\def\invlim{\mathop{\vtop{\ialign{##\crcr$\hfill{\lim}\hfil$\crcr  
\noalign{\kern1pt\nointerlineskip}\leftarrowfill\crcr\noalign
{\kern -3pt}}}}\limits}
\def\dirlim{\mathop{\vtop{\ialign{##\crcr$\hfill{\lim}\hfil$\crcr
\noalign{\kern1pt\nointerlineskip}\rightarrowfill\crcr\noalign
{\kern -3pt}}}}\limits}
\def\lomapr#1{\smash{\mathop{\relbar\joinrel\longrightarrow}\limits^{#1}}}
 \def\verylomapr#1{\smash{\mathop{\relbar\joinrel\relbar\joinrel\relbar\joinrel\longrightarrow}\limits^{#1}}}
\def\veryverylomapr#1{\smash{\mathop{\relbar\joinrel\relbar\joinrel\relbar
\joinrel\relbar\joinrel\relbar\joinrel\longrightarrow}\limits^{#1}}}
\def\epsilon{\varepsilon}
\let\mathcal\mathscr
\newtheorem{theorem}[equation]{Theorem} 
 \newtheorem{lemma}[equation]{Lemma}
 \newtheorem{proposition}[equation]{Proposition}
 \newtheorem{corollary}[equation]{Corollary}
\theoremstyle{definition}
\newtheorem{definition}[equation]{Definition}
\theoremstyle{remark}
\newtheorem{remark}[equation]{Remark}
\newtheorem{example}[equation]{Example}
\newtheorem*{acknowledgments}{Acknowledgments}
\def\cdot{{\scriptscriptstyle\bullet}}
\newcommand{\Qp}{\mathbf{Q}_p}
\renewcommand{\phi}{\varphi}
\newcommand{\R}{\mathrm {R} }
\newcommand{\pri}{^{\prime}}
\newcommand{\ovk}{\overline{K} }
 \newcommand{\coker}{\operatorname{coker} }
 \newcommand{\coim}{\operatorname{coim} }
 \newcommand{\holim}{\operatorname{holim} }
  \newcommand{\rig}{\operatorname{rig} }
 \newcommand{\hocolim}{\operatorname{hocolim} }
  \newcommand{\proeet}{\operatorname{pro\acute{e}t}  }
 \newcommand{\eet}{\operatorname{\acute{e}t} }
 \newcommand{\an}{\operatorname{an} }
 \newcommand{\nr}{\operatorname{nr} }
   \newcommand{\conv}{\operatorname{conv} }
 \newcommand{\Spec}{\operatorname{Spec} } 
  \newcommand{\Sp}{\operatorname{Sp} }  
   \newcommand{\Spwf}{\operatorname{Spwf} }
 \newcommand{\Spf}{\operatorname{Spf} }
 \newcommand{\Hom}{\operatorname{Hom} }
 \newcommand{\Ext}{\operatorname{Ext} }
 \newcommand{\Gal}{\operatorname{Gal} }
 \newcommand{\tr}{ \operatorname{tr} }
 \newcommand{\can}{ \operatorname{can} }
 \newcommand{\id}{ \operatorname{Id} }
\newcommand{\synt}{ \operatorname{syn} }
\newcommand{\sem}{\operatorname{ss} }
\newcommand{\hk}{\operatorname{HK} } 
\newcommand{\dr}{\operatorname{dR} }
 \newcommand{\crr}{\operatorname{cr} }
 \newcommand{\im}{\operatorname{im} }
 \newcommand{\kr}{^{\scriptscriptstyle\bullet}}
  \newcommand{\Ind}{\operatorname{Ind}} 
  \newcommand{\Pro}{\operatorname{Pro}}
 \newcommand{\sff}{{\mathcal{F}}}
 \newcommand{\sy}{{\mathcal{Y}}}
 \newcommand{\sh}{{\mathcal{H}}}
 \newcommand{\sg}{{\mathcal{G}}}
 \newcommand{\sv}{{\mathcal{V}}} 
 \newcommand{\srr}{{\mathcal{R}}}
  \newcommand{\su}{{\mathcal{U}}}
 \newcommand{\sbb}{{\mathcal{B}}}
 \newcommand{\so}{{\mathcal O}}
 \newcommand{\sj}{{\mathcal J}}
 \newcommand{\sa}{{\mathcal{A}}}
 \newcommand{\sx}{{\mathcal{X}}}
 \newcommand{\sss}{{\mathcal{S}}}
\newcommand{\sd}{{\mathcal{D}}}
\newcommand{\sm}{{\mathcal{M}}}
 \newcommand{\wt}{\widetilde}
 \newcommand{\wh}{\widehat}
 \newcommand{\Z}{ {\mathbf Z} }
   \newcommand{\Q}{ {\mathbf Q}}
   \newcommand{\N}{{\mathbf N}}
         \newcommand{\rg}{\R\Gamma}
\numberwithin{equation}{section}
\begin{document}
 \title[On  $p$-adic comparison theorems for rigid analytic varieties, I]{On $p$-adic comparison theorems for rigid analytic varieties, I}
 \author{Pierre Colmez}
\address{CNRS, IMJ-PRG, Sorbonne Universit\'e, 4 place Jussieu,
75005 Paris, France}
\email{pierre.colmez@imj-prg.fr}
\author{Wies{\l}awa Nizio{\l}}
  \address{CNRS, UMPA, \'Ecole Normale Sup\'erieure de Lyon, 46 all\'ee d'Italie, 69007 Lyon, France}
 \email{wieslawa.niziol@ens-lyon.fr}
 \thanks{This research was partially supported by the  project  ANR-14-CE25 and the NSF grant No. DMS-1440140.}
\begin{abstract} We compute, in a stable range,  the arithmetic $p$-adic \'etale cohomology of smooth rigid analytic and dagger varieties (without any assumption on the existence of a nice integral model) in terms of differential forms using syntomic methods. The main technical input is a construction of a Hyodo-Kato cohomology and a Hyodo-Kato isomorphism with de Rham cohomology. 

\end{abstract}
 \setcounter{tocdepth}{2}

 \maketitle
 \tableofcontents
 \section{Introduction}Let $p$ be a prime. 
 Let  $\so_K$ be a complete discrete valuation ring of mixed characteristic $(0,p)$ with perfect residue field $k$ and fraction field $K$. Let $F$ be the fraction field of the ring of Witt vectors
 $\so_F=W(k)$ of $k$. Let $\ovk$ be an algebraic closure of $K$ and let $C=\wh{\ovk}$ be its $p$-adic completion; let  $\sg_K=\Gal(\ovk/K)$. Let $F^{\nr}$ be the maximal unramified extension of $F$ in $\ovk$. 
 
   In a joint work with Gabriel Dospinescu \cite{CDN1}, \cite{CDN3} we have computed the $p$-adic (pro-)\'etale cohomology of certain $p$-adic symmetric spaces. A key ingredient of these computations was a one-way (de Rham to \'etale) comparison theorem for rigid analytic varieties over $K$ with a semistable formal model over $\so_K$ that allowed us to pass from (pro-)\'etale cohomology to syntomic cohomology and then to a filtered Frobenius eigenspace associated to  de Rham cohomology. 
   
   The main goal of this paper is to define all the cohomologies that will be necessary for extending such comparison quasi-isomorphisms to all smooth rigid analytic varieties over $K$ or $C$ (without any assumption on the existence of a nice integral model). We will focus on the arithmetic case and leave the geometric case for the sequel of this paper \cite{CN4}. 
   \subsection{Main results}
   We are mainly interested in partially proper  rigid analytic varieties. Since these varieties have a canonical overconvergent (or dagger) structure we are led to study dagger varieties\footnote{Recall that a dagger variety is a rigid analytic variety equipped with an overconvergent structure sheaf. See \cite{GK0} for the basic definitions and properties.}. This is advantageous: for example, a dagger affinoid has de Rham cohomology that is a finite rank vector space with its natural Hausdorff topology while the de Rham cohomology of rigid analytic affinoids is, in general, infinite dimensional and not Hausdorff.
   
   Our first main result is the following theorem: 
 \begin{theorem}
 \label{main00}
 To any smooth dagger  variety $X$ over $L=K,C$ there are  naturally  associated\footnote{All cohomology complexes live in the bounded below derived $\infty$-category of locally convex topological vector spaces over $\Q_p$. Quasi-isomorphisms in this category we call {\em strict quasi-isomorphisms}. }:
 \begin{enumerate}
 \item A pro-\'etale cohomology $\rg_{\proeet}(X,\Q_p(r))$, $r\in \Z$. If $X$ is partially proper this agrees with the pro-\'etale cohomology of $X$ considered  as a rigid analytic variety.
 \item For $L=C$, a $\ovk$-valued rigid cohomology $\rg_{\rig,\ovk}(X)$ and a natural strict quasi-isomorphism\footnote{See Proposition \ref{pierre1} for the definition of the tensor product.}
 $$
 \rg_{\rig,\ovk}(X)\wh{\otimes}^R_{\ovk}C\simeq \rg_{\dr}(X).
 $$
 This defines a natural $\ovk$-structure on the de Rham cohomology\footnote{By the same procedure one can define a $F^{\nr}$-valued rigid cohomology $\rg_{\rig,F^{\nr}}(X)$ and a natural strict quasi-isomorphism
  $ \rg_{\rig,F^{\nr}}(X)\wh{\otimes}^R_{F^{\nr}}C\simeq \rg_{\dr}(X).
 $}.
\item A Hyodo-Kato cohomology 
$\rg_{\hk}(X)$. This is a dg $F$-algebra if $L=K$,
and   a dg $F^{\nr}$-algebra if $L$=$C$,
equipped with a Frobenius $\phi$ and a monodromy operator $N$. For $L=C$, we have  natural Hyodo-Kato strict quasi-isomorphisms
$$
 \iota_{\hk}: \rg_{\hk}(X)\wh{\otimes}_{F^{\nr}}\ovk\stackrel{\sim}{\to} \rg_{\rig,\ovk}(X),\quad  \iota_{\hk}: \rg_{\hk}(X)\wh{\otimes}^R_{F^{\nr}}C\stackrel{\sim}{\to} \rg_{\dr}(X).
 $$
 \item For $L=K$, a syntomic cohomology $\rg_{\synt}(X,\Q_p(r))$, $r\in \N$, that fits into a distinguished triangle
  \begin{equation}
  \label{triangle}
 \rg_{\synt}(X,\Q_p(r))\lomapr{}  [\rg_{\hk}(X)]^{N=0,\phi=p^r}\lomapr{\iota_{\hk}} \rg_{\dr}(X)/F^r,
 \end{equation}
and  a natural period morphism
 $$
 \alpha_r: \rg_{\synt}(X,\Q_p(r))\to \rg_{\proeet}(X,\Q_p(r))
 $$
 that is a strict quasi-isomorphism after truncation $\tau_{\leq r}$. 
 \end{enumerate}
 \end{theorem}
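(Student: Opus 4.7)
The plan is to build each of the four cohomologies locally on small affinoids and then to descend in a compatible way. First, using the overconvergent structure and the theory of dagger affinoids (Grosse-Kl\"onne), one reduces to a basis of the Grothendieck topology on $X$ consisting of basic dagger affinoids that admit a semistable (log-smooth) formal model --- such a basis exists by semistable reduction for dagger affinoids, possibly after base change to $\so_C$ in the case $L=C$. On any such local piece, all four cohomologies are already at our disposal in the classical literature: pro-\'etale cohomology as the sheafification of the dagger pro-\'etale site (reducing to Scholze's pro-\'etale site in the partially proper case), de Rham and rigid cohomologies via the overconvergent de Rham complex, and Hyodo-Kato cohomology as the Beilinson-style log crystalline cohomology equipped with its Frobenius $\phi$ and monodromy $N$.

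Second, I would globalize by descent. One checks that on overlaps the locally defined cohomologies agree up to natural strict quasi-isomorphism, independently of the chosen semistable model; the overconvergent dagger structure is what guarantees that the resulting complexes sit in the derived $\infty$-category of locally convex $\Q_p$-vector spaces with well-behaved (Fr\'echet/LB) topology and finite-dimensional cohomology on affinoids. The base change $\rg_{\rig,\ovk}(X)\wh{\otimes}^R_{\ovk}C\simeq\rg_{\dr}(X)$ follows from the corresponding local statement by flat base change; the local Hyodo-Kato-to-rigid isomorphism glues to give $\iota_{\hk}\colon \rg_{\hk}(X)\wh{\otimes}_{F^{\nr}}\ovk\simeq\rg_{\rig,\ovk}(X)$, and composing with the previous base change yields the Hyodo-Kato-to-de-Rham comparison after $\wh{\otimes}_{F^{\nr}}^R C$.

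Third, for $L=K$, define syntomic cohomology as the homotopy fiber
$$
\rg_{\synt}(X,\Q_p(r)):=\fiber\Bigl([\rg_{\hk}(X)]^{N=0,\phi=p^r}\lomapr{\iota_{\hk}}\rg_{\dr}(X)/F^r\Bigr),
$$
so that the distinguished triangle \eqref{triangle} is tautological. The period morphism $\alpha_r$ is then the descent of the Fontaine-Messing period map constructed on semistable affinoids (as used in \cite{CDN1}); the fact that $\tau_{\leq r}\alpha_r$ is a strict quasi-isomorphism reduces to the local, semistable case, where it is exactly the stable-range comparison theorem already recalled from \cite{CDN1}.

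The main obstacle is not any single algebraic construction but rather \emph{uniform topological control}: to obtain strict quasi-isomorphisms one has to propagate Fr\'echet/LB topologies, completed tensor products, and the $(\phi,N)$-action through each descent and glueing step, and to show that the fixed-point functor $[-]^{N=0,\phi=p^r}$ is compatible with these topologies. A parallel hurdle is proving independence of the local semistable model in the topological derived category; over $C$ this is precisely what forces the use of dagger, rather than rigid, affinoids, since only then are the de Rham and rigid cohomology groups Hausdorff and finite-dimensional on affinoids, allowing the alteration and base-point-independence arguments of de Jong--Beilinson type to be executed with strict topological control.
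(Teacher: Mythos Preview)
Your outline has the right architecture, and defining $\rg_{\synt}$ as the homotopy fiber giving \eqref{triangle} is exactly what the paper does. The gap is in the construction of the period map.

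The Fontaine--Messing period map is a map out of \emph{crystalline} syntomic cohomology: it is built on semistable \emph{formal} schemes from absolute log-crystalline cohomology over $\Z_p$. But the distinguished triangle \eqref{triangle} is formed from the \emph{overconvergent} Hyodo--Kato cohomology $\rg_{\rig}(-/\so_F^0)$ of Grosse-Kl\"onne, which lives on semistable \emph{weak} formal schemes. These are two genuinely different theories, each with its own Hyodo--Kato morphism (one via the section $s$ of $p_0$ on $r^{\rm PD}$-cohomology, the other via the log-scheme with boundary $\overline{\sx}_0$), and there is no Fontaine--Messing map out of the overconvergent version. So ``descent of the Fontaine--Messing period map'' does not produce a map out of the syntomic cohomology you just defined.

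The paper bridges this by introducing a \emph{second} overconvergent syntomic cohomology $\rg^{\dagger}_{\synt}(X,\Q_p(r))$, defined on a dagger affinoid with presentation $\{X_h\}$ as $\hocolim_h\rg_{\synt}(X_h,\Q_p(r))$; this one \emph{does} carry the Fontaine--Messing map. The period morphism is then the composite
\[
\rg_{\synt}(X,\Q_p(r))\ \xleftarrow{\ \sim\ }\ \rg^{\dagger}_{\synt}(X,\Q_p(r))\ \longrightarrow\ \rg_{\proeet}(X,\Q_p(r)).
\]
The substantive work is the left arrow (Proposition~\ref{zamek3}), which rests on Theorem~\ref{zamek}: for partially proper dagger $X$ the overconvergent and rigid-analytic syntomic cohomologies agree. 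The proof of that theorem reduces, after passing to $X_C$ and using Galois descent for Hyodo--Kato cohomology, to showing that the overconvergent and crystalline Hyodo--Kato maps are compatible on the level of cohomology---this is the computation in diagram~(\ref{reduction4}). That compatibility is the technical heart of the paper; your proposal does not identify it, and the ``uniform topological control'' you flag, while real, is downstream of it.

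A smaller point: the paper does not use a ``dagger pro-\'etale site''. Pro-\'etale cohomology of a dagger affinoid is \emph{defined} as $\hocolim_h\rg_{\proeet}(X_h,\Q_p(r))$ over a presentation, then globalized; the comparison with rigid-analytic pro-\'etale cohomology in the partially proper case (Proposition~\ref{kicia-kicia}) is a separate cofinality argument, not a consequence of identifying two sites.
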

 We also prove an analogous theorem for smooth rigid analytic varieties. 
 
 The second main result of this paper is the following corollary of Theorem \ref{main00}. 
 \begin{theorem}\label{first3.1}
  Let $X$ be a smooth dagger variety over $K$ and let $r\geq 0$. 
    \begin{enumerate}
    \item 
    For $1\leq i\leq r-1$, the boundary map induced by the distinguished triangle (\ref{triangle})
    $$
    \partial_{r}: \wt{H}^{i-1}_{\dr}(X)\to \wt{H}^{i}_{\proeet}(X,\Q_p(r))
    $$
    is an isomorphism. In particular, the cohomology $ \wt{H}^{i}_{\proeet}(X,\Q_p(r))$ is classical and it has a natural $K$-structure. 
    \item We have  long exact sequences
    \begin{align*}
   &  0\to \wt{H}^{r-1}(\rg_{\dr}(X)/F^r)\lomapr{\partial_r} \wt{H}^r_{\proeet}(X,\Q_p(r))\to \wt{H}^{r}([\rg_{\hk}(X)]^{N=0,\phi=p^r})\lomapr{\iota_{\hk}}\wt{H}^r(\rg_{\dr}(X)/F^r)\\
   & 
0\to  \wt{H}^{r-1}_{\hk}(X)^{\phi=p^{r-1}}  \to \wt{H}^{r}([\rg_{\hk}(X)]^{N=0,\phi=p^r})\to \wt{H}^{r}_{\hk}(X)^{N=0,\phi=p^r}\to 0
    \end{align*}
    Moreover, the cohomology $ \wt{H}^{i}_{\hk}(X)$ is classical. 
    \end{enumerate}
    \end{theorem}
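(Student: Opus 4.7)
The strategy is a diagram chase in the long exact sequence attached to the distinguished triangle \eqref{triangle}, driven by Newton-slope bounds on Frobenius acting on $\widetilde{H}^j_{\hk}(X)$. The key input I will assume (and which is established in the main body of the paper) is that $\phi-p^s$ is bijective on $\widetilde{H}^j_{\hk}(X)$ whenever $s>j$, because the $p^s$-eigenspace is zero by slope, and the topological/spectral structure of $\widetilde{H}^j_{\hk}(X)$ promotes this to bijectivity.

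Using that the period morphism is a strict quasi-isomorphism on $\tau_{\le r}$, the triangle \eqref{triangle} yields, for $i\le r$, the long exact sequence
\begin{align*}
\cdots\to \widetilde{H}^{i-1}([\rg_{\hk}(X)]^{N=0,\phi=p^r}) &\to \widetilde{H}^{i-1}(\rg_{\dr}(X)/F^r)\xrightarrow{\partial_r}\widetilde{H}^i_{\proeet}(X,\Q_p(r))\\
&\to \widetilde{H}^i([\rg_{\hk}(X)]^{N=0,\phi=p^r})\to\cdots.
\end{align*}
First I would identify $\widetilde{H}^{i-1}(\rg_{\dr}/F^r)\simeq \widetilde{H}^{i-1}_{\dr}(X)$ for $i-1<r$ via the short exact sequence of complexes $0\to\Omega^{\ge r}\to\Omega^{\bullet}\to\Omega^{<r}\to 0$ and the trivial vanishing $H^j(X,\Omega^{\ge r})=0$ for $j<r$. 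Next I would analyze the Hyodo--Kato term via the iterated fiber sequences
$$[\rg_{\hk}(X)]^{N=0,\phi=p^r}\simeq \operatorname{fib}\bigl(N\colon\rg_{\hk}(X)^{\phi=p^r}\to\rg_{\hk}(X)^{\phi=p^{r-1}}\bigr),\qquad \rg_{\hk}(X)^{\phi=p^s}\simeq \operatorname{fib}(\phi-p^s).$$
Feeding in the slope bound at the relevant degrees shows that $\phi-p^r$ is bijective on $\widetilde{H}^{i-1}_{\hk}$ and $\widetilde{H}^i_{\hk}$, and $\phi-p^{r-1}$ is bijective on $\widetilde{H}^{i-2}_{\hk}$ and $\widetilde{H}^{i-1}_{\hk}$, for $1\le i\le r-1$. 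Chasing through the two fiber sequences this forces $\widetilde{H}^i([\rg_{\hk}(X)]^{N=0,\phi=p^r})=0$ in this range; plugging back into the displayed LES gives the isomorphism of Part (1), and the natural $K$-structure on $\widetilde{H}^i_{\proeet}(X,\Q_p(r))$ transports through $\partial_r$ from the obvious $K$-structure of $\widetilde{H}^{i-1}_{\dr}(X)$.

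For Part (2), the first long exact sequence is simply the continuation of the above LES to degree $r$: the leading zero is provided by the vanishing $\widetilde{H}^{r-1}([\rg_{\hk}(X)]^{N=0,\phi=p^r})=0$ (the case $i=r-1$ already established). The second long exact sequence is the degree-$r$ portion of the LES coming from the fiber sequence that defines $[\rg_{\hk}(X)]^{N=0,\phi=p^r}$; the same slope bound gives $\widetilde{H}^{r-1}(\rg_{\hk}^{\phi=p^r})=0$ (making the sequence short exact on the left) together with the identifications $\widetilde{H}^{r-1}(\rg_{\hk}^{\phi=p^{r-1}})\simeq \widetilde{H}^{r-1}_{\hk}(X)^{\phi=p^{r-1}}$ and $\widetilde{H}^r(\rg_{\hk}^{\phi=p^r})\simeq \widetilde{H}^r_{\hk}(X)^{\phi=p^r}$. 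Classicality of $\widetilde{H}^i_{\hk}(X)$ is then inherited from classicality of $\widetilde{H}^i_{\dr}(X)$ via the Hyodo--Kato isomorphism after base change from $F$ to $K$.

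The main obstacle I anticipate is not the diagram chase itself but the slope/weight estimate for Frobenius on $\widetilde{H}^j_{\hk}(X)$ of an arbitrary smooth dagger variety with no chosen integral model, together with the topological subtlety that $\phi-p^s$ must have closed image so that these spectral statements really hold on the potentially infinite-dimensional topological vector spaces $\widetilde{H}^j_{\hk}(X)$ rather than only on abstract cohomology.
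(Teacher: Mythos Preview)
Your proposal is correct and follows essentially the same strategy as the paper: pass to syntomic cohomology via the period map (Corollary~\ref{period15}), run the long exact sequence of the triangle~\eqref{triangle}, and kill the Hyodo--Kato eigenspace terms in low degree using the slope bound $\mathrm{slopes}(\phi\mid H^j_{\hk})\leq j$. The paper packages the eigenspace analysis as a spectral sequence $E_2^{ij}=\wt{H}^i([H^j_{\hk}(X)]^{N=0,\phi=p^r})\Rightarrow \wt{H}^{i+j}$ (Proposition~\ref{computHK}), whereas you unwind the two nested fiber sequences directly; these are equivalent bookkeeping devices. The slope input is obtained in the paper exactly as you anticipate: reduce by $\eta$-\'etale descent to the semistable case, use the weight spectral sequence, and invoke the Chiarellotto--Le~Stum bound on rigid cohomology of smooth varieties over~$k$.

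One point where your route diverges slightly: you deduce classicality of $\wt{H}^i_{\hk}(X)$ from the Hyodo--Kato isomorphism with $\wt{H}^i_{\dr}(X)$ after base change $F\to K$. The paper instead proves classicality directly (Proposition~\ref{rain1}) by a functional-analytic argument on the complexes coming from an $\eta$-\'etale hypercover (compact-type LB-spaces plus Open Mapping), and only afterwards uses this to control the topology on the eigenspace complexes. Your route is viable, but note that the paper does not explicitly state the arithmetic overconvergent Hyodo--Kato map is a strict quasi-isomorphism after $\otimes_F K$ for general $X$ (only the local version and the geometric version over~$C$ are stated); you would need to globalize the local statement by descent, which is straightforward but should be said. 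The paper's direct argument also delivers the finer topological information (Fr\'echet structure, surjective limits) that feeds into showing the $(\phi,N)$-eigenspace cohomology is classical, which you would otherwise have to reconstruct.
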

    Here $\wt{H}$ refers to cohomology taken in the derived category of locally convex topological vector spaces over $\Q_p$ and ``classical" means that the cohomology $\wt{H}$ is isomorphic to the algebraic cohomology $H$ equipped with its natural quotient topology (very often this is equivalent to the natural topology on $H$ being separated).  If $X$ is proper,  we have the   isomorphisms
  $$
  H^{r-1}_{\dr}(X)\stackrel{\sim}{\to} \wt{H}^{r-1}(\rg_{\dr}(X)/F^r),\quad  H^{r}_{\dr}(X)/\Omega^r(X)\stackrel{\sim}{\to} \wt{H}^{r}(\rg_{\dr}(X)/F^r).
  $$
If  $X$ is Stein,  we get the  isomorphisms 
  \begin{align*}
  \wt{H}^{r-1}(\rg_{\dr}(X)/F^r)\simeq  \Omega^{r-1}(X)/\im d_{r-1},\quad 
   \wt{H}^i(\rg_{\dr}(X)/F^r)\simeq 0,\, i\geq r.
  \end{align*}
  Hence the cohomology $\wt{H}^{r-1}(\rg_{\dr}(X)/F^r)$  is classical.

    We prove an analogous result in the case of smooth rigid analytic varieties over $K$ and this generalizes the computations \cite[Cor. 3.16]{CN} done for smooth affinoids with semistable reduction.
\begin{remark}
For a smooth proper scheme $X$ over $K$, the analog of the  map $
    \partial_{r}: \wt{H}^{i-1}_{\dr}(X)\to \wt{H}^{i}_{\proeet}(X,\Q_p(r))$ is a geometric incarnation of the Bloch-Kato exponential. See \cite[Remark 2.14]{NN}, \cite[Prop. 3.8]{DN}, \cite[Th. 3.1]{Ni} for a detailed discussion. 
\end{remark}
 \subsection{Proof of Theorem \ref{main00}}
 We will now sketch  how Theorem \ref{main00} is proved. The pro-\'etale cohomology in (1) is defined in the most naive way: if $X$ is a smooth dagger affinoid with a presentation $\{X_h\}_{h\in\N}$ by a pro-affinoid rigid analytic variety\footnote{See Section \ref{pierre11} for the definition of presentations.} we set 
 $\rg_{\proeet}(X,\Q_p(r)):=\hocolim_h\rg_{\proeet}(X_h,\Q_p(r))$; then we globalize. From this description it is clear that we have a natural map $\rg_{\proeet}(X,\Q_p(r))\to
 \rg_{\proeet}(\wh{X},\Q_p(r))$, where $\wh{X}$ is the completion of $X$ (a rigid analytic variety). 
 
 For the rest of Theorem \ref{main00}, first we show that, using the rigid analytic \'etale local alterations of Hartl and Temkin \cite{Urs}, \cite{Tem},  the \'etale topology on $X_{L}$ has a base consisting of semistable weak formal schemes (always assumed to be of finite type) over finite extensions of $\so_K$. This allows us to define sheaves by specifying them on such integral models and then sheafifying for the $\eta$-\'etale topology\footnote{This construction mimics that of Beilinson in \cite{BE2} done for algebraic varieties; here $\eta$-\'etale means topology induced from the \'etale topology of the generic fiber.}. For example, for (2), we define  $\rg_{\rig,\ovk}(X):=\rg_{\eet}(X,\sa_{\rig,\ovk})$, for a sheaf $\sa_{\rig,\ovk}$ induced from a 
 presheaf  assigning to a semistable model $\sy$ over $\so_C$ coming by base change from a semistable model $\sy_{\so_E}$ over $\so_E$, $[E:K]<\infty$,  the complex\footnote{We give here a rough definition; see Section \ref{lebras} for a precise definition.} $\hocolim\rg_{\rig}(\sy_{\so_E,0})$,  $\sy_{\so_E,0}$ is the special fiber of $\sy_{\so_E}$, where the homotopy colimit is taken over   such models $\sy_{\so_E}$. In an analogous way we define, for (3), 
  the Hyodo-Kato cohomology using the overconvergent  Hyodo-Kato cohomology of Grosse-Kl\"onne that for a semistable model $\sy$ over $\so_K$ is defined as $\rg_{\hk}(\sy_0):=\rg_{\rig}(\sy_0/\so_F^0)$; the Hyodo-Kato quasi-isomorphism is induced from the one defined by Grosse-Kl\"onne $\iota_{\hk}: \rg_{\rig}(\sy_0/\so_F^0)\stackrel{\sim}{\to}\rg_{\rig}(\sy_0/\so_F^{\times})$. Here $\so_K^{\times}$, $\so_K^0$ denote the (weak formal) scheme associated to $\so_K$ with the canonical and the induced by $\N\to \so_K$, $1\mapsto 0$, log-structure, respectively. 
  
  We define the syntomic cohomology in (4)  in two different, but (non obviously) equivalent, ways. One definition is just as a homotopy fiber that yields the distinguished triangle (\ref{triangle}). The other, for dagger affinoids with a presentation $\{X_h\}_{h\in\N}$, sets $\rg_{\synt}(X,\Q_p(r)):=\hocolim_h\rg_{\synt}(X_h,\Q_p(r))$. Here the syntomic cohomology $\rg_{\synt}(X_h,\Q_p(r))$ of a rigid analytic variety $X_h$ is defined by  $\eta$-\'etale descent, using the fact that semistable formal models form a base for the \'etale topology of $X$,  from the crystalline syntomic cohomology of Fontaine-Messing. Recall that the latter is defined as   the homotopy fiber 
  $\rg_{\synt}(\sx,\Q_p(r):=[F^r\rg_{\crr}(\sx)\lomapr{\phi-p^r}\rg_{\crr}(\sx)]$, where the crystalline cohomology is absolute (i.e., over $\Z_p$). The second definition works also for smooth dagger varieties over $C$.

   It is quite nontrivial to show that these two definitions agree. Along the way, we prove the main technical result of this paper: 
   \begin{theorem}
   \label{first4}Let $r\geq 0$. Let $X$ be 
a smooth dagger variety over $K$. There is a natural morphism $$\rg_{\synt}(X,\Q_p(r))\to \rg_{\synt}(\wh{X},\Q_p(r)).$$ 
It is a strict quasi-isomorphism if $X$ is partially proper. 
   \end{theorem}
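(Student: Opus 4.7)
The plan is to first construct the morphism directly from the $\hocolim$ description of $\rg_{\synt}$ on dagger affinoids. If $X$ is a smooth dagger affinoid with presentation $\{X_h\}_{h\in\N}$ and completion $\wh{X}=\bigcap_h X_h$, the inclusions $\wh{X}\hookrightarrow X_h$ induce compatible restriction maps $\rg_{\synt}(X_h,\Q_p(r))\to\rg_{\synt}(\wh{X},\Q_p(r))$, whose homotopy colimit in $h$ produces the desired morphism
\[
\rg_{\synt}(X,\Q_p(r))\to\rg_{\synt}(\wh{X},\Q_p(r)).
\]
For a general smooth dagger $X$, I would glue these local maps using $\eta$-\'etale descent: semistable weak formal schemes form a common base for the $\eta$-\'etale topology on both $X$ (as a dagger variety) and $\wh{X}$ (as a rigid analytic variety), and the local construction is functorial for restriction, so it assembles to a global morphism.

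For the strict quasi-isomorphism when $X$ is partially proper, I would exploit a Stein-like exhaustion $X=\bigcup_n X^{(n)}$ by quasi-compact open dagger subspaces with $X^{(n)}$ relatively compact (in the dagger sense) inside $X^{(n+1)}$; such an exhaustion is precisely what partial properness provides. The key observation is that the dagger structure on $X^{(n)}$ induced from $X$ is presented by the cofinal sequence of rigid-analytic neighborhoods $\{\wh{X^{(n+h)}}\}_{h\geq 0}$. One then obtains
\[
\rg_{\synt}(X,\Q_p(r))\simeq\R\invlim_n\rg_{\synt}(X^{(n)},\Q_p(r))\simeq\R\invlim_n\hocolim_h\rg_{\synt}(\wh{X^{(n+h)}},\Q_p(r)),
\]
while $\rg_{\synt}(\wh{X},\Q_p(r))\simeq\R\invlim_m\rg_{\synt}(\wh{X^{(m)}},\Q_p(r))$. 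A cofinality/diagonalization argument, reindexing $m=n+h$, should identify the two: the outer $\R\invlim_n$ absorbs the inner $\hocolim_h$ along the anti-diagonal, collapsing the double limit to the single rigid-analytic tower.

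The hard part will be making this interchange of $\hocolim_h$ and $\R\invlim_n$ rigorous inside the derived $\infty$-category of locally convex topological vector spaces, where \emph{strict} quasi-isomorphisms are required. Unlike in a purely algebraic setting, one must control the functional-analytic topology and ensure no $\R\invlim^1$ pathologies arise. I would expect this to rely on compactness/nuclearity of the transition maps $\rg_{\synt}(\wh{X^{(n+h)}},\Q_p(r))\to\rg_{\synt}(\wh{X^{(n+h+1)}},\Q_p(r))$ (analogous to the finiteness that makes dagger affinoid de Rham cohomology Hausdorff and finite-dimensional), together with a Mittag-Leffler-type condition on the Stein exhaustion. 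A further subtlety, to be handled along the way, is reconciling the homotopy-fiber definition of $\rg_{\synt}(X,\Q_p(r))$ with the $\hocolim_h$ definition, since only the former admits an obvious counterpart on the rigid-analytic side via Fontaine--Messing $\eta$-\'etale descent; this reconciliation is, as the introduction notes, the principal technical payoff of the theorem itself.
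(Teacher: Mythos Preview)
Your cofinality argument is exactly the one the paper uses for \emph{pro-\'etale} cohomology (Proposition~\ref{kicia-kicia}), and it would indeed show that $\rg_{\synt}^{\dagger}(X,\Q_p(r))\to\rg_{\synt}(\wh X,\Q_p(r))$ is a strict quasi-isomorphism for partially proper $X$, where $\rg_{\synt}^{\dagger}$ is the $\hocolim_h$-definition. The gap is that the theorem concerns the homotopy-fiber definition $\rg_{\synt}(X,\Q_p(r))=[[\rg_{\hk}(X)]^{N=0,\phi=p^r}\to\rg_{\dr}(X)/F^r]$, and you acknowledge but do not address the identification $\rg_{\synt}\simeq\rg_{\synt}^{\dagger}$. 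This is not a formality that can be ``handled along the way'': the overconvergent Hyodo-Kato cohomology entering $\rg_{\synt}$ is built from $\rg_{\rig}(\sx_0/\so_F^0)$ via Grosse-Kl\"onne's combinatorial construction on semistable \emph{weak} formal models, whereas the Hyodo-Kato cohomology implicit in $\rg_{\synt}^{\dagger}$ comes from $\rg_{\crr}(\sx_0/\so_F^0)$ and the classical section of $p_0$ on $r^{\rm PD}$ over semistable formal models; comparing these two Hyodo-Kato morphisms is precisely the hard part. In the paper the identification $\rg_{\synt}\simeq\rg_{\synt}^{\dagger}$ (Proposition~\ref{zamek3}) is proved \emph{using} Theorem~\ref{zamek} as input, so your proposed logical order is circular.

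The paper therefore does not go through the $\hocolim$ presentation at all. It constructs the map $\iota$ at the level of semistable weak formal models by chaining $\rg_{\rig}(\sx_0/\so_F^0)\leftarrow\rg_{\rig}(\overline{\sx}_0/r^{\dagger}_F)\to\rg_{\rig}(\sx_0/r^{\dagger}_F)\to\rg_{\crr}(\sx_0/r^{\rm PD}_F)_{\Q_p}\to\rg_{\crr}(\sx_0/\so_F^0)_{\Q_p}$ together with the obvious de Rham map, and then proves separately that $\rg_{\dr}(X)\to\rg_{\dr}(\wh X)$ and $\rg_{\hk}(X)\to\rg_{\hk}(\wh X)$ are strict quasi-isomorphisms for partially proper $X$. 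The de Rham case is Grosse-Kl\"onne's theorem. The Hyodo-Kato case is the real work: one passes to $X_C$ by Galois descent, tensors with $\ovk$ over $F^{\nr}$, applies the two (overconvergent and crystalline) Hyodo-Kato isomorphisms to land in $\rg_{\rig,\ovk}$ and $\rg_{\conv,\ovk}$, and reduces to the de Rham comparison over $\ovk$. The crux is verifying that the resulting square commutes on cohomology, which is done by tracking the section $s$ of $p_0$ modulo the filtration of $r^{\rm PD}_{\ovk,\Q_p}$ by the ideals $I_n$. None of this is captured by a formal limit interchange.
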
This theorem is proved  by representing both sides of the morphism by means of  the crystalline and the overconvergent Hyodo-Kato cohomology, respectively, then passing via Galois descent to $X_C$, and finally passing  through the crystalline and overconvergent Hyodo-Kato quasi-isomorphisms (that need to be shown to be compatible) to the de Rham cohomology, where the result is known.

   To define the period map in (4), for $L=K,C$,  we first define it for rigid analytic varieties by the $\eta$-\'etale descent of the Fontaine-Messing period map $\alpha_r: \rg_{\synt}(\sx,\Q_p(r))\to \rg_{\eet}(\sx_L,\Q_p(r))$, for a semistable formal scheme $\sx$ over $\so_L$. Then we use the second  definition of syntomic cohomology and   the period maps $\alpha_r: \rg_{\synt}(X_h,\Q_p(r))\to \rg_{\eet}(X_h,\Q_p(r))$ to get the period map $\alpha_r$ in Theorem \ref{main00}. The fact that it is a strict quasi-isomorphism in a stable range  follows from the computations of $p$-adic nearby cycles via syntomic complexes done in \cite{Ts} in the geometric case and in  \cite{CN} in the arithmetic case.
   \begin{remark}For
   an algebraic variety $X$ over $L=K,C$,  a well behaved syntomic cohomology $\rg_{\synt}(X,\Q_p(r))$, $r\geq 0$, was defined in \cite{NN}. A more conceptual definition was given in \cite{DN} but the approach in \cite{NN} is more concrete and this is the one we mimic  in this paper.
   For $L=K$ and smooth $X$, there exists a natural map $\rg_{\synt}(X,\Q_p(r))\to \rg_{\synt}(X^{\an},\Q_p(r))$, where $X^{\an}$ denotes the analytification of $X$. This should be a strict quasi-isomorphism if $X$ is proper although we do not prove this in this paper. 
   \end{remark}
   \begin{remark}
   Let $\sx$ be a proper  semistable scheme over $\so_K$ (we allow a horizontal divisor at infinity). Ertl-Yamada \cite{EY}  have extended  Grosse-Kl\"onne's  definition of the Hyodo-Kato morphism to this setting and defined the corresponding rigid syntomic cohomology by the defining property (\ref{triangle}).  See \cite{Yam} for a more conceptual definition in the case when there is no horizontal divisor at infinity. 
   \end{remark}
  \begin{acknowledgments}W.N. would like to thank MSRI, Berkeley, for hospitality during Spring 2019 semester when parts of this paper were written. We would like to thank 
 Benjamin Antieau, Antoine Chambert-Loir, Antoine Ducros, Veronika Ertl, and  Luc Illusie  for helpful discussions concerning the content of this paper. We thank the referees for a careful reading of the paper and helpful comments. 
 \end{acknowledgments}

 \subsubsection{Notation and conventions.} All formal schemes are $p$-adic. For a (weak formal or formal) scheme $\sx$ over $\so_K$, we will denote by $\sx_n$ its reduction modulo $p^n$, $n\geq 1$, and  by $\sx_0$ its special fiber. 
 
 We will denote by $\so_K$,
$\so_K^{\times}$, and $\so_K^0$, depending on the context,  the scheme $\Spec ({\so_K})$ or the formal scheme $\Spf (\so_K)$ with the trivial, the canonical (i.e., associated to the closed point), and the induced by $\N\to \so_K, 1\mapsto 0$,
log-structure, respectively.

 \begin{definition}
Let $N\in {\mathbf N}$. For a morphism $f: M\to M^{\prime}$ of ${\mathbf Z}_p$-modules, we say that $f$ is 
{\it $p^N$-injective} (resp. {\it $p^N$-surjective}) if its kernel (resp. its cokernel) is annihilated by $p^N$ 
and we say that $f$ is a  {\it $p^N$-isomorphism} if it is $p^N$-injective and $p^N$-surjective. 
We define in the same way the notion of {\it $p^N$-distinguished triangle} or {\it $p^N$-acyclic complex} 
(a complex whose cohomology groups are annihilated by $p^N$) as well as the notion of {\it $p^N$-quasi-isomorphism}
 (map in the derived category that induces a $p^N$-isomorphism on cohomology). 
\end{definition}

  Unless otherwise stated, we work in the derived (stable) $\infty$-category $\sd(A)$ of left-bounded complexes of a quasi-abelian category $A$ (the latter will be clear from the context).
  Many of our constructions will involve (pre)sheaves of objects from $\sd(A)$. The reader may consult the notes of Illusie \cite{IL} and Zheng \cite{Zhe} for a brief introduction to how to work with such (pre)sheaves  and \cite{Lu1}, \cite{Lu2} for a thorough treatment.
  
    We will use a shorthand for certain homotopy limits. Namely,  if $f:C\to C'$ is a map  in the  derived $\infty$-category of a quasi-abelian category, we set
$$[\xymatrix{C\ar[r]^f&C'}]:=\holim(C\to C^{\prime}\leftarrow 0).$$ 
And we set
$$
\left[\begin{aligned}
\xymatrix@R=.5cm{C_1\ar[d]\ar[r]^f & C_2\ar[d]\\
C_3\ar[r]^g & C_4
}\end{aligned}\right]
:=[[C_1\stackrel{f}{\to} C_2]\to [C_3\stackrel{g}{\to} C_4]],
$$ 
for 
 a commutative diagram (the one inside the large bracket) in the derived $\infty$-category of a quasi-abelian category.
 \section{An equivalence of topoi}
 Let $X$ be a smooth rigid analytic variety over $K$, resp. $C$. In this section, we will show that the \'etale site of $X$ has a base (in the sense of Verdier, see \cite{V2}) built from semistable formal schemes over  finite extensions of $\so_K$, resp. over $\so_C$. We will show the same for smooth dagger spaces over $K$ and $C$.
 \subsection{A general criterium}
 \label{topoi}In \cite[2.1]{Be1}  Beilinson generalized a well-known criterium of Verdier \cite[4.1]{V2} stating  conditions under which one  can change sites while preserving their topoi. While Verdier assumed the functor $F$ below to be fully faithful, Beilinson allows it to be just faithful.

 We will briefly summarize  \cite[2.1]{Be1}.   Let $\sv$ be an essentially small site and let  ${\rm Sh}({\sv})$ be the corresponding topos. 
 {\em A base for} $\sv$ is a pair $(\sbb,F)$, where $\sbb$ is an essentially small category and  $F: \sbb\to \sv$ is a faithful functor, which satisfies the following property: \vspace{3mm}
 \\
($\star$) For  $V\in\sv$ and a finite family of pairs $(B_{\alpha},f_{\alpha}), B_{\alpha}\in \sbb, f_{\alpha}: V\to F(B_{\alpha}),$ there exists a set of objects $B^{\prime}_{\beta}\in \sbb$ and a covering family $\{F(B^{\prime}_{\beta})\to V\}$ such that each composition $F(B^{\prime}_{\beta})\to V\to F(B_{\alpha})$ lies in $\Hom(B_{\beta}^{\prime},B_{\alpha})\subset \Hom(F(B^{\prime}_{\beta}),F(B_{\alpha}))$. 
 \begin{remark}
 \label{kolo2}
 \begin{enumerate}
 \item  For the empty set of $(B_{\alpha},f_{\alpha})$'s the above means that every  $V\in\sv$ has a covering by objects $F(B),B\in\sbb$. If $F$ is fully faithful, then ($\star$) is equivalent  to this assertion. 
 \item  If $\sbb$ admits finite products and $F$ commutes with finite products, then it suffices to check ($\star$) for families $(B_{\alpha},f_{\alpha})$ having $\leq 1$ elements.
 \item In the general case, it suffices to check ($\star$) for families $(B_{\alpha},f_{\alpha})$ having $\leq 2$ elements.
 \end{enumerate}
 \end{remark}
 
  Let  $(\sbb,F)$ be a base for $\sv$. Define a covering sieve in $\sbb$ as a sieve whose $F$-image is a covering sieve in $\sv$. The following proposition is proved by Beilinson \cite[2.1]{Be1}.
   \begin{proposition}
   \begin{enumerate}
   \item  Covering sieves in $\sbb$ form a Grothendieck topology on $\sbb$.
   \item The functor $F:\sbb\to\sv$ is continuous.
   \item $F$ induces an equivalence of  topoi ${\rm Sh}(\sbb)\stackrel{\sim}{\to}{\rm Sh}(\sv)$.
   \end{enumerate}
   We call the above topology on $\sbb$ the $F$-induced topology.
   \end{proposition}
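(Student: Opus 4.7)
The strategy is to verify the Grothendieck topology axioms on $\sbb$ (for (1)), note that (2) is then essentially a tautology, and finally prove the equivalence of topoi in (3) by constructing an adjoint pair whose unit and counit are isomorphisms, with property $(\star)$ playing the essential role in both the pullback axiom and in the unit isomorphism.

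For (1), the maximal-sieve and local-character axioms transfer directly from $\sv$. The delicate axiom is stability under pullback. Let $S$ be a covering sieve on $B$ and $f : B' \to B$ a morphism of $\sbb$; one must show $f^* S$ is a covering sieve on $B'$. For any test morphism $k : V \to F(B')$ in the pullback covering sieve $F(f)^*\langle F(S)\rangle$ on $F(B')$, there exist $g : A \to B$ in $S$ and $e : V \to F(A)$ with $F(f) \circ k = F(g) \circ e$. Applying $(\star)$ to the pair of data $\bigl((B',k),(A,e)\bigr)$ produces a cover $\{F(B'_\beta) \to V\}$ together with liftings $h_\beta : B'_\beta \to B'$ and $h'_\beta : B'_\beta \to A$ in $\sbb$. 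Faithfulness of $F$ then forces $f \circ h_\beta = g \circ h'_\beta$, and this composition lies in $S$ since $S$ is a sieve, so $h_\beta \in f^* S$. It follows that the sieve generated by $F(f^* S)$ refines the covering pullback sieve on $F(B')$ and is therefore itself covering. Part (2) is immediate from the definition of covering sieves in $\sbb$.

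For (3), consider the restriction functor $F^{-1} : {\rm Sh}(\sv) \to {\rm Sh}(\sbb)$, $F^{-1}\sff = \sff \circ F^{\mathrm{op}}$, which is well defined by (2), together with its left adjoint $F_! : {\rm Sh}(\sbb) \to {\rm Sh}(\sv)$ obtained by sheafifying the left Kan extension along $F^{\mathrm{op}}$. The equivalence of topoi will follow from the unit and counit of $(F_!, F^{-1})$ being isomorphisms. The counit $F_! F^{-1} \sff \to \sff$ is an isomorphism because, by Remark \ref{kolo2}(1) applied to the empty family, every object of $\sv$ admits a cover by objects of the form $F(B)$, $B \in \sbb$, and a sheaf on $\sv$ is determined by its restriction to such a base. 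The unit $\scc \to F^{-1} F_! \scc$ amounts to the identification $(F_!\scc)(F(B)) = \scc(B)$ for every $B \in \sbb$.

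This last identification is the main obstacle, and it is exactly the place where Beilinson's weakening of Verdier's hypothesis from fully faithful to merely faithful requires the full strength of $(\star)$. The difficulty is twofold: a covering family of $F(B)$ of the form $\{F(B_\alpha) \to F(B)\}$ may involve $\sv$-morphisms not in the image of $\sbb$, and even when all of these come from $\sbb$, the fiber products $F(B_\alpha) \times_{F(B)} F(B_\gamma)$ that control the sheaf axiom need not lift to $\sbb$ with $\sbb$-projections. The remedy is to apply $(\star)$ twice: first, to each $F(B_\alpha)$ with the single pair $(B, f_\alpha)$, producing a refinement whose cover maps to $F(B)$ come from morphisms of $\sbb$; then, to each fiber product $F(C_\gamma) \times_{F(B)} F(C_\delta)$ with the two-element family given by its two projections — this two-element case is the general case by Remark \ref{kolo2}(3). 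The resulting refinement provides a cover $\{F(D_\epsilon) \to F(B)\}$ in which both the cover maps and the transition maps over fiber products lift to $\sbb$; faithfulness of $F$ ensures that these lifts agree wherever the diagrams they should fit into commute in $\sv$. The sheaf axiom for $\scc$ on this refined cover then computes $(F_!\scc)(F(B)) = \scc(B)$, completing the proof.
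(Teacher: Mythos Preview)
The paper does not give its own proof of this proposition; it simply cites Beilinson \cite[2.1]{Be1}. Your sketch is correct and is essentially the argument one finds there: property $(\star)$ is exactly what is needed to verify the pullback axiom in (1) and to establish the unit isomorphism in (3).

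Two minor comments on presentation. In (1), the phrase ``refines the covering pullback sieve'' is slightly imprecise; what you have actually shown is that every arrow $k$ in $F(f)^*\langle F(S)\rangle$ is \emph{locally} in $\langle F(f^*S)\rangle$, and then the local-character axiom on $\sv$ gives that $\langle F(f^*S)\rangle$ is covering. In (3), your outline is correct but compressed: the point is that by $(\star)$ with one datum any cover of $F(B)$ can be refined to one whose structure maps to $F(B)$ come from $\sbb$, and by $(\star)$ with two data (Remark~\ref{kolo2}(3)) the overlaps $F(C_\gamma)\times_{F(B)}F(C_\delta)$ can similarly be refined; faithfulness of $F$ then matches the cocycle conditions, so the $\sv$-sheaf condition for $F_!\scc$ at $F(B)$ reduces to the $\sbb$-sheaf condition for $\scc$ at $B$.
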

   \begin{remark}
   \label{kolo1}
   \begin{enumerate}
   \item If $F$ is fully faithful, the above proposition is \cite[4.1]{V2}.
   \item Let $\xymatrix@C=0.5cm{(F^s,F_s): {\rm Sh}(\sbb)\ar@<-0.5mm>[r]&{\rm Sh}(\sv)\ar@<-0.5mm>[l]}$ be the usual adjoint functors.  For a presheaf $\sff$ on $\sv$, we have $F_s(\sff^{ a})=F_p(\sff)^{a }$,
   where $F_p$ is the pushforward of presheaves and the subscript $a$ means ``associated sheaf".
   \item
If $(\sbb,F)$ is a base for $\sv$ and $(\sbb^{\prime},F^{\prime})$ is a base for the $F$-induced topology on $\sbb$ then $(\sbb^{\prime},FF^{\prime})$ is a base for $\sv$. 
   \end{enumerate}
   \end{remark}
   \subsection{Categories of formal models}
   \label{models}
   We will show now that  the \'etale site of smooth rigid analytic varieties over $K$, resp.  over  $C$, admits a base built from semistable formal schemes over finite extensions of $\so_K$, resp. over $\so_C$.
   \subsubsection{Models.}
   Let $L=K,C$. A morphism of $\so_L$-schemes $f:Y\to X$ is called $\eta$-\'etale, an $\eta$-isomorphism, etc., if  its generic fiber $f_L$ is \'etale, an isomorphism, etc..  An $\so_L$-scheme is {\em admissible} if it is flat and of finite type over $\so_L$. 
   A formal $\so_L$-scheme $\sx$ is {\em admissible} if it is flat and of finite type over $\Spf(\so_L)$. For an admissible formal $\so_L$-scheme $\sx$, we denote by $\sx_L$ (or $\sx_{\eta}$) its rigid analytic generic fiber.  We say that a morphism $\sy\to\sx$ between admissible formal $\so_L$-schemes is {\em $\eta$-\'etale} if its generic fiber $f_L$ (or $f_{\eta}$) is \'etale. Similarly, we define {\em $\eta$-smooth} morphisms\footnote{In a more traditional language we would call such morphisms ``$\rig$-\'etale", etc. However, since it is becoming standard to use $\eta$ to denote the rigid generic fiber, we have elected to use $\eta$-\'etale in  this paper.}.

    Let ${\rm Sm}_L$ be the category of smooth $L$-rigid varieties. We will consider categories $\sm$ formed by  semistable  formal models of such varieties.\\
   (a) {\em  $K$-setting}:  A {\em   model over} $K$ (a {\em  $K$-model}) is an admissible formal $\so_K$-scheme $\sx$. 
  % For such $\sx$, we set $\so_{K_{\sx}}:=\Gamma(\sx,\so_{\sx})$. Then $K_{\sx}$ is a product of several finite extensions of $K$ (labeled by the connected components of $\sx_K$); if $\sx$ is normal then $\so_{K_{\sx}}$ is the product of the of the corresponding rings of integers. The special fiber $\sx_0$ of $\sx$ is the union of fibers over the closed point of $\so_{K_{\sx}}$.
   A formal scheme over $\so_K$ is called {\em  semistable} if, locally for the Zariski topology,  it admits an \'etale morphism to a formal scheme of the form
    $$\Spf(\so_K\{X_1,\ldots,X_l\}/(X_1\cdots X_m-\varpi)),\quad 0\leq m\leq l,
    $$
    for a uniformizer $\varpi$ of $\so_K$ (we allow $m=0$ just to get formal affine space -- when the formal scheme is smooth). 
    A $K$-model $\sx$ is called {\em semistable}  if it is semistable over $\so_E$ for a finite field extension $E$ of $K$. In that case, assume that $\sx_K$ is connected (which is equivalent to $\sx$ being connected) and let $K_{\sx}$ be the algebraic closure of $K$ in $\Gamma(\sx_K,\so_{\sx_K})$ (note that $E\subset K_{\sx}$). Then $\so_{K_{\sx}}$ is the integral closure of $\so_K$ in $\Gamma(\sx,\so_{\sx})$ and $\sx$ is semistable over $\so_{K_{\sx}}$. 
We will say that $\sx$ is  {\em split} over $K_{\sx}$.

  Let $\sm_K$ denote the category of  $K$-models (morphisms are morphisms of formal schemes over $\so_K$) and  let $\sm^{\sem}_K$ be its full subcategory of semistable $K$-models. 
  
     (b) {\em  $C$-setting}: A {\em  model over $C$} (a {\em   $C$-model}) is an admissible formal $\so_C$-scheme $\sx$.   It is 
         called {\em  semistable}, if locally for the Zariski topology, 
      it admits an \'etale morphism to a formal scheme of the form
    $$\Spf(\so_C\{X_1,\ldots,X_l\}/(X_1\cdots X_m-\varpi)), \quad 0\leq m\leq l,
    $$
for $0\neq \varpi\in \so_C$. 
      It is called {\em basic semistable}
     if there exists a semistable model $\sx^{\prime}$ over $\so_E$, $E$ a finite extension of $K$,  and a $C$-point $\alpha: E\to C$ such that $\sx$ is isomorphic to the base change $\sx^{\prime}_{\so_C}$. 
     Let $\sm_C$ denote the category of  $C$-models and  let $\sm^{\sem}_C$, $\sm^{\sem,b}_C$ be its full subcategories  of semistable  and basic semistable $C$-models, respectively. 
     
 We note that, if we equip the formal schemes in $\sm^{\sem}_K$, $\sm^{\sem,b}_C$, and $\sm^{\sem}_C$ with the log-structure associated to the special fiber over the ring over which they split, every map in these categories is a map of log-schemes. Warning: the maps in the category $\sm^{\sem,b}_C$ {\em do not} have to come from finite levels.

  The $K$- and $C$-settings are connected by  the base change functors
\begin{equation}
\label{base-change}
\xymatrix@R=.5cm{
\sm^{\sem,b}_C\ar[r]& {\rm Sm}_C\\
\sm^{\sem}_K\ar[r]\ar[u]& {\rm Sm}_K,\ar[u]
}
\end{equation}
where the right vertical arrow is the base change $(-)\wh{\otimes}_KC$ and the left arrow assigns to a  $K$-model $\su$ semistable over $\so_L$, $L$ a finite extension of $K$, 
the  disjoint union of semistable models $\su\wh{\otimes}_{\so_L, \alpha}\so_C$ over $C$-points $\alpha: L\to C$.
\subsubsection{Semistable reduction.}
We say that an admissible formal $\so_L$-scheme $\sx$ is {\em algebraizable} if it is isomorphic to the $p$-adic completion of an admissible $\so_L$-scheme $X$. 
   The well-known algebraization theorem of Elkik \cite{Elk} yields the following theorem.
   \begin{theorem}{\rm (Temkin, \cite[Th. 3.1.3]{Tem})}\label{algebraization}
   Any affine $\eta$-smooth admissible formal $\so_L$-scheme $\sx$ is algebraizable. Moreover, we can find an affine $\eta$-smooth admissible $\so_L$-scheme $X$ such  that $\sx\simeq \wh{X}$. 
   \end{theorem}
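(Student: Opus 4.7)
My plan is to combine Elkik's algebraization theorem (which is why the theorem is attributed to \cite{Elk} in the preamble) with a principal localization to cut out the non-smooth locus of the algebraic model. Set $\sa := \Gamma(\sx, \so_{\sx})$, a flat, topologically finitely presented $\so_L$-algebra with smooth generic fiber $\sa[1/p]$, and fix a presentation $\sa = \so_L\langle T_1, \ldots, T_n \rangle/(f_1, \ldots, f_m)$.

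First, I would apply Elkik's approximation theorem to the henselian pair $(\so_L[T_1, \ldots, T_n]^{\mathrm{h}}_{(p)}, (p))$: for polynomials $g_1, \ldots, g_m \in \so_L[T_1, \ldots, T_n]$ sufficiently close $p$-adically to the $f_i$'s, the quotient $B := \so_L[T_1, \ldots, T_n]/(g_1, \ldots, g_m)$ has $p$-adic completion isomorphic to $\sa$. Replacing $B$ by its quotient modulo $p$-power torsion gives a flat, finitely presented $\so_L$-algebra with the same completion. Since $\sa[1/p]$ is smooth over $L$, the natural morphism $\Spec(\sa[1/p]) \to \Spec(B_L)$ factors through the smooth locus $U \subseteq \Spec(B_L)$, which is Zariski open.

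Second, I would produce an element $h \in B$ whose vanishing locus $V(h_L) \subseteq \Spec(B_L)$ contains the non-smooth locus $\Spec(B_L) \setminus U$, and such that $h$ becomes a unit in $\sa$; existence follows from the facts that the non-smooth locus is a closed subscheme disjoint from the image of $\Spf(\sa)$ and that disjoint closed subsets of a noetherian affine scheme can be separated by principal divisors. Then $X := \Spec(B[1/h])$ is affine, flat, finitely presented over $\so_L$, with smooth generic fiber, and its $p$-adic completion is $\sa[1/h] = \sa$ because $h$ was chosen invertible in $\sa$.

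The main obstacle will be the final step: after the approximation, one has little a priori control over the non-smooth locus of $B_L$, so one must carefully extract the element $h$ that both covers this locus and is inverted by $\sa$. The key inputs are Elkik's theorem for the algebraization itself and the fact that $\Spf(\sa)$ meets $\Spec(B)$ only inside the smooth locus of the generic fiber, which is what makes such an $h$ exist; the remaining issues (flatness after killing $p$-torsion, and compatibility of $p$-adic completion with localization at a unit) are routine.
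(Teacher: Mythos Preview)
The paper does not prove this theorem; it quotes it from Temkin \cite[Th.~3.1.3]{Tem}, crediting the underlying input to Elkik. Your two-step outline (Elkik approximation, then a principal localization to excise the non-smooth locus of the generic fiber) is in the spirit of Temkin's argument.

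Two points in your sketch need more care. First, your statement of the Elkik step is not literally correct: it is \emph{not} true that for an arbitrary presentation $\sa = \so_L\langle T\rangle/(f)$ and any sufficiently $p$-adically close polynomials $g_i$ one has $\widehat{\so_L[T]/(g)}\cong\sa$ (e.g.\ $f = T^2$, $g = T^2 - p^{2N}$ over $\so_K$). What makes the approximation work is precisely the $\eta$-smoothness, which allows one to choose a presentation satisfying an explicit Jacobian condition (a suitable ideal of minors contains a power of $p$); this condition is stable under small perturbation and is what drives the Elkik argument. Second, and more substantively, your justification for the element $h$ is too quick: that the non-smooth locus $Z \subset \Spec(B_L)$ is disjoint from $V(p) \subset \Spec(B)$ is automatic (they live in different fibers), but what you need is that the \emph{closure} $\bar{Z}$ of $Z$ in $\Spec(B)$ misses $V(p)$. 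This holds, and again it is the $\eta$-smoothness of $\sa$ that forces it: were $\bar{Z} \cap V(p)$ nonempty, the $p$-adic formal completion of (the reduced) $\bar{Z}$ would be a nonempty $\so_L$-flat closed formal subscheme of $\Spf(\sa)$, and its nonempty rigid generic fiber would lie in both $\Sp(\sa[1/p])$ and the analytic singular locus $Z^{\mathrm{an}}$, a contradiction. Once $\bar{Z}\cap V(p)=\emptyset$, writing $1 = h + pb$ with $h\in I_{\bar{Z}}$ gives the element you want; no noetherian hypothesis is needed here, which matters since $\so_C$ is not noetherian when $L=C$.
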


 We quote  two results of Temkin which generalize  results of Hartl \cite[Th. 1.4]{Urs} (which works for complete discretely-valued fields) and Faltings \cite[III.2]{Fal} (see \cite[Th. 2.5.2]{Tem} for an algebraic analog and  \cite{BS} for a refined algebraic analog).
       \begin{theorem}{\rm (Temkin, \cite[Th. 3.3.1]{Tem})}\label{Tem1}
    Let $\sx$ be an $\eta$-smooth admissible formal scheme over $\so_L$. Then there exists a finite field extension  $E/L$ and a $\eta$-\'etale covering $\sx^{\prime}\to\sx\otimes_{\so_L}\so_E$ such that $\sx^{\prime}$ is  semistable over $\so_E$.
    \end{theorem}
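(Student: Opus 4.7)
The plan is to bootstrap from the classical discretely-valued case to the general setting via algebraization, descent, and gluing. First, I would reduce to the affine case: since $\eta$-\'etale coverings can be assembled from coverings of the members of a finite open affine cover of $\sx$ (refining the extension $E$ so it dominates all the extensions needed on each piece), it suffices to treat an affine $\eta$-smooth admissible formal $\so_L$-scheme. By Theorem \ref{algebraization} we may then write such $\sx$ as $\widehat{X}$ for an affine $\eta$-smooth admissible $\so_L$-scheme $X$, and the semistable $\eta$-\'etale cover we seek for $\sx$ will be obtained by $p$-adic completion of a corresponding algebraic cover of $X_{\so_E}$.

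Second, in the case $L=K$, I would invoke Hartl's theorem \cite[Th. 1.4]{Urs}, which is exactly the $\eta$-\'etale semistable reduction statement for $\eta$-smooth admissible schemes over a complete discrete valuation ring of mixed characteristic. Passing to $p$-adic completions and using that formal completion preserves \'etaleness, smoothness, and the local semistable shape, one obtains the required $\sx' \to \sx \otimes_{\so_K}\so_E$. For $L = C$, I would use a standard spreading-out argument: since $X$ is of finite type over $\so_C$ and $\so_C = \mathop{\rm colim} \so_{K'}$ as $K'$ ranges over the finite extensions of $K$ in $C$, the scheme $X$ descends to some admissible $X_0$ over $\so_{K'}$ with $X_0 \otimes_{\so_{K'}}\so_C \simeq X$, and after enlarging $K'$ we may assume $X_{0,K'}$ is smooth. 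Applying the $K$-case to $X_0$ gives an $\eta$-\'etale semistable cover $X_0' \to X_0 \otimes_{\so_{K'}}\so_{E}$ for some finite $E/K'$; choosing any embedding $E \hookrightarrow C$ (which exists since $C$ is algebraically closed) and base changing to $\so_C$ yields the desired cover of $\sx$.

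The main obstacle, which is entirely absorbed by citing Hartl, is the $L=K$ case itself: going from a semistable \emph{alteration} (the natural output of de Jong-style arguments, and of Temkin's own alteration machinery \cite[Th. 2.5.2]{Tem}) to an honestly $\eta$-\'etale covering. This requires characteristic-zero input, namely that a generically finite cover of a smooth rigid-analytic generic fiber can, after further blowups in the special fiber and a finite base extension to split ramification, be refined so that the generic map becomes \'etale while preserving the semistable local model on the integral side. The rest of the argument is formal: affinewise algebraization, descent of finite-type data from $\so_C$ to $\so_{K'}$, and completion are all well-behaved enough to preserve both $\eta$-\'etaleness and the local semistable coordinate description.
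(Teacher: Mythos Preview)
The paper does not prove this theorem; it is quoted from Temkin \cite[Th.~3.3.1]{Tem} without argument. So your proposal must stand on its own.

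For $L=K$ your strategy is fine, though the algebraization detour is superfluous: Hartl's theorem \cite[Th.~1.4]{Urs} already applies directly to $\eta$-smooth admissible formal schemes over a complete discrete valuation ring, which is exactly what $\sx$ is when $L=K$.

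For $L=C$ there is a genuine gap. The assertion ``$\so_C=\mathop{\rm colim}\,\so_{K'}$'' is false: one has $\so_{\ovk}=\varinjlim_{K'}\so_{K'}$, and $\so_C$ is the $p$-adic \emph{completion} of $\so_{\ovk}$, which is strictly larger (it contains elements transcendental over $K$). Consequently the standard spreading-out argument you invoke does not apply: a finitely presented $\so_C$-algebra need not descend to any $\so_{K'}$, since its structure constants may lie outside $\so_{\ovk}$. One might try to repair this by an approximation argument---perturbing the presentation so that the coefficients land in $\so_{\ovk}$ while keeping the $p$-adic completion unchanged up to isomorphism---but carrying this out over the non-Noetherian valuation ring $\so_{\ovk}$ is a nontrivial step that you have not supplied, and it is essentially the content that Temkin's paper provides. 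His proof proceeds via local uniformization on Berkovich spaces and does not reduce the $C$-case to the discretely-valued case; the passage from $K$ to $C$ is the substance of the theorem, not a formality.
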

    \begin{corollary}{\rm (Temkin, \cite[Cor. 3.3.2]{Tem})}\label{Tem11}
    Let  $X$ a smooth qcqs rigid space over $L$. Then there exists a finite extension $E/L$ and an \'etale covering $X^{\prime} \to X\otimes_LE$ such that $X^{\prime}$ is affinoid and has  a  semistable affine formal model.
    \end{corollary}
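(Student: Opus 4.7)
The plan is to reduce the corollary to Theorem \ref{Tem1} by (i) finding an admissible formal $\so_L$-model of $X$, (ii) applying that theorem to obtain a semistable model after an $\eta$-\'etale base change and finite extension, and (iii) refining the resulting cover to make it affine.

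First, I would invoke Raynaud's theorem to produce an admissible formal $\so_L$-model $\sx$ of $X$; since $X$ is qcqs we may take $\sx$ qcqs. The smoothness of $X = \sx_\eta$ then says exactly that $\sx$ is $\eta$-smooth in the sense of Section \ref{models}. This puts us in the hypotheses of Theorem \ref{Tem1}, which yields a finite extension $E/L$ and an $\eta$-\'etale covering $\sx' \to \sx \otimes_{\so_L} \so_E$ with $\sx'$ semistable over $\so_E$.

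Next I would pass to an affine model. Since $\sx$ is qcqs and the covering map is of finite type, $\sx'$ is again qcqs, hence admits a finite cover by affine opens $\sx'_1,\ldots,\sx'_n$, each of which inherits semistability (this property is Zariski local by definition). Set
\[
\sx'' := \bigsqcup_{j=1}^n \sx'_j.
\]
A finite disjoint union of affine formal schemes is affine (its ring of global functions is the finite product of the component rings), and $\sx''$ is semistable over $\so_E$. The natural map $\sx'' \to \sx'$ is surjective on underlying topological spaces, so the composition $\sx'' \to \sx\otimes_{\so_L}\so_E$ is an $\eta$-\'etale morphism whose generic fiber is surjective. Taking generic fibers gives $X' := \sx''_\eta$, which is a finite disjoint union of affinoids and hence itself an affinoid, together with an \'etale covering $X' \to X\otimes_L E$; by construction $\sx''$ is an affine semistable formal model of $X'$.

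The only substantive step is (ii), which is exactly Theorem \ref{Tem1}; the rest of the argument is formal, relying on Raynaud's generic fiber construction, the Zariski-locality of semistability, and the elementary fact that finite disjoint unions of affine(oid) objects remain affine(oid). There is no real obstacle once Theorem \ref{Tem1} is granted; the mild care needed is to arrange qcqs hypotheses so that the affine refinement $\sx'_1,\ldots,\sx'_n$ is finite, which is what makes the disjoint union $\sx''$ itself affine.
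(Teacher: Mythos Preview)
Your proof is correct and follows essentially the same strategy as the paper: take a formal model by Raynaud, apply Theorem \ref{Tem1}, then refine to an affine semistable model whose generic fiber is the desired affinoid. You are simply more explicit about the refinement step (finite affine cover, then disjoint union) than the paper, which just says ``we can refine $\sx'$ to make it affine.''
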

    \begin{proof}
    Take an admissible formal model $\sx$ of $X$ (such a model exists  by a theorem of Raynaud \cite[Th. 4.1]{Ray}). 
    Take $E/L$ and $\sx^{\prime} \to \sx\otimes_{\so_L}{\so_E}$ as in Theorem \ref{Tem1}. We can refine  $\sx^{\prime}$ to make it affine. Then its generic fiber $\sx^{\prime}_{E}$ is affinoid and has $\sx^{\prime}$ for a  semistable model.
    \end{proof}
\subsubsection{An equivalence of topoi.}
Let  $\sm$ be any category from Section \ref{models} and let $F_{\eta}$ be the forgetful functor $\sx\mapsto\sx_{\eta}$.  The main result of this section is the following
\begin{proposition}
\label{Bonn1}
If $\sm$ is  the category $\sm_K$ or $\sm^{\sem}_K$ then $(\sm,F_{\eta})$ is a base for ${\rm Sm}_{K,\eet}$. If $\sm$ is $\sm_C$, $\sm^{\sem,b}_C$,  or $\sm^{\sem}_C$ then $(\sm,F_{\eta})$ is a base for ${\rm Sm}_{C,\eet}$. 
\end{proposition}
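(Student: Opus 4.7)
By Remark \ref{kolo2}(3) it suffices to establish that $F_\eta$ is faithful and that property $(\star)$ holds for families of size $\leq 2$. Faithfulness is immediate: if $\sx = \Spf A$ and $\sy = \Spf B$ are affine admissible formal $\so_L$-schemes, then $A,B$ are $\so_L$-flat, so $A\hookrightarrow A[1/p]$ and $B\hookrightarrow B[1/p]$; hence two continuous $\so_L$-algebra maps $A\to B$ inducing the same $A[1/p]\to B[1/p]$ coincide. Globalizing gives the faithfulness of $F_\eta$ on all five categories of models.

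For the empty family of $(\star)$, we need to cover each $V\in{\rm Sm}_L$ by generic fibers of objects of $\sm$. In the non-semistable cases $\sm_K,\sm_C$ this is Raynaud's theorem: every qcqs admissible open of $V$ is the generic fiber of some admissible formal $\so_L$-scheme. For $\sm^{\sem}_K$ and $\sm^{\sem}_C$ we apply Corollary \ref{Tem11} directly. For $\sm^{\sem,b}_C$, we additionally need that a semistable formal $\so_C$-scheme so produced is basic, i.e.\ descends from $\so_E$ with $E/K$ finite; this follows from the standard spreading-out argument applied to the local \'etale coordinates over the model $\Spf(\so_C\{X_1,\ldots,X_l\}/(X_1\cdots X_m-\varpi))$ (choose $\varpi\in\so_E$ and descend the finitely presented \'etale data).

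For $(\star)$ with maps $f_i\colon V\to \sx_{i,\eta}$ ($i=1,2$, $\sx_i\in\sm$), the strategy is to first realize the compositions as morphisms of admissible formal models in the larger category $\sm_L\in\{\sm_K,\sm_C\}$, and only then refine to the semistable variants. Choose an affinoid covering $\{V_\beta\}$ of $V$ fine enough that each $f_i(V_\beta)$ is contained in the generic fiber of an affine open $\sw_{i,\beta}\subset \sx_i$. Pick any admissible formal model $\sv_\beta$ of $V_\beta$ (Raynaud); by the standard theory of admissible blowups --- which do not alter the generic fiber --- one can successively replace $\sv_\beta$ so that each map $V_\beta\to\sw_{i,\beta,\eta}$ lifts to a morphism $\sv_\beta\to\sw_{i,\beta}$ of admissible formal $\so_L$-schemes: the obstruction to lifting is that the pullback of each topological generator of $\so_{\sw_{i,\beta}}$ be integral on $\sv_\beta$, which is enforced by blowing up the appropriate ideal of denominators. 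Composing with $\sw_{i,\beta}\hookrightarrow \sx_i$ yields the desired family in $\sm_K$ or $\sm_C$.

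To descend to the semistable variants, apply Theorem \ref{Tem1} to each $\sv_\beta$: there is a finite extension $E_\beta/L$ (taken trivial when $L=C$ is already algebraically closed) and an $\eta$-\'etale cover $\sv_\beta'\to\sv_\beta\otimes_{\so_L}\so_{E_\beta}$ with $\sv_\beta'$ semistable. The maps $\sv_\beta'\to\sv_\beta\to\sx_i$ are morphisms in $\sm^{\sem}_K$ or $\sm^{\sem}_C$; combining with the spreading-out descent of the second paragraph treats $\sm^{\sem,b}_C$. The main obstacle is the interaction of the blowups of paragraph~3 with the semistabilization of Theorem \ref{Tem1}: this is handled by the strict ordering (blowups first, Temkin second), since once the compositions $\sv_\beta\to\sx_i$ exist as morphisms of formal schemes they automatically pull back along any further $\eta$-\'etale refinement, so no compatibility between admissible blowups and semistable alterations is required.
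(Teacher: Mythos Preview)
Your argument is essentially correct and tracks the paper's, but two points deserve comment.

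\textbf{Structure.} The paper is more economical for the semistable subcategories. Instead of re-running the full $(\star)$ argument for $\sm^{\sem}_K$, $\sm^{\sem}_C$, and $\sm^{\sem,b}_C$, it uses the transitivity of bases (Remark \ref{kolo1}(3)): once $(\sm_K,F_\eta)$ is known to be a base and $\iota\colon \sm^{\sem}_K\hookrightarrow \sm_K$ is fully faithful, Remark \ref{kolo2}(1) reduces everything to the \emph{empty} family in $(\star)$, i.e., to showing that every $\su\in\sm_K$ admits an $\eta$-\'etale cover by semistable models, which is Theorem \ref{Tem1}. The same trick handles $\sm^{\sem}_C$, and then $\sm^{\sem,b}_C\hookrightarrow\sm^{\sem}_C$. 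Your approach of repeating the blowup-then-Temkin routine for each category works, but the paper's layering avoids the repetition. (For the blowup step itself, the paper simply cites \cite[Lemma 5.6]{FRG} applied to the product map $X\to\prod_\alpha\su_{\alpha,K}$, which is exactly what you sketch by hand.)

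\textbf{The basic case $\sm^{\sem,b}_C$.} Here the paper and you genuinely diverge: the paper shows every semistable $\so_C$-model is $\eta$-\'etale covered by a basic one using a log-blow-up \cite[Lemma 1.11]{Sai}, whereas you descend directly. Your idea is sound, but ``standard spreading out'' undersells what is needed, because $\so_C$ is \emph{not} a direct limit of the $\so_E$, so finitely presented data over $\so_C$ do not automatically descend. What makes your argument go through is the combination of two facts you should state: first, after a coordinate change $X_1\mapsto (\varpi/\varpi')X_1$ with $\varpi'\in\so_{\overline K}$ of the same valuation, one may assume $\varpi\in\so_E$; second, since $\so_C/p=\so_{\overline K}/p=\varinjlim_E \so_E/p$, the mod-$p$ reduction of the \'etale chart genuinely descends to some $\so_E/p$, and unique formal lifting of \'etale algebras then produces a model over $\so_E$ whose $p$-completed base change to $\so_C$ recovers the original. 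With this said, your route is a legitimate (and arguably more self-contained) alternative to the paper's appeal to Saito.
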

\begin{proof}
Consider first the $K$-setting. We need to show that $\sm_K$ satisfies condition $(\star)$  from Section \ref{topoi}. For that, assume that  $X$ is a rigid analytic variety over $K$ and take a finite family\footnote{By Remark \ref{kolo2}, we may assume that this family consists of one element.} of  $K$-models $\su_{\alpha}$ together with  maps $f_{\alpha}: X\to \su_{\alpha,K}$. We need to find an \'etale covering $\pi: X^{\prime}\to X$ and a  $K$-model $\sx^{\prime}$  of $X^{\prime}$ such that every map $f_{\alpha}\pi$ extends to a map $\sx^{\prime}\to \su_{\alpha}$. 

 Replacing $X$ by an  affinoid admissible covering, we may assume that $X$ is a disjoint union of affinoids. By a theorem of Raynaud \cite[Th. 4.1]{Ray}, we can find a $K$-model of  $X$. By \cite[Lemma 5.6]{FRG}, 
 this model can be modified by  an admissible blow-up to a $K$-model $\sx$  of $X$ such that there exists a dotted arrow that makes the following diagram commute
$$
 \xymatrix{
 \sx\ar@{.>}[d]  & X\ar[d]^{\prod_{\alpha}f_{\alpha}}\ar@{_(-->}[l]\\
  \prod_{\alpha}\su_{\alpha} & \prod_{\alpha}\su_{\alpha,K}\ar@{_(-->}[l] 
   }
 $$ 
This is the model we wanted. 
 
  Now, to show that $(\sm^{\sem}_K,F_{\eta})$ is a base it suffices, by Remark \ref{kolo1},  to show that $(\sm^{\sem}_K, \iota)$, for the natural functor $\iota: \sm^{\sem}_K\hookrightarrow \sm_K$,  is a base of $\sm_K$. Since $\iota$ is fully faithful, by Remark \ref{kolo2},  it suffices to check that, for every $K$-model $\su\in \sm_K$, there exists a map of $K$-models $\su^{\prime}\to \su$ such that $\su^{\prime}_K\to\su_K$ is \'etale and $\su^{\prime}$ is semistable. But this follows from Theorem \ref{Tem1}.
  
   For the  $C$-setting the argument is analogous in the case of $\sm_C$ and $\sm^{\sem}_C$.  For $\sm^{\sem,b}_C$,  since $\sm^{\sem,b}_C\hookrightarrow \sm^{\sem}_C$ is fully faithful, by Remark \ref{kolo2},  it suffices to check that, for every $C$-model $\su\in \sm^{\sem}_C$, there exists a map of $C$-models $\su^{\prime}\to \su$ such that $\su^{\prime}_C\to\su_C$ is \'etale and $\su^{\prime}$ is basic semistable. But this can be achieved by taking for $\su^{\prime}$ a log-blow-up of $\su$ (see \cite[Lemma 1.11]{Sai}).
\end{proof}   
We call the topology induced by $F_{\eta}$ on the categories $\sm$ the {\em $\eta$-\'etale} topology. The functors in (\ref{base-change}) are continuous for the respective \'etale  topologies. 
By Section \ref{topoi} and Proposition \ref{Bonn1}, $F_{\eta}$ identifies \'etale sheaves on ${\rm Sm}_K$, resp. ${\rm Sm}_C$, with $\eta$-\'etale sheaves on $\sm_K$, $\sm^{\sem}_K$, resp. $\sm_C$, $\sm^{\sem,b}_C$, $\sm^{\sem}_C$. We obtain the {\em \'etale localization} functors
    $$
    {\rm Psh}(\sm^?_K)\to {\rm Sh}({\rm Sm}_{K,\eet}),\quad  {\rm Psh}(\sm^?_C)\to {\rm Sh}({\rm Sm}_{C,\eet}),
    $$
    which assign to any presheaf $\sff$ on models the corresponding \'etale sheaf $\sff^{\sim}$ viewed as an \'etale sheaf on varieties. 
    \begin{remark}
    For any presheaf on $\sm_K$ or $\sm_C$, its $\eta$-\'etale sheafification is the same as the $\eta$-\'etale sheafification of its restriction to resp. $\sm^{\sem}_K$ or $\sm^{\sem,b}_C$, $\sm^{\sem}_C$. 
    %For a presheaf on $\sm^{\sem}_{C/K}$, its \'etale sheafification is the same as the \'etale sheafification of its restriction to $\sm^{\sem}_{C/L}$, where $L$ is any finite extension of $K$ contained in $C$.
    \end{remark}
    \begin{remark}
    In this paper we will use over and over again the following procedure to define an \'etale  sheaf $\sff$  on, say, ${\rm Sm}_K$. 
    \begin{enumerate}
    \item {\rm ({\em Local definition})}: We define a functorial  $\sff(Y)$, $Y\in\sm^{\sem}_K$.
    \item {\rm ({\em Globalization})}: We sheafify the so defined presheaf in $\eta$-\'etale topology. This yields an \'etale  sheaf $\sff$ on ${\rm Sm}_K$ (this notation is slightly abusive but hopefully will not cause problems in understanding).
    \item {\rm ({\em Local-global compatibility})}: We will often need to know that we have $\eta$-\'etale descent, i.e., that, for $Y\in\sm^{\sem}_K$,  the natural map $\sff(Y)\to \rg_{\eet}(Y_K,\sff)$ is a quasi-isomorphism. 
    \end{enumerate}
    \end{remark}
    \subsection{Categories of weak formal models}\label{dagger-topoi}
       In this section, we will show  that  the \'etale site of smooth dagger varieties\footnote{For basics on dagger (or overconvergent) varieties we refer the reader to \cite{GK0}.} over $K$, resp.  over  $C$, admits a base built from semistable weak formal schemes over finite extensions of $\so_K$, resp. over $\so_C$.
\subsubsection{Models}\label{dagger-models}
    Let $L=K,C$. 
   A weak formal $\so_L$-scheme $\sx$ is {\em admissible} if it is flat and of finite type over $\so_L$. For an admissible weak  formal $\so_L$-scheme $\sx$, we denote by $\sx_L$ (or $\sx_{\eta}$) its dagger generic fiber.  We say that a morphism $f:\sy\to\sx$ between admissible weak  formal $\so_L$-schemes is {\em $\eta$-\'etale} if its generic fiber $f_L$ (or $f_{\eta}$) is \'etale. Similarly, we define {\em $\eta$-smooth} morphisms. 
   
    Let ${\rm Sm}^{\dagger}_L$ be the category of smooth $L$-dagger varieties. We define the  categories $\sm^{\dagger}_L, \sm^{\dagger,\sem,b}_C,$ and $ \sm^{\dagger,\sem}_L$ formed by  weak  formal models, basic semistable, and semistable weak formal models\footnote{Semistable weak formal schemes are defined by the same formulas as semistable formal schemes with the ring of convergent power series $\so_L\{X_1,\cdots,X_l\}$ replaced by the ring of overconvergent power series $\so_L[X_1,\cdots, X_l]^{\dagger}$.}, respectively, 
      of such varieties  in a similar way as in the rigid analytic case above.    If we equip the weak formal schemes in $\sm^{\dagger,\sem}_L$  with the log-structure associated to the special fiber over the ring over which they split, every map in these categories is a map of log-schemes.
The functors
 $
\sm^{\dagger,\sem}_L\to \sm^{\dagger}_L ,\quad \sm^{\dagger,\sem,b}_C\to \sm^{\dagger,\sem}_C
 $
are  fully faithful embeddings. The $K$- and $C$-settings are connected by  the base change functors.
  \subsubsection{Semistable reduction}
   We say that an admissible weak formal $\so_L$-scheme $\sx$ is {\em algebraizable} if it is isomorphic to the weak completion of an admissible $\Spec(\so_L)$-scheme $X$. 
   The algebraization theorem, Theorem  \ref{algebraization}, combined with the fact that, up to an isomorphism, there is a unique dagger structure on every rigid analytic affinoid \cite[Cor. 7.5.10]{FvP},  yields the following 
   \begin{corollary}
   \label{referee15}
   Any affine $\eta$-smooth admissible weak formal $\so_L$-scheme $\sx$ is algebraizable. Moreover, we can find an affine $\eta$-smooth admissible $\so_L$-scheme $X$ such  that $\sx\simeq {X}^{\dagger}$. 
   \end{corollary}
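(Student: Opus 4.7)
The strategy is to reduce to Theorem \ref{algebraization} by passing to $p$-adic completions, and then to descend from the formal world back to the weak formal world via the uniqueness of dagger structures on rigid affinoids.

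\emph{Step 1: algebraize the $p$-adic completion.} Write $\sx=\Spwf(A^{\dagger})$. Its $p$-adic completion $\wh{\sx}=\Spf(\wh{A^{\dagger}})$ is an affine $\eta$-smooth admissible formal $\so_L$-scheme, so Theorem \ref{algebraization} provides an affine $\eta$-smooth admissible $\so_L$-scheme $X=\Spec(B)$ together with an isomorphism $\varphi:\wh{X}=\Spf(\wh{B})\xrightarrow{\sim}\wh{\sx}$.

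\emph{Step 2: compare $\sx$ with $X^{\dagger}$.} The weak completion $X^{\dagger}=\Spwf(B^{\dagger})$ is an affine $\eta$-smooth admissible weak formal $\so_L$-scheme whose $p$-adic completion is $\wh{X}$, hence, via $\varphi$, isomorphic to $\wh{\sx}$. Taking dagger generic fibres gives two smooth dagger affinoids $\sx_L$ and $(X^{\dagger})_L$ whose rigid analytic completions are canonically identified with $(\wh{\sx})_L\simeq(\wh{X})_L$. By the uniqueness, up to isomorphism, of the dagger structure on a rigid analytic affinoid \cite[Cor. 7.5.10]{FvP}, there is an isomorphism of dagger affinoids $(X^{\dagger})_L\simeq \sx_L$ refining the identification of their completions.

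\emph{Step 3: lift to weak formal integral models.} Finally, I upgrade the identifications of Step 2 to an isomorphism $\sx\simeq X^{\dagger}$. Both $A^{\dagger}$ and $B^{\dagger}$ embed as flat, $p$-adically separated $\so_L$-subalgebras into the common completion $\wh{A^{\dagger}}\simeq\wh{B}$; via the dagger isomorphism above, their generic fibres $A^{\dagger}[1/p]$ and $B^{\dagger}[1/p]$ are identified compatibly with the inclusion into $\wh{A^{\dagger}}[1/p]$. Since $A^{\dagger}$ is the weak completion of a finitely generated $\so_L$-subalgebra (generated by dagger-algebra generators of $A^{\dagger}$), and similarly for $B^{\dagger}$, the identification of dagger generic fibres together with the identification of $p$-adic completions pins down the integral structure and produces the desired isomorphism $A^{\dagger}\simeq B^{\dagger}$, whence $\sx\simeq X^{\dagger}$.

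\emph{Main obstacle.} The delicate point is Step 3, namely promoting compatibility on dagger generic fibres and on $p$-adic completions to an isomorphism of the weak formal schemes themselves. Morally one needs that, in the smooth affine setting, an element of $\wh{A^{\dagger}}$ whose image in $\wh{A^{\dagger}}[1/p]$ lies in the dagger subalgebra $A^{\dagger}[1/p]$ already lies in $A^{\dagger}$; equivalently, the weak formal integral model of a smooth dagger affinoid is determined by its $p$-adic formal completion together with its dagger generic fibre. Everything else in the argument is essentially formal, so this overconvergence-rigidity point is the real content to verify.
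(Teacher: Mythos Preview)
Your strategy is exactly the paper's --- the entire proof there is the one-line remark that Theorem~\ref{algebraization} combined with \cite[Cor.~7.5.10]{FvP} yields the result --- and you have usefully unpacked it and located where the content lies. The ``main obstacle'' you flag is, however, not a real obstacle; it dissolves via an elementary saturation argument using only flatness (not smoothness). For any $\so_L$-flat wcfg algebra $A^\dagger$ one has $A^\dagger = A^\dagger[1/p]\cap \wh{A^\dagger}$ inside $\wh{A^\dagger}[1/p]$: if $a$ lies in the right-hand side then $p^n a\in A^\dagger$ for some $n$, and since $p^n a\in p^n\wh{A^\dagger}$ while $A^\dagger/p^n\xrightarrow{\ \sim\ }\wh{A^\dagger}/p^n$, we get $p^n a\in p^n A^\dagger$, hence $a\in A^\dagger$ by $p$-torsion-freeness. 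Next, the content of \cite[Cor.~7.5.10]{FvP} is that the dagger subalgebra of an affinoid algebra is uniquely determined \emph{as a subalgebra}, not merely up to abstract isomorphism; hence your isomorphism $\varphi[1/p]$ already carries $B^\dagger[1/p]$ onto $A^\dagger[1/p]$ compatibly with the inclusion into the common completion. Combining the two, $\varphi(B^\dagger)=\varphi\bigl(B^\dagger[1/p]\cap\wh{B^\dagger}\bigr)=A^\dagger[1/p]\cap\wh{A^\dagger}=A^\dagger$, which is exactly your Step~3.
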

  This corollary allows us to prove the following
       \begin{corollary}\label{Tem-dagger}
    \begin{enumerate}
    \item Let $\sx$ be a $\eta$-smooth admissible weak formal scheme over $\so_L$. Then there exists a finite field extension  $E/L$ and a $\eta$-\'etale covering $\sx^{\prime}\to\sx\otimes_{\so_L}\so_E$ such that $\sx^{\prime}$ is  semistable over $\so_E$.
    \item     
    Let  $X$ a smooth qcqs dagger space over $L$. Then there exists a finite extension $E/L$ and an \'etale covering $X^{\prime} \to X\otimes_LE$ such that $X^{\prime}$ is a dagger affinoid and has  a  semistable affine weak formal model. 
\end{enumerate}
    \end{corollary}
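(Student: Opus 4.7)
The plan is to bootstrap both statements from their rigid/formal analogs (Theorem~\ref{Tem1} and Corollary~\ref{Tem11}) by passing back and forth between weak formal and formal models using the algebraization Corollary~\ref{referee15}, and then invoking the uniqueness of the dagger structure on a rigid affinoid \cite[Cor.~7.5.10]{FvP} together with Elkik's algebraization \cite{Elk} to transfer the semistability witnesses from the formal to the weak formal setting.

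For~(1), since semistability is a Zariski-local condition, first assume $\sx$ is affine. Corollary~\ref{referee15} gives $\sx \simeq X^{\dagger}$ for some affine $\eta$-smooth admissible $\so_L$-scheme $X$, whose $p$-adic completion $\wh{X}$ is an affine $\eta$-smooth admissible formal $\so_L$-scheme. Applying Theorem~\ref{Tem1} to $\wh{X}$ produces a finite extension $E/L$ and an $\eta$-\'etale cover $\sy\to\wh{X}\otimes_{\so_L}\so_E$ with $\sy$ semistable over $\so_E$. Refining as in the proof of Corollary~\ref{Tem11}, we may take $\sy$ affine, and Theorem~\ref{algebraization} then produces an affine $\eta$-smooth admissible $\so_E$-scheme $Y$ with $\wh{Y}\simeq\sy$. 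The candidate weak formal model is $\sx' := Y^{\dagger}$, equipped with its natural map $\sx'\to X^{\dagger}\otimes_{\so_L}\so_E$. Its $\eta$-\'etaleness follows because the dagger generic fiber has the same underlying rigid space as the \'etale map $\sy_{\eta}\to(\wh{X}\otimes_{\so_L}\so_E)_{\eta}$, so by the uniqueness part of \cite[Cor.~7.5.10]{FvP} the morphism is \'etale in the dagger sense.

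For~(2), apply Corollary~\ref{Tem11} to the underlying rigid space $X_{\rig}$ of $X$ to obtain a finite extension $E/L$ and an \'etale cover $X'_{\rig}\to X_{\rig}\otimes_L E$ with $X'_{\rig}$ affinoid and admitting a semistable affine formal model $\sy$. Algebraizing $\sy\simeq\wh{Y}$ via Theorem~\ref{algebraization}, the weak completion $Y^{\dagger}$ is an affine weak formal $\so_E$-model whose dagger generic fiber $X'$ has underlying rigid space $X'_{\rig}$; by \cite[Cor.~7.5.10]{FvP} this $X'$ is the required dagger affinoid and the resulting covering $X'\to X\otimes_L E$ is \'etale. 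Semistability of $Y^{\dagger}$ reduces, as in~(1), to the transfer step discussed next.

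The main obstacle is precisely this last transfer: showing that semistability of $\wh{Y}$ as a formal $\so_E$-scheme forces semistability of $Y^{\dagger}$ as a weak formal $\so_E$-scheme. Semistability of $\wh{Y}$ is witnessed Zariski-locally by \'etale morphisms $\wh{Y}|_U\to\Spf(\so_E\{X_1,\dots,X_l\}/(X_1\cdots X_m-\varpi))$, and we need the corresponding \'etale morphisms $Y^{\dagger}|_U\to\Spf(\so_E[X_1,\dots,X_l]^{\dagger}/(X_1\cdots X_m-\varpi))$ in the overconvergent setting. The natural route is to use Elkik's algebraization \cite{Elk} to descend each such formal-scheme \'etale chart, after a further Zariski refinement, to an \'etale morphism of admissible $\so_E$-algebras, and then weakly complete that algebraic chart to produce the desired overconvergent \'etale morphism, yielding the semistable structure on $Y^{\dagger}$ required by both statements.
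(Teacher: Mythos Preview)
Your route through formal schemes forces the transfer step you flag, and that step has genuine gaps. First, the ``natural map'' $Y^{\dagger}\to X^{\dagger}\otimes_{\so_L}\so_E$ in~(1) is not supplied: you only have the formal map $\wh{Y}\to\wh{X}\otimes_{\so_L}\so_E$, and the completion functor from affine weak formal $\so_E$-schemes to formal ones is not full (take $A=B=\so_L[T]^{\dagger}$ and send $T$ to any $f\in\so_L\{T\}\setminus\so_L[T]^{\dagger}$). The result \cite[Cor.~7.5.10]{FvP} you invoke is a statement about generic fibers, not about integral weak formal models. Second, even if Elkik produces an algebraic \'etale chart over the model scheme, you obtain some algebraic $Z$ with $\wh{Z}\simeq\wh{Y}|_U$; but weak completion depends on the algebraic model and not just on its $p$-adic completion, so there is no reason $Z^{\dagger}=Y^{\dagger}|_U$, and your charts may not live on $Y^{\dagger}$ at all.

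The paper avoids the formal detour. Temkin's own proof of Theorem~\ref{Tem1} already proceeds by algebraizing (Theorem~\ref{algebraization}), applying the \emph{algebraic} local uniformization \cite[Th.~2.5.2]{Tem}, and then $p$-adically completing. The weak formal variant just swaps the bookends: algebraize via Corollary~\ref{referee15}, apply the same algebraic result, and then \emph{weakly} complete. Now the $\eta$-\'etale covering map and the semistable \'etale charts to $\Spec(\so_E[X_1,\dots,X_l]/(X_1\cdots X_m-\varpi))$ are produced at the algebraic level; weak completion of these algebraic maps immediately gives the required weak formal data, and no transfer from formal to weak formal is needed. Part~(2) then follows from part~(1) just as Corollary~\ref{Tem11} follows from Theorem~\ref{Tem1}, by first taking an admissible weak formal model of $X$.
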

    \begin{proof}
    For (1), having Corollary \ref{referee15}, Temkin's proof of Theorem \ref{Tem1} goes through. For (2), we modify the proof of Corollary \ref{Tem11} using the algebraization result from Theorem \ref{algebraization}.
    \end{proof}
  \subsubsection{An equivalence of topoi.}
Let  $\sm^{\dagger}$ be any category from Section \ref{dagger-models} and let $F_{\eta}$ be the forgetful functor $\sx\mapsto\sx_L$. The main result of this section is the following
\begin{proposition}
\label{Bonn2}
If $\sm^{\dagger}$ is  the category $\sm^{\dagger}_K$ or $\sm^{\dagger,\sem}_K$ then $(\sm^{\dagger},F)$ is a base for ${\rm Sm}^{\dagger}_{K,\eet}$. If $\sm^{\dagger}$ is $\sm^{\dagger}_C$,  $\sm^{\dagger,\sem,b}_C$, or $\sm^{\dagger,\sem}_C$ then $(\sm^{\dagger},F_{\eta})$ is a base for ${\rm Sm}^{\dagger}_{C,\eet}$. 
\end{proposition}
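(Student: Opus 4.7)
The plan is to follow the proof of Proposition \ref{Bonn1} step by step, with weak formal schemes replacing formal schemes throughout, and to invoke the dagger analogs (Corollary \ref{referee15} and Corollary \ref{Tem-dagger}) of Theorem \ref{algebraization} and Theorem \ref{Tem1} at the corresponding places.

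I will first verify condition $(\star)$ from Section \ref{topoi} for $(\sm^{\dagger}_K, F_\eta)$. By Remark \ref{kolo2} it suffices to treat a single pair $(\su,f\colon X\to\su_K)$, and after refining $X$ by a dagger affinoid admissible cover I may assume $X$ is a disjoint union of dagger affinoids. To produce a weak formal model of $X$ admitting an extension of $f$, I would pass to the rigid completion $\wh X$, apply Raynaud's theorem \cite[Th.\ 4.1]{Ray} and the admissible blow-up argument \cite[Lemma 5.6]{FRG} exactly as in Proposition \ref{Bonn1} to obtain a formal model $\wh{\sx}$ of $\wh X$ through which $\wh f$ factors as a map to $\wh{\su}$, then algebraize $\wh{\sx}$ via Theorem \ref{algebraization} and take the weak completion to get a weak formal model $\sx$ of $X$. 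The extension $\sx\to\su$ is obtained by algebraizing the extended formal morphism, using functoriality of algebraization and weak completion together with the uniqueness of the dagger structure supplied by Corollary \ref{referee15}.

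For $(\sm^{\dagger,\sem}_K, F_\eta)$, the fully faithful inclusion $\iota\colon \sm^{\dagger,\sem}_K\hookrightarrow \sm^{\dagger}_K$ allows me to apply Remark \ref{kolo1}(3): once the previous step is established, by Remark \ref{kolo2}(1) it suffices to show that every $\su\in\sm^{\dagger}_K$ admits an $\eta$-\'etale cover by semistable weak formal schemes, which is exactly Corollary \ref{Tem-dagger}(1) (combined with the base change along the finite extension $E/K$ produced by the corollary). The $C$-settings for $\sm^{\dagger}_C$ and $\sm^{\dagger,\sem}_C$ are handled identically. For $\sm^{\dagger,\sem,b}_C$, as in Proposition \ref{Bonn1}, full faithfulness of $\sm^{\dagger,\sem,b}_C\hookrightarrow \sm^{\dagger,\sem}_C$ reduces the problem to producing an $\eta$-\'etale cover of any semistable weak formal $\so_C$-model by basic semistable ones, which is achieved by a log-blow-up in the style of \cite[Lemma 1.11]{Sai}.

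The main technical obstacle I anticipate is the map-extension step in the verification of $(\star)$. In the rigid case, this rests on admissible formal blow-ups, whose theory is fully developed and has no equally standard counterpart for weak formal schemes; my plan is to sidestep this by performing the blow-up at the algebraic (or algebraized) level and transporting the result back via weak completion, which is why Corollary \ref{referee15} is essential here. Once this step is in place, the rest of the proof is a mechanical translation of the rigid case.
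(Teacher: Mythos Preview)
Your treatment of the semistable step and of the $C$-setting (including the reduction of $\sm^{\dagger,\sem,b}_C$ to $\sm^{\dagger,\sem}_C$ via a log-blow-up) matches the paper. The divergence is in the verification of $(\star)$ for $\sm^{\dagger}_K$.

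The paper does not go through rigid completions and algebraization at all. Instead it invokes the dagger analog of Raynaud's theory due to Langer--Muralidharan \cite{LA}: there is an equivalence between quasi-paracompact admissible weak formal $\so_K$-schemes localized at weak formal blow-ups and quasi-separated quasi-paracompact $K$-dagger spaces. With this in hand, the blow-up step from the proof of Proposition~\ref{Bonn1} (producing a model $\sx$ of $X$ together with a map $\sx\to\su$ extending $f$) carries over verbatim, because one now has both weak formal models and weak formal blow-ups available directly.

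Your proposed route has a genuine gap at exactly the place you flag. Passing to $\wh X$, running Raynaud and \cite[Lemma 5.6]{FRG}, and algebraizing the resulting formal model is fine, but you then need a \emph{weak formal morphism} $\sx\to\su$ lifting the formal morphism $\wh\sx\to\wh\su$. Elkik-type algebraization (Theorem~\ref{algebraization}, Corollary~\ref{referee15}) algebraizes \emph{objects}, not morphisms; it is not functorial in the way you need. Moreover, $\su$ is an arbitrary admissible weak formal scheme and need not be affine, so Corollary~\ref{referee15} does not even apply to it. Without an independent argument that $\Hom_{\text{weak formal}}(\sx,\su)\to\Hom_{\text{formal}}(\wh\sx,\wh\su)$ is surjective (which is false in general), this step does not go through. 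The Langer--Muralidharan theorem is precisely what circumvents this: it supplies the missing theory of weak formal blow-ups, so no passage to the formal world is required.
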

 \begin{proof}
Consider first the $K$-setting.  Recall the following dagger version of Raynaud's theory of formal models of rigid analytic varieties:
\begin{theorem}{\rm (Langer-Muralidharan, \cite{LA})}
There is an equivalence of categories between
\begin{enumerate}
\item the category of quasi-paracompact admissible weak formal schemes over $\so_K$ localized by the class of weak formal blow-ups,
\item the category of quasi-separated quasi-paracompact $K$-dagger spaces.
\end{enumerate}
\end{theorem}
It is now easy to see that the proof of Proposition \ref{Bonn1} goes through in our case with Raynaud's theory replaced by this dagger analog. 

   For the  $C$-setting the argument is analogous to the one used in the proof of Proposition \ref{Bonn1}.
\end{proof}   
We call the topology induced by $F_{\eta}$ on the categories $\sm^{\dagger}$ the $\eta$-{\em \'etale} topology. The base-change functors  are continuous for the respective \'etale topologies. 
By Section \ref{dagger-topoi} and Proposition \ref{Bonn2}, $F_{\eta}$ identifies \'etale sheaves on ${\rm Sm}^{\dagger}_K$, resp. ${\rm Sm}^{\dagger}_C$, with $\eta$-\'etale sheaves on $\sm^{\dagger}_K$, $\sm^{\dagger,\sem}_K$, resp. $\sm^{\dagger}_C$, $\sm^{\dagger,\sem,b}_C$, $\sm^{\dagger,\sem}_C$. We obtain the {\em \'etale localization} functors
    $$
    {\rm Psh}(\sm^?_K)\to {\rm Sh}({\rm Sm}^{\dagger}_{K,\eet}),\quad  {\rm Psh}(\sm^?_C)\to {\rm Sh}({\rm Sm}^{\dagger}_{C,\eet}),
    $$
    which assign to any presheaf $\sff$ on weak formal models the corresponding \'etale sheaf $\sff^{\sim}$ viewed as an \'etale sheaf on dagger varieties. 
    Moreover, 
    for any presheaf on $\sm^{\dagger}_K$ or $\sm^{\dagger}_C$, its $\eta$-\'etale sheafification is the same as the $\eta$-\'etale sheafification of its restriction to resp. $\sm^{\dagger,\sem}_K$, $\sm^{\dagger,\sem,b}_C$, or $\sm^{\dagger,\sem}_C$. 
    %For a presheaf on $\sm^{\sem}_{C/K}$, its \'etale sheafification is the same as the \'etale sheafification of its restriction to $\sm^{\sem}_{C/L}$, where $L$ is any finite extension of $K$ contained in $C$.
    
\section{Pro-\'etale cohomology of dagger varieties}
Let  the base field $L$  be $K$ or $C$. Fix a pseudo-uniformizer $\varpi\in L$, i.e., an invertible, topologically nilpotent element. All the rigid analytic varieties considered are over $L$; we assume that they are separated and taut\footnote{See \cite[Def. 5.6.6]{Hub} for the definition of "taut".}. 

 The purpose of this section is to define the pro-\'etale cohomology of dagger varieties. We will do it in the most naive way: for a dagger affinoid we will use its presentation of the dagger structure to define the pro-\'etale cohomology of the dagger affinoid as the homotopy colimit of pro-\'etale cohomologies of the (rigid) affinoids in the presentation; for a general dagger variety we will globalize the construction for dagger affinoids via \v{C}ech coverings.
 \subsection{Topology}
 Our cohomology groups will be equipped with a canonical topology. To talk about it in a systematic way, we will work  rationally in the category of locally convex $K$-vector spaces and integrally in the category of pro-discrete $\so_K$-modules. We review here briefly the relevant basic definitions and facts. For details and further reading and references the reader may consult \cite[Sec. 2.1, 2.3]{CDN3}. 
 \subsubsection{Derived category of locally convex $K$-vector spaces} 
  A topological $K$-vector space\footnote{For us, a {\em $K$-topological vector space} is a $K$-vector space with a linear topology.} is called {\em locally convex} ({\em convex} for short) if there exists a 
neighbourhood basis of the origin consisting of $\so_K$-modules.  
 We denote by $C_K$ the category of convex $K$-vector spaces. 
  It  is a quasi-abelian category.
Kernels, cokernels, images, and coimages  are taken in the  category of vector spaces and equipped with the induced topology. A morphism $f:E\to F$ is {\em strict} if and only if it is relatively open, i.e., for any neighbourhood $V$ of $0$ in $E$ there is a neighbourhood $V^{\prime}$ 
of $0$ in $F$ such that $f(V)\supset V^{\prime}\cap f(E)$. 

 The category $C_K$ has a  natural
 exact category structure: the admissible monomorphisms are  embeddings, the
admissible epimorphisms are open surjections. 
A complex $E\in C(C_K)$ is called   {\em strict} if its differentials are strict.
   There are  truncation functors on $C(C_K)$: 
\begin{align*}
\tau_{\leq n}E & :=\cdots \to E^{n-2}\to E^{n-1}\to 
\ker(d_n)\to 0\to\cdots\\
 \tau_{\geq n} E & :=\cdots \to 0\cdots \to
\coim(d_{n-1}) \to E^n\to E^{n+1}\to\cdots
\end{align*}
with cohomology objects $$\wt{H}^n(E):=
\tau_{\leq n}\tau_{\geq n}(E)=(\coim(d_{n-1})\to \ker(d_n)).
$$
We note that here $\coim(d_{n-1})$ and $\ker(d_n)$ are equipped naturally with the quotient and subspace topology, respectively. The  cohomology $H^*(E)$ taken in the category of $K$-vector spaces we will call {\em algebraic} and, if necessary, we will always equip it with the sub-quotient topology. 

  We will denote the left-bounded derived $\infty$-category of $C_K$ by $\sd(C_K)$. 
       A morphism of complexes that is a quasi-isomorphism in $\sd(C_K)$, i.e., its cone is strictly exact,  will be called a {\em strict quasi-isomorphism}. We will denote by $D(C_K)$ the homotopy category of $\sd(C_K)$.

 For $n\in \Z$, let $D_{\leq n}(C_K)$ (resp. $D_{\geq n}(C_K)$) denote the full subcategory of $D(C_K)$  of complexes that are strictly exact in degrees $k >n$ (resp. $k<n$). The above truncation functors extend to truncations functors
  $\tau_{\leq n}: D(C_K)\to D_{\leq n}(C_K)$ and $\tau_{\geq n}: D(C_K)\to D_{\geq n}(C_K)$. The pair $(D_{\leq n}(C_K),D_{\geq n}(C_K)$) defines a $t$-structure on $D(C_K)$. The (left) heart  $LH(C_K)$ is an abelian category: every object of $LH(C_K)$ is represented (up to  equivalence)  by a monomorphism $f:E\to F$, where $F$ is in degree $0$, i.e., it is isomorphic to a complex $0\to E\stackrel{f}{\to} F\to 0$; 
{\it if $f$ is strict} this object is also represented by the cokernel of $f$
(the whole point of this construction is to keep track of the two possibly different topologies
on $E$: the given one and the one inherited by the inclusion into $F$).
 
  We have an embedding
$I: C_K\hookrightarrow LH(C_K)$, $E\mapsto (0\to E)$,
that induces an equivalence $D(C_K)\stackrel{\sim}{\to} D(LH(C_K))$ that is compatible with t-structures. These t-structures pull back to  $t$-structures on the derived dg categories $\sd(C_K), \sd(LH(C_K))$ and so does the above equivalence. There is a functor (the {\em classical part}) $C: LH(C_K)\to C_K$ that sends the monomorphism $f: E\to F$ to $\coker f$. We have $CI\simeq \id_{C_K}$ and a natural epimorphism $e: \id_{LH(C_K)}\to IC$.

  We will denote by $\wt{H}^n: \sd(C_K)\to \sd(LH(C_K))$ the associated cohomological functors.  Note that $C\wt{H}^n=H^n$ and we have a natural epimorphism $\wt{H}^n\to IH^n$.  If, evaluated on $E$,  this epimorphism is an isomorphism  we will say that the  cohomology $\wt{H}^n(E)$ is {\em classical} (in most cases this is equivalent to $H^n(E)$ being separated). 
  
    \subsubsection{The category of pro-discrete $\so_K$-modules.}
 
Objects in the category $PD_K$ of pro-discrete $\so_K$-modules are topological $\so_K$-modules that are countable inverse limits, as topological $\so_K$-modules, of discrete $\so_K$-modules $M^i$, $i\in \N$. 
It is a quasi-abelian category.  It has countable filtered projective limits. Countable products are exact functors. 
    
   Inside  $PD_K$ we distinguish the category $PC_K$ of pseudocompact $\so_K$-modules, i.e., pro-discrete modules $M\simeq\invlim_iM_i$ such that each $M_i$ is of finite length (we note that if $K$ is a finite extension of $\Q_p$ this is equivalent to $M$ being profinite). It is an abelian category. It has countable exact products as well as exact countable filtered projective limits. 

  There is a functor from the category of pro-discrete $\so_K$-modules to convex $K$-vector spaces.
 Since $K \simeq \varinjlim(\so_K\lomapr{\varpi} \so_K\lomapr{\varpi}\so_K\lomapr{\varpi} \cdots)$, the 
algebraic tensor product $M\otimes_{\so_K}K$ is an inductive limit:
$$
M\otimes_{\so_K}K\simeq  \varinjlim(M\lomapr{\varpi} M\lomapr{\varpi}M\lomapr{\varpi} \cdots).
$$
We equip it with the induced inductive limit topology. 
This defines a tensor product functor 
  $$(-){\otimes}K: PD_K\to C_{K}, \quad M\mapsto M\otimes _{\so_K}K.
  $$
 Since $C_K$ admits filtered inductive limits, the functor $(-){\otimes}K$ extends 
to a functor $(-){\otimes}K: \Ind(PD_K)\to C_{K}$.

    The  functor $(-){\otimes}K$ is right exact but not, in general, left exact\footnote{We will call a functor $F$ right exact if  it transfers strict exact sequences $0\to A\to B\to C\to0$ to  costrict exact sequences 
    $F(A)\to F(B)\to F(C)\to 0$.}. For example,
  the short strict exact sequence
  $$
  0\to \prod_{i\geq 0}p^i\Z_p\lomapr{\can}\prod_{i\geq 0}\Z_p\to \prod_{i\geq 0}\Z_p/p^i\to 0
  $$
  after tensoring with $\Q_p$ is not costrict exact on the left (note that $(\prod_{i\geq 0}\Z_p/p^i){\otimes}\Q_p$ is not Hausdorff). We will consider its (compatible) left derived functors
  $$
  (-){\otimes}^LK: \sd^{-}(PD_K)\to \Pro(\sd^{-}(C_K)),\quad  (-){\otimes}^LK: \sd^{-}(\Ind(PD_K))\to \Pro(\sd^{-}(C_K)).
  $$
  
   The following fact will greatly simplify our computations.
  \begin{proposition}{\rm (\cite[Prop. 2.6]{CDN3})}\label{acyclic-integral}
  If $E$ is a complex of torsion free and $p$-adically complete (i.e., $E\simeq \varprojlim_n E/p^n$) modules from $PD_K$ then the natural map
  $$
   E{\otimes}^LK\to  E{\otimes}K
  $$ is a strict quasi-isomorphism. 
  \end{proposition}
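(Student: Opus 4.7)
The plan is to compute the derived tensor product explicitly using a flat resolution of $K$ over $\so_K$ and then show that, under the stated hypotheses, the only nonzero cohomology lives in degree $0$ with the expected topology.

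First, I would reduce to the case where $E$ is a single module $M$ concentrated in degree $0$ (rather than a genuine complex); since $(-)\otimes K$ is defined termwise and the derived functor shares this compatibility, a standard truncation/hypercohomology argument reduces the general claim to the module case.

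Next, I would exploit the presentation $K \simeq \varinjlim(\so_K \lomapr{\varpi}\so_K\lomapr{\varpi}\cdots)$ to write down the mapping-telescope resolution
\[
0 \to \bigoplus_{n\geq 0}\so_K \lomapr{\partial} \bigoplus_{n\geq 0}\so_K \to K \to 0,\qquad \partial(e_n)= e_n - \varpi\, e_{n+1},
\]
which is a flat (in fact free) resolution of $K$ as an $\so_K$-module. Tensoring against $M$ then gives
\[
M\otimes^L K \simeq \bigl[\textstyle\bigoplus_n M \lomapr{\id - \varpi\sigma} \bigoplus_n M\bigr],
\]
in degrees $-1$ and $0$, whose $H^0$ is (algebraically) the colimit $\varinjlim(M\lomapr{\varpi}M\lomapr{\varpi}\cdots) = M\otimes K$. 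A direct check (reading off the $e_0$-coefficient, then $e_1$, etc.) shows that $\id - \varpi\sigma$ is algebraically injective on $\bigoplus_n M$ regardless of any hypothesis, so the underlying complex is exact at the $-1$ term and the induced map $M\otimes^L K \to M\otimes K$ is an algebraic quasi-isomorphism.

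The remaining and main task is the topological upgrade: one must show that $\id - \varpi\sigma$ is a \emph{strict} monomorphism in $C_K$, so that the complex is not merely algebraically exact but strictly exact in the sense required by $\sd(C_K)$. This is where both hypotheses enter: torsion-freeness makes multiplication by $\varpi$ an injection at the module level, while $p$-adic completeness $M\simeq\varprojlim_n M/p^n$ guarantees that the $\varpi$-adic filtration on $M$ is complete and Hausdorff, so that a Cauchy-type argument using the telescope structure produces, for each element in the image of $\partial$, a preimage depending continuously on the data; equivalently, the sub-complex topology on $\im \partial$ matches the quotient topology inherited from $\bigoplus_n M$. I expect the main obstacle to be precisely this strictness verification, since direct sums (and the induced topology coming from $(-)\otimes K$ as an inductive limit) are notoriously delicate in $PD_K$/$C_K$, and one must carefully disentangle the direct-sum topology from the $p$-adic completeness hypothesis on $M$; once this is done, the strict exactness of the resolution together with the vanishing of $H^{-1}$ yields the desired strict quasi-isomorphism.
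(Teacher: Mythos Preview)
The paper does not prove this statement; it is quoted verbatim from \cite[Prop.~2.6]{CDN3} with no argument given, so there is no in-paper proof to compare your proposal against.

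On its own merits, your sketch has a genuine gap. The functor $(-)\otimes K:PD_K\to C_K$ is an additive right-exact functor between quasi-abelian categories, and its left derived functor $(-)\otimes^L K$ (which the paper takes valued in $\Pro(\sd^-(C_K))$) is defined abstractly by resolving the source object in $PD_K$ or $\Ind(PD_K)$, not by resolving $K$ as an $\so_K$-module. Your telescope complex $[\bigoplus_n M\xrightarrow{\id-\varpi\sigma}\bigoplus_n M]$ does compute the \emph{algebraic} $M\otimes_{\so_K}^L K$, but you have not explained why it also represents the abstract $M\otimes^L K$ in the paper's sense; a ``balanced Tor'' statement in the quasi-abelian setting is not automatic, and neither is the claim that countable direct sums of objects of $PD_K$ are $(-)\otimes K$-acyclic. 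Your ``reduction to a single module'' is likewise not innocent: derived functors of complexes in quasi-abelian categories are not computed termwise without knowing the terms are acyclic, which is exactly what is to be proved.

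You are right that the strictness of $\id-\varpi\sigma$ is where the hypotheses must enter, but you only gesture at this and do not carry it out; in particular, the interplay between the direct-sum topology on $\bigoplus_n M$ in $C_K$ and the $p$-adic completeness of $M$ needs an actual argument, not just the observation that it is delicate.
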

\subsection{Pro-\'etale cohomology of dagger varieties}
In this section we will  define pro-\'etale cohomology of dagger varieties and  study its basic properties. 
\subsubsection{Dagger varieties  and pro-systems of rigid analytic  varieties} \label{pierre11} We will briefly review here the content of \cite[Appendix]{Vez}.
Recall the following definition \cite[Def. A.19]{Vez}: 
\begin{definition}
Let $X$ be a rigid analytic affinoid. A {\em presentation of a dagger structure on $X$} is a pro-affinoid rigid variety $\{ X_h\}$, $h\in \N$,
where $X$ and all $X_h$ are rational subvarieties of $X_1$, such that $X\Subset  X_{h+1}\Subset  X_h$ and the pro-system is 
coinitial among rational subvarieties of $X_1$ containing $X$ in their interiors\footnote{Recall that, for an open immersion $X\subset Y$ of adic spaces over $L$, we write $X\Subset Y$ if the inclusion factors over the adic compactification
 of $X$ over $L$ (see \cite[Th. 5.1.5]{Hub}).}. A {\em morphism} of presentations between 
$\{X_h\}$ and $\{ Y_k\}$ is a morphism of pro-objects, i.e., an element of $\invlim_k\dirlim_h \Hom(X_h, Y_k)$.
\end{definition}
\begin{example}
Let $X=X_1(f/g)$ be a rational subvariety of an affinoid variety $X_1$. The pro-system $\{X_h=X_1(\varpi f^h/g^h)\}$ of rational subvarieties  of $X_1$ is a presentation of a dagger structure on $X$.

 More generally, consider the rational inclusion $X=X_1(f_1/g,\cdots, f_m/g)\Subset X_1$ of affinoid rigid varieties. We can write
\begin{align*}
\so(X_1) & =L\{\varpi\tau_1,\cdots, \varpi\tau_n\}/I,\\
\so(X) & =L\{\tau_1,\cdots, \tau_n,v_1,\cdots,v_m\}/((v_ig-f_i)+I).
\end{align*}
 Let  $X_h$ be the rational subvariety of $X_1$ with
$$
\so(X_h)=L\{\varpi^{1/h}\tau_1,\cdots, \varpi^{1/h}\tau_n,\varpi^{1/h}v_1,\cdots, \varpi^{1/h}v_m\}/((v_ig-f_i)+I).
$$
The pro-system $\{X_h\}$ of rational subvarieties  of $X_1$ is a presentation of a dagger structure on $X$. We have
$$
\dirlim\so(X_h)\simeq L[\tau_1,\cdots, \tau_n,v_1,\cdots, v_m]^{\dagger}/((v_ig-f_i)+I),
$$
which is a dagger algebra.
\end{example}
 
 The following proposition  clarifies  the relationship between presentations of  dagger structures and dagger algebras.
\begin{proposition}{\rm (\cite[Prop. A.22]{Vez})}\label{equivalence1}
Let $\wh{X}=\Sp \wh{R}$ be a rigid affinoid and let $\{X_h\}$ be a presentation of a dagger structure on $\wh{X}$. We have 
\begin{enumerate}
\item $R=\dirlim \so(X_h)$ is a dagger algebra dense in $\wh{R}$;
\item the functor $\{X_h \}\mapsto \Sp^{\dagger} R$ induces an equivalence of categories between dagger 
affinoid varieties  and their presentations.
\end{enumerate}
\end{proposition}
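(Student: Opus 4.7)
The plan is to prove (1) by a direct computation that matches the form of $R$ with the standard definition of a dagger algebra, and to deduce (2) by constructing a quasi-inverse and checking full faithfulness via the universal property that ``dagger morphisms extend to some rigid neighborhood''.

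For (1), I would first reduce to the situation of the worked example preceding the proposition. Every rational inclusion $\wh{X} \Subset X_1$ of affinoids can be put in the form
\[
\so(X_1)=L\{\varpi\tau_1,\ldots,\varpi\tau_n\}/I,\qquad \so(\wh{X})=L\{\tau_1,\ldots,\tau_n,v_1,\ldots,v_m\}/((v_i g - f_i)+I),
\]
and then by coinitiality of the presentation $\{X_h\}$ it suffices to compute $\dirlim \so(X_h)$ along the explicit presentation with radii $\varpi^{1/h}$. A straightforward verification of convergence estimates shows that this colimit equals $L[\tau_1,\ldots,\tau_n,v_1,\ldots,v_m]^{\dagger}/((v_i g - f_i)+I)$, which is by definition a dagger algebra, and its $p$-adic completion is $\so(\wh{X})$. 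Density of $R$ in $\wh{R}$ then follows because each $\so(X_h)$ sits inside $\wh{R}$ by the restriction maps along $\wh{X}\Subset X_h$, and overconvergent power series are dense in convergent ones. The only subtlety is that the argument must be carried out in a way independent of the chosen presentation, which is handled by coinitiality: any two presentations are cofinal in each other, so the $\dirlim$ is well defined up to canonical isomorphism.

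For (2), I would first check essential surjectivity. Given a dagger affinoid $\Sp^{\dagger} R$ with $R = R_0^{\dagger}/J$, where $R_0^{\dagger}=L[\tau_1,\ldots,\tau_n]^{\dagger}$, choose any presentation of $R_0^{\dagger}$ as a rational ``shrinking'' system on $\Sp L\{\tau_1,\ldots,\tau_n\}$ (for instance the one with radii $\varpi^{1/h}$), and cut out by $J$ on each level. The resulting pro-system of rational subvarieties gives back $R$ under the functor of (1), so the functor hits every dagger affinoid. For full faithfulness, consider two presentations $\{X_h\}$, $\{Y_k\}$ with dagger algebras $R$, $S$. A morphism of presentations is by definition an element of $\invlim_k \dirlim_h \Hom(X_h,Y_k)$, which is the same as a compatible family of $L$-algebra maps $\so(Y_k)\to \so(X_h)$ with $h$ depending on $k$; passing to the colimit in $h$ and the limit in $k$ yields exactly an $L$-algebra map $S\to R$, hence a morphism of dagger affinoids. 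Conversely, given a dagger morphism $S\to R$, I would use the key overconvergence property: the image of each generator $\tau_i$ of $S$ lies in some $\so(X_h)$ because $R = \dirlim \so(X_h)$, and the map $\so(Y_k)\to R$ factors through $\so(X_{h(k)})$ for $h(k)$ large enough, since overconvergent generators converge on some strictly larger neighborhood. Coinitiality of presentations makes the resulting map canonical modulo refinement, giving a bijection on Hom sets.

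I expect the main obstacle to be the last step: namely, showing that a dagger $L$-algebra map $S\to R$ actually lifts (after possibly shrinking both sides) to a compatible family of affinoid maps $\so(Y_k)\to \so(X_{h(k)})$. This is a concrete but delicate convergence estimate: one needs that if $f \in R = \dirlim \so(X_h)$ satisfies an equation defining $S$ (so $f$ converges on some $X_{h_0}$), then for every $k$ the equations defining $\so(Y_k)$ are satisfied already on some $X_{h(k)}$ with controlled $h(k)$. This is where the condition $X\Subset X_{h+1}\Subset X_h$ plays the essential role, since it provides the strict inclusions needed to propagate overconvergence estimates. Once this is done, the equivalence of categories follows formally, and the compatibility with base change along morphisms of presentations is automatic from the construction.
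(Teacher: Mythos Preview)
The paper does not supply its own proof of this proposition; it is quoted from \cite[Prop.~A.22]{Vez} and used as a black box. So there is no argument in the present paper to compare yours against.

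Your outline is nonetheless the natural one and matches the structure of Vezzani's proof. For (1), the reduction to the explicit presentation and the identification of the colimit with a Washnitzer quotient is exactly the computation recorded in the example preceding the proposition. For (2), your reduction of full faithfulness to the extension statement ``a dagger homomorphism $S\to R$ lifts to $\so(Y_k)\to\so(X_{h(k)})$ for suitable $h(k)$'' is indeed the crux; it follows from automatic continuity of dagger-algebra maps (so images of generators land in some $\so(X_{h_0})$) together with an $\varepsilon$-of-room argument exploiting $\wh X\Subset X_{h+1}$.

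One point you gloss over deserves a word. In (1) you assert that ``any two presentations are cofinal in each other'' and invoke this to make $\dirlim\so(X_h)$ independent of choices. This is true but is not literally the content of the definition: coinitiality there is stated relative to rational subvarieties of the fixed ambient $X_1$, and two presentations may have different ambients $X_1$, $Y_1$. To compare them you still need the $\Subset$ condition plus a small argument (for each $Y_k$ with $\wh X\Subset Y_k$, find a rational $U\subset X_1$ with $\wh X\Subset U\subset Y_k$, then use coinitiality inside $X_1$). This is routine but should be said rather than absorbed into the word ``coinitiality''.
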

In fact, it is not hard to see that we have a functor ${\rm pres}: X\mapsto \{X_h\}$ from dagger algebras to presentations of dagger structures (up to a unique isomorphism) that is the right inverse (on the nose) of the functor in the above proposition.

\subsubsection{\'Etale topology of dagger varieties} For basic properties of dagger algebras and varieties  and morphisms between them see \cite{GK0}. For basic properties of \'etale and smooth morphisms of dagger varieties see \cite{ET1}. 
We quote the following result.
\begin{proposition}{\rm (\cite[Th. 2.3]{ET1})}
Let $X$ be a dagger affinoid with completion $\wh{X}$. We have a  natural equivalence of \'etale topoi
$$
{\rm Sh}(\wh{X}_{\eet})\stackrel{\sim}{\to} {\rm Sh}(X_{\eet}).
$$ 
\end{proposition}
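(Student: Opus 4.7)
The plan is to verify the equivalence by exhibiting the natural functor from étale covers of $X$ to étale covers of $\wh X$ and showing it is essentially a bijection, so that the induced morphism of sites is an equivalence. The key input is the fact, already used earlier in the excerpt (Corollary 7.5.10 of \cite{FvP}, invoked in the proof of Corollary \ref{referee15}), that on a rigid affinoid there is a unique dagger structure up to isomorphism, and that this isomorphism is itself unique. This \emph{rigidity} of the dagger structure is what makes the completion functor fully faithful.

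First I would define the completion functor $c:X_{\eet}\to\wh X_{\eet}$ on objects by $(Y\to X)\mapsto (\wh Y\to\wh X)$; étaleness is preserved by completion, and coverings go to coverings since completion does not change the underlying set of points of an affinoid and is compatible with rational subdomains. For \emph{full faithfulness}, one reduces to the affinoid case by gluing: if $U,V$ are étale dagger affinoids over $X$, then morphisms $U\to V$ over $X$ correspond to morphisms of dagger algebras, and by uniqueness of the dagger structure on $\wh V$, any morphism $\wh U\to\wh V$ of rigid affinoids refines uniquely to a morphism of the underlying dagger algebras. For \emph{essential surjectivity on covering families}, the step I expect to be the main obstacle, one takes an étale morphism $Y\to\wh X$, localizes to reduce to the case where $Y=\Sp\wh B$ is affinoid, and produces a dagger algebra $B$ together with an étale map $\Sp^{\dagger}B\to X$ completing to $Y\to\wh X$. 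The natural way is to use a standard étale presentation $\wh B=\wh A\{T_1,\ldots,T_n\}/(f_1,\ldots,f_n)$ with $\det(\partial f_i/\partial T_j)$ a unit, where $\wh A=\so(\wh X)$, and lift the $f_i$ to overconvergent elements of $A[T_1,\ldots,T_n]^{\dagger}$ (this is possible because $A$ is dense in $\wh A$). A small overconvergent perturbation preserves the unit condition on the Jacobian, and the resulting $B=A[T_1,\ldots,T_n]^{\dagger}/(f_1,\ldots,f_n)$ is étale over $A$ with completion canonically isomorphic to $\wh B$; uniqueness up to isomorphism follows again from rigidity of the dagger structure.

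Once we know that $c$ is fully faithful and that every étale cover of $\wh X$ refines to one coming from $X_{\eet}$, we can apply the Verdier/Beilinson criterion recalled in Section \ref{topoi}: the pair $(X_{\eet},c)$ becomes a base, in the sense of Proposition \ref{Bonn1}, for the étale site of $\wh X$, and condition $(\star)$ is exactly the statement that every finite collection of étale morphisms $\wh Y_\alpha\to\wh X$ can be refined to a covering that factors (in the base) through étale dagger morphisms to $X$, which follows by combining the affinoid lifting above with quasicompactness of the étale morphisms involved. The induced topology on $X_{\eet}$ agrees with the given one because an étale family of dagger varieties is a covering iff it is one on affinoid points, iff its completion is an étale covering. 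Therefore the proposition of Beilinson recalled in Section \ref{topoi} yields the equivalence of topoi ${\rm Sh}(\wh X_{\eet})\stackrel{\sim}{\to}{\rm Sh}(X_{\eet})$.

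The real technical content of the argument lies entirely in the affinoid lifting step; everything else is formal from the Verdier-style framework already developed in Section \ref{topoi}. Grosse-Klönne handles this lifting in \cite{ET1} by a direct construction on standard étale presentations, exactly as sketched above, and the result extends from the affinoid base to the global statement by the usual descent along open immersions, which is allowed because open immersions of dagger affinoids complete to open immersions of rigid affinoids and vice versa.
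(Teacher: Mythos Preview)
The paper does not prove this proposition at all: it is introduced with ``We quote the following result'' and simply cited from \cite[Th.~2.3]{ET1}. There is therefore no argument in the paper to compare your proposal against.

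Your sketch is a plausible outline of how the cited result might be established, but two remarks are in order. First, a minor but concrete error: you write ``Grosse-Kl\"onne handles this lifting in \cite{ET1}'', but \cite{ET1} is a paper of Etesse, not Grosse-Kl\"onne. Second, the way you invoke the Beilinson--Verdier framework from Section~\ref{topoi} is somewhat circular in spirit: that section is set up to produce bases for a \emph{given} site, whereas here you really want an equivalence of two categories equipped with their own topologies, which is closer to the original Verdier criterion (fully faithful functor, covering-lifting). Your argument would be cleaner if phrased directly as: $c$ is fully faithful (by rigidity of dagger structures on affinoids), every \'etale affinoid over $\wh X$ lifts to a dagger \'etale affinoid over $X$ (your standard-\'etale perturbation argument), and coverings correspond under $c$; then the equivalence of sites, hence of topoi, is immediate without needing condition~$(\star)$.
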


One can  promote the equivalence of categories between dagger spaces and their presentations in Proposition \ref{equivalence1} to an equivalence of topoi. 
  \begin{definition}{\rm (\cite[Def. A.24]{Vez})}
 (i)  Let ${\rm P}$ be a property of morphisms of rigid analytic varieties. We say that a morphisms of pro-rigid varieties  $\phi: X\to Y$ has the 
  property ${\rm P}$ if $X\simeq \{X_h\}, Y\simeq \{Y_k\}$ and $\phi=\{\phi_h\}$ with $\phi_h: X_h\to Y_h$ having property $P$.\\
 (ii)  We say that a collection of  morphisms of pro-rigid spaces 
  $\{\phi_i: \{U_{ih}\}\to \{X_h\}\}_{i\in I}$ is a {\em cover} if $X\Subset \bigcup_i\im(U_{ih})$ for all $h$. 
  \end{definition}
  In particular, one can  define open immersions, smooth, and \'etale morphisms  of presentations of dagger affinoids which agree with the corresponding notions for dagger affinoids. 
  Since  the morphisms $\wh{X}\subset X_h$ are open immersions (hence \'etale), we deduce that, if a morphism $X\to Y$ is an open immersion (resp. smooth, resp. \'etale), then so is the associated morphism $\wh{X}\to\wh{Y}$. 
  
  ($\bullet$) From now on we will use the following convention: if $X$ is a smooth dagger affinoid, the presentation $X\simeq \{X_h\}$ will be assumed to have all $X_h$ smooth as well. 
  \begin{corollary}{\rm (\cite[Cor. A.28]{Vez})}
  Let $X$ be a dagger affinoid  with a presentation $\{X_h\}$. We have a  natural equivalence of \'etale topoi
  $${\rm Sh}(X_{\eet}) \stackrel{\sim}{\to }{\rm Sh}(\{X_h\}_{\eet}).
  $$
  \end{corollary}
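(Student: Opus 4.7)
The strategy is to reduce the claim to Beilinson's base-change-of-sites criterion recalled in Section \ref{topoi}. By the already quoted proposition \cite[Th.~2.3]{ET1}, we have ${\rm Sh}(\wh{X}_{\eet}) \stackrel{\sim}{\to} {\rm Sh}(X_{\eet})$, so it suffices to exhibit an equivalence ${\rm Sh}(\wh{X}_{\eet}) \stackrel{\sim}{\to} {\rm Sh}(\{X_h\}_{\eet})$.

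First, I would define the candidate restriction functor $F \colon \{X_h\}_{\eet} \to \wh{X}_{\eet}$. A small object of $\{X_h\}_{\eet}$ is represented by an étale pro-morphism $\{U_{h}\}\to\{X_h\}$, i.e.\ by an étale $U_{h_0}\to X_{h_0}$ for some $h_0$ (with canonical pullbacks $U_h := U_{h_0}\times_{X_{h_0}}X_h$ for $h\geq h_0$); we set $F(\{U_h\})=\wh{U}:=U_{h_0}\times_{X_{h_0}}\wh X$, which is well defined up to canonical isomorphism since $\wh X\Subset X_h$ for all $h$. By the definition of a cover of a pro-rigid space, étale covers are sent to étale covers, so $F$ is continuous. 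Faithfulness of $F$ is immediate from the density $R=\dirlim \so(X_h)\subset \wh{R}$ of Proposition \ref{equivalence1}.

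Next, I would verify Beilinson's property $(\star)$ for $F$. Since the category of étale morphisms to $\{X_h\}$ admits fibre products and $F$ commutes with them, by Remark \ref{kolo2} it is enough to check that every object $V\to \wh X$ of $\wh X_{\eet}$ admits a covering family $\{F(\{U_{i,h}\})\to V\}$, i.e.\ that étale morphisms to $\wh X$ locally spread out to étale pro-morphisms to $\{X_h\}$. Reducing to the affinoid case, an étale morphism $V=\Sp \wh B\to \wh X=\Sp \wh R$ admits (locally on $V$) a presentation $\wh B\cong \wh R\langle T_1,\dots,T_n\rangle/(f_1,\dots,f_n)$ with unit Jacobian. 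Since the $f_i$ have coefficients in $\wh R$ and $R=\dirlim \so(X_h)$ is dense in $\wh R$, one can approximate $(f_i)$ by $(\tilde f_i)$ with coefficients in $\so(X_{h})$ for $h$ large; because étaleness is an open condition (unit Jacobian), the resulting $\Sp \so(X_h)\langle T\rangle /(\tilde f_i)\to X_h$ is étale and recovers $V\to \wh X$ after base change to $\wh X$. A second approximation shows that two such lifts become equal after restricting to some $X_{h'}$, so morphisms between the lifted pro-objects correspond to morphisms on $\wh X$.

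Applying the proposition recalled in Section \ref{topoi} then yields ${\rm Sh}(\{X_h\}_{\eet})\stackrel{\sim}{\to}{\rm Sh}(\wh X_{\eet})$, and composition with the étale dagger equivalence gives the claim. The main obstacle is the spreading-out argument in Step~2: it requires both the density of $R$ in $\wh R$ from Proposition~\ref{equivalence1} and the fact that étaleness is stable under small perturbations of the defining equations, together with enough finite presentation to ensure only finitely many coefficients must be approximated simultaneously. Everything else is essentially formal from Beilinson's criterion.
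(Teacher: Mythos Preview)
The paper does not supply its own proof of this corollary: it is quoted verbatim from \cite[Cor.~A.28]{Vez} and left without argument, so there is nothing in the paper to compare your proposal against directly.

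That said, your strategy is sound and is essentially the one Vezzani uses. A couple of minor clarifications would tighten it. First, you invoke Remark~\ref{kolo2}(2) (finite products) but then only verify the empty-family case of $(\star)$; what you really want is that $F$ is \emph{fully faithful}, so that Remark~\ref{kolo2}(1) applies and $(\star)$ reduces to the covering statement alone. Your ``second approximation'' sentence is exactly the ingredient for full faithfulness, so you should promote it: once you know that any \'etale map $U_{h_0}\to X_{h_0}$ and any morphism between two such spreadings is determined, after possibly increasing $h$, by its restriction to $\wh X$, you have full faithfulness of $F$, and then the spreading-out of objects finishes $(\star)$. Second, the approximation step is cleaner if you phrase it via standard \'etale presentations and the fact that the Jacobian being a unit is an open condition on the coefficient space; this is routine but worth stating explicitly since it is the only non-formal input beyond Proposition~\ref{equivalence1}.
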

\subsubsection{Definition of pro-\'etale cohomology.} $\quad$

 (i) {\em Local definition.} If $\{X_h\}$ is a pro-rigid analytic  variety,  we set  $$\rg_{\proeet}(\{X_h\},\Z/p^n(r)):=
\hocolim_h\rg_{\proeet}(X_h,\Z/p^n(r))\stackrel{\sim}{\leftarrow}\hocolim_h\rg_{\eet}(X_h,\Z/p^n(r)),\quad r\in\Z.$$
Let $X$ be a dagger affinoid. We define its pro-\'etale cohomology as 
\begin{equation}
\label{def22}
\rg_{\proeet}(X,\Z/p^n(r)):=\rg_{\proeet}({\rm pres}(X),\Z/p^n(r)),\quad r\in\Z.
\end{equation} If the dagger affinoid $X$ has a dagger presentation $\{X_h\}$ then  
 $\rg_{\eet}(X,\Z/p^n(r))\stackrel{\sim}{\leftarrow}\hocolim_h\rg_{\eet}(X_h,\Z/p^n(r))$ and we have a natural quasi-isomorphism 
 \begin{equation}
 \label{pig1}
 \rg_{\eet}(X,\Z/p^n(r))\stackrel{\sim}{\to}\rg_{\proeet}(X,\Z/p^n(r)).
 \end{equation}
We make similar definitions for $\Z_p$ and $\Q_p$ coefficients. We have the natural maps (note the direction of the second map)
\begin{equation}
\label{dagmap}
\rg_{\proeet}(X,\Z_p(r))\to \rg_{\proeet}(X,\Q_p(r)),\quad \rg_{\proeet}(X,\Z_p(r))\to \rg_{\eet}(X,\Z_p(r)).
\end{equation}
The first map is a  rational quasi-isomorphism.
If the dagger affinoid $X$ has  dagger presentation $\{X_h\}$ then we define the second  map in the following way
\begin{align}
\label{dagmaps}
\rg_{\proeet}(X,\Z_p(r)) & =\hocolim_h\rg_{\proeet}(X_h,\Z_p(r))  \stackrel{\sim}{\leftarrow}\hocolim_h\rg_{\eet}(X_h,\Z_p(r))\\
& = \hocolim_h\holim_n\rg_{\eet}(X_h,\Z/p^n(r))
  \to \holim_n\hocolim_h\rg_{\eet}(X_h,\Z/p^n(r))\notag\\
  & =\holim_n\rg_{\eet}(X,\Z/p^n(r))\stackrel{\sim}{\leftarrow}
\rg_{\eet}(X,\Z_p(r)).\notag
\end{align}
Here the second quasi-isomorphism holds because $X_h$ is quasi-compact (cover $X_h$ with a finite number of affionoids and use the quasi-isomorphism (\ref{pig1})).

\smallskip
     (ii) {\em Topological issues.} We need to discuss topology.  Let, for a moment,  $X$ be a rigid analytic variety over $L$. We equip the pro-\'etale and \'etale cohomologies $\R\Gamma_{\proeet}(X,\Q_p(r))$, and  $\R\Gamma_{\eet}(X,\Q_p(r))
$ with a natural topology by proceeding as in \cite[Sec. 3.3.2]{CDN3} by using as local data compatible $\Z/p^n$-free complexes\footnote{Such complexes can be found, for example, by taking the system of  \'etale hypercovers.}. If  
 $X$ is quasi-compact, we obtain in this way complexes of Banach spaces over $\Q_p$. In that case the natural continuous map $\R\Gamma_{\eet}(X,\Q_p(r))\to \R\Gamma_{\proeet}(X,\Q_p(r))$ is a strict quasi-isomorphism. 
 
 More precisely, we have
 $$
 \rg_{\proeet}(X,\Q_p(r)):=\hocolim\rg_{\eet}(U_{\cdot},\Q_p(r)),
 $$
 where the homotopy colimit is over \'etale quasi-compact hypercoverings\footnote{Here and below, we use ``colimit over hypercoverings" as a shorthand for ``colimit over the filtered category of hypercoverings up to simplicial homotopy".} of $X$.   
Since all the complexes $\rg_{\eet}(U_{\cdot},\Q_p(r))$ are complexes of Fr\'echet spaces, all the arrows in the colimit are strict quasi-isomorphisms. Hence we can compute with any particular hypercovering. 
   
\begin{remark} 
\label{ducros}
We will often use the following  simple observation. If $X$ is a smooth  rigid analytic variety then we can find an increasing quasi-compact admissible covering $\{U_n\}_{n\in\N}$ of $X$ such that $U_i$ is contained in the relative interior of $U_{i+1}$.
If $X$ is moreover partially proper we can assume that $U_i\Subset U_{i+1}$. We have analogous statements for dagger varieties.

 It follows that, for a general smooth rigid analytic variety $X$ we have an increasing quasi-compact admissible covering $\{U_n\}_{n\in\N}$ of $X$, such that we have   (in $\sd(C_{\Q_p})$)
 $$
 \rg_{\proeet}(X,\Q_p(r))\simeq \holim_n\rg_{\eet}(U_{n},\Q_p(r)).
 $$
 Hence we have the short exact sequence
 $$
0\to H^1\holim_n\wt{H}^{i-1}_{\eet}(U_n,\Q_p(r))\to  \wt{H}^i \rg_{\proeet}(X,\Q_p(r))\to H^0\holim_n\wt{H}^i_{\eet}(U_{n},\Q_p(r))\to 0.
 $$
\end{remark}

 If $X$ is a dagger affionoid, its pro-\'etale cohomology acquires now natural topology by taking the homotopy colimit in (\ref{def22}) in $\sd(C_{\Q_p})$.  
 
\smallskip
 (iii) {\em Globalization.} For a general smooth dagger variety $X$, 
we have the  natural equivalence of analytic topoi
$$
{\rm Sh}(({\rm SmAff}^{\dagger}_{L}/X_L)_{\eet})\stackrel{\sim}{\to} {\rm Sh}(({\rm Sm}^{\dagger}_{L}/X_L)_{\eet}),
$$
where  ${\rm Sm}^{\dagger}_{L}/X_L$  is the category of smooth morphisms of dagger varieties to $X_L$ and ${\rm SmAff}^{\dagger}_{L}/X_L$ is its full subcategory of affinoid objects.
Using this equivalence, we define the sheaf $\sa_{\proeet}(r)$, $r\in\Z$,  on $X_{\eet}$ as the sheaf associated to the presheaf defined by:  $U\mapsto \rg_{\proeet}(U,\Q_p(r))$, $U\in {\rm SmAff}^{\dagger}_{L}$, $U\rightarrow X$ an \'etale map. We define the pro-\'etale cohomology of $X$ as 
$$
\rg_{\proeet}(X,\Q_p(r)):=\rg_{\eet}(X, \sa_{\proeet}(r)),\quad r\in\Z.
$$
 We equip it with topology by proceeding as in the case of pro-\'etale cohomology of rigid analytic varieties starting with the case of dagger affinoids that was described above.

\smallskip
  (iv) {\em Local-global compatibility.} This definition is consistent with the  previous definition:
\begin{lemma}
 Let  $X$ be a dagger affinoid with the presentation $\{X_h\}$.  Then the natural map
$$
\rg_{\proeet}(\{X_h\},\Q_p(r))\to \rg_{\eet}(X, \sa_{\proeet}(r)),\quad r\in\Z,
$$
is a strict quasi-isomorphism. 
\end{lemma}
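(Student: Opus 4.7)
The plan is to unwind both sides via étale hypercovers, exploit the equivalence of étale topoi ${\rm Sh}(X_{\eet})\simeq {\rm Sh}(\{X_h\}_{\eet})$ in order to propagate a single hypercover of $X$ to a compatible system of hypercovers of the $X_h$, and then commute two homotopy colimits.

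First, I would unpack the right-hand side. By the definition of $\sa_{\proeet}(r)$ as an étale sheafification of $U\mapsto \rg_{\proeet}(U,\Q_p(r))$ on dagger affinoids étale over $X$, together with the general formula for sheaf cohomology as a homotopy colimit over étale hypercoverings, there is a strict quasi-isomorphism
$$
\rg_{\eet}(X,\sa_{\proeet}(r))\simeq \hocolim_{U_{\cdot}\to X}\rg_{\proeet}(U_{\cdot},\Q_p(r)),
$$
where $U_{\cdot}$ runs over affinoid étale hypercovers of the dagger affinoid $X$. Using the local definition \eqref{def22} on each level, this becomes $\hocolim_{U_{\cdot}}\hocolim_h\rg_{\eet}(U_{\cdot,h},\Q_p(r))$ once we choose, for each level, a dagger presentation.

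The key step is to make the presentations compatible. Using Corollary A.28 (the equivalence ${\rm Sh}(X_{\eet})\stackrel{\sim}{\to}{\rm Sh}(\{X_h\}_{\eet})$), any affinoid étale hypercover $U_{\cdot}\to X$ extends to a pro-system of affinoid étale hypercovers $U_{\cdot,h}\to X_h$ in such a way that $\{U_{\cdot,h}\}$ is a presentation of $U_{\cdot}$. With this choice, the double homotopy colimit above becomes
$$
\hocolim_{U_{\cdot}}\hocolim_h\rg_{\eet}(U_{\cdot,h},\Q_p(r))\simeq \hocolim_h\hocolim_{U_{\cdot}}\rg_{\eet}(U_{\cdot,h},\Q_p(r))\simeq \hocolim_h\rg_{\eet}(X_h,\Q_p(r)),
$$
where the first isomorphism is the commutation of two filtered homotopy colimits, and the second uses that $\{U_{\cdot,h}\}_{U_{\cdot}}$ is cofinal among étale hypercovers of $X_h$ (again via the topos equivalence, now applied to the rigid analytic $X_h$ and its completion, together with the fact that étale covers of $X$ lift to étale covers of some $X_h$). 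The right-hand side is exactly $\rg_{\proeet}(\{X_h\},\Q_p(r))$ by definition, and chasing definitions shows this identification is induced by the natural map of the lemma.

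The main obstacle is topological: one must check that all the identifications above are \emph{strict} quasi-isomorphisms in $\sd(C_{\Q_p})$, not merely quasi-isomorphisms of underlying complexes. This is handled by observing that each $X_h$ and each $U_{\cdot,h}$ is quasi-compact, so that $\rg_{\eet}(U_{\cdot,h},\Q_p(r))$ is a complex of Fréchet (in fact Banach at each simplicial level) spaces and all transition maps in both the hypercover diagram and the pro-system $\{X_h\}$ are strict; the commutation of filtered hocolimits with hocolimits of Fréchet complexes then preserves strictness, and the topology we put on $\rg_{\eet}(X,\sa_{\proeet}(r))$ was precisely designed (via the homotopy colimit in $\sd(C_{\Q_p})$ in the local case) to agree with this. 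Once strictness is verified at each step, the statement follows.
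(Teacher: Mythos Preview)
Your overall architecture is the same as the paper's: reduce to showing that $\rg_{\proeet}^\sharp(X,\Q_p(r)):=\hocolim_h\rg_{\eet}(X_h,\Q_p(r))$ satisfies descent for \'etale affinoid hypercovers $U_{\cdot}\to X$, by lifting the hypercover to the levels of the presentation and invoking \'etale descent for rigid analytic varieties. Two points, however, need repair.

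\textbf{The lifting does not land over $X_h$.} Your claim that ``any affinoid \'etale hypercover $U_{\cdot}\to X$ extends to a pro-system of affinoid \'etale hypercovers $U_{\cdot,h}\to X_h$'' is not correct as stated. By the definition of a cover of pro-rigid varieties, the images of the $U_{\cdot,h}$ in $X_h$ are only required to contain $X$, not to cover $X_h$. What one actually obtains (this is exactly what the paper does, with a footnote spelling it out) is, after passing to a subsequence, a pro-system of hypercoverings $U_{\cdot,h}\to V_h$ with $V_h\subset X_h$ open and $V_{h+1}\Subset V_h$. The extra step is then to observe, from the coinitiality built into the definition of a presentation together with quasi-compactness of $V_h$, that the pro-objects $\{V_h\}$ and $\{X_h\}$ are equivalent; only then does $\hocolim_h\rg_{\eet}(V_h,\Q_p(r))\simeq \hocolim_h\rg_{\eet}(X_h,\Q_p(r))$. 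The paper also first truncates ($\tau_{\leq k}$) so that the hypercover has finitely many affinoids in each degree, which is what makes the level-wise lifting possible.

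\textbf{Cofinality is the wrong justification.} Your final step asserts that $\{U_{\cdot,h}\}_{U_{\cdot}}$ is cofinal among \'etale hypercovers of $X_h$. This is false: $X_h$ is strictly larger than $X$ and has many \'etale covers that do not arise by thickening a cover of $X$. The correct argument (and the one the paper uses) is simply \'etale descent for rigid analytic varieties: once $U_{\cdot,h}\to V_h$ is an honest \'etale hypercover, the map $\rg_{\eet}(V_h,\Q_p(r))\to \rg_{\eet}(U_{\cdot,h},\Q_p(r))$ is already a strict quasi-isomorphism for each individual $U_{\cdot}$, so the inner homotopy colimit over $U_{\cdot}$ is constant. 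No cofinality is needed or available.
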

\begin{proof}Set $\rg_{\proeet}^{\sharp}(X,\Q_p(r)):=\rg_{\proeet}(\{X_h\},\Q_p(r)).$ It suffices to show that, for any \'etale affinoid hypercovering $U_{\cdot}$ of $X$, the natural map 
$$
\rg_{\proeet}^{\sharp}(X,\Q_p(r))\to \rg_{\proeet}^{\sharp}(U_{\cdot},\Q_p(r))
$$
is a strict quasi-isomorphism (modulo taking a refinement of $U_{\cdot}$). For that, it suffices to show that, for any $k\in\N$, the map
\begin{equation}
\label{cieplo1}
\tau_{\leq k}\rg_{\proeet}^{\sharp}(X,\Q_p(r))\to \tau_{\leq k}\rg_{\proeet}^{\sharp}(T,\Q_p(r)),
\end{equation}
where $T=U_{\cdot}$, 
is a strict quasi-isomorphism. Since, for that, it is enough to work with the truncation $\tau_{\leq k+1} T$ we will assume that $T$ is a finite hypercovering and has a finite number of affinoids in every degree. 

Take  the dagger presentation $X\simeq \{X_h\}, h\in\N$. We can represent $T$ by a pro-system of hypercoverings
$\{T_h\to V_h\}, V_h \subset X_h,$ $h\in\N$, forming a dagger presentation of $T$ degree-wise\footnote{This uses the simple observation that if a collection of  morphisms of pro-rigid spaces 
  $\{\phi_i: \{V_{ih}\}\to \{X_h\}\}_{i\in I}$ is an \'etale  { cover} then we can choose a subsequence $\{X_{k_h}\}$ of $\{X_h\}$ such that the pro-rigid spaces $\{V_{i,k_h}:=V_{ih}\times_{X_h}X_{k_h}\}$ form an \'etale cover of $\{X_h\}$ and moreover all the maps 
  $\{\phi_i: \{V_{i,k_h}\}\to \{X_{k_h}\}\}_{i\in I}$  are \'etale covers (to see this use the "initial" part of the definition of presentations).}. We note that then $V_{h+1}\Subset V_h$. From the universal property of $\{X_h\}$ and the quasi-compactness of $V_h$, we get that the two pro-rigid varieties $\{X_h\}$ and $\{V_h\}$ are equivalent. 
It follows that we have a natural strict quasi-isomorphism
$$
\hocolim_h\rg_{\eet}(X_h,\Q_p(r))\stackrel{\sim}{\to}\hocolim_h\rg_{\eet}(V_h,\Q_p(r)).
$$
Hence the map (\ref{cieplo1}) is represented by a composition
\begin{align*}
& \tau_{\leq k}\rg_{\proeet}^{\sharp}(X,\Q_p(r))\stackrel{\sim}{\leftarrow}\tau_{\leq k}(\hocolim_h\rg_{\eet}(V_h,\Q_p(r)))\\
 & \quad \quad \stackrel{\sim}{\rightarrow}\tau_{\leq k}(\hocolim_h\rg_{\eet}(T_h,\Q_p(r))) \simeq \tau_{\leq k}\rg^{\sharp}_{\proeet}(T,\Q_p(r)),
\end{align*}
where the middle strict quasi-isomorphism follows from \'etale descent for rigid analytic varieties. This finishes our proof of the lemma.
\end{proof}
\begin{remark}For a smooth dagger variety $X$, we can define similarly the integral pro-\'etale cohomology $\rg_{\proeet}(X,\Z_p(r)), r\in\Z$. We have the natural maps
\begin{align*}
& \rg_{\proeet}(X,\Z_p(r))\to \rg_{\proeet}(X,\Q_p(r)),\\
&  \rg_{\proeet}(X,\Z_p(r))\to \rg_{\eet}(X,\Z_p(r))\stackrel{\sim}{\to}\rg_{\eet}(\wh{X},\Z_p(r))\stackrel{\sim}{\to}\rg_{\proeet}(\wh{X},\Z_p(r)).
\end{align*}
For $X$ quasi-compact, the first map becomes a strict  quasi-isomorphism after tensoring with $\Q_p$; this is not the case for general $X$. The second map is  a globalization of  maps for dagger affinoids defined in (\ref{dagmaps}).
\end{remark}
\subsubsection{Comparison isomorphisms.} Let $L=K,C$. For $X\in {\rm Sm}^{\dagger}_L$, we have a natural map 
\begin{equation}
\label{map-berkeley}
\iota_{\proeet}: \rg_{\proeet}(X,\Q_p(r))\to\rg_{\proeet}(\wh{X},\Q_p(r)). 
\end{equation}
It is obtained by the globalization of such maps for dagger affinoids: if the dagger affionoid $X$ has a dagger presentation $\{X_h\}$ then we set
$$
\iota_{\proeet}: \rg_{\proeet}(X,\Q_p(r))= \hocolim_h\rg_{\proeet}(X_h,\Q_p(r)) \lomapr{\can}\rg_{\proeet}(\wh{X},\Q_p(r)).
$$
\begin{proposition}
\label{kicia-kicia}
Let $X$ be partially proper. Then the map (\ref{map-berkeley})  is a strict quasi-isomorphism.
\end{proposition}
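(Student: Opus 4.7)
My plan is to reduce to an interleaving argument via an exhaustion of $X$ provided by partial properness. Using Remark \ref{ducros}, I would choose an increasing admissible covering $\{U_n\}_{n\in\N}$ of $X$ by dagger affinoids with $U_n\Subset U_{n+1}$; completing yields an analogous exhaustion $\{\wh{U}_n\}$ of $\wh{X}$ with $\wh{U}_n\Subset\wh{U}_{n+1}$. Both sides of $\iota_{\proeet}$ then admit $\holim$-presentations
$$\rg_{\proeet}(X,\Q_p(r))\simeq\holim_n\rg_{\proeet}(U_n,\Q_p(r)),\qquad \rg_{\proeet}(\wh{X},\Q_p(r))\simeq\holim_n\rg_{\eet}(\wh{U}_n,\Q_p(r)),$$
the latter using that pro-\'etale and \'etale cohomology agree rationally on quasi-compact rigid analytic varieties; the map $\iota_{\proeet}$ is compatible with both.

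Exploiting $U_n\Subset U_{n+1}$, I would choose a dagger presentation $\{U_{n,h}\}$ of $U_n$ with $\wh{U}_n\subset U_{n,h}\subset\wh{U}_{n+1}$ for every $h$. Set $A_n:=\rg_{\eet}(\wh{U}_n,\Q_p(r))$ and $B_n:=\rg_{\proeet}(U_n,\Q_p(r))=\hocolim_h\rg_{\eet}(U_{n,h},\Q_p(r))$, the latter identification coming from the quasi-compactness of each rigid affinoid $U_{n,h}$. The nested inclusions produce restriction maps $A_{n+1}\to B_n$ (from $U_{n,h}\subset\wh{U}_{n+1}$) and $B_n\to A_n$ (from $\wh{U}_n\subset U_{n,h}$) whose composition is the canonical transition $A_{n+1}\to A_n$; symmetrically, for a compatible choice of presentations, the composition $B_{n+1}\to A_{n+1}\to B_n$ is the natural transition $B_{n+1}\to B_n$. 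Applying $\holim_n$ turns these into mutually inverse strict quasi-isomorphisms between $\holim_n B_n$ and $\holim_n A_n$, the one from $B$ to $A$ being precisely $\iota_{\proeet}$.

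The main obstacle I anticipate is topological strictness. Everything must be carried out inside $\sd(C_{\Q_p})$, and one has to verify that: (i) the two $\holim$-descriptions above hold as strict equivalences; (ii) the identification $B_n\simeq\hocolim_h\rg_{\eet}(U_{n,h},\Q_p(r))$ is strict; and (iii) the zig-zag of restriction maps interleaving the two pro-systems remains strict after taking $\holim_n$. Since all ingredients are complexes of Fr\'echet spaces (for \'etale cohomology of quasi-compact rigid spaces) or countable filtered colimits thereof, and since restriction maps along open inclusions of quasi-compact rigid spaces are continuous by construction, these strictness statements should follow from standard properties of countable homotopy (co)limits of Fr\'echet complexes.
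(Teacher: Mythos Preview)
Your proposal is correct and follows essentially the same route as the paper. Both arguments pass to an exhaustion $\{U_n\}$ with $U_n\Subset U_{n+1}$, present both sides as $\holim_n$ over these, and then show that the two pro-systems $\{\rg_{\proeet}(\wh U_n,\Q_p(r))\}$ and $\{\rg_{\proeet}(U_n,\Q_p(r))\}$ are interleaved (the paper phrases this as a factorization through ${\rm pres}(U_n)$, you unwind it to explicit presentation members $U_{n,h}\subset\wh U_{n+1}$); your added discussion of strictness is a point the paper leaves implicit.
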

\begin{proof}Since a partially proper smooth dagger variety is locally Stein, we can assume $X$ to be Stein.  Choose an admissible covering of $X$ by  an increasing sequence of dagger affinoids $\{U_n\}$, $n\in\N$, strictly contained in each other. Then the map $\iota_{\proeet}$ from (\ref{map-berkeley}) can be written as the composition
$$\rg_{\proeet}(X,\Q_p(r))\stackrel{\sim}{\to} \holim_n\rg_{\proeet}(U_n,\Q_p(r))\to  \holim_n\rg_{\proeet}(\wh{U}_n,\Q_p(r))\stackrel{\sim}{\leftarrow} \rg_{\proeet}(\wh{X},\Q_p(r))
$$
and we need to show that the middle map is a strict quasi-isomorphism. 
But, for every $n > 1$, the map $\wh{U}_n\to \wh{U}_{n-1}$ factorizes canonically as $\wh{U}_n\to {\rm pres}(U_n)\to \wh{U}_{n-1}$ yielding the factorization 
$$
\rg_{\proeet}(\wh{U}_{n-1},\Q_p(r))\to \rg_{\proeet}({\rm pres}(U_n),\Q_p(r))\to \rg_{\proeet}(\wh{U}_n,\Q_p(r)).
$$
It follows that the prosystems
$$
\{\rg_{\proeet}(U_n,\Q_p(r))\},\quad \{\rg_{\proeet}({\rm pres}(U_n),\Q_p(r))\}
$$ are equivalent. Since,  $\rg_{\proeet}(U_n,\Q_p(r))\stackrel{\sim}{\leftarrow}\rg_{\proeet}({\rm pres}(U_n),\Q_p(r))$ we are done.
\end{proof}
% \section{Syntomic cohomology}
%We define here syntomic cohomologies for smooth  dagger and rigid analytic varieties. We  show that they are quasi-isomorphic for partially proper varieties and that the  comparison theorem between syntomic  complexes and $p$-adic nearby cycles for semistable formal schemes from \cite{CN} yields a comparison theorem between syntomic cohomology and $p$-adic pro-\'etale cohomology of smooth dagger and rigid analytic varieties. 
\section{Rigid analytic  syntomic cohomology}
In this section we define syntomic cohomology for smooth rigid analytic varieties over $K$ or $C$ by $\eta$-\'etale descent of the classical definition due to Fontaine-Messing. We show that the computations of syntomic cohomology from \cite{CN}
done for rigid analytic varieties  with semistable reduction generalize to all smooth rigid varieties. We also introduce Hyodo-Kato cohomology for such varieties,  prove that it satisfies Galois descent,
 and  define the Hyodo-Kato morphism (that is a quasi-isomorphism over $C$). Finally, over $K$, we define Bloch-Kato  rigid analytic syntomic cohomology (built from Hyodo-Kato and de Rham cohomologies) and show that it is quasi-isomorphic to the  rigid analytic syntomic cohomology.

 \subsection{Definition of rigid analytic syntomic cohomology}We define  the syntomic cohomology of smooth rigid analytic varieties by \'etale descent of crystalline syntomic cohomology of semistable models.

    Let $\su\in \sm^{\sem}_K$. We consider it as a log-formal scheme with the log-structure associated to the special fiber.
     For $r\geq 0$, we have  the mod $p^n$, completed, and rational 
   absolute (i.e., over $\Z_p$) filtered crystalline cohomology 
   \begin{align*}
  & \R\Gamma_{\crr}(\su_n,\sj^{[r]}), \quad    \R\Gamma_{\crr}(\su,\sj^{[r]})  :=\holim_n \R\Gamma_{\crr}(\su_n,\sj^{[r]}),\\
   & \R\Gamma_{\crr}(\su,\sj^{[r]})_{\Q_p}  :=\R\Gamma_{\crr}(\su,\sj^{[r]})\otimes^L_{\Z_p}{\Q_p}.
   \end{align*}
     Here  $\sj^{[r]}$ denotes the $r$'th Hodge filtration sheaf.  The corresponding $\eta$-\'etale sheafifications on $\sm^{\sem}_K$ we will denote by $F^r\sa_{\crr,n}, F^r\sa_{\crr}, $ and $F^r\sa_{\crr,\Q_p}$. We make analogous definitions for crystalline cohomology of basic semistable models over $\so_C$ (see \cite{BE2} for details). 
     
    For $r\geq 0$, define the mod $p^n$, completed, and rational 
    crystalline syntomic cohomology
    \begin{align*}
    \R\Gamma_{\synt}(\su,\Z/p^n(r)) & :=[\R\Gamma_{\crr}(\su_n,\sj^{[r]})\lomapr{p^r-\phi}\R\Gamma_{\crr}(\su_n)]\simeq [[\R\Gamma_{\crr}(\su_n)]^{\phi=p^r}\lomapr{\can} \R\Gamma_{\crr}(\su_n)/\R\Gamma_{\crr}(\su_n,\sj^{[r]})],\\
     \R\Gamma_{\synt}(\su,\Z_p(r)) & :=\holim_n \R\Gamma_{\synt}(\su,\Z/p^n(r)),\\
       \R\Gamma_{\synt}(\su,\Z_p(r))_{\Q_p}  & :=     \R\Gamma_{\synt}(\su,\Z_p(r))\otimes^L_{\Z_p}\Q_p \simeq [\R\Gamma_{\crr}(\su,\sj^{[r]})_{\Q_p}\lomapr{p^r-\phi}\R\Gamma_{\crr}(\su)_{\Q_p}].
    \end{align*}
        The corresponding $\eta$-\'etale sheafifications on $\sm^{\sem}_K$ we will denote by $\sa_{\synt,n}(r), \sa_{\synt}(r), $ and $\sa_{\synt}(r)_{\Q_p}$. We make analogous definitions for crystalline  syntomic cohomology of basic semistable models over $\so_C$. 
        We have the distinguished triangles
 \begin{align*}
 &  \sa_{\synt,n}(r)\to F^r\sa_{\crr,n}\lomapr{p^r-\phi} \sa_{\crr,n},\\
 & \sa_{\synt,n}(r)\to \sa_{\crr,n}^{\phi=p^r}\to \sa_{\crr,n}/F^r,
 \end{align*}
 where we set $\sa_{\crr,n}^{\phi=p^r}:=[\sa_{\crr,n}\lomapr{p^r-\phi} \sa_{\crr,n}]$.  Similarly for the completed and rational cohomology.
    
     For $X\in {\rm Sm}_L$, $L=K,C$,  we define two rational (rigid analytic) syntomic cohomologies: 
     $$
 \R\Gamma_{\synt}(X,\Z_p(r))_{\Q_p} :=\rg_{\eet}(X,\sa_{\synt}(r))\otimes^L_{\Z_p}{\Q_p},\quad  \R\Gamma_{\synt}(X,\Q_p(r)) :=\rg_{\eet}(X,\sa_{\synt}(r)_{\Q_p}).
    $$
    From now on, to simplify the notation, we will write $(-)_{\Q_p}$ for $(-)\otimes_{\Z_p}^L{\Q_p}$; similarly for coefficients other than $\Q_p$.
    There is a canonical map 
    \begin{equation}
    \label{passage}
     \R\Gamma_{\synt}(X,\Z_p(r))_{\Q_p} \to \R\Gamma_{\synt}(X,\Q_p(r)).
     \end{equation} It follows immediately from the definitions that, for $X$ quasi-compact, this is a quasi-isomorphism (but it is not so in general).
     By proceeding just as in \cite[Sec. 3.3.1]{CDN3} (using crystalline embedding systems) we can equip both complexes in (\ref{passage}) 
  with a natural topology for which they   become  complexes of Banach  spaces over $\Q_p$  in the case $X$ is quasi-compact\footnote{We note that $\so_K$ being syntomic over $\so_F$, all the integral complexes in sight are
   in fact  $p$-torsion free.} (and in that case the quasi-isomorphism (\ref{passage}) is strict).  We do the same for the crystalline complexes involved in the definition of syntomic cohomology. 
  We have  distinguished triangles in $\sd(C_{\Q_p})$
\begin{align}
\label{seq11}
 & \R\Gamma_{\synt}(X,\Z_p(r))_{\Q_p} \to  \rg_{\eet}(X,\sa_{\crr}^{\phi=p^r})_{\Q_p}\to \rg_{\eet}(X,\sa_{\crr}/F^r)_{\Q_p},\\
 & \R\Gamma_{\synt}(X,\Q_p(r)) \to  \rg_{\eet}(X,\sa_{\crr,\Q_p}^{\phi=p^r})\to \rg_{\eet}(X,\sa_{\crr,\Q_p}/F^r).\notag
 \end{align}
 We will show later (see Corollary \ref{compt1}) that if $X=\sx_K$, for an admissible  semistable  formal scheme  $\sx$ over $\so_K$, then the canonical map
 $$
 \R\Gamma_{\synt}(\sx,\Q_p(r))\to \R\Gamma_{\synt}(X,\Q_p(r))
 $$ is a strict quasi-isomorphism. 
 \subsubsection{Rigid analytic de Rham  cohomology}\label{derham1}
  Let $L=K,C$. Consider the presheaf $X\mapsto \R\Gamma_{\dr}(X)$ of filtered dg $L$-algebras on ${\rm Sm}_L$. Let $\sa_{\dr}$ be its \'etale sheafification on ${\rm Sm}_L$. It is a sheaf of filtered $L$-algebras on ${\rm Sm}_{L,\eet}$. For $X\in {\rm Sm}_L$, we have the natural filtered quasi-isomorphism: $\R\Gamma_{\dr}(X)\stackrel{\sim}{\to}\R\Gamma_{\eet}(X,\sa_{\dr})$. We equip $\R\Gamma_{\dr}(X)$ with the topology induced by the canonical topology on affinoid algebras; we equip $\R\Gamma_{\eet}(X,\sa_{\dr})$ with topology using \'etale descent as we did before.
  Then the above quasi-isomorphism is strict:  sheaves of differential forms satisfy \'etale descent in the strict sense.

  Let $X\in {\rm Sm}_L$. We will need to understand the cohomology groups in degrees $r-1$ and $r$ of 
 $$
 \rg_{\dr}(X)/F^r\simeq\rg(X,\so_X\to\Omega^1_X\to\cdots\to\Omega^{r-1}_X).
 $$
 To do that consider the distinguished triangle (in $\sd(C_L)$)
 \begin{equation}\label{poznan1}
 0\to \ker d_r[-r]\to \tau_{\leq r}\Omega\kr_X\to \Omega^{\leq r-1}_X\to 0,
 \end{equation}
 where $d_r:\Omega^r_X\to\Omega^{r+1}_X$ is the de Rham differential.
 It yields the long exact sequence
 $$
 0\to \wt{H}^{r-1}_{\dr}(X)\to \wt{H}^{r-1}(\rg_{\dr}(X)/F^r)\to \wt{H}^r(X,\ker d_r[-r])\to \wt{H}^r_{\dr}(X).
 $$
 Or, since $\wt{H}^r(X,\ker d_r[-r])=\Omega^{r}(X)^{d=0}$, 
 the short exact sequence
 $$
 0\to \wt{H}^{r-1}_{\dr}(X)\to \wt{H}^{r-1}(\rg_{\dr}(X)/F^r)\to \ker \pi \to 0,
 $$
 where $\pi$ is the natural map $\Omega^r(X)^{d=0}\to \wt{H}^r_{\dr}(X).$
We have a monomorphism $\im d_{r-1}(X)\hookrightarrow \ker \pi$.

 The distinguished triangle (\ref{poznan1}) yields also the long exact sequence
 $$
0\to \coker \pi \to \wt{H}^r(\rg_{\dr}(X)/F^r)\to \wt{H}^1(X,\ker d_r)\to \wt{H}^{r+1}(X,\tau_{\leq r}\Omega\kr_X).
 $$
\begin{remark}
  (a)  If $X$ is proper, all the Hodge and de Rham cohomology groups are classical (finite dimensional vector spaces over $K$), the Hodge-de Rham spectral sequence degenerates at $E_1$ \cite[Cor. 1.8]{Sch},  and we get the  isomorphisms
  $$
  H^{r-1}_{\dr}(X)\stackrel{\sim}{\to} \wt{H}^{r-1}(\rg_{\dr}(X)/F^r),\quad  H^{r}_{\dr}(X)/\Omega^r(X)\stackrel{\sim}{\to} \wt{H}^{r}(\rg_{\dr}(X)/F^r).
  $$
 (b) If  $X$ is Stein, we have $H^i(X,\Omega^j_X)=0$, $i\neq 0$, and all the de Rham cohomology groups are classical (Fr\'echet spaces).
We have
$$
\rg_{\dr}(X)/F^r\simeq (\so(X)\to\Omega(X)\to \cdots\to \Omega^{r-1}(X))
$$
with strict differentials.
Hence we get the  isomorphisms 
  \begin{align*}
  \wt{H}^{r-1}(\rg_{\dr}(X)/F^r)\simeq  \Omega^{r-1}(X)/\im d_{r-1},\quad 
   \wt{H}^i(\rg_{\dr}(X)/F^r)\simeq 0,\, i\geq r.
  \end{align*}
  Hence the cohomology $\wt{H}^{r-1}(\rg_{\dr}(X)/F^r)$  is classical.
  \end{remark}
  %If $X\sh^d_K$ is the Drinfeld half-space of dimension $d$, then the de Rham differentials $d_i$ on $\Omega^i(X)$ are zero,
  %hence we have have an isomorphism
  %$$
  % H^{r-1}_{\dr}(\sh^d)\stackrel{\sim}{\to} H^{r-1}(\rg_{\dr}(\sh^d)/F^r).
  %$$ 

  \begin{proposition}Let $X\in {\rm Sm}_K$.
 \label{derham}Let $r
 \geq 0$. We have a canonical strict quasi-isomorphism
 $$\gamma_r: \rg_{\dr}(X)/F^r\stackrel{\sim}{\to}\rg_{\eet}(X,\sa_{\crr,\Q_p}/F^r).
  $$
 \end{proposition}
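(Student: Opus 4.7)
The plan is to reduce to the local case on semistable formal models, where this becomes the standard comparison between rational absolute crystalline cohomology modulo its Hodge filtration and de Rham cohomology modulo its Hodge filtration, and then globalize via $\eta$-\'etale descent, which Proposition \ref{Bonn1} turns into ordinary \'etale descent on ${\rm Sm}_K$.

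First, I would unravel the sheaf $\sa_{\crr,\Q_p}/F^r$: by construction it is the $\eta$-\'etale sheafification on $\sm^{\sem}_K$ of the presheaf $\su\mapsto\rg_{\crr}(\su)_{\Q_p}/\rg_{\crr}(\su,\sj^{[r]})_{\Q_p}$, where crystalline cohomology is absolute and $\su$ is given the semistable log structure over $\so_{K_\su}^0$. Meanwhile $\sa_{\dr}$ is the \'etale sheafification on ${\rm Sm}_K$ of the usual de Rham complex of affinoids, with its Hodge filtration, and both sheaves satisfy strict \'etale descent. So constructing $\gamma_r$ reduces to producing a functorial strict quasi-isomorphism
\[
\rg_{\dr}(\su_K)/F^r\stackrel{\sim}{\to}\rg_{\crr}(\su)_{\Q_p}/\rg_{\crr}(\su,\sj^{[r]})_{\Q_p}
\]
for $\su\in\sm^{\sem}_K$, compatible enough to pass through the sheafifications.

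To build the local comparison, I would compute both sides via a crystalline embedding system. Choose, Zariski-locally, a closed embedding $\su\hookrightarrow\sz$ into a $p$-adic (weak) formal $\so_K^\times$-scheme $\sz$ that is log-smooth over $\so_K^\times$ (e.g.\ the standard ambient space for a semistable chart); let $\sd$ be the log PD-envelope of $\su$ in $\sz$. Then $\rg_{\crr}(\su)_{\Q_p}$ is computed by the log de Rham complex $(\sd\wh{\otimes}\Omega_{\sz/\Z_p}^{\cdot,\log})_{\Q_p}$, and $\rg_{\crr}(\su,\sj^{[r]})_{\Q_p}$ by the subcomplex obtained from the divided-power filtration, so the quotient is represented rationally by the truncated complex $\sd_{\Q_p}\otimes\Omega^{<r}_{\sz/\Z_p}$. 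On the generic fiber the log differentials $d\log(\pi)$ of the semistable chart become trivial because $\pi$ becomes invertible, and the PD thickening $\sd$ is identified rationally with the coordinate ring of $\su_K$ in a tubular neighborhood, so this quotient is canonically and strictly quasi-isomorphic to $\Omega^{<r}(\su_K)$, i.e.\ to $\rg_{\dr}(\su_K)/F^r$. This is exactly the filtered crystalline-to-de-Rham comparison of Kato for log-smooth schemes, used here rationally; functoriality with respect to morphisms of embedding systems, and independence of the embedding, are formal.

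Finally I would globalize: \'etale descent for $\Omega^\cdot$ (strict, as noted in Section \ref{derham1}) together with the definition of $\sa_{\crr,\Q_p}/F^r$ as an $\eta$-\'etale sheafification turns the local map into a morphism $\gamma_r$ of \'etale sheaves on ${\rm Sm}_K$ that is a strict quasi-isomorphism stalkwise, hence globally. The main obstacle, as expected, is the local step, and inside it the delicate point is the topological/strictness part: one must check that the filtered de Rham complex of the log PD envelope $\sd$, with its $p$-adic Fr\'echet topology, maps to the affinoid de Rham complex of $\su_K$ by a \emph{strict} quasi-isomorphism. This requires invoking the comparison between the $p$-adic completion of a polynomial PD algebra and the corresponding Tate algebra, together with the standard fact that for smooth affinoids over $K$ the de Rham differentials are strict. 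Granting this, the rest of the argument is formal.
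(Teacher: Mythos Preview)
Your overall architecture matches the paper's exactly: construct $\gamma_r$ locally on semistable formal models and then globalize by $\eta$-\'etale descent. The paper simply cites \cite[Cor.~2.4]{NN} for the local comparison and \cite[Prop.~6.1]{CDN3} for the strictness, whereas you sketch both; the differences worth flagging are in those sketches.

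First, your local computation conflates bases. The crystalline cohomology in $\sa_{\crr,\Q_p}$ is \emph{absolute}, i.e., over $\Z_p$ (equivalently $\so_F$ with the trivial log structure), not over $\so_{K_{\su}}^{\times}$. An embedding $\su\hookrightarrow\sz$ with $\sz$ log-smooth over $\so_K^{\times}$ does not compute absolute crystalline cohomology unless $K/F$ is unramified; one must lift further to something log-smooth over $\so_F$ (e.g., via $r^{\rm PD}_F$), and then the quotient by $F^r$ is not simply $\sd_{\Q_p}\otimes\Omega^{<r}_{\sz/\Z_p}$. The actual content of \cite[Cor.~2.4]{NN} is precisely that, after passing to $\Q_p$ and modding out by $F^r$, this ``arithmetic'' thickening in the $\so_F\to\so_K$ direction collapses, yielding $\rg_{\dr}(\su_K)/F^r$. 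Your sentence ``the PD thickening $\sd$ is identified rationally with the coordinate ring of $\su_K$'' papers over this; as written it is not correct for ramified $K$.

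Second, for strictness the paper's point is sharper than yours: the integral comparison map is only a $p^N$-quasi-isomorphism for some constant $N$ (not an honest quasi-isomorphism), so strictness of the rational map is not automatic and requires the argument from \cite[Prop.~6.1]{CDN3} (inverting $p$ on a $p^N$-quasi-isomorphism of $p$-torsion-free pro-discrete complexes gives a strict quasi-isomorphism). Your appeal to ``de Rham differentials of smooth affinoids are strict'' is beside the point here --- the issue is on the crystalline side, not the de Rham side.
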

 \begin{proof}Let $\sx$ be a quasi-compact semistable    formal scheme over $\so_E$, $[E:K] < \infty$. 
Recall that \cite[Cor. 2.4]{NN}
there exists a functorial  and compatible with base-change  quasi-isomorphism
$$\gamma_r:\quad \R\Gamma_{\dr}(\sx_K)/F^r \stackrel{\sim}{\to} \R\Gamma_{\crr}(\sx,\so/\sj^{[r]})_{\Q_p}.
$$
This quasi-isomorphism is in fact strict: this is not completely evident because the integral version of the morphism is only a $p^N$-quasi-isomorphism for some constant $N$ 
but can be seen by an argument identical to the one used at the end of the proof of \cite[Prop. 6.1]{CDN3}. 
By $\eta$-\'etale descent  we get the  strict quasi-isomorphism in the proposition.
 \end{proof}
 \subsubsection{Some computations.} Recall that, in a stable range and up to some universal constants, crystalline syntomic cohomology has a simple relation to de Rham cohomology. Let $\sx$ be an affine semistable formal scheme over $\so_K$. 
Let $r\geq 0$. We note that 
$\tau_{\leq r-1}(\rg_{\crr}(\sx)/F^r)\stackrel{\sim}{\to}\rg_{\crr}(\sx)/F^r$ and that the natural map $\tau_{\leq r+1}([\rg_{\crr}(\sx)]^{\phi=p^r})\to [\rg_{\crr}(\sx)]^{\phi=p^r}$ is a $p^{2r}$-quasi-isomorphism (since $1-p^s\phi, s\geq 1, $ is invertible on differentials in degree $r+s$).
\begin{proposition}{\rm(Colmez-Nizio\l, \cite[Prop. 3.12]{CN})}
\label{added}
{\rm (i)} The natural map
$$\tau_{\leq r+1}\rg_{\synt}(\sx,\Z_p(r))\to\rg_{\synt}(\sx,\Z_p(r))$$
is a $p^{2r}$-quasi-isomorphism and $H^{r+1}\rg_{\synt}(\sx,\Z_p(r))\stackrel{\sim}{\to}H^{r+1}([\rg_{\crr}(\sx)]^{\phi=p^r})$.

{\rm (ii)}
The complex $\tau_{\leq r-1}([\rg_{\crr}(\sx)]^{\phi=p^r})$ is $p^{N}$-acyclic, for a constant $N=N(e,d,p,r)$, where $e=[K:F], d=\dim \sx/\so_K$. Hence the natural map 
$\rg_{\crr}(\sx)/F^r\to \tau_{\leq r-1}(\rg_{\synt}(\sx,\Z_p(r))[1])$ is a $p^{N}$-quasi-isomorphism.

{\rm (iii)}
 The above statements are valid also modulo $p^n$. Moreover, $H^{r+1}([\rg_{\crr}(\sx_n)]^{\phi=p^r})$ is, \'etale locally on $\sx_n$, $p^{N}$-trivial, for a constant $N=N(r)$.
\end{proposition}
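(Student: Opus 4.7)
The plan is to exploit the interaction between the Frobenius $\phi$ and the Hodge filtration $F^\bullet\rg_{\crr}(\sx)$. Two complementary inputs do all the work. First, Mazur--Ogus divisibility: $\phi(F^s\rg_{\crr})\subset p^s\rg_{\crr}$, so the divided Frobenius $\phi_s:=\phi/p^s$ is well defined and bounded on $\gr^s\rg_{\crr}\simeq \rg(\sx,\Omega^s_\sx)[-s]$. Second, the slope estimate (Mazur's theorem, valid in the semistable setting): the slopes of $\phi$ on $H^i\rg_{\crr}(\sx)$ are bounded by $i$. The first fact controls $p^r-\phi$ in Hodge-degree $>r$; the second controls it in cohomological degree $<r$.

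For (i), start from the fiber sequence
$$\rg_{\synt}(\sx,\Z_p(r))\to [\rg_{\crr}(\sx)]^{\phi=p^r}\to \rg_{\crr}(\sx)/F^r.$$
The quotient is represented by the stupidly truncated de Rham complex $\sigma^{<r}\Omega^\bullet_\sx$ and vanishes cohomologically in degrees $\geq r$, so the long exact sequence already gives $H^i\rg_{\synt}\xrightarrow{\sim}H^i[\rg_{\crr}]^{\phi=p^r}$ for $i\geq r+1$, in particular for $i=r+1$. To show the $p^{2r}$-acyclicity in degrees $\geq r+2$, use the $\phi$-stable Hodge sub-filtration to split off
$$[F^{r+1}\rg_{\crr}]^{\phi=p^r}\to [\rg_{\crr}]^{\phi=p^r}\to [\rg_{\crr}/F^{r+1}]^{\phi=p^r}.$$
On $\gr^{r+s}$ for $s\geq 1$ one has $p^r-\phi=p^r(1-p^s\phi_{r+s})$ with $1-p^s\phi_{r+s}$ invertible by a geometric series, so $p^r-\phi$ is $p^r$-invertible on each graded piece; d\'evissage along the Hodge filtration of $F^{r+1}$ then makes $[F^{r+1}\rg_{\crr}]^{\phi=p^r}$ itself $p^r$-acyclic. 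Meanwhile $\rg_{\crr}/F^{r+1}\simeq \sigma^{\leq r}\Omega^\bullet_\sx$ is supported in degrees $\leq r$, so its $\phi$-fiber sits in degrees $\leq r+1$. Combining, one loses at most one further factor of $p^r$ passing between the three terms, giving the announced $p^{2r}$-bound.

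For (ii), it suffices by the same fiber sequence to show $[\rg_{\crr}]^{\phi=p^r}$ is $p^N$-acyclic in degrees $\leq r-1$; then the connecting map identifies $\rg_{\crr}/F^r$ with $\tau_{\leq r-1}(\rg_{\synt}[1])$ up to $p^N$. Using the long exact sequence of the Frobenius fiber, this reduces to showing that $p^r-\phi$ acts on $H^i\rg_{\crr}(\sx)$ as a $p^N$-isomorphism for $i\leq r-1$. By the slope estimate, $\phi$ on these cohomology groups has slopes at most $i<r$, so $p^r-\phi$ becomes invertible after inverting $p$; the denominator of the inverse is bounded by a Dieudonn\'e--Manin / Fontaine--Laffaille type estimate which depends only on $r$, $i$ and the slope bound, hence on $e,d,p,r$, producing the constant $N$.

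For (iii), the integral arguments above all specialize mod $p^n$: the inverses $(1-p^s\phi_{r+s})^{-1}$ on $\gr^{r+s}$ and $(p^r-\phi)^{-1}$ on the low-degree $H^i$ have $p$-adically bounded denominators and descend. The subtle point, which is the main obstacle, is the \'etale-local triviality of $H^{r+1}[\rg_{\crr}(\sx_n)]^{\phi=p^r}$: one identifies this group (using $H^{r+1}(\rg_{\crr}(\sx_n)/F^r)=0$ and the fiber sequence) with $H^{r+1}\rg_{\synt}(\sx,\Z/p^n(r))$, then with $p$-adic nearby cycles via the Fontaine--Messing period map; the Milnor $\dlog$-symbol map surjects onto these nearby cycles \'etale-locally up to a $p^N$ depending only on $r$, and symbols can be trivialised on a sufficiently small \'etale neighbourhood. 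The technical crux is ensuring that all three estimates -- the Hodge d\'evissage, the slope denominator, and the symbol surjectivity -- contribute bounded powers of $p$ that are uniform in $n$ and in $\sx$.
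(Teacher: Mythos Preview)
The paper does not prove this proposition; it is quoted from \cite[Prop.~3.12]{CN}, and the sentence immediately preceding the statement already records the mechanism behind (i) (invertibility of $1-p^s\phi_s$ on differentials in degree $r+s$). Your outline for (i) matches that.

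Your argument for (ii), however, has a genuine gap. You assert that ``the slopes of $\phi$ on $H^i\rg_{\crr}(\sx)$ are bounded by $i$'' and then invert $p^r-\phi$ on these cohomology groups via a Dieudonn\'e--Manin/Fontaine--Laffaille estimate. But $\sx$ is \emph{affine}: the modules $H^i_{\crr}(\sx)$ are not finitely generated over $W$, so Newton polygons and the Dieudonn\'e--Manin classification are simply unavailable, and the slope bounds you cite (Mazur, Katz) are theorems about proper varieties. The very shape of the constant $N=N(e,d,p,r)$, with its dependence on the ramification index $e$, already signals that the mechanism is not slope-theoretic on cohomology. What is actually needed is a complex-level argument on the explicit de Rham complex of a PD-envelope: one must show that the divided Frobenii $\phi_q=\phi/p^q$ on the individual terms $\Omega^q$ are $p^M$-\emph{isomorphisms} (a strong-divisibility statement of Fontaine--Laffaille/Breuil type, degraded by bounded $p$-powers outside the strict FL range), so that in Hodge degree $q\leq r-1$ one can invert $p^r-\phi=-p^q\phi_q(1-p^{r-q}\phi_q^{-1})$ up to $p^N$. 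Your sketch for (iii) has a related problem: you pass from $H^{r+1}\rg_{\synt}(\sx,\Z/p^n(r))$ to $p$-adic nearby cycles via the Fontaine--Messing period map, but by Theorem~\ref{main0} that map is only a $p^N$-isomorphism in degrees $\leq r$; in degree $r+1$ you have no control over its kernel or cokernel, so the transfer of \'etale-local triviality from the nearby-cycle side is unjustified.
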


    Let $X\in {\rm Sm}_K$, $r\geq 0$. The distinguished triangle (\ref{seq11}) and Lemma \ref{derham} yield a natural map
 $$\partial_r: (\rg_{\dr}(X)/F^r)[-1]\to \rg_{\synt}(X,\Q_p(r)).$$
    \begin{corollary}\label{first1}
    \begin{enumerate}
    \item 
    For $i\leq r-1$, the map
    $$
    \partial_r: \wt{H}^{i-1}_{\dr}(X)\to \wt{H}^{i}_{\synt}(X,\Q_p(r))
    $$
    is an isomorphism.
    \item We have the  exact sequence
    $$
    0\to \wt{H}^{r-1}(\rg_{\dr}(X)/F^r)\lomapr{\partial_r} \wt{H}^r_{\synt}(X,\Q_p(r))\to \wt{H}^{r}_{\eet}(X,\sa_{\crr,\Q_p}^{\phi=p^r})\to \wt{H}^{r}(\rg_{\dr}(X)/F^r)
    $$
    \end{enumerate}
    \end{corollary}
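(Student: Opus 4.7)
My plan is to read off both assertions from the long exact cohomology sequence attached to the distinguished triangle (\ref{seq11}), after replacing its third term by truncated de Rham cohomology. First I would use Proposition \ref{derham} to rewrite the second triangle in (\ref{seq11}) as a strict distinguished triangle
\[
\rg_{\synt}(X,\Q_p(r))\longrightarrow \rg_{\eet}(X,\sa_{\crr,\Q_p}^{\phi=p^r})\longrightarrow \rg_{\dr}(X)/F^r
\]
in $\sd(C_{\Q_p})$, whose boundary map is the $\partial_r$ of the statement.

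The key vanishing I need is $\wt{H}^i_{\eet}(X,\sa_{\crr,\Q_p}^{\phi=p^r})=0$ for $i\le r-1$. This will follow from Proposition \ref{added}(ii) together with the equivalence of topoi of Proposition \ref{Bonn1}. On an affine semistable $\sx$, the complex $\tau_{\le r-1}[\rg_{\crr}(\sx)]^{\phi=p^r}$ is $p^N$-acyclic and hence strictly acyclic after inverting $p$; this says that the cohomology presheaves of $\sa_{\crr,\Q_p}^{\phi=p^r}$ vanish in degrees below $r$ on the $\eta$-\'etale base of semistable affines. The local-to-global spectral sequence then propagates this vanishing to $\wt{H}^i_{\eet}(X,\sa_{\crr,\Q_p}^{\phi=p^r})$ for $i\le r-1$.

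Granting the vanishing, part (1) is immediate: the long exact sequence yields isomorphisms
\[
\partial_r\colon \wt{H}^{i-1}(\rg_{\dr}(X)/F^r)\stackrel{\sim}{\longrightarrow}\wt{H}^i_{\synt}(X,\Q_p(r))\quad\text{for }i\le r-1,
\]
and for $i-1\le r-2$ the stupid truncation $\Omega^{\le r-1}_X$ computes the same cohomology as the full de Rham complex, so $\wt{H}^{i-1}(\rg_{\dr}(X)/F^r)=\wt{H}^{i-1}_{\dr}(X)$. Part (2) is the $4$-term exact sequence that drops out of the same long exact sequence at the edge of the vanishing range, using only vanishing up to degree $r-1$.

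The main obstacle will be the vanishing step: one has to work derivedly in $\sd(C_{\Q_p})$ in order to preserve strictness, and one must argue cleanly that vanishing of the presheaf values on the semistable base implies vanishing of the sheafified cohomology sheaves, and hence of the sheaf cohomology in the same range. Once that is in place, both items are formal consequences of (\ref{seq11}) and the de Rham identification $\gamma_r$.
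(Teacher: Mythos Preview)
Your proposal is correct and follows essentially the same route as the paper: both read off the claims from the long exact sequence attached to the second distinguished triangle in (\ref{seq11}) (after identifying the third term with $\rg_{\dr}(X)/F^r$ via Proposition~\ref{derham}), and both deduce the crucial vanishing $\wt{H}^{i}_{\eet}(X,\sa_{\crr,\Q_p}^{\phi=p^r})=0$ for $i\le r-1$ from Proposition~\ref{added}(ii). Your write-up is a bit more explicit about why the local $p^N$-acyclicity on affine semistable models propagates to the global vanishing (via the $\eta$-\'etale base of Proposition~\ref{Bonn1}), whereas the paper simply asserts it; otherwise the arguments are the same.
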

    \begin{proof} 
    To prove the first claim, note that we have the long exact sequence
    $$
    \wt{H}^{i-1}_{\eet}(X,\sa_{\crr,\Q_p}^{\phi=p^r})\to \wt{H}^{i-1}(\rg_{\dr}(X)/F^r)\to \wt{H}^i\rg_{\synt}(X,\Q_p(r))\to \wt{H}^{i}_{\eet}(X,\sa_{\crr,\Q_p}^{\phi=p^r})
    $$
    If $i\leq r-1$ then $\wt{H}^{i-1}\rg_{\dr}(X)\stackrel{\sim}{\to}\wt{H}^{i-1}(\rg_{\dr}(X)/F^r)$ and 
    (1) follows from Proposition \ref{added} (which implies that $\wt{H}^{i-1}_{\eet}(X,\sa_{\crr,\Q_p}^{\phi=p^r})=0$ and $\wt{H}^{i}_{\eet}(X,\sa_{\crr,\Q_p}^{\phi=p^r})=0$).
    
    By a similar argument we get that the map
    $
    \partial_r:  \wt{H}^{r-1}(\rg_{\dr}(X)/F^r)\to \wt{H}^r_{\synt}(X,\Q_p(r))
    $ is injective which yields    the second claim of the corollary.
    \end{proof}
%\section{Rigid analytic Hyodo-Kato cohomology}
%??????
\subsection{Arithmetic rigid analytic  Hyodo-Kato cohomology} \label{kwak-kwak}We define here  Hyodo-Kato  cohomology of smooth rigid analytic varieties over $K$ as well as a Hyodo-Kato morphism. We do it by $\eta$-\'etale descent of crystalline Hyodo-Kato cohomology and the Hyodo-Kato morphism for semistable models.
  
\subsubsection{Hyodo-Kato cohomology.}  Let $\sa_{\hk}$ be the $\eta$-\'etale sheafification of the presheaf $\sx\mapsto \R\Gamma_{\hk}(\sx_0):=\rg_{\crr}(\sx_0/\so^0_{F_L})_{\Q_p}$ on $\sm_{K}^{ss}$. Here $\sx$ is a semistable formal model over $\so_L$, $[L:K]< \infty,$ $L=K_{\sx}$,  and $F_L$ is the maximal absolutely unramified subfield of $L$. 
The sheaf $\sa_{\hk}$ 
     is a sheaf of dg $ F$-algebras on ${\rm Sm}_{K,\eet}$ equipped with a $\phi$-action and a derivation $N$ such that $N\phi=p\phi N$. For $X\in {\rm Sm}_{K}$, set $\R\Gamma_{\hk}(X):=\R\Gamma_{\eet}(X,\sa_{\hk})$. Equip it with a topology in the usual way, via $\eta$-\'etale descent, from the natural topology on $\R\Gamma_{\hk}(\sx_0)$.
     
\subsubsection{Convergent cohomology.}  Let $\sa_{\conv}$ be the $\eta$-\'etale sheafification of the presheaf
 \footnote{Here $\rg_{\conv}(\sx_1/\so_L^{\times})$ (and later $\rg_{\rig}(\sx_1/\so_L^{\times})$) are defined following the construction of Grosse-Kl\"onne \cite[1.1-1.4]{GKFr} 
 by taking rigid analytic tubes (resp. dagger tubes).} $\sx\mapsto \rg_{\conv}(\sx_1/\so_L^{\times})$, $L=K_{\sx}$, on $\sm_{K,\eet}^{\sem}$.  For $X\in {\rm Sm}_K$, we set $\rg_{\conv}(X):=\rg_{\eet}(X,\sa_{\conv})$. It is a dg $K$-algebra. We equip it with the topology induced  by $\eta$-\'etale descent from the topology of the $\R\Gamma_{\conv}(\sx_{1}/\so_L^{\times})$'s. We have natural (strict) quasi-isomorphisms
 $$
 \sa_{\conv}\simeq \sa_{\dr},\quad \rg_{\conv}(X)\simeq\rg_{\dr}(X)
 $$   
 induced by the quasi-isomorphisms $\rg_{\conv}(\sx_1/\so_L^{\times})\simeq \rg_{\dr}(\sx_L)$ that hold because $\sx$ is log-smooth over $\so_L^{\times}$. 
\subsubsection{Hyodo-Kato morphism.}  To define the Hyodo-Kato quasi-isomorphism we will use the  original Hyodo-Kato quasi-isomorphism defined for quasi-compact formal schemes in \cite{HK} (see also \cite{OM}). 
     We will  describe  it now  in some detail. Denote by $r^+_F$ the algebra $\so_F[[T]]$
with the log-structure associated to $T$.  Sending $T$ to $p$ induces
a surjective morphism $r^+_F\to \so_F^{\times}$.    We denote by $r^{\rm PD}_F$ the $p$-adic divided power envelope of $r^+_F$ with respect to the kernel of this morphism. Frobenius is defined by $T\mapsto T^p$, monodromy is a $\so_F$-linear derivation given  by $T\mapsto T$. We will skip the subscript $F$ if there is no danger of confusion.

\smallskip
   (i) {\em Local definition.}  Assume that we have an admissible  semistable formal scheme  $\sx$ over $\so_K$. We will work in the classical derived category. 
   Recall  that  the Frobenius
   $$
   r^{\rm PD}_{n,\phi}\otimes^L_{r^{\rm PD}_{n}}\R\Gamma_{\crr}(\sx_0/r^{\rm PD}_{n})\to \rg_{\crr}(\sx_0/r^{\rm PD}_{n}),\quad 
   \so_{F,n,\phi}\otimes^L_{\so_{F,n}}\rg_{\crr}(\sx_0/\so^0_{F,n})\to \rg_{\crr}(\sx_0/\so_{F,n}^0)
   $$
   has a $p^N$-inverse, for  $N=N(d)$, $d=\dim \sx_0$. This is proved in \cite[2.24]{HK}.
   Recall also that   the projection $p_0:  \rg_{\crr}(\sx_0/r^{\rm PD}_{n})\to \rg_{\crr}(\sx_0/\so_{F,n}^0)$, $T\mapsto 0$,
has a functorial (for maps between formal schemes and a change of $n$) and Frobenius-equivariant $p^{N_{\iota}}$-section, $N_{\iota}=N(d)$,
   $$\iota_n: 
   \rg_{\crr}(\sx_0/\so_{F,n}^0)\to  \rg_{\crr}(\sx_0/r^{\rm PD}_{n}),
    $$
 i.e., $p_0\iota_n=p^{N_{\iota}}$. 
 %Up to $p^{N_{\iota}}$ this section is unique (hence will be unique rationally). 
 This  follows easily from the proof of Proposition 4.13 in \cite{HK}; 
 the key point being that the Frobenius on $\rg_{\crr}(\sx_0/\so_{F,n}^0)$ is close to a quasi-isomorphism and the Frobenius on the ${\rm PD}$-ideal of $r^{\rm PD}$ is close to zero. Moreover, the resulting map 
 \begin{equation}
 \label{section1}
 \iota_n: 
   \rg_{\crr}(\sx_0/\so_{F,n}^0)\otimes^L_{\so_{F,n}}r^{\rm PD}_{n} \to  \rg_{\crr}(\sx_0/r^{\rm PD}_{n})
    \end{equation}
 is a $p^N$-quasi-isomorphism, $N=N(d)$, \cite[Lemma 5.2]{HK} and so is the composite
 $$
 p_{p} \iota_n: \rg_{\crr}(\sx_0/\so_{F,n}^0)\to  \rg_{\crr}(\sx_0/\so^{\times}_{F,n}),
 $$
 where   the projection $p_p:  \rg_{\crr}(\sx_0/r^{\rm PD}_{n})\to \rg_{\crr}(\sx_0/\so_{F,n}^{\times})$ is defined by $T\mapsto p$. 
Taking $\holim_n$ of the last map we obtain a map 
 $$
  p_{p} \iota: \rg_{\crr}(\sx_0/\so_{F}^0) \to  \rg_{\crr}(\sx_0/\so^{\times}_{F})
 $$
 that  is a $p^N$-quasi-isomorphism, $N=N(d)$. 
 
 We define the Hyodo-Kato map as the composition
 \begin{align}
 \label{definition-HK}
\iota_{\hk}: \rg_{\crr}(\sx_0/\so_{F}^0)_{F} & \verylomapr{p^{-N_{\iota}}p_{p} \iota} \rg_{\crr}(\sx_0/\so^{\times}_{F})_{F}\to \rg_{\crr}(\sx_0/\so^{\times}_{K})_{K}\stackrel{\sim}{\leftarrow}\rg_{\conv}(\sx_0/\so^{\times}_{K})\\
 & \stackrel{\sim}{\leftarrow}
 \rg_{\conv}(\sx_1/\so^{\times}_{K})\stackrel{\sim}{\leftarrow}\rg_{\dr}({\sx}_K).\notag
 \end{align}
 The fourth map  is actually a natural isomorphism by the invariance under infinitesimal thickenings of convergent cohomology \cite[0.6.1]{Og}.
 The induced map 
 $\iota_{\hk}:\rg_{\crr}(\sx_0/\so_{F}^0)_{F}\otimes_{F}K\to \rg_{\dr}(\sx_K)$ 
 is a  strict quasi-isomorphism.

\smallskip
  (ii) {\em Globalization.}  Let now $X$ be a smooth rigid analytic variety over $K$. Since the computation, leading to the existence of the section $\iota$, in Proposition 4.13 in \cite{HK} can be done on the big topos as long as we can control the dimension of the schemes involved, the above Hyodo-Kato map can be lifted to a Hyodo-Kato map
   $$
   \iota_{\hk}:\sa_{\hk}\to \sa_{\dr}
   $$
   in the classical derived category of \'etale sheaves on $X$. It induces  the Hyodo-Kato map
   \begin{equation}
   \label{cieplo}
   \iota_{\hk}: \rg_{\hk}(X)\to\rg_{\dr}(X).
   \end{equation}
   \begin{proposition}{\rm ({\em Local-global compatibility})}
   \label{hypercov}
   For a semistable formal scheme  $\sx$ over $\so_K$, the canonical map
   \begin{equation}
   \label{hypercov0}
   \rg_{\hk}(\sx_0)\to\rg_{\hk}(\sx_K)
   \end{equation}
   is a strict quasi-isomorphism.
   \end{proposition}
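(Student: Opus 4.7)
The plan is to deduce the local--global compatibility from \'etale descent of de Rham cohomology, transported through the Hyodo--Kato quasi-isomorphism. By construction, $\rg_{\hk}(\sx_K)=\rg_{\eet}(\sx_K,\sa_{\hk})$ is cohomology of the $\eta$-\'etale sheafification on $\sm^{\sem}_K$ of the presheaf $\sy\mapsto\rg_{\crr}(\sy_0/\so_{F_L}^0)_{\Q_p}$ (with $L=K_{\sy}$). Using the equivalence of topoi from Proposition \ref{Bonn1}, it suffices to show that this presheaf already satisfies $\eta$-\'etale descent up to strict quasi-isomorphism: that is, for every $\eta$-\'etale hypercovering $\sy_{\bullet}\to\sx$ in $\sm^{\sem}_K$ the augmentation
$$
\rg_{\hk}(\sx_0)\longrightarrow \mathrm{Tot}\bigl(\rg_{\hk}(\sy_{\bullet,0})\bigr)
$$
is a strict quasi-isomorphism. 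A standard spectral-sequence argument then identifies $\rg_{\hk}(\sx_0)$ with $\rg_{\eet}(\sx_K,\sa_{\hk})$.

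To establish this descent, I would first refine $\sy_{\bullet}$ (using quasi-compactness in each simplicial degree and Theorem \ref{Tem1}) so that every component $\sy_i$ is semistable over a common finite extension $\so_E$ of $\so_K$. The Hyodo--Kato quasi-isomorphism, applied functorially to each $\sy_i$ in its $\so_E$-version, then produces a commutative diagram
$$
\xymatrix{
\rg_{\hk}(\sx_0)\otimes_{F_E} E\ar[d]_{\iota_{\hk}}\ar[r] & \mathrm{Tot}\bigl(\rg_{\hk}(\sy_{\bullet,0})\bigr)\otimes_{F_E} E\ar[d]^{\iota_{\hk}}\\
\rg_{\dr}(\sx_E)\ar[r] & \mathrm{Tot}\bigl(\rg_{\dr}(\sy_{\bullet,E})\bigr)
}
$$
in which the vertical arrows are strict quasi-isomorphisms by the local Hyodo--Kato theorem and the bottom arrow is a strict quasi-isomorphism by strict \'etale descent of the sheaf of K\"ahler differentials on smooth rigid varieties over $E$. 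Hence the top arrow is a strict quasi-isomorphism. Since $E/F_E$ is a finite (hence faithfully flat) field extension and each $\rg_{\hk}(\sy_{i,0})$ is a complex of finite-dimensional $F_E$-vector spaces (for quasi-compact $\sy_i$), base change along $F_E\to E$ is exact and reflects strict quasi-isomorphisms, yielding the desired descent for $\rg_{\hk}$.

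The main obstacle is promoting the Hyodo--Kato morphism to a diagram of strict quasi-isomorphisms simultaneously at all levels of the hypercovering. The section $\iota_n$ of \eqref{section1} is produced by a convergent $p$-adic argument on the PD-envelope $r^{\rm PD}$, and its functoriality in $\sy$ holds only up to uniform $p^N$-constants depending on $\dim\sy_0$; only after passing to $\holim_n$ and inverting $p$ do these constants disappear, giving a functorial strict quasi-isomorphism in $\sd(C_{F_E})$ that fits into the above diagram. A secondary, bookkeeping obstacle is the refinement producing the common base $\so_E$: this is handled by taking $E$ to be the Galois closure (in a fixed algebraic closure of $K$) of the compositum of the finitely many $K_{\sy_i}$ appearing in the truncation $\tau_{\leq k}\mathrm{Tot}(\rg_{\hk}(\sy_{\bullet,0}))$ for each $k$, and pulling $\sy_{\bullet}$ back to $\so_E$ before applying the argument above.
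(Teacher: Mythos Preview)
Your overall strategy---reduce to \'etale descent for de Rham via the Hyodo--Kato isomorphism---is the same as the paper's, but there is a genuine gap in how you handle the varying base fields. The left column of your square involves $\rg_{\hk}(\sx_0)$, which is an $F$-algebra (since $\sx$ is semistable over $\so_K$), not an $F_E$-algebra, so the tensor $\otimes_{F_E}E$ on the left is not well-posed. If you repair this by tensoring over $F$ instead, then on the right $\rg_{\hk}(\sy_{i,0})\otimes_F E\simeq \rg_{\hk}(\sy_{i,0})\otimes_{F_E}(F_E\otimes_F E)$ splits into $[F_E:F]$ copies (the Galois twists), and the Hyodo--Kato map lands in a product of twisted de Rham complexes rather than in $\rg_{\dr}(\sy_{i,E})$ alone; the square no longer matches the \'etale descent statement for $\rg_{\dr}(\sx_K)$ over $K$. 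Relatedly, ``pulling $\sy_\bullet$ back to $\so_E$'' does not produce semistable $\so_E$-schemes when $E/L_i$ is ramified, so you cannot invoke the Hyodo--Kato isomorphism for the pulled-back objects.

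The paper (following \cite[Prop.~3.18]{NN}) resolves exactly this issue by Galois descent: one passes to the geometric side, where the Hyodo--Kato complexes become uniformly $F^{\nr}$-linear and the Hyodo--Kato isomorphism identifies $\rg_{\hk}\otimes_{F^{\nr}}\ovk$ with $\rg_{\dr}\otimes_K\ovk$; de Rham descent there gives the result over $F^{\nr}$, and taking $\sg_K$-invariants (using the smoothness of the Galois action) brings it back over $F$. This is precisely the mechanism that absorbs the varying $F_{L_i}$ and $L_i$, and it is the step your argument is missing.
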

   \begin{proof}
   The proof of Proposition 3.18 in \cite{NN} goes through practically verbatim. Key points: the de Rham
   analog of (\ref{hypercov0})   holds plus   we have Galois descent for both sides of (\ref{hypercov0}) that allows us to deal with the field extensions appearing in the construction of local semistable models.
   \end{proof}

   \begin{remark}The above definition of the Hyodo-Kato quasi-isomorphism was normalized (at $p$) so that it is functorial. 
   A more customary definition depends on the uniformizer $\varpi$ (one basically proceeds as above but using the ${\rm PD}$-envelope of the map $\so_F\{T\}\to \so_K, T\mapsto \varpi,$
   instead of $r^{\rm PD}_F$) and hence it is not functorial.
   \end{remark}

\subsubsection{Arithmetic $r^{\rm PD}$-cohomology.}\label{tea1}
We define the  $r^{\rm PD}$-cohomology of smooth rigid analytic varieties over $K$ by $\eta$-\'etale descent of 
 the $r^{\rm PD}$-cohomology of semistable models.
  
 Let $\sa_{\rm PD}$ be the $\eta$-\'etale sheafification of the presheaf $\sx\mapsto \R\Gamma_{\crr}(\sx_0/r^{\rm PD}_{L})_{\Q_p}$ on $\sm_{K}^{ss}$. Here $\sx$ is an admissible semistable formal scheme over $\so_L$, $L=K_{\sx}$. We wrote $r^{\rm PD}_{L}$ for the $r^{\rm PD}$-ring corresponding to $F_L$.  Let ${ \srr}^{\rm PD}$ be the $\eta$-\'etale sheafification  of  the presheaf $\sx\mapsto r^{\rm PD}_{L,\Q_p}$ on $\sm_{K}^{ss}$. 
The sheaf $\sa_{\rm PD}$ 
     is a sheaf of dg $ \srr^{\rm PD}_{\Q_p}$-algebras on ${\rm Sm}_{K,\eet}$ equipped with a $\phi$-action and a derivation $N$, compatible with the derivation on $\srr^{\rm PD}$,  such that $N\phi=p\phi N$. For $X\in {\rm Sm}_{K}$, set $\R\Gamma_{\rm PD}(X):=\R\Gamma_{\eet}(X,\sa_{\rm PD})$. Equip it with a topology in the usual way, via $\eta$-\'etale descent, from the natural topology on the $\R\Gamma_{\crr}(\sx_0/r^{\rm PD}_L)_{\Q_p}$'s.
\begin{proposition} {\rm ({\em Local-global compatibility})} For a semistable formal model  $\sx$ over $\so_K$, the canonical map
$$
\rg_{\crr}(\sx_0/r^{\rm PD}_K)_{\Q_p}\to\rg_{\rm PD}(\sx_K)
$$
is a strict quasi-isomorphism. 
\end{proposition}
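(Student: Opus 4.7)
The plan is to mirror the proof of Proposition \ref{hypercov} for Hyodo–Kato cohomology, reducing the $r^{\rm PD}$-statement to it via the Frobenius-equivariant section $\iota$ of (\ref{section1}). First, I would observe that, by construction, $\sa_{\rm PD}$ is the $\eta$-\'etale sheafification of $\sx\mapsto \rg_{\crr}(\sx_0/r^{\rm PD}_L)_{\Q_p}$. Thus the assertion amounts to showing two things: (a) the local presheaf is already an $\eta$-\'etale sheaf on $\sm^{\sem}_K$ after evaluation at $\sx$, and (b) the target on the right has the expected base-change behavior under the field extensions $L/K$ needed to present $\sx$ as a semistable model.

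For the first reduction, I would use the Frobenius-equivariant section
$$\iota_n:\rg_{\crr}(\sx_0/\so_{F,n}^0)\otimes^L_{\so_{F,n}}r^{\rm PD}_n\to \rg_{\crr}(\sx_0/r^{\rm PD}_n),$$
which is a $p^N$-quasi-isomorphism by \cite[Lemma 5.2]{HK}. Taking $\holim_n$ and tensoring with $\Q_p$ turns this into a strict quasi-isomorphism
$$\rg_{\hk}(\sx_0)\wh\otimes_{F_L} r^{\rm PD}_{L,\Q_p}\stackrel{\sim}{\to} \rg_{\crr}(\sx_0/r^{\rm PD}_L)_{\Q_p}.$$
Sheafifying the same identification $\eta$-\'etale-locally on $\sm^{\sem}_K$ gives $\sa_{\rm PD}\simeq \sa_{\hk}\wh\otimes_F \srr^{\rm PD}_{\Q_p}$ (with the understanding that the tensor factor $\srr^{\rm PD}_{\Q_p}$ is locally constant on the appropriate open in the Galois tower, which is a Fr\'echet space). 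Applying $\rg_{\eet}(\sx_K,-)$ and using the fact that $\srr^{\rm PD}_{\Q_p}$ is a nuclear Fr\'echet $F$-algebra (so that completed tensor product commutes with the derived global sections appearing in $\eta$-\'etale descent), this reduces the desired strict quasi-isomorphism to the corresponding statement for $\rg_{\hk}$, which is precisely Proposition \ref{hypercov}.

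For the second reduction, I would take care of the finite extensions $L=K_{\sx}/K$ by Galois descent, just as in the proof of Proposition \ref{hypercov}. The ring $r^{\rm PD}_L$ depends only on $F_L$, so $\Gal(L/K)$ acts on $\rg_{\crr}(\sx_0/r^{\rm PD}_L)_{\Q_p}$ through its quotient $\Gal(F_L/F)$ on the coefficients and through its action on $\sx_0$ on the geometric side; Galois invariants recover $\rg_{\crr}(\sx_{0,K}/r^{\rm PD}_K)_{\Q_p}$. Since Galois descent holds for $\rg_{\hk}$ and for the Fr\'echet coefficient ring $\srr^{\rm PD}_{\Q_p}$ (as the completed tensor product is exact in the pro-discrete / nuclear Fr\'echet setting), it holds for $\sa_{\rm PD}$, and the globalization procedure agrees with the direct evaluation at $\sx$.

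The main obstacle I anticipate is the topological bookkeeping: the section $\iota$ exists only up to a power of $p$ integrally, so one must be careful to invert $p$ before claiming a \emph{strict} quasi-isomorphism, and the tensor product with $r^{\rm PD}_{L,\Q_p}$ is a completed tensor of a Fr\'echet space with the Banach complexes that compute $\rg_{\hk}$. Checking that this completed tensor commutes with the homotopy limits and colimits used in the $\eta$-\'etale descent, and that the resulting map remains strict in $\sd(C_{\Q_p})$, is where the real work lies — but this is the same kind of argument as at the end of the proof of \cite[Prop.\ 6.1]{CDN3}, already invoked for the de Rham comparison in Proposition \ref{derham}.
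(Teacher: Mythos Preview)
Your core strategy is exactly the paper's: use the rational section $s=p^{-N_\iota}\iota$ coming from (\ref{section1}) to identify $\rg_{\crr}(\sx_0/r^{\rm PD}_L)_{\Q_p}\simeq \rg_{\hk}(\sx_0)\wh\otimes^R_{F_L}r^{\rm PD}_{L,\Q_p}$, and then invoke the Hyodo--Kato local-global compatibility (Proposition~\ref{hypercov}). The paper does this directly at the hypercovering level: for an $\eta$-\'etale hypercovering $\su_\cdot$ of $\sx$ from $\sm^{\sem}_K$, it draws the commutative square with $s$ on both sides and observes that $\rg_{\hk}(\su_{\cdot,0})\wh\otimes^R_{F_{L_\cdot}}r^{\rm PD}_{L_\cdot,\Q_p}\simeq \rg_{\hk}(\su_{\cdot,0})\wh\otimes^R_{F}r^{\rm PD}_{K,\Q_p}$, so the bottom map becomes $(\text{HK descent map})\wh\otimes^R_F r^{\rm PD}_{K,\Q_p}$ and is a strict quasi-isomorphism by Proposition~\ref{hypercov}. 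This single algebraic identity is what replaces your appeal to ``completed tensor product commutes with derived global sections''; it makes the coefficient ring genuinely constant (equal to $r^{\rm PD}_{K,\Q_p}$) rather than the varying sheaf $\srr^{\rm PD}_{\Q_p}$, which is what your phrasing needs but does not quite say.

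Your ``second reduction'' is misplaced: in the statement $\sx$ is already semistable over $\so_K$, so $K_\sx=K$ and there is no Galois descent to perform at the level of $\sx$. The field extensions that actually appear are those attached to the terms of the hypercovering $\su_\cdot$, and they are absorbed precisely by the identity above, not by a separate Galois-invariants argument.
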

\begin{proof} It suffices to show that, for any $\eta$-\'etale hypercovering $\su_{\cdot}$ of $\sx$  from $\sm^{\sem}_K$ (we may assume that in every degree of the hypercovering we have a  quasi-compact formal scheme), the natural map
\begin{equation*}
 \R\Gamma_{\crr}(\sx_0/r^{\rm PD})_{\Qp}\to  \R\Gamma_{\crr}(\su_{\cdot,0}/r^{\rm PD}_{{L_{\cdot}}})_{\Q_p}
 \end{equation*}
is a strict quasi-isomorphism (modulo taking a refinement of $\su_{\cdot}$). 
Recall that the $p^N$-quasi-isomorphism $\iota$ from (\ref{section1}) yields a strict quasi-isomorphism ($\wh{\otimes}^R$ denotes the right derived functor of the tensor product)
  \begin{equation}
  \label{twisted-section}
  s=p^{-N_{\iota}}\iota: \quad \rg_{\hk}(\sx_0)\wh{\otimes}^R_Fr^{\rm PD}_{K,\Q_p}\stackrel{\sim}{\to} \rg_{\crr}(\sx_0/r^{\rm PD}_K)_{\Q_p}.
  \end{equation}
Using it we get the following commutative diagram
$$
\xymatrix@C=.6cm@R=.6cm{ 
\R\Gamma_{\crr}(\sx_0/r_K^{\rm PD})_{\Qp}\ar[r] &  \R\Gamma_{\crr}(\su_{\cdot,0}/r^{\rm PD}_{{L_{\cdot}}})_{\Q_p}\\
\rg_{\hk}(\sx_0)\wh{\otimes}^R_Fr^{\rm PD}_{K,\Q_p}\ar[r] \ar[u]_{s}^{\wr} & \rg_{\hk}(\su_{\cdot,0})\wh{\otimes}^R_{F_{L_{\cdot}}}r^{\rm PD}_{{L_{\cdot}},\Q_p}\ar[u]_{s_{\cdot}}^{\wr}
}
$$
Since $\rg_{\hk}(\su_{\cdot,0})\wh{\otimes}^R_{F_L}r^{\rm PD}_{L,\Q_p}
\simeq \rg_{\hk}(\su_{\cdot,0})\wh{\otimes}^R_{F}r^{\rm PD}_{K,\Q_p}
$ and since, by Proposition \ref{hypercov}, the natural map $\rg_{\hk}(\sx_0)\to \rg_{\hk}(\su_{\cdot,0})$ is a strict quasi-isomorphism so is the bottom map in the above diagram. It follows that  the top map is also a strict-quasi-isomorphism, as wanted. 
\end{proof}
 \subsection{Geometric rigid analytic  Hyodo-Kato cohomology}  \label{berkeley11} 
We will now define the  Hyodo-Kato  cohomology of smooth rigid analytic varieties over $C$. We will do it by $\eta$-\'etale descent of crystalline Hyodo-Kato cohomology of basic semistable models.

  \subsubsection{Definition and basic properties}\label{part-i} Let  ${f}:\sx\to \Spf(\so_C)^{\times}$ be a semistable  formal model.
 Suppose  that ${f}$ is the base change of 
 a semistable  formal model   ${f}_L:\sx_{\so_L}\to \Spf(\so_{L})^{\times}$ by $\theta: \Spf(\so_C)^{\times}\to\Spf(\so_{L})^{\times}$, for a finite extension $L/K$.
 That is, we have a map $\theta_{L}: \sx\to \sx_{\so_L}$ such that the square $({f},{f}_L,\theta,\theta_{L})$ is Cartesian.
 In the algebraic setting (algebraic schemes and $\ovk$ in place of $C$) such data $(L,\sx_{\so_L},\theta_{L})$ clearly form a filtered set.
 In our analytic case this is also 
 the case for the system 
$$\Sigma=\big\{(L,\sx_{\so_{L,1}},\theta_{L})\big\}$$
corresponding to the reduction modulo $p$ of such data\footnote{This is because the schemes $\sx_{\so_L,1}$ from above are algebraic.}, i.e., a system in which objects are reductions $(L,\sx_{\so_{L,1}},\theta_{L})$ modulo $p$ of the tuples $(L,\sx_{\so_{L}},\theta_{L})$ as above but morphisms are morphisms between the reduced objects.

\smallskip
 (i) {\em Hyodo-Kato cohomology.} For a morphism of tuples $(L\pri,\sx_{\so_{L\pri},1}\pri,\theta\pri_{L^\prime})\to (L,\sx_{\so_L,1},\theta_{L})$ from $\Sigma$,
 we   have a canonical base change identification compatible with $\phi$-action (crystalline unramified base change)
$$\R\Gamma_{\hk}(\sx_{\so_L,0})\otimes_{F_L}F_{L\pri}\stackrel{\sim}{\to}
 \R\Gamma_{\hk}(\sx\pri_{\so_{L^\prime},0}).
$$
We set\footnote{Everything here and below is done in the derived $\infty$-category $\sd(C_{\Q_p})$.}
$$ \R\Gamma_{\hk}(\sx_1):= \hocolim_{\Sigma}\R\Gamma_{\hk}(\sx_{\so_L,0}).
$$
%Concerning the, perhaps strange notation, we note that $????$. 
$ \R\Gamma_{\hk}(\sx_1)$ is a dg $ F^{\nr}$-algebra\footnote{The field $F^{\nr}$ is equipped with the inductive limit topology. Later on we will use the same type of topology for $\ovk$.}  equipped with a $\phi$-action and a derivation $N$ such that $N\phi=p\phi N$.
It is functorial with respect to  $\sx$: note that the restriction of a morphism  $\sx\to \sy$ to a morphism $\sx_1\to\sy_1$ is defined over a finite extension of $K$.  Let $\sa_{\hk}$ be the $\eta$-\'etale sheafification of the presheaf $\sx\mapsto \rg_{\hk}(\sx_1)$ on $\sm_C^{\sem,b}$. For $X\in {\rm Sm}_C$, we set $\rg_{\hk}(X):=\rg_{\eet}(X,\sa_{\hk})$. It is a dg $F^{\nr}$-algebra equipped with a Frobenius, monodromy action, and a continuous action of
 $\sg_K$ if $X$ is defined over $K$ (this action is smooth, i.e., the stabilizer of every element is an open subgroup of $\sg_K$, if $X$ is quasi-compact; in general, it is only ``pro-smooth"). We equip it with the topology induced by $\eta$-\'etale descent from the topology of the $\R\Gamma_{\hk}(\sx_{\so_L,0})$'s. 
 
\smallskip
 (ii) {\em Convergent cohomology.}  Let $\sa_{\conv}$ be the $\eta$-\'etale sheafification of the presheaf
  $\sx\mapsto \rg_{\conv}(\sx_1/\so_C^{\times})$ on $\sm_{C,\eet}^{\sem,b}$.  For $X\in {\rm Sm}_C$, we set $\rg_{\conv}(X):=\rg_{\eet}(X,\sa_{\conv})$. It is a dg $C$-algebra  equipped with a continuous action of
 $\sg_K$. We equip it with the topology induced  by $\eta$-\'etale descent from the topology of the $\R\Gamma_{\conv}(\sx_{1}/\so_C^{\times})$'s. We have natural (strict) quasi-isomorphisms
 $$
 \sa_{\conv}\simeq \sa_{\dr},\quad \rg_{\conv}(X)\simeq\rg_{\dr}(X).
 $$
 
  Let $\sa_{\conv,\ovk}$ be the \'etale sheafification of the presheaf $\sx\mapsto \rg_{\conv,\ovk}(\sx_1)$ on $\sm_{C,\eet}^{\sem,b}$, where we set
  $$ \R\Gamma_{\conv,\ovk}(\sx_1):= \hocolim_{\Sigma}\R\Gamma_{\conv}(\sx_{\so_L,1}/\so_L^{\times})
  $$
  in the notation from above. For $X\in {\rm Sm}_C$, we set $\rg_{\conv,\ovk}(X):=\rg_{\eet}(X,\sa_{\conv,\ovk})$. It is a dg $\ovk$-algebra  equipped with a continuous action of
 $\sg_K$ if $X$ is defined over $K$ (this action is smooth if $X$ is quasi-compact). We equip it with the topology induced  by $\eta$-\'etale descent from the topology of the $\R\Gamma_{\conv}(\sx_{\so_L,1}/\so_L^{\times})$'s. 
 There are natural continuous morphisms
$$
  \sa_{\conv,\ovk}  \to \sa_{\conv},\quad \rg_{\conv,\ovk}(X)\to \rg_{\conv}(X).
$$
 \begin{remark}
 Instead of $\R\Gamma_{\conv,\ovk}(\sx_1)$ above we could have used 
  $$ \R\Gamma_{\conv,F^{\nr}}(\sx_1):= \hocolim_{\Sigma}\R\Gamma_{\conv}(\sx_{\so_L,1}/\so_{F_L}^{\times}).
  $$
  This would give a natural $F^{\nr}$-structure on de Rham cohomology (see Proposition \ref{etale-descent} below).
 \end{remark}
 
\smallskip
  (iii) {\em $r^{\rm PD}$-cohomology.} Let $\sa_{\rm PD}$ be the $\eta$-\'etale sheafification of the presheaf $\sx\mapsto \rg_{\rm PD}(\sx_1)$ on $\sm_{C,\eet}^{\sem,b}$, where we set
  $$ \R\Gamma_{\rm PD}(\sx_1):= \hocolim_{\Sigma}\R\Gamma_{\crr}(\sx_{\so_L,0}/r^{\rm PD}_L)_{\Q_p}.
  $$
  in the notation from above. For $X\in {\rm Sm}_C$, we set $\rg_{\rm PD}(X):=\rg_{\eet}(X,\sa_{\rm PD})$. Set
  $ r^{\rm PD}_{\ovk}:=r^{\rm PD}_{F}\otimes_{\so_F}\so_{F^{\nr}}:=\dirlim_L(r^{\rm PD}_F\otimes_{\so_F}\so_{F_L})
  $, $[L:K]<\infty$. $\rg_{\rm PD}(X)$
   is a dg $r^{\rm PD}_{\ovk,\Q_p}$-algebra  equipped with a continuous action of
 $\sg_K$ if $X$ is defined over $K$ (this action is smooth if $X$ is quasi-compact). We equip it with the topology induced  by $\eta$-\'etale descent  from the topology of the $\R\Gamma_{\crr}(\sx_{\so_L,0}/r^{\rm PD}_L)_{\Q_p}$'s. \\

 \subsubsection{Hyodo-Kato quasi-isomorphisms} We keep the set-up from Section \ref{part-i}.
  The Hyodo-Kato morphism from  (\ref{definition-HK}):
  \begin{equation}
  \label{HK-jablko}
  \iota_{\hk}: \rg_{\hk}(\sx_{\so_L,0})\to \rg_{\conv}(\sx_{\so_L,1}/\so_L^{\times}),\quad  \iota_{\hk}: \rg_{\hk}(\sx_{\so_L,0})\otimes_{F_L}L\stackrel{\sim}{\to }\rg_{\conv}(\sx_{\so_L,1}/\so_L^{\times})
  \end{equation}
   is compatible with morphisms in $\Sigma$ and taking  its homotopy colimit yields the first of the following two natural strict quasi-isomorphisms (called again the {\em Hyodo-Kato quasi-isomorphisms})
\begin{align}
\label{HK-geometric}
 & \iota_{\hk}:\quad \R\Gamma_{\hk}(\sx_{1})\otimes_{F^{\nr}}{\ovk}\stackrel{\sim}{\to} \R\Gamma_{\conv,\ovk}(\sx_1)=\hocolim_{\Sigma}\R\Gamma_{\conv}(\sx_{\so_L,1}/\so_L^{\times}),\\
& \iota_{\hk}:\quad \R\Gamma_{\hk}(\sx_1)\wh{\otimes}^R_{F^{\nr}}{C}\stackrel{\sim}{\to} \R\Gamma_{\dr}(\sx_{C}).\notag
\end{align}
By definition,   $ \R\Gamma_{\hk}(\sx_{1})\otimes_{F^{\nr}}{\ovk}:=\hocolim_L(\R\Gamma_{\hk}(\sx_{1})\otimes_{F^{\nr}}L)$, the homotopy colimit taken over fields $L, [L:F^{\nr}]<\infty$.  We have
$ \R\Gamma_{\hk}(\sx_{1})\otimes_{F^{\nr}}{\ovk}\simeq \hocolim_{\Sigma}(\R\Gamma_{\hk}(\sx_{\so_L,0})\otimes_{F_L}L)$.
In the second Hyodo-Kato morphism in (\ref{HK-geometric}), by definition\footnote{See \cite[Sec. 2.1]{CDN3} for a quick review of basic facts concerning tensor products in the category $C_{\Q_p}$.}, 
$$
 \R\Gamma_{\hk}(\sx_{1})\wh{\otimes}^R_{F^{\nr}}C:=\hocolim_{\Sigma}(\R\Gamma_{\hk}(\sx_{\so_L, 0})\wh{\otimes}^R_{F_L}C).
$$
We note that all the maps in the homotopy colimits are strict quasi-isomorphisms. 
The Hyodo-Kato morphism itself is induced from the Hyodo-Kato strict quasi-isomorphism (\ref{HK-jablko}):
$$
\hocolim_{\Sigma}(\R\Gamma_{\hk}(\sx_{\so_L, 0})\wh{\otimes}^R_{F_L}C)\stackrel{\sim}{\to}\hocolim_{\Sigma}(\R\Gamma_{\conv}(\sx_{\so_L, 1}/\so_L^{\times})\wh{\otimes}^R_{L}C)
$$
and the strict  quasi-isomorphisms
\begin{align*}
\hocolim_{\Sigma}(\R\Gamma_{\conv}(\sx_{\so_L,1}/\so_L^{\times})\wh{\otimes}_{L}^RC)  \stackrel{\sim}{\to} \R\Gamma_{\conv}(\sx_{1}/\so_{C}^{\times})
\simeq \R\Gamma_{\dr}(\sx_{C}).
\end{align*}
The first quasi-isomorphism is given by base change. We note here that, since $\rg_{\conv}(\sx_{\so_L,1}/\so_L^{\times})$ is a complex of Banach spaces, the completed tensor product with $C$ is exact. 

  Similarly, for $\sx$ as at the beginning of Section \ref{part-i}, the strict quasi-isomorphism (\ref{twisted-section})  yields a strict quasi-isomorphism
  \begin{equation}
  \label{zimno-berkeley}
   s:\ \rg_{\hk}(\sx_1)\wh{\otimes}^R_{F^{\nr}}r^{\rm PD}_{\ovk,\Q_p}\stackrel{\sim}{\to} \rg_{\rm PD}(\sx_1),
  \end{equation}
  where we set $$\rg_{\hk}(\sx_1)\wh{\otimes}^R_{F^{\nr}}r^{\rm PD}_{\ovk,\Q_p}:=
  \hocolim_{\Sigma}(\R\Gamma_{\hk}(\sx_{\so_L,0})\wh{\otimes}^R_{F_L}r^{\rm PD}_{L,\Q_p}).
  $$
  We also get ($T\mapsto 0$)
  $$
  \rg_{\rm PD}(\sx_1)\otimes_{r^{\rm PD}_{\ovk,\Q_p}}F^{\nr}\simeq \rg_{\hk}(\sx_1), \quad 
  $$
where we set $$  \rg_{\rm PD}(\sx_1)\otimes_{r^{\rm PD}_{\ovk,\Q_p}}F^{\nr}:=
  \hocolim_{\Sigma}(\R\Gamma_{\crr}(\sx_{\so_L,0}/r^{\rm PD}_{L})_{\Q_p}\wh{\otimes}^R_{r^{\rm PD}_{L,\Q_p}}F_L).
$$

 Varying $\sx$ in the above constructions we obtain the (Hyodo-Kato) maps
 $$
\iota_{\hk}:  \sa_{\hk}\to \sa_{\conv,\ovk},\quad \iota_{\hk}: \sa_{\hk}\to \sa_{\dr}, \quad s: \sa_{\hk}\to \sa_{\rm PD}
 $$
 of sheaves on ${\rm Sm}_{C,\eet}$. We claim that,
for $X\in {\rm Sm}_C$, they induce the natural  (Hyodo-Kato) strict quasi-isomorphisms 
  \begin{equation}
  \label{HK-two}
   \iota_{\hk}:\R\Gamma_{\hk}(X)\wh{\otimes}_{F^{\nr}}{\ovk}\stackrel{\sim}{\to} \R\Gamma_{\conv,\ovk}(X),\quad
 \iota_{\hk}: \R\Gamma_{\hk}(X)\wh{\otimes}^R_{{F}^{\nr}}{C}\stackrel{\sim}{\to} \R\Gamma_{\dr}(X),\quad
 \R\Gamma_{\hk}(X)\wh{\otimes}^R_{{F}^{\nr}}{r^{\rm PD}_{\ovk,\Q_p}}\stackrel{\sim}{\to} \R\Gamma_{\rm PD}(X).
  \end{equation}
  Here we set\footnote{The notation is ad hoc and rather awful here but we hope that it is self-explanatory.} 
 \begin{align}
 \label{HK-referee}
  & \R\Gamma_{\hk}(X)\wh{\otimes}_{F^{\nr}}{\ovk}:=\hocolim((\R\Gamma_{\hk}{\otimes}_{F^{\nr}}{\ovk})(\su_{\cdot,1})),\\
 & \R\Gamma_{\hk}(X)\wh{\otimes}^R_{F^{\nr}}C:=\hocolim((\R\Gamma_{\hk}\wh{\otimes}^R_{F^{\nr}}{C})(\su_{\cdot,1})),\notag\\
 &  \R\Gamma_{\hk}(X)\wh{\otimes}^R_{{F}^{\nr}}r^{\rm PD}_{\ovk,\Q_p}:=\hocolim((\R\Gamma_{\hk}\wh{\otimes}^R_{F^{\nr}}r^{\rm PD}_{\ovk,\Q_p})(\su_{\cdot,1})),\notag
 \end{align}
 where the homotopy colimit is taken over $\eta$-\'etale hypercoverings $\su_{\cdot}$ from $\sm^{\sem, b}_C$.  We note that we have 
\begin{equation}
\label{etale-berkeley}\R\Gamma_{\conv,\ovk}(X)\simeq \hocolim\R\Gamma_{\conv,\ovk}(\su_{\cdot,1}),\quad
\R\Gamma_{\rm PD}(X)\simeq \hocolim\R\Gamma_{\rm PD}(\su_{\cdot,1}).
\end{equation}
Indeed, by Proposition \ref{etale-descent} below (there is no circular reasoning here) we have $$\hocolim\R\Gamma_{\conv,\ovk}(\su_{\cdot,1})\stackrel{\sim}{\to} \hocolim\R\Gamma_{\conv,\ovk}(\su_{\cdot,C}),\quad \hocolim\R\Gamma_{\rm PD}(\su_{\cdot,1})\stackrel{\sim}{\to} \hocolim\R\Gamma_{\rm PD}(\su_{\cdot,C}).
$$
 Hence (\ref{etale-berkeley}) follows from the fact that $\R\Gamma_{\conv,\ovk}(X)$ and $\R\Gamma_{\rm PD}(X)$ satisfy $\eta$-\'etale descent. Having (\ref{etale-berkeley}),
the first strict quasi-isomorphism in (\ref{HK-two}) follows from the first Hyodo-Kato strict quasi-isomorphism in (\ref{HK-geometric}). The second Hyodo-Kato strict quasi-isomorphism in (\ref{HK-geometric})  implies easily the second strict quasi-isomorphism we wanted. The third strict quasi-isomorphism follows from (\ref{zimno-berkeley}).

\subsubsection{Local-global compatibility and comparison results.} Having the  quasi-isomorphisms (\ref{HK-two}) we can prove the following 
comparison result (where the tensor products in (2) and (3) are defined as in (\ref{HK-referee}): 
\begin{proposition}\label{etale-descent}
\begin{enumerate}
\item 
  Let  $\sx\in \sm^{\sem,b}_C$.
The natural maps
  \begin{align*}
&  \R\Gamma_{\hk}(\sx_1)\to\rg_{\hk}(\sx_C),   \quad \rg_{\conv,\ovk}(\sx_1)\to \rg_{\conv,\ovk}(\sx_C), \\
& \rg_{\conv}(\sx_1)\to \rg_{\conv}(\sx_C), \quad 
  \rg_{\rm PD}(\sx_1)\to \rg_{\rm PD}(\sx_C)
  \end{align*}
  are strict quasi-isomorphisms. 
  \item For $X\in {\rm Sm}_C$, we have  natural strict quasi-isomorphisms
  $$
  \rg_{\conv,\ovk}(X)\wh{\otimes}^R_{\ovk}C\stackrel{\sim}{\to}\rg_{\conv}(X) \simeq \rg_{\dr}(X).
  $$
  \item For $X\in {\rm Sm}_K$, we have a natural strict quasi-isomorphism
  $$
  \rg_{\dr}(X)\wh{\otimes}_{K}\ovk\simeq \rg_{\conv,\ovk}(X_C).
  $$
\end{enumerate}
\end{proposition}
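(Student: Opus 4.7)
The plan is to prove (1) first as the technical core, then deduce (2) and (3) by formal base-change manipulations with the Hyodo-Kato quasi-isomorphisms.

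For (1), the four maps are treated separately but by closely related arguments. The $\rg_{\conv}$ case is immediate: the presheaf $\sx\mapsto \rg_{\conv}(\sx_1/\so_C^{\times})$ is isomorphic to $\sx\mapsto \rg_{\dr}(\sx_C)$ via $\sa_{\conv}\simeq \sa_{\dr}$, and sheaves of differential forms satisfy strict $\eta$-\'etale descent on $\sm^{\sem,b}_{C,\eet}$. For $\rg_{\hk}$, I would take an $\eta$-\'etale hypercovering $\su_\cdot\to\sx$ in $\sm^{\sem,b}_C$; after truncating at a chosen cohomological degree and refining we may assume it is finite and degreewise quasi-compact. Since a finite diagram of basic semistable $C$-models is defined over a sufficiently large finite extension, by cofinality we may further assume $\su_\cdot$ is the base change of a hypercovering $\su_{\cdot,\so_L}\to \sx_{\so_L}$ in $\sm^{\sem}_L$ for some $L$ in $\Sigma$. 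For each further finite extension $M/L$, the arithmetic local-global compatibility of Proposition \ref{hypercov} (whose proof applies verbatim with $\so_M$ in place of $\so_K$) supplies a strict quasi-isomorphism
$$\rg_{\hk}(\sx_{\so_M,0})\stackrel{\sim}{\longrightarrow} \rg_{\hk}(\su_{\cdot,\so_M,0}),$$
functorially in $M$. Taking $\hocolim$ over $M\in\Sigma$ preserves strict quasi-isomorphisms in $\sd(C_{\Q_p})$ and yields the claim for $\rg_{\hk}$. The $\rg_{\conv,\ovk}$ case is entirely analogous, while the $\rg_{\rm PD}$ case follows by combining the established $\rg_{\hk}$ statement with the strict quasi-isomorphism (\ref{zimno-berkeley}).

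For (2), the identification $\rg_{\conv}(X)\simeq \rg_{\dr}(X)$ is already noted; for the first map, locally on $\sx\in \sm^{\sem,b}_C$, chaining the Hyodo-Kato quasi-isomorphisms (\ref{HK-two}) gives
\begin{align*}
\rg_{\conv,\ovk}(\sx_1)\wh{\otimes}^R_{\ovk}C
&\simeq \bigl(\rg_{\hk}(\sx_1)\wh{\otimes}_{F^{\nr}}\ovk\bigr)\wh{\otimes}^R_{\ovk}C\\
&\simeq \rg_{\hk}(\sx_1)\wh{\otimes}^R_{F^{\nr}}C\simeq \rg_{\dr}(\sx_C).
\end{align*}
Globalization via the $\eta$-\'etale descent established in (1), together with the fact that on a finite quasi-compact hypercovering the completed tensor product with $C$ commutes with the relevant hocolims, promotes this to the asserted quasi-isomorphism for all $X\in{\rm Sm}_C$. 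For (3), I would pick a local semistable model $\sx$ of $X$ over $\so_L$ with $[L:K]<\infty$. For each finite $M/L$ inside $\ovk$, the convergent/de Rham identification yields
$$\rg_{\conv}(\sx_{\so_M,1}/\so_M^{\times})\simeq \rg_{\dr}(\sx_M)\simeq \rg_{\dr}(X)\wh{\otimes}^R_KM,$$
and taking $\hocolim$ over $M$ produces $\rg_{\conv,\ovk}(\sx_{\so_C,1})\simeq \rg_{\dr}(X)\wh{\otimes}_K\ovk$. Globalizing via (1) combined with strict \'etale descent for $\sa_{\dr}$ yields the claim.

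The principal technical hurdle is the $\rg_{\hk}$ portion of (1): one must verify that $\eta$-\'etale hypercoverings of a basic semistable $C$-model can be cofinally found at finite levels within $\Sigma$, and that the arithmetic local-global compatibility is preserved upon passing to the filtered homotopy colimit in the quasi-abelian derived category $\sd(C_{\Q_p})$. Once this point is settled, the remainder of the proof is a formal consequence of the Hyodo-Kato strict quasi-isomorphisms in (\ref{HK-two}) and (\ref{zimno-berkeley}) together with \'etale descent for $\sa_{\dr}$.
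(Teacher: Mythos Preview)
Your argument for part~(1) contains a genuine gap. You assert that, after truncating and refining, an $\eta$-\'etale hypercovering $\su_\cdot\to\sx$ in $\sm^{\sem,b}_C$ may be taken to be the base change of a hypercovering $\su_{\cdot,\so_L}\to\sx_{\so_L}$ in $\sm^{\sem}_L$. But the paper explicitly warns (Section~\ref{models}) that maps in $\sm^{\sem,b}_C$ need \emph{not} come from finite levels; only their reductions modulo~$p$ do (this being the reason the filtered system $\Sigma$ is defined at the mod~$p$ level). Without a formal model of the hypercovering over $\so_M$, you cannot invoke Proposition~\ref{hypercov}, whose statement and proof both require an $\eta$-\'etale hypercovering of \emph{formal} schemes (the de~Rham side lives on the generic fiber). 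The ``principal technical hurdle'' you flag is thus not merely a verification but an actual obstruction to the route you propose.

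The paper sidesteps this entirely. For the Hyodo--Kato case of~(1) it first observes that the map $\rg_{\hk}(\sx_1)\to\rg_{\hk}(\su_{\cdot,1})$ is a strict quasi-isomorphism iff it is so after $-\otimes_{F^{\nr}}\ovk$, and then uses the \emph{local} Hyodo--Kato isomorphism~(\ref{HK-geometric}) to reduce to the $\{\conv,\ovk\}$ case. There the presheaf values are, by definition, $\hocolim_\Sigma\rg_{\dr}(\sx_{\so_L,1}\!\!\!\phantom{.}_{\phantom{L}L})$, and since $\Sigma$ records mod~$p$ data the finitely many maps in a truncated hypercovering can be found over a common finite extension $K'$; one concludes by \'etale descent for de~Rham cohomology over $K'$. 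The $\rg_{\rm PD}$ and $\rg_{\conv}$ cases are then reduced to the Hyodo--Kato and de~Rham cases via~(\ref{zimno-berkeley}) and $\sa_{\conv}\simeq\sa_{\dr}$ respectively, as you do.

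Your treatments of~(2) and~(3) are essentially correct in outline and close to the paper's, though note that the quasi-isomorphisms you invoke on $\sx_1$ are the \emph{local} ones~(\ref{HK-geometric}), not the global~(\ref{HK-two}); the latter are only available once part~(1) is established (the paper is careful to point out there is no circularity). For~(2) the paper argues degree-wise on a hypercovering with a small commutative diagram rather than chaining through $\rg_{\hk}$, but the content is the same.
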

\begin{proof}For the first claim,  it suffices to show that, for any $\eta$-\'etale hypercovering $\su_{\cdot}$ of $\sx$  from $\sm^{\sem, b}_C$, the natural maps
\begin{equation}
\label{morning}
 \R\Gamma_{?}(\sx_1)\to  \R\Gamma_{?}(\su_{\cdot,1}),\quad ?=\hk, \{\conv,\ovk\}, \conv, {\rm PD}, 
\end{equation}
are strict quasi-isomorphisms (modulo taking a refinement of $\su_{\cdot}$). We may assume that in every degree of the hypercovering we have a finite number of formal models. For the Hyodo-Kato case, it suffices to show the strict quasi-isomorphism after we tensor both sides with $\ovk$ over $F^{\nr}$. But then we can use the Hyodo-Kato quasi-isomorphism
(\ref{HK-geometric}) to reduce   to the case of $\{\conv, \ovk\}$ in (\ref{morning}).

For that case, note that our map is  strictly quasi-isomorphic to a map
$$
 \R\Gamma_{\dr}(\sx_{L})\otimes_L\ovk\to  (\R\Gamma_{\dr}\otimes_{L_{\cdot}}\ovk)(\su_{\cdot,{L_{\cdot}}}).
$$
The rather ugly notation for the hypercovering just underscores the fact that the field over which the particular formal schemes split varies. Passing to cohomology ($\wt{H}(-)$-cohomology) and then to a truncated hypercovering we can assume that all the rigid spaces and maps involved are defined over a common field $K^{\prime}$, a finite extension of $L$. We get a strict  quasi-isomorphism by  \'etale descent for de Rham cohomology.  The cases of ${\rm PD}$- and $\conv$-cohomology, can be reduced to that of Hyodo-Kato and de Rham cohomologies via the strict quasi-isomorphisms $\rg_{\rm PD}(X)\simeq \rg_{\hk}(X)\wh{\otimes}_{F^{\nr}}^Rr^{\rm PD}_{\ovk,\Q_p}$ and $\rg_{\conv}(X)\simeq \rg_{\dr}(X)$, respectively. 

  For the second claim of the proposition, it suffices to show that for an $\eta$-\'etale  hypercovering $\su_{\cdot}$ of $X$ from $\sm^{\sem, b}_C$, we have a strict quasi-isomorphism
 $$
 ( \rg_{\conv,\ovk}\wh{\otimes}^R_{\ovk}C)(\su_{\cdot,1})\simeq \rg_{\dr}(\su_{\cdot,C}).
  $$
  It suffices to argue degree-wise. Hence it suffices to show that, for a semistable formal model $\su$ over $\so_E$, $[E:L]<\infty$, the first  top horizontal arrow in the following diagram is a strict quasi-isomorphism:
$$
\xymatrix@R=.6cm{
  \rg_{\conv,\ovk}(\su_{\so_C,1})\wh{\otimes}^R_{\ovk}C \ar[r] &  \rg_{\conv}(\su_{\so_C,1})\ar[r]^-{\sim}  & \rg_{\dr}(\su_{C})\\
    \rg_{\conv}(\su_{\so_E,1})\wh{\otimes}^R_{E}C\ar[ur]^-{\sim} \ar[u]^{\wr} \ar[r]^-{\sim} & \rg_{\dr}(\su_{\rm PD})\wh{\otimes}^R_EC\ar[ur]^{\sim}.
}
$$
Since this diagram clearly commutes and the other  arrows are strict quasi-isomorphisms, this is evident.

For the third  claim of the proposition,  it suffices to show that, for any $\eta$-\'etale hypercovering $\su_{\cdot}$ of $X_C$  from $\sm^{\sem, b}_C$, the natural map
\begin{equation}
\label{morningg}
 \R\Gamma_{\dr}(X)\wh{\otimes}_K\ovk\to  \R\Gamma_{\conv,\ovk}(\su_{\cdot,1})
 \end{equation}
is a  strict quasi-isomorphism (modulo taking a refinement of $\su_{\cdot}$). We can assume that $\su_{\cdot}$ has formal models  in every degree. Then both sides of (\ref{morningg}) can be computed by 
$
(\R\Gamma_{\dr}\otimes_{L_{\cdot}}\ovk)(\su_{\cdot,L_{\cdot}})
$ proving what we wanted.
\end{proof}

 \subsubsection{Galois descent.}
 The following proposition shows that Hyodo-Kato cohomology satisfies Galois descent.
\begin{proposition}
\label{descent}
Let $X\in {\rm Sm}_K$. The natural projection $\varepsilon: X_{C,\eet}\to X_{\eet}$ defines pullback strict quasi-isomorphisms
\begin{equation}
\label{qis11}
 \varepsilon^*: \rg_{\hk}(X)\stackrel{\sim}{\to}\rg_{\hk}(X_C)^{\sg_K},\quad  \varepsilon^*: \rg_{\conv}(X)\stackrel{\sim}{\to}\rg_{\conv,\ovk}(X_C)^{\sg_K},\quad \varepsilon^*: \rg_{\rm PD}(X)\stackrel{\sim}{\to}\rg_{\rm PD}(X_C)^{\sg_K}.
 \end{equation}
\end{proposition}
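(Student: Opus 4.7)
The plan is to reduce Galois descent in all three cases to classical Galois descent along $F^{\nr}/F$ (respectively $\ovk/K$), by combining the definitions with the unramified base-change and Hyodo-Kato comparison isomorphisms already in hand.

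First I would reduce to a single local model. Since $\varepsilon^*$ is induced by a morphism of \'etale sheaves and since $\eta$-\'etale hypercovers of $X$ base-change to $\eta$-\'etale hypercovers of $X_C$, the functor $\varepsilon^*$ is compatible with the descent computations of both sides. Using the arithmetic and geometric local-global compatibilities (Proposition \ref{hypercov}, Proposition \ref{etale-descent}(1), and the $r^{\rm PD}$ analog proved just above them), I may assume $X = \sx_K$ for a single semistable formal model $\sx$ over $\so_K$; then the system $\Sigma$ is cofinal in $\{(L,\sx_{\so_L})\}$ with $L/K$ running over finite Galois extensions of $K$.

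For the Hyodo-Kato case, the crystalline unramified base-change isomorphism $\rg_{\hk}(\sx_{\so_L,0}) \simeq \rg_{\hk}(\sx_0) \otimes_F F_L$ (compatible with $\phi$, $N$, and with the transition maps in $L$) combines with the definition $\rg_{\hk}(\sx_1)=\hocolim_\Sigma \rg_{\hk}(\sx_{\so_L,0})$ to yield a $\sg_K$-equivariant strict quasi-isomorphism
\[
\rg_{\hk}(\sx_1) \simeq \rg_{\hk}(\sx_0) \otimes_F F^{\nr},
\]
in which the action factors through the unramified quotient $\sg_K \twoheadrightarrow \Gal(F^{\nr}/F)$. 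Since the colimit is filtered and the action is smooth (each element is fixed by $\sg_L$ for some finite $L$), classical Galois descent for $F^{\nr}/F$ gives $(\rg_{\hk}(\sx_0)\otimes_F F^{\nr})^{\sg_K}\simeq \rg_{\hk}(\sx_0)$, which identifies with $\rg_{\hk}(X)$ by Proposition \ref{hypercov}.

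The remaining two cases I would bootstrap from the Hyodo-Kato case. For $\rg_{\rm PD}$, I would apply the strict quasi-isomorphism $s$ from (\ref{twisted-section}) and its geometric version (\ref{zimno-berkeley}) to write both sides as $\rg_{\hk}(\cdot)$ completed-tensored over $F$ (resp.\ $F^{\nr}$) with $r^{\rm PD}_{K,\Q_p}$ (resp.\ $r^{\rm PD}_{\ovk,\Q_p}$); the Galois descent identity $(r^{\rm PD}_{\ovk,\Q_p})^{\sg_K} = r^{\rm PD}_{K,\Q_p}$, immediate from the filtered colimit $r^{\rm PD}_{\ovk,\Q_p}=\dirlim_L r^{\rm PD}_{L,\Q_p}$, then concludes the case. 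For $\rg_{\conv}$ I would combine $\rg_{\conv}(X)\simeq \rg_{\dr}(X)$ with the identification $\rg_{\conv,\ovk}(X_C)\simeq \rg_{\dr}(X)\wh{\otimes}_K \ovk$ from Proposition \ref{etale-descent}(3), and apply Galois descent for $\ovk/K$ to the $K$-vector space $\rg_{\dr}(X)$.

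The main obstacle will be topological: one must check that derived $\sg_K$-invariants commute both with the $\eta$-\'etale sheafification defining the geometric cohomologies and with the filtered colimit indexing $\Sigma$. In the quasi-compact case the Galois action is smooth, so $(-)^{\sg_K}$ commutes with the filtered colimit on the nose; for a general (non-quasi-compact) $X$ one must pass to an increasing admissible affinoid covering and control the resulting continuous $R^1\!\lim$ term using the Banach/Fr\'echet structure of the local pieces, in the spirit of the computations of Section \ref{pierre11}. Ensuring that the tensor products appearing in (\ref{HK-two}) interact correctly with these invariants is the technical heart of the argument.
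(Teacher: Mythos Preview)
Your overall strategy is sound and close to the paper's, but your reduction step has a gap. By Temkin's local alterations (Theorem~\ref{Tem1}), $\eta$-\'etale descent only lets you assume $X=\sx_K$ for $\sx\in\sm^{\sem}_K$, i.e.\ $\sx$ is semistable over $\so_L$ for some \emph{finite extension} $L/K$, not necessarily over $\so_K$ itself. Your unramified base-change identity $\rg_{\hk}(\sx_{\so_{L'},0})\simeq \rg_{\hk}(\sx_0)\otimes_F F_{L'}$ and the descent $(F^{\nr})^{\sg_K}=F$ then only yield $\rg_{\hk}(X_C)^{\sg_L}\simeq\rg_{\hk}(\sx_0)$, which identifies with $\rg_{\hk}(X/L)$ rather than $\rg_{\hk}(X/K)$. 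The paper closes this gap by taking $L/K$ Galois, observing that $X\otimes_K C\simeq (X\otimes_L C)\times\Gal(L/K)$ and hence $\rg_{\hk}(X\otimes_K C)^{\sg_K}\simeq\rg_{\hk}(X\otimes_L C)^{\sg_L}$, and combining this with the evident identification $\rg_{\hk}(X/K)\simeq\rg_{\hk}(X/L)$.

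Your bootstraps for $\rg_{\rm PD}$ and $\rg_{\conv}$ via the section $s$ and Proposition~\ref{etale-descent}(3) are reasonable alternatives; the paper instead just reruns the same argument ``analogously'' for all three cases (the base-change input for $r^{\rm PD}$-cohomology being as direct as for Hyodo--Kato). Finally, the topological obstacle is not the $R^1\!\lim$ control you describe: once you have reduced to a quasi-compact model the $\sg_K$-action is smooth and no limit argument is needed. The genuine subtlety is that $(-)^{\sg_K}$ must be well-defined at the derived level, i.e.\ that strictly quasi-isomorphic smooth $\sg_K$-complexes have strictly quasi-isomorphic fixed points; the paper handles this in Remark~\ref{Galois-sense} via the normalized trace.
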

\begin{remark}
\label{Galois-sense}
Here,  we denoted by $\rg_{\hk}(X_C)^{\sg_K}$, etc.,  the complex obtained by taking the $\sg_K$-fixed points of a representative of $\rg_{\hk}(X_C)$. 
This definition makes sense, i.e., two strictly quasi-isomorphic complexes representing $\rg_{\hk}(X_C)$ give two strictly quasi-isomorphic complexes representing $\rg_{\hk}(X_C)^{\sg_K}$. Or, otherwise speaking, taking a cone of the given quasi-isomorphism, for a complex $T:=T^0\to T^1\to T^2\to\cdots$ such that each $T^i$ is a direct sum of products of LB-spaces with a smooth action of $\sg_K$, the complex $T^{\sg_K}$ is strictly exact.
 Indeed,  since the complex $T$ is strictly exact, for all $i$, we have the strictly exact sequence
  \begin{equation}
  \label{exact-11}
  0\to \ker d_i\to T^i\to \ker d_{i+1}\to 0,
  \end{equation}
and we need to show that the induced sequence
  \begin{equation}
  \label{exact-12}
  0\to (\ker d_i)^{\sg_K}\to (T^i)^{\sg_K}\to (\ker d_{i+1})^{\sg_K}\to 0
  \end{equation}
 is exact. We note that there exists a normalized trace function
  $$\tr: T^i\to (T^i)^{\sg_K},\quad x\mapsto \varinjlim_{L \subset \ovk}\frac{1}{[L:K]}\sum _{\sigma\in \Gal(L/K)} \sigma(x).
 $$
 This is well-defined because $T^i$ is a finite direct sum of products of smooth $\sg_K$-modules and on a smooth $\sg_K$-module the limit in the formula stabilizes. Let now $x\in (\ker d_{i+1})^{\sg_K}$. Since the sequence (\ref{exact-11}) is exact, there exists $y\in T^i$ mapping to $x$. But then $\tr(y)$ maps to $\tr(x)=x$. Since $\tr(y)\in (T^i)^{\sg_K}$ this means that the sequence (\ref{exact-12}) is exact, as wanted. 
 \end{remark}
\begin{proof}({\em of Proposition \ref{descent}}) By $\eta$-\'etale descent, 
 we may assume that $X=\sx_K$ for $\sx\in \sm^{\sem}_K$. 
 Recall that the action of $\sg_K$ on $\R\Gamma_{\hk}(X_{C})$,  $\R\Gamma_{\conv}(X_{C})$, and $  \R\Gamma_{\rm PD}(X_C)$ is then smooth.  We will prove only the first quasi-isomorphism - the proof of the others being analogous. 
 
 Passing to a finite  extension of the splitting field $L$ of $\sx$, if necessary, we may assume that $\sx$ is semistable  over a finite Galois extension $L$ of $K$. Consider the following commutative diagram (we added the base $K$ and $L$ in the definition of the arithmetic Hyodo-Kato cohomology to stress that we are working with the category $\sm^{\sem}_K$ and $\sm^{\sem}_L$, respectively):
 $$
 \xymatrix@R=.6cm{
 \R\Gamma_{\hk}(X/L)  \ar[r]^-{\varepsilon^*} &  \R\Gamma_{\hk}(X\otimes_LC)^{\sg_L}\\
 \R\Gamma_{\hk}(X/K)\ar[u]^{\wr}  \ar[r]^-{\varepsilon^*}  &  \R\Gamma_{\hk}(X\otimes_KC)^{\sg_K}\ar[u]
 }
 $$
 By Proposition \ref{hypercov} and  Proposition \ref{etale-descent}, the top horizontal map is quasi-isomorphic to the map
 \begin{equation}
 \label{kolo20}
  \varepsilon^*: \R\Gamma_{\hk}(\sx_0)  \to (\R\Gamma_{\hk}(\sx_0) \otimes_{F_L}F^{\nr})^{\sg_L},
\end{equation}
which clearly is a quasi-isomorphism. Since $X\otimes_KC\simeq (X\otimes_LC)\times H$ for $H=\Gal(L/K)$, we have
$$ \R\Gamma_{\hk}(X\otimes_KC)  \simeq \R\Gamma_{\hk}(X\otimes_LC)\times H.$$
Hence the right vertical map in the above diagram is a quasi-isomorphism as well. It follows that so is the bottom horizontal map, as wanted. 
\end{proof}
\subsection{Passage to Bloch-Kato arithmetic rigid analytic syntomic cohomology} \label{passage1}
 Let $X\in {\rm Sm}_K$. Let $r\geq 0$. In this section, we  define   the Bloch-Kato rigid analytic syntomic cohomology: 
 \begin{align*}
 \rg^{\rm BK}_{\synt}(X, \Q_p(r)):= [[\rg_{\hk}(X)]^{N=0,\phi=p^r}\lomapr{\iota^{\prime}_{\hk}}\rg_{\dr}(X)/F^r],
\end{align*}
where the map $\iota^{\prime}_{\hk}$ is defined below, 
and  we show that it is strictly quasi-isomorphic to the rigid analytic syntomic cohomology of $X$:
\begin{proposition}\label{passage-BK}There is a natural strict quasi-isomorphism
 \begin{align*}
\iota_2: \quad \rg^{\rm BK}_{\synt}(X, \Q_p(r))\simeq\rg_{\synt}(X, \Q_p(r)).
\end{align*}
\end{proposition}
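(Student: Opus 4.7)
The plan is to reduce the comparison to a sheaf-level identification and then use the $r^{\rm PD}$-cohomology of Section~\ref{tea1} as a bridge between absolute crystalline and Hyodo--Kato cohomology.

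\emph{Step 1: reduction.} From the defining fiber sequence (\ref{seq11}) for $\rg_{\synt}(X,\Q_p(r))$ and the strict quasi-isomorphism $\gamma_r$ of Proposition~\ref{derham} identifying $\rg_{\eet}(X,\sa_{\crr,\Q_p}/F^r)\simeq \rg_{\dr}(X)/F^r$, we may rewrite
$$\rg_{\synt}(X,\Q_p(r))\simeq \bigl[\rg_{\eet}(X,\sa_{\crr,\Q_p}^{\phi=p^r})\to \rg_{\dr}(X)/F^r\bigr].$$
Since $\rg^{\rm BK}_{\synt}(X,\Q_p(r))=[[\rg_{\hk}(X)]^{N=0,\phi=p^r}\to \rg_{\dr}(X)/F^r]$ by definition, the problem reduces to constructing a natural strict quasi-isomorphism
$$\kappa:\ [\rg_{\hk}(X)]^{N=0,\phi=p^r}\stackrel{\sim}{\to}\rg_{\eet}(X,\sa_{\crr,\Q_p}^{\phi=p^r}),$$
compatible with the respective maps to $\rg_{\dr}(X)/F^r$.

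\emph{Step 2: construction via $r^{\rm PD}$-cohomology.} By $\eta$-\'etale descent and the local-global compatibility results of Sections~\ref{kwak-kwak} and~\ref{tea1}, $\kappa$ may be built at the sheaf level on $\sm^{\sem}_K$. The $r^{\rm PD}$-cohomology sheaf $\sa_{\rm PD}$ serves as the intermediary: one has the $\phi$-equivariant strict quasi-isomorphism $s:\sa_{\hk}\wh{\otimes}^R_F\srr^{\rm PD}_{\Q_p}\stackrel{\sim}{\to}\sa_{\rm PD}$, the sheafified form of (\ref{twisted-section}), together with a natural $\phi$-equivariant identification $\sa_{\crr,\Q_p}\simeq[\sa_{\rm PD}]^{N=0}$ presenting the absolute log-crystalline cohomology of a semistable $\sx/\so_K^{\times}$ as the $N=0$ locus of its $r^{\rm PD}$-crystalline cohomology---the sheaf-theoretic shadow of Fontaine's identity $B^+_{\rm cr}=(B^+_{\rm st})^{N=0}$. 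Combining these, the construction of $\kappa$ reduces to the algebraic identification
$$[\sa_{\hk}\wh{\otimes}^R_F\srr^{\rm PD}_{\Q_p}]^{N=0,\phi=p^r}\simeq [\sa_{\hk}]^{N=0,\phi=p^r}.$$

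\emph{Step 3: algebraic identification.} This last step is formal: using the exact sequence $0\to F\to r^{\rm PD}_{\Q_p}\stackrel{N}{\to}r^{\rm PD}_{\Q_p}\to 0$, the local nilpotence of $N$ on Hyodo--Kato cohomology, and the relation $N\phi=p\phi N$, one invokes the standard Hyodo--Kato untwisting automorphism $\sigma=\sum_{n\geq 0}(-1)^n N_{\hk}^n\otimes T^{[n]}$, which intertwines the total monodromy with $1\otimes N_{r^{\rm PD}}$ and, up to the $p^{-n}$ shift induced by $N\phi=p\phi N$, carries Frobenius eigenspaces of the total complex onto those of $\sa_{\hk}$. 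The compatibility of $\kappa$ with the projection to $\rg_{\dr}(X)/F^r$ is automatic since $\iota'_{\hk}$ and the Fontaine--Messing canonical map both factor through $\sa_{\rm PD}$.

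\emph{Main obstacle.} The hard step will be the sheaf-level identification $\sa_{\crr,\Q_p}\simeq[\sa_{\rm PD}]^{N=0}$ in Step~2, which requires careful log-crystalline book-keeping comparing the absolute base $(\Z_p,\text{trivial})$ with $r^{\rm PD}$. A secondary difficulty is controlling strictness of every intermediate quasi-isomorphism in $\sd(C_{\Q_p})$, for which Proposition~\ref{acyclic-integral} is used to replace derived completed tensor products by ordinary ones on $p$-torsion-free integral models and to promote the algebraic identifications of Step~3 from the classical derived category to the strict setting.
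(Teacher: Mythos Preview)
Your overall architecture matches the paper's: both pass through $r^{\rm PD}$-cohomology and identify $[\sa_{\crr,\Q_p}]^{\phi=p^r}$ with $[\sa_{\hk}]^{N=0,\phi=p^r}$ via the chain $\sa_{\crr}\rightsquigarrow[\sa_{\rm PD}]^{N=0}\rightsquigarrow\sa_{\hk}$. The paper carries this out locally by the diagram (\ref{cr=HK}), using Kato's Lemma 4.2 for the first step and the projection $p_0:T\mapsto 0$ for the second, then globalizes.

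However, your Step 3 contains a genuine gap. The ``standard untwisting automorphism'' $\sigma=\sum_{n\geq 0}(-1)^n N_{\hk}^n\otimes T^{[n]}$ does \emph{not} intertwine the total monodromy with $1\otimes N_{r^{\rm PD}}$: on $r^{\rm PD}$ the monodromy is the derivation $T\mapsto T$, so $N_r(T^{[n]})=nT^{[n]}$, not $-T^{[n-1]}$; the telescoping that makes the analogous formula work for $B_{\rm st}=B_{\rm cr}[u]$ (where $N(u)=-1$) simply fails here, and indeed $r^{\rm PD}_{\Q_p}$ contains no element $u$ with $N(u)=1$. A direct calculation shows $N_{\rm total}(\sigma(m\otimes 1))\neq 0$ unless $N_{\hk}(m)=0$. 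Moreover, even granting an intertwiner, your claimed Frobenius compatibility is unjustified: $\phi_r(T^{[n]})=T^{pn}/n!$, so $\sigma$ does not commute with $\phi\otimes\phi$. The paper bypasses this entirely: it uses $p_0$ directly and argues that the cofiber---supported on the PD-ideal---has vanishing $\phi=p^r$ eigenspace because $\phi$ is topologically nilpotent on $T$. No untwisting and no appeal to nilpotence of $N_{\hk}$ on complexes are needed.

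Two further points: your Step~2 identification $\sa_{\crr,\Q_p}\simeq[\sa_{\rm PD}]^{N=0}$ is imprecise. The absolute crystalline sheaf is built from $\rg_{\crr}(\sx_1/\so_F)$, and the passage to $\rg_{\crr}(\sx_0/\so_F)$ via $i^*$ is only a quasi-isomorphism \emph{after} taking $[\cdot]^{\phi=p^r}$ (a Frobenius argument again); the correct statement is the eigenspaced version. Finally, the compatibility of $\kappa$ with the maps to $\rg_{\dr}(X)/F^r$ is not automatic: the paper devotes the right-hand portion of diagram (\ref{cr=HK}) (through convergent cohomology and the comparison $\rg_{\conv}\simeq\rg_{\crr}$ for log-smooth schemes) precisely to establishing that $\iota'_{\hk}$ agrees with the canonical map coming from Fontaine--Messing syntomic cohomology.
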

\begin{proof}$\quad$ 

 (i) {\em Local definition. } Let $\sx$ be an admissible semistable formal scheme over $\so_K$.
We define  a functorial strict quasi-isomorphism
\begin{align}
\label{herbata}
 \iota_2: & \quad  \rg^{\rm BK}_{\synt}(\sx,\Q_p(r)):= [[\rg_{\crr}(\sx_0/\so_F^0)_{F}]^{N=0,\phi=p^r}\lomapr{\iota^{\prime}_{\hk}} \rg_{\dr}({\sx}_K)/F^r]  \\& \quad \quad \stackrel{\sim}{\to} [[\rg_{\crr}(\sx_1/\so_F)_F]^{\phi=p^r}
  \lomapr{\can}  \rg_{\crr}(\sx_1/\so_K^{\times})_{K}/F^r] \simeq 
 \rg_{\synt}(\sx,\Z_p(r))_{\Q_p}\notag
\end{align}
by the following diagram
\begin{equation}\label{cr=HK}
\xymatrix@C=-.1cm{ &  &  \rg_{\crr}(\sx_1/\so_K^{\times})_{K} & \rg_{\dr}({\sx}_K)\ar[l]^-{\sim}\ar[dl]^-{\sim}\ar[ddl]^-{\sim}\\
 [\rg_{\crr}(\sx_1/\so_F)_F]^{\phi=p^r}\ar[rru]^{\can} \ar[d]^{i^*}_{\wr}&  [\rg_{\conv}(\sx_1/\so_F)]^{\phi=p^r}\ar[l]^{\varepsilon_1}_{\sim}\ar[d]^{i^*}_{\wr} \ar[r]& \rg_{\conv}(\sx_1/\so_K^{\times})\ar[d]^{i^*}_{\wr}\ar[u]^{\wr}\\
 [\rg_{\crr}(\sx_0/\so_F)_F]^{\phi=p^r}\ar[dr] \ar[d]^{\wr}&  [\rg_{\conv}(\sx_0/\so_F)]^{\phi=p^r}\ar[rd]\ar[r]\ar[l]^{\varepsilon_0}_{\sim} & \rg_{\conv}(\sx_0/\so_K^{\times})\\
[\rg_{\crr}(\sx_0/r^{\rm PD}_F)_{\Q_p}]^{N=0,\phi=p^r}\ar[r]^-{p_p}\ar[drr]^{p_0}_{\sim}&  \rg_{\crr}(\sx_0/\so_F^{\times})_F & \rg_{\conv}(\sx_0/\so_F^{\times})\ar[l]^{\sim}\ar[u] \\
 & & [\rg_{\crr}(\sx_0/\so_F^{0})_F]^{N=0,\phi=p^r}\ar@/_30pt/[ruuuu]^{\iota^{\prime}_{\hk}}
}
\end{equation}
The vertical left bottom map is a quasi-isomorphism by \cite[Lemma 4.2]{K1}.
The map $\iota_{\hk}^{\prime}$ is defined by the zigzag in the diagram. The map $p_0$  is a quasi-isomorphism because Frobenius is highly nilpotent on $T$. The slanted map  from the convergent to crystalline cohomology is a  strict quasi-isomorphism because the log-scheme $\sx_1$ is log-smooth over $\so^{\times}_{K,1}$. The two right maps $i^*$ are strict quasi-isomorphisms (actually, natural isomorphisms) by the invariance of convergent cohomology under infinitesimal thickenings; the left map $i^*$ is a quasi-isomorphism by a standard Frobenius argument (see \cite[proof of Lemma 5.9]{CN}). We claim that the maps $\varepsilon_1, \varepsilon_0$ are strict quasi-isomorphisms. Indeed, it suffices to check this for the second of the two maps and then it follows from the commutative diagram
$$
\xymatrix@R=.6cm@C=.8cm{ [\rg_{\crr}(\sx_0/\so_F)_F]^{\phi=p^r}\ar[d]^{\wr}&  [\rg_{\conv}(\sx_0/\so_F)]^{\phi=p^r}\ar[l]^-{\varepsilon_0}\ar[d]^{\wr}\\
 [\rg_{\crr}(\sx_0/r^{\rm PD}_F)_{\Q_p}]^{N=0,\phi=p^r} \ar[d]_{p_0}^{\wr}&  [\rg_{\conv}(\sx_0/\hat{r}_F)]^{N=0,\phi=p^r}\ar[l]^-{\varepsilon}\ar[d]_{p_0}^{\wr}\\
  [\rg_{\crr}(\sx_0/\so_F^0)_F]^{N=0,\phi=p^r}&  [\rg_{\conv}(\sx_0/\so_F^0)]^{N=0,\phi=p^r}\ar[l]^-{\varepsilon^0}_{\sim}
}
$$
since the map $\varepsilon^0$ is a strict quasi-isomorphism by the log-smoothness of the log-scheme $\sx_0$ over $k^0$. Here $\hat{r}_F:=\so_F\{T\}$ and  the right vertical maps are strict quasi-isomorphisms by the same arguments as the left vertical maps.

\smallskip
 (ii) {\em Globalization.} Let $\sa_{\synt}^{\rm BK}$ be the $\eta$-\'etale sheafification of the presheaf $\sx\to \rg^{\rm BK}_{\synt}(\sx,\Q_p(r))$ on $\sm^{\sem}_{K,\eet}$. We have 
 \begin{align*}
 \rg_{\eet}(X,\sa^{\rm BK}_{\synt}) & \simeq [\rg_{\eet}(X,\sa_{\hk})^{N=0,\phi=p^r}\lomapr{\iota^{\prime}_{\hk}} \rg_{\eet}(X,\sa_{\dr})/F^r]\\
   & \simeq [\rg_{\hk}(X)^{N=0,\phi=p^r}\lomapr{\iota^{\prime}_{\hk}} \rg_{\dr}(X)/F^r]\simeq \rg_{\synt}^{\rm BK}(X,\Q_p(r)).
 \end{align*}
Since $\rg_{\synt}(X, \Q_p(r))=\rg_{\eet}(X,\sa_{\synt})$, by $\eta$-\'etale descent,  the strict quasi-isomorphisms $\iota_2$ from (\ref{herbata}) can be lifted to a  strict quasi-isomorphism
\begin{align*}
\iota_2: \quad  \rg_{\synt}(X, \Q_p(r))\simeq \rg^{\rm BK}_{\synt}(X, \Q_p(r)),
\end{align*}
as wanted.
\end{proof}
\begin{remark}
 Let us state the following corollary of the above computations.
  \begin{corollary}{\rm ({\em Local-global compatibility})} \label{compt1}
Let $r\geq 0$. For a semistable formal scheme $\sx$ over $\so_K$, the canonical map
$$
\rg_{\synt}(\sx,\Q_p(r))\to \rg_{\synt}(\sx_K,\Q_p(r))
$$
is a strict quasi-isomorphism. 
\end{corollary}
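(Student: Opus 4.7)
The plan is to reduce both sides to the Bloch-Kato form of syntomic cohomology via Proposition~\ref{passage-BK} and then to invoke the Hyodo-Kato local-global compatibility of Proposition~\ref{hypercov}. First, applying Proposition~\ref{passage-BK} to $X=\sx_K$ on the right-hand side produces a natural strict quasi-isomorphism
$$\rg_{\synt}(\sx_K,\Q_p(r))\simeq \bigl[[\rg_{\hk}(\sx_K)]^{N=0,\phi=p^r}\stackrel{\iota'_{\hk}}{\to}\rg_{\dr}(\sx_K)/F^r\bigr].$$
On the left-hand side, I would reuse the computation already performed in the local part of the proof of Proposition~\ref{passage-BK}, namely the strict quasi-isomorphism~(\ref{herbata}) built from diagram~(\ref{cr=HK}), to identify
$$\rg_{\synt}(\sx,\Q_p(r))\simeq \bigl[[\rg_{\hk}(\sx_0)]^{N=0,\phi=p^r}\stackrel{\iota'_{\hk}}{\to}\rg_{\dr}(\sx_K)/F^r\bigr],$$
so that the only change between the two Bloch-Kato descriptions is that $\rg_{\hk}$ is evaluated on the special fiber $\sx_0$ on one side and on the generic fiber $\sx_K$ on the other.

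The key input is then Proposition~\ref{hypercov}, which says that the canonical map $\rg_{\hk}(\sx_0)\to\rg_{\hk}(\sx_K)$ is a strict quasi-isomorphism. Since $(-)^{N=0,\phi=p^r}$ is a finite homotopy limit it preserves strict quasi-isomorphisms, and so does the homotopy fibre defining $\rg^{\rm BK}_{\synt}$; combining these observations one obtains a strict quasi-isomorphism between the two Bloch-Kato presentations above, and chaining the three gives a strict quasi-isomorphism $\rg_{\synt}(\sx,\Q_p(r))\stackrel{\sim}{\to}\rg_{\synt}(\sx_K,\Q_p(r))$.

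The main obstacle I foresee is not producing \emph{some} strict quasi-isomorphism between the two sides, but rather checking that the composite constructed this way coincides with the canonical sheafification morphism referred to in the statement. This requires tracing the naturality of every arrow in diagram~(\ref{cr=HK}): one needs to verify that the zigzag defining $\iota_2$ is compatible with the transition from the presheaf $\su\mapsto\rg_{\synt}(\su,\Z_p(r))_{\Q_p}$ on $\sm_K^{\sem}$ to its $\eta$-\'etale sheafification, and similarly that the Hyodo-Kato identification $\rg_{\hk}(\sx_0)\stackrel{\sim}{\to}\rg_{\hk}(\sx_K)=\rg_{\eet}(\sx_K,\sa_{\hk})$ provided by Proposition~\ref{hypercov} is compatible, through the Hyodo-Kato morphism, with the identification of $\rg_{\dr}(\sx_K)$ sitting inside $\rg_{\crr}(\sx_1/\so_K^{\times})_K$ on the crystalline side. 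Once these compatibilities are verified at the level of the local diagrams, the comparison propagates to the sheafification and the claimed global statement follows formally.
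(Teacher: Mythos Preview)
Your proposal is correct and follows exactly the same approach as the paper's own proof: both sides are identified with their Bloch--Kato presentations via the local map $\iota_2$ of (\ref{herbata}) and the global Proposition~\ref{passage-BK}, and the claim then reduces to Proposition~\ref{hypercov}. The compatibility issue you flag is handled in the paper simply by declaring the two $\iota_2$'s ``compatible'', which is implicit in the construction of Proposition~\ref{passage-BK} as the $\eta$-\'etale sheafification of the local map (\ref{herbata}); so your caution is well placed but no further work is actually required.
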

\end{remark}
\begin{proof}
By construction and Proposition \ref{passage-BK}, we have compatible  strict quasi-isomorphisms
\begin{align*}
\iota_2: \quad & \rg_{\synt}(\sx, \Q_p(r))\simeq [[\rg_{\hk}(\sx_0)]^{N=0,\phi=p^r}\lomapr{\iota^{\prime}_{\hk}}\rg_{\dr}(\sx_K)/F^r],\\
\iota_2: \quad & \rg_{\synt}(\sx_K, \Q_p(r))\simeq [[\rg_{\hk}(\sx_K)]^{N=0,\phi=p^r}\lomapr{\iota^{\prime}_{\hk}}\rg_{\dr}(\sx_K)/F^r].
\end{align*}
It suffice now to note that, by Proposition \ref{hypercov}, the natural map $\rg_{\hk}(\sx_0)\to \rg_{\hk}(\sx_K)$ is a strict quasi-isomorphism. 
\end{proof}

\section{Overconvergent syntomic cohomology} 
In this section we define syntomic cohomology for smooth dagger varieties over $K$ or $C$ in two ways (yielding strictly quasi-isomorphic theories).
 Recall that in \cite{CDN3}  syntomic cohomology of  semistable weak formal schemes is defined as a homotopy fiber of a map from Frobenius eigenspaces of Hyodo-Kato cohomology to a filtered quotients of de Rham cohomology.
  By $\eta$-\'etale descent this yields the first definition of syntomic cohomology for smooth dagger varieties.
 For the second definition we take, for smooth dagger affinoids,   the  homotopy colimits of syntomic cohomologies of the rigid analytic affinoids forming a presentation of the dagger structure, and then we globalize.
 The second  definition will allow us to define period maps to pro-\'etale cohomology. 
              
       To carry out the above, we introduce Hyodo-Kato cohomology for smooth dagger varieties, prove that it satisfies Galois descent, and define the Hyodo-Kato morphism (that is a strict quasi-isomorphism over~$C$).      
\subsection{Overconvergent de Rham cohomology}Let $L=K,C$. 
       Consider the presheaf $X\mapsto \R\Gamma_{\dr}(X)$ of filtered dg $L$-algebras on ${\rm Sm}^{\dagger}_L$. Let $\sa_{\dr}$ be its \'etale sheafification. It is a sheaf of filtered $L$-algebras on ${\rm Sm}^{\dagger}_{L,\eet}$. For $X\in {\rm Sm}^{\dagger}_L$, we have the filtered quasi-isomorphism: $\R\Gamma_{\dr}(X)\stackrel{\sim}{\to}\R\Gamma_{\eet}(X,\sa_{\dr})$.  We equip $\R\Gamma_{\dr}(X)$ with the topology induced by the canonical topology on dagger algebras; we equip $\R\Gamma_{\eet}(X,\sa_{\dr})$ with topology using \'etale descent as we did before.
  Then the above quasi-isomorphism is strict:  dagger differentials satisfy \'etale descent in the strict sense.   The de Rham cohomology $H^i_{\dr}(X)$ is classical: it is a finite dimensional $K$-vector space with its natural Hausdorff topology for $X$ quasi-compact and a Fr\'echet space (a surjective limit of finite dimensional $K$-vector spaces) for a general smooth $X$ (use Remark \ref{ducros}). See the proof of Proposition \ref{rain1} below for how this can be shown.

\subsubsection{Complex $ \rg_{\dr}(X)/F^r$. }      
       Let $X\in {\rm Sm}^{\dagger}_L$. The cohomology groups of 
 $
 \rg_{\dr}(X)/F^r
 $
 have the same description as their rigid analytic counterparts in Section \ref{derham1}. That is, 
 the distinguished triangle (in $\sd(C_L)$)
 \begin{equation}
 \label{triangle-kwak}
 0\to \ker d_r[-r]\to \tau_{\leq r}\Omega\kr_X\to \Omega^{\leq r-1}_X\to 0
 \end{equation}
 yields 
 the strict short exact sequence
 $$
 0\to H^{r-1}_{\dr}(X)\to \wt{H}^{r-1}(\rg_{\dr}(X)/F^r)\to \ker \pi \to 0,
 $$
 where $\pi$ is the natural map $\Omega^r(X)^{d=0}\to H^r_{\dr}(X).$
We have a strict monomorphism $\im d_{r-1}(X)\hookrightarrow \ker \pi$. We note that the cohomology $ \wt{H}^{r-1}(\rg_{\dr}(X)/F^r)$ is classical (as an extension of classical objects). 

 The  distinguished triangle  (\ref{triangle-kwak}) yields also the strict long exact sequence
 $$
0\to \coker \pi \to \wt{H}^r(\rg_{\dr}(X)/F^r)\to \wt{H}^1(X,\ker d_r)\to \wt{H}^{r+1}(X,\tau_{\leq r}\Omega\kr_X).
 $$

      \subsection{Arithmetic overconvergent Hyodo-Kato cohomology} We define  the Hyodo-Kato  cohomology of smooth dagger  varieties over $K$ by $\eta$-\'etale descent of overconvergent  Hyodo-Kato cohomology of semistable models.

   \subsubsection{Local definition.}   Let $X$ be a log-smooth scheme over $k^0$. The overconvergent Hyodo-Kato cohomology of $X$ is defined (by Grosse-Kl\"onne in \cite{GKFr}) as $\rg_{\hk}(X):=\rg_{\hk}(X/\so_F):=\rg_{\rig}(X/\so_F^0)$. It is a dg $F$-algebra, equipped with a $\phi$-action and a monodromy operator $N$ such that $N\phi=p\phi N$. We equip it with a topology as in \cite[Sec. 3.1]{CDN3}.
    
  Let $X$ be a semistable scheme over $k^0$.  Recall that we have the Hyodo-Kato morphism
    \begin{equation}
    \label{referee11}
    \iota_{\hk}: \rg_{\rig}(X/\so^0_F)\to \rg_{\rig}(X/\so_F^{\times})
    \end{equation}
    that is actually a strict quasi-isomorphism \cite[Section 3.1.3]{CDN3}.  We have chosen here the functorial version of this morphism as defined by Ertl-Yamada \cite[Prop. 2.5]{EY}: a  combinatorial modification of the original morphism of Grosse-Kl\"onne  yields easy functoriality on most of the data; full functoriality is obtained by a coherent zigzag construction \cite[Lemma 2.6]{EY}.
    \begin{remark} \label{referee12}
    For the convenience of the reader we will describe in more detail the constructions of Grosse-Kl\"onne (see for details \cite[Section 3.1.3]{CDN3}) and Ertl-Yamada.  
   Let $\{X_i\}_{i\in I}$ be the irreducible components of $X$ with the induced log-structure. Denote by $M_{\cdot}$ the nerve of the covering $\coprod_{i\in I}X_i\to X$. By \cite[Lemma 3.8]{CDN3}, the natural map
   $$
   \rg_{\rig}(X/\so)\to \rg_{\rig}(M_{\cdot}/\so),\quad \so=\so_F^0,\so_F^{\times},$$
    is a strict quasi-isomorphism. 
    
    Let $\overline{X}$ be the log-scheme with boundary attached to $X$ in \cite{GKFr}.  It comes equipped with a natural map $M_{\cdot}^{\prime}\hookrightarrow \overline{X}$, where $M_{\cdot}^{\prime}$ is a slight combinatorial modification\footnote{We take the definition of Ertl-Yamada, which allows multiplicities in the index set, rather than the original definition of Grosse-Kl\"onne, which does not allow them.} of $M_{\cdot}$:   there is a natural map $M_{\cdot}\to M_{\cdot}^{\prime}$ that induces a strict quasi-isomorphism
    $$
    \rg_{\rig}(M^{\prime}_{\cdot}/\so)\to \rg_{\rig}(M_{\cdot}/\so).
    $$
    We have the following commutative diagram, where $\so(0)=\so_F^0, \so(p)=\so_F^{\times}$, $a=0,p$, and $p_a$ is the map induced by $T\mapsto a$:
    $$
    \xymatrix@R=.6cm{
    \rg_{\rig}(X/\so(a))\ar[r]_{\sim}\ar[dr]^-{\sim} & \rg_{\rig}(M^{\prime}_{\cdot}/\so(a))\ar[d]^{\wr}    &\rg_{\rig}(\overline{X}/r^{\dagger}_F)\ar[l]^-{\sim}\ar[dddr]\\
    &  \rg_{\rig}(M_{\cdot}/\so(a))\\
     &  \rg_{\rig}(M_{\cdot}/r^{\dagger}_F)\ar[u]^{p_a}\\
         \rg_{\rig}(X/r^{\dagger}_F)\ar[rrr]^-{\sim} \ar[ur]^{\sim}\ar[uuu]^{p_a} && & \rg_{\rig}(M^{\prime}_{\cdot}/r^{\dagger}_F)\ar[ull]^{\sim}\ar[uuull]_{p_a}
}
    $$
    We wrote here $r^{\dagger}_F:=\so_F[T]^{\dagger}$ with the log-structure associated to $T$; Frobenius is defined by $T\mapsto T^p$, monodromy is the $\so_F$-linear derivation given by $T\mapsto T$.   
    The Hyodo-Kato morphism (\ref{referee11}) is now defined as the following composition
    $$
 \iota_{\hk}:\quad       \rg_{\rig}(X/\so_F^0)\stackrel{\sim}{\to}\rg_{\rig}(M^{\prime}_{\cdot}/\so_F^0)\stackrel{\sim}{\leftarrow}\rg_{\rig}(\overline{X}/r^{\dagger}_F)\stackrel{\sim}{\to} \rg_{\rig}(M^{\prime}_{\cdot}/\so_F^{\times}) \stackrel{\sim}{\leftarrow}\rg_{\rig}(X/\so_F^{\times}).
$$
For another semistable scheme $Y$ over $k^0$ and a map of log-schemes $g:Y\to X$, Ertl-Yamada define in \cite[Lemma 2.6]{EY} a pullback 
    morphism $g^*: \rg_{\rig}(\overline{X}/r^{\dagger}_F)\to \rg_{\rig}(\overline{Y}/r^{\dagger}_F)$ that makes $\iota_{\hk}$ functorial.
    
    In what follows, to simplify the notation, we will write
    \begin{align*}
    & p_a:\quad \rg_{\rig}(\overline{X}/r^{\dagger}_F)\stackrel{\sim}{\to} \rg_{\rig}(M^{\prime}_{\cdot}/\so(a)) \stackrel{\sim}{\leftarrow}\rg_{\rig}(X/\so(a)),\\
   &  f_1: \quad \rg_{\rig}(\overline{X}/r^{\dagger}_F) \to \rg_{\rig}(M^{\prime}_{\cdot}/r^{\dagger}_F) \stackrel{\sim}{\leftarrow}\rg_{\rig}(X/r^{\dagger}_F).
    \end{align*}
    The above commutative diagram yields the functorial commutative diagram
    $$
    \xymatrix{
       \rg_{\rig}(X/\so(a)) & \rg_{\rig}(\overline{X}/r^{\dagger}_F)\ar[l]^-{p_a}_-{\sim}\ar[dl]^{f_1}\\
         \rg_{\rig}(X/r^{\dagger}_F)\ar[u]^-{p_a}
    }
    $$
    \end{remark}
    If $\sx$ is a semistable weak formal scheme over $\so_K$,
  we define  the Hyodo-Kato map $$\iota_{\hk}: \R\Gamma_{\hk}(\sx_0)\to\R\Gamma_{\dr}(\sx_K)$$
as the following composition
  \begin{equation}
  \label{kolo21}\R\Gamma_{\hk}(\sx_0)=\R\Gamma_{\rig}(\sx_0/\so_F^0)\lomapr{\iota_{\hk}} \rg_{\rig}(\sx_0/\so_F^{\times})\to  \rg_{\rig}(\sx_0/\so_K^{\times})
  \simeq \R\Gamma_{\dr}(X_K).
\end{equation}
Note that this definition works also for base changes (with respect to $\so_K$) of semistable weak formal schemes over $\so_K$.
Since the natural morphism $ \rg_{\rig}(\sx_0/\so_F^{\times})\otimes_F K\to  \rg_{\rig}(\sx_0/\so_K^{\times})
    $ is a strict quasi-isomorphism so is the induced 
morphism
    $$\iota_{\hk}: \rg_{\hk}(\sx_0)\otimes_FK\stackrel{\sim}{\to} \rg_{\dr}(\sx_K).
    $$
 \subsubsection{Globalization.}    
     Let $\sa_{\hk}$ be the $\eta$-\'etale sheafification of the presheaf $\sx\mapsto \R\Gamma_{\hk}(\sx_0/\so_{F_L})$, $L=K_{\sx}$,  on $\sm_{K}^{\dagger,ss}$; this is an \'etale sheaf of dg $F$-algebras on ${\rm Sm}^{\dagger}_{K}$ equipped with a $\phi$-action and a derivation $N$ such that $N\phi=p\phi N$. For $X\in {\rm Sm}^{\dagger}_{K}$, set $\R\Gamma_{\hk}(X):=\R\Gamma_{\eet}(X,\sa_{\hk})$.  Equip it with a topology in the usual way, via $\eta$-\'etale descent, from the topology on the $\rg_{\hk}(\sx_0/\so_{F_L})$'s.
          \begin{proposition}{\rm ({\em Local-global compatibility})}
          \label{hypercov-dagger}Let $\sx$ be a semistable weak formal scheme over $\so_K$. Then the natural map
     $$
     \rg_{\hk}(\sx_0)\to\rg_{\hk}(\sx_K)
     $$
     is a strict quasi-isomorphism.
     \end{proposition}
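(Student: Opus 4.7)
The plan is to mimic the argument used in the rigid analytic setting (Proposition~\ref{hypercov}), now with the overconvergent constructions substituted throughout. Since $\rg_{\hk}(\sx_K)=\rg_{\eet}(\sx_K,\sa_{\hk})$ is defined by $\eta$-\'etale sheafification, it suffices to show that for every $\eta$-\'etale hypercovering $\su_{\cdot}\to\sx$ by objects of $\sm_K^{\dagger,\sem}$, with each $\su_i$ split over some finite extension $L_i/K$, the natural map
$$
\rg_{\hk}(\sx_0)\to\rg_{\hk}(\su_{\cdot,0})
$$
is a strict quasi-isomorphism (possibly after refinement). By passing to truncations one may assume each simplicial degree involves only finitely many weak formal models.

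First I would pass to the de Rham side. Tensoring with $K$ over $F$ is exact on the Fr\'echet/LB spaces in sight, and the Hyodo-Kato strict quasi-isomorphism $\iota_{\hk}:\rg_{\hk}(\sx_0)\otimes_F K\stackrel{\sim}{\to}\rg_{\dr}(\sx_K)$ (and its analogue for each $\su_{i,0}$ after tensoring with $L_i$ over $F_{L_i}$) would convert the desired statement into a de Rham descent statement, modulo control of the varying coefficient fields $F_{L_i}$.

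Next I would run a Galois descent argument parallel to Proposition~\ref{descent}. After refining the hypercovering, one may assume that up to the chosen truncation all the $\su_i$ are defined over a common finite Galois extension $K'/K$ with Galois group $H$. The smoothness of the $H$-action, together with the normalized-trace argument of Remark~\ref{Galois-sense}, identifies $\rg_{\hk}(\sx_0)\otimes_F F_{K'}$ with the $H$-fixed points of $\rg_{\hk}(\sx_{0}\otimes_{\so_K}\so_{K'})$, and similarly for the hypercovering. Working over $K'$ and tensoring with $K'$ over $F_{K'}$, the claim reduces to the strict quasi-isomorphism
$$
\rg_{\dr}(\sx_{K'})\stackrel{\sim}{\to}\rg_{\dr}(\su_{\cdot,K'}),
$$
which is \'etale descent for dagger differentials, recorded strictly at the start of Section~5.1.

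The main obstacle will be making the Hyodo-Kato morphism~(\ref{kolo21}) strictly functorial for the simplicial maps of $\su_\cdot$, including those that do not sit over a common $\so_F^0$. This is exactly the content of the Ertl-Yamada coherent-zigzag construction recalled in Remark~\ref{referee12}: the pullback $g^*$ on $\rg_{\rig}(\overline{X}/r^{\dagger}_F)$ upgrades $\iota_{\hk}$ to a functor on semistable $k^0$-schemes, which then propagates along the simplicial structure and the base-changes $\so_{F_{L_i}}\to\so_{F_{L_j}}$. Once this compatibility is in place, stringing together the three reductions --- de Rham via $\iota_{\hk}$, Galois descent via the normalized trace, and \'etale descent for dagger de Rham cohomology --- closes the proof.
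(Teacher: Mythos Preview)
Your proposal is correct and follows essentially the same route as the paper, which simply writes ``Same as the proof of Proposition~\ref{hypercov}'' (that proof in turn records as key points the de Rham analog plus Galois descent to handle the field extensions arising in the hypercovering). You have spelled out precisely these ingredients: reduction via $\iota_{\hk}$ to \'etale descent for dagger de Rham cohomology, together with finite Galois descent to absorb the varying splitting fields $L_i$, and you correctly identify the Ertl--Yamada functoriality of Remark~\ref{referee12} as the mechanism that makes $\iota_{\hk}$ compatible with the simplicial maps.
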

     \begin{proof}
    Same as the proof of Proposition \ref{hypercov}.
     \end{proof}
     
      For $X\in {\rm Sm}^{\dagger}_K$, we define natural $F$-linear maps ({\em the overconvergent Hyodo-Kato morphisms})
$$\iota_{\hk}:\sa_{\hk}\to\sa_{\dr},\quad \iota_{\hk}:  \R\Gamma_{\hk}(X){\to}\R\Gamma_{\dr}(X)
$$
by the $\eta$-\'etale sheafification of the Hyodo-Kato map $\iota_{\hk}: \R\Gamma_{\hk}(\sx_0){\to}\R\Gamma_{\dr}(\sx_K)
$ and its globalization, respectively.

  \subsubsection{Topology.}    
     We will now discuss  topology in more detail. 
     \begin{proposition} 
\label{rain1}
Let $X$ be a smooth dagger variety over $K$. 
\begin{enumerate}
\item If $X$ is quasi-compact then $\wt{H}^*_{\hk}(X)$ is classical. It  is a finite dimensional $F$-vector space with its unique locally convex Hausdorff topology.
\item For a general $X$, the cohomology $\wt{H}^*_{\hk}(X)$ is classical. It is a  Fr\'echet space, a limit of finite dimensional $F$-vector spaces.
\item The endomorphism $\phi$ on $H^*_{\hk}(X)$ is  a  homeomorphism.
\item If $k$ is finite and $X$ is quasi-compact then $H^*_{\hk}(X)$ is a mixed $F$-isocrystal, i.e., the eigenvalues\footnote{We define the eigenvalues of $\phi$ in $\Q\otimes F^*$ to be the $s$'th roots of the eigenvalues of $\phi^s$, where $s$ is  any non-zero multiple of $f$ for $|k|=p^f$. We note that this definition is stable under base change from $F$ to $F^{\prime}$, $[F^{\prime}:F]<\infty$.}  of $\phi$  are Weil numbers (if $X$ is not quasi-compact then $H^*_{\hk}(X)$ is a product of mixed $F$-isocrystals). 
\end{enumerate}
\end{proposition}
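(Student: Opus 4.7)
The plan is to reduce every claim to the case of a single semistable weak formal model and then import known properties of the overconvergent rigid (Hyodo--Kato) cohomology of Grosse-Kl\"onne. For (1), by Corollary \ref{Tem-dagger} and the $\eta$-\'etale descent established in Proposition \ref{hypercov-dagger}, a quasi-compact $X$ admits a finite $\eta$-\'etale hypercover by dagger affinoids each possessing a semistable weak formal model $\sx$, so that $\rg_{\hk}(X)$ is computed by the totalization of $\rg_{\hk}(\sx_{\cdot,0})=\rg_{\rig}(\sx_{\cdot,0}/\so^0_{F_\cdot})$. Grosse-Kl\"onne's finiteness theorem for the overconvergent log-rigid cohomology of log-smooth $k$-schemes of finite type makes each component finite-dimensional over $F$. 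Since a finite-dimensional $F$-vector space carries a unique Hausdorff convex topology, and this coincides with the natural topology on $\wt{H}^*_{\hk}(X)$ coming from $\eta$-\'etale descent, the cohomology is classical.

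For (2), I would use Remark \ref{ducros} to pick an admissible cover of $X$ by an increasing chain $\{U_n\}$ of quasi-compact dagger opens with $U_n\Subset U_{n+1}$, giving
$$\rg_{\hk}(X)\simeq \holim_n \rg_{\hk}(U_n).$$
By (1) each $\wt{H}^i_{\hk}(U_n)$ is finite-dimensional over $F$, so the transition maps of the projective system automatically satisfy Mittag--Leffler on rank grounds; hence $R^1\lim$ vanishes, the short exact sequence analogous to that of Remark \ref{ducros} identifies $\wt{H}^i_{\hk}(X)$ with $\lim_n \wt{H}^i_{\hk}(U_n)$, and this limit is a Fr\'echet space (countable limit of finite-dimensional Hausdorff spaces). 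Classicality follows from the vanishing of $R^1\lim$, and passing to algebraic cohomology recovers the same object.

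For (3), bijectivity of $\phi$ on each $H^*_{\rig}(\sx_0/\so_F^0)$ is a standard property of overconvergent (log-)rigid cohomology, established first in the smooth proper case and extended to the log-smooth setting via the weight spectral sequence. On a finite-dimensional $F$-vector space a continuous linear bijection is automatically a homeomorphism, handling the quasi-compact case; the general case then follows by passing to the Fr\'echet limit and applying the open mapping theorem to the $\phi$-equivariant transition system. For (4), the Weil-number statement for $\phi$-eigenvalues on $H^*_{\rig}(\sx_0/\so_{F_L}^0)$ in the proper log-smooth case is due to Mokrane, via the monodromy-weight spectral sequence for semistable reduction; the non-proper case can be obtained by combining this with de Jong's alterations and the weight filtration on open rigid cohomology in the spirit of Chiarellotto--Le Stum. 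The main obstacle will be (4), since it requires carefully assembling these weight-theoretic inputs in the overconvergent log-rigid setting in a way compatible with our base-change conventions; the other three claims are essentially formal consequences of the local finiteness of overconvergent log-rigid cohomology.
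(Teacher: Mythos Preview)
Your outline follows the same architecture as the paper's proof: reduce to semistable models via $\eta$-\'etale descent, invoke the known finiteness for $\R\Gamma_{\rig}(\sx_0/\so_F^0)$, and then pass to general $X$ by an increasing quasi-compact cover. Parts (2) and (3) are essentially as in the paper.

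However, there is a genuine gap in your argument for (1), and you have misidentified the main obstacle. You write that ``a finite-dimensional $F$-vector space carries a unique Hausdorff convex topology, and this coincides with the natural topology on $\wt{H}^*_{\hk}(X)$ \ldots, the cohomology is classical.'' This does not follow. Classicality means that the map $\coim d_{i-1}\to\ker d_i$ is strict; knowing only that the algebraic quotient $\ker d_i/\im d_{i-1}$ is finite-dimensional does \emph{not} force $\im d_{i-1}$ to be closed (a continuous injection of Banach spaces with dense non-closed image of finite codimension gives a counterexample). The paper does real work here: choosing an affine $\eta$-\'etale hypercovering $\su_\cdot$ with quasi-compact terms, it observes that $\R\Gamma_{\hk}(\su_{\cdot,0})$ is built from inductive limits of Banach spaces with injective compact transition maps, hence strong duals of reflexive Fr\'echet spaces; kernels and coimages inherit this property and are therefore $LB$-spaces. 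One then splits off a finite-dimensional complement $W\subset\ker d_i$ mapping isomorphically onto $H^i_{\hk}(X)$ and applies the Open Mapping Theorem to the continuous bijection $\coim d_{i-1}\oplus W\to\ker d_i$ of $LB$-spaces to conclude strictness. This functional-analytic step is the actual crux of (1); your proposal skips it entirely.

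Conversely, you flag (4) as the hardest part and sketch an argument via Mokrane, alterations, and weight filtrations. In the paper's logic (4) is immediate from (1): the semistable case is already known (it is part of \cite[Prop.~3.2]{CDN3}), and $\eta$-\'etale descent plus the finiteness in (1) propagate it. No new weight-theoretic input is needed at this stage.
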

\begin{proof}In the case $X=\sx_K$, for a semistable weak formal model  $\sx$ over $\so_K$,  and for $\wt{H}^*_{\hk}(\sx_0)$ this is \cite[Prop. 3.2]{CDN3}. 
All algebraic statements concerning   cohomology in the proposition follow from that by using $\eta$-\'etale descent and the quasi-isomorphism from Proposition \ref{hypercov-dagger}. 

 We treat now the topological claims. For  (1),  we first use the  $\eta$-\'etale descent and the fact that claim (1) holds in the case $X$ has a semistable model over $\so_K$ to  construct a filtration on the classical cohomology $H^i_{\hk}(X)$ with graded pieces finite rank vector spaces over $F$ with their canonical Hausdorff topology.  This implies that the natural topology on $H^i_{\hk}(X)$ is also Hausdorff. It remains to show that $\wt{H}^i_{\hk}(X)$ is classical. Take an $\eta$-\'etale hypercovering $\su_{\cdot}$ of $X$ built from objects of $\sm^{\dagger, \sem}_K$. Assume that in every degree we have a finite number of affine weak formal schemes (recall that $X$ is quasi-compact). Then the complex $\rg_{\hk}(\su_{\cdot,0})$ is built from inductive limits of Banach spaces with injective and compact transition maps. Using the fact that these are strong duals of reflexive Fr\'echet spaces we know that the kernels of the differentials and their coimages have the same property. In particular, they are $LB$-spaces. The cohomology $\wt{H}^i_{\hk}(X)$ is represented by the pair $\coim d_{i-1}\to \ker d_{i}$ and ${H}^i_{\hk}(X)=\ker d_i/\im d_{i-1}$ with the induced topology. Let $W$ be a subspace of $\ker d_i$ that maps onto
 ${H}^i_{\hk}(X)$ and has the same rank as the latter. Then the map $\coim d_{i-1}\oplus W\to \ker d_i$ is a continuous map of  $LB$-spaces that is an algebraic isomorphism hence, by the Open Mapping Theorem, it is a topological isomorphism. Hence the map $\coim d_{i-1}\to \ker d_{i}$ is strict and the cohomology $\wt{H}^i_{\hk}(X)$ is classical. 
 
 A similar argument, using strong duals of reflexive Fr\'echet spaces, implies that  a  map between two Hyodo-Kato  complexes associated to two (different) $\eta$-\'etale  affine hypercoverings  of $X$ as above is a strict quasi-isomorphism. This implies that, for $X$ quasi-compact, the cohomology of $\R\Gamma_{\hk}(X)$ is strictly quasi-isomorphic to the cohomology of $\R\Gamma_{\hk}(\su_{\cdot,0})$ for any $\eta$-\'etale affine hypercovering $\su_{\cdot}$ as above. 

 To see that $\phi$ is a homeomorphism in (3), note that this is clear for quasi-compact $X$ by the above remarks. For a general $X$,  as in the case of pro-\'etale cohomology,  cover it with an admissible increasing quasi-compact covering $\{U_n\}_{n\in\N}$.  We obtain the exact sequence
 $$
 0\to H^1\holim_n \wt{H}^{i-1}_{\hk}(U_n)\to \wt{H}^i_{\hk}(X)\to H^0\holim_n \wt{H}^{i}_{\hk}(U_n)\to 0
 $$
 But, by (1), the cohomologies $\wt{H}^{i}_{\hk}(U_n)$ are classical and finite dimensional over $F$. Hence, the cohomology $\wt{H}^i_{\hk}(X)$ is classical and we have
 $${H}^i_{\hk}(X)\stackrel{\sim}{\to} \invlim_n {H}^{i}_{\hk}(U_n).
 $$
 Hence it is Fr\'echet, as wanted. We have proved (2), and (4)  follows now trivially from (1). 
 \end{proof}
%\begin{remark}
%We have an analogous statement for the  de Rham complexes 
% $ \R\Gamma_{\dr}(X)\in \sd^b(C_K)$ of a smooth dagger variety $X$ over $K$.
% For a quasi-compact $X$, their cohomology groups are classical; they  are finite $K$-vector spaces with their canonical  Hausdorff topology. For a general $X$, their cohomology groups are also classical; they are Fr\'echet spaces. 
%\end{remark}
      
     \subsubsection{$(\phi,N)$-cohomology.}Let $X\in{\rm Sm}^{\dagger}_K$, $r\geq 0$. 
 We will need to understand the cohomology of 
 $[\rg_{\hk}(X)]^{N=0,\phi=p^r}$. We have
 $$
 [\rg_{\hk}(X)]^{N=0,\phi=p^r}=\left[\begin{aligned}\xymatrix@R=.6cm{\rg_{\hk}(X)\ar[d]^{N}\ar[r]^{p^r-\phi} & \rg_{\hk}(X)\ar[d]^{N}\\
 \rg_{\hk}(X)\ar[r]^{p^r-p\phi} & \rg_{\hk}(X)
 }\end{aligned}\right]
 $$
  This gives rise to a  spectral sequence
\begin{equation}\label{sseq22}
E^{ij}_2=\wt{H}^i([ H^j_{\hk}(X)]^{N=0,\phi=p^r})\Rightarrow \wt{H}^{i+j}(\rg_{\hk}(X)^{N=0,\phi=p^r}),
\end{equation}
where $\wt{H}^*([H^j_{\hk}(X)]^{N=0,\phi=p^r})$ is the cohomology of the complex 
$$
\left[\begin{aligned}\xymatrix@R=.6cm{H^j_{\hk}(X)\ar[r]^{p^r-\phi} \ar[d]^{N} & H^j_{\hk}(X)\ar[d]^{N}\\
H^j_{\hk}(X)\ar[r]^{p^r-p\phi} & H^j_{\hk}(X)
}\end{aligned}\right]
$$
That is, we can compute it by the sequence
$$
H^j_{\hk}(X)\veryverylomapr{(N,p^r-\phi)}  H^j_{\hk}(X)\oplus H^j_{\hk}(X)\veryverylomapr{(p^r-p\phi)-N}H^j_{\hk}(X).
$$
The  cohomology $\wt{H}^i([H^j_{\hk}(X)]^{N=0,\phi=p^r})$ is classical and a  Fr\'echet space. This is because we can write naturally $H^i_{\hk}(X)\simeq \invlim_nH^i_{\hk}(U_n)$, for an admissible increasing quasi-compact covering 
$\{U_n\}_{n\in\N}$ of $X$, and all the cohomologies $H^i_{\hk}(U_n)$ are finite dimensional over $F$. 

 Hence, in the spectral sequence (\ref{sseq22}),  the terms are classical and Fr\'echet spaces. Arguing by limits as above, we conclude that so is the abutment. 

\begin{remark}In the case when $H^j_{\hk}(X)$ is a finite $(\phi,N)$-module (for example when $X$ is quasi-compact), then ${H}^*([H^j_{\hk}(X)]^{N=0,\phi=p^r})\simeq \Ext^*_{\phi,N}(F,H^j_{\hk}(X)\{r\})$, the $\Ext$-groups in the category of finite $(\phi,N)$-modules  \cite{BE2}. 
\end{remark}
\begin{proposition}
\label{computHK}
Let $X\in {\rm Sm}^{\dagger}_K, r\geq 0$. 
\begin{enumerate}
\item We have ${H}^{i}([\rg_{\hk}(X)]^{N=0,\phi=p^r})=0$ for $i\leq r-1$.
\item 
    There is a  strict short exact sequence
\begin{equation}
\label{jesien}
0\to H^{r-1}_{\hk}(X)^{\phi=p^{r-1}}\to H^{r}([\rg_{\hk}(X)]^{N=0,\phi=p^r})\to H^{r}_{\hk}(X)^{N=0,\phi=p^r}\to 0
\end{equation}
\end{enumerate}
\end{proposition}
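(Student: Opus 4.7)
The plan is to interpret $[\rg_{\hk}(X)]^{N=0,\phi=p^r}$ as the homotopy fiber
$$[[\rg_{\hk}(X)]^{\phi=p^r} \xrightarrow{\bar N} [\rg_{\hk}(X)]^{\phi=p^{r-1}}],$$
using $N\phi=p\phi N$ (so $N$ carries the $\phi=p^r$ condition to $\phi=p^{r-1}$), and to exploit the resulting long exact sequence in cohomology. This reduces the question to understanding $H^i([\rg_{\hk}(X)]^{\phi=p^s})$ for $s\in\{r-1,r\}$ and $i\leq r$.

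Each complex $[\rg_{\hk}(X)]^{\phi=p^s}=[\rg_{\hk}(X)\xrightarrow{p^s-\phi}\rg_{\hk}(X)]$ yields the strict short exact sequence
$$0\to H^{i-1}_{\hk}(X)/(p^s-\phi)\to H^i([\rg_{\hk}(X)]^{\phi=p^s})\to H^i_{\hk}(X)^{\phi=p^s}\to 0.$$
The decisive input is the weight bound of Proposition~\ref{rain1}(4): the $\phi$-eigenvalues on $H^j_{\hk}(X)$ are Weil numbers of weight $\leq 2j$, so $p^s$ (weight $2s$) is not an eigenvalue whenever $s>j$, which makes $p^s-\phi$ invertible on $H^j_{\hk}(X)$ in that range. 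Applied with $s=r$ and $s=r-1$, this forces $H^i([\rg_{\hk}(X)]^{\phi=p^r})=0$ for $i<r$, $H^r([\rg_{\hk}(X)]^{\phi=p^r})\simeq H^r_{\hk}(X)^{\phi=p^r}$, $H^i([\rg_{\hk}(X)]^{\phi=p^{r-1}})=0$ for $i<r-1$, and $H^{r-1}([\rg_{\hk}(X)]^{\phi=p^{r-1}})\simeq H^{r-1}_{\hk}(X)^{\phi=p^{r-1}}$.

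Substituting into the long exact sequence, part~(1) is immediate. At $i=r$ one obtains
$$0\to H^{r-1}_{\hk}(X)^{\phi=p^{r-1}}\to H^r([\rg_{\hk}(X)]^{N=0,\phi=p^r})\to H^r_{\hk}(X)^{\phi=p^r}\xrightarrow{N} H^r([\rg_{\hk}(X)]^{\phi=p^{r-1}}),$$
and composing the last map with the projection to $H^r_{\hk}(X)^{\phi=p^{r-1}}$ recovers $N$ on Hyodo-Kato cohomology, so the kernel is identified with $H^r_{\hk}(X)^{N=0,\phi=p^r}$, delivering the desired short exact sequence.

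The main obstacle is this last identification: a priori an element of $\ker(N\colon H^r_{\hk}(X)^{\phi=p^r}\to H^r_{\hk}(X)^{\phi=p^{r-1}})$ could map nontrivially into the subgroup $H^{r-1}_{\hk}(X)/(p^{r-1}-\phi)\subset H^r([\rg_{\hk}(X)]^{\phi=p^{r-1}})$, and ruling this out rests on compatibility of $N$ with the weight/monodromy structure on $H^r_{\hk}(X)$. Subordinate issues are extending the weight bound from finite $k$ to general perfect $k$, handled by descending to a subfield of finite type over which the defining data of the special fiber is already defined, and verifying strictness, which reduces via Fr\'echet limits (as in the proof of Proposition~\ref{rain1}) to the quasi-compact case where all cohomologies are finite-dimensional $F$-vector spaces and strictness is automatic.
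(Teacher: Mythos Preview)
Your structural approach—decomposing $[\rg_{\hk}(X)]^{N=0,\phi=p^r}$ as the fiber of $N:[\rg_{\hk}(X)]^{\phi=p^r}\to[\rg_{\hk}(X)]^{\phi=p^{r-1}}$ and reading off the long exact sequence—is equivalent to the paper's spectral sequence argument, just unwound by hand. The real divergence, and the genuine gap, is in the key input you invoke.

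You cite Proposition~\ref{rain1}(4) for the claim that the $\phi$-eigenvalues on $H^j_{\hk}(X)$ are Weil numbers \emph{of weight $\leq 2j$}, and deduce from this that $p^s-\phi$ is invertible on $H^j_{\hk}(X)$ whenever $s>j$. But Proposition~\ref{rain1}(4) asserts no weight bound at all: it only says (for finite $k$ and quasi-compact $X$) that the eigenvalues are Weil numbers of \emph{some} weight. So your invertibility conclusion is unjustified, and your proposed reduction from general $k$ to finite $k$ by spreading out inherits the same gap.

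The paper's argument uses a different input: the \emph{Newton slopes} of $\phi$ on $H^j_{\hk}(X)$ are $\leq j$. This holds for arbitrary perfect $k$ and is established in the proof by reducing (via $\eta$-\'etale descent to a semistable model and the weight spectral sequence) to the slope bound on classical rigid cohomology $H^i_{\rig}(Y/F)$ of smooth $k$-schemes, which is due to Chiarellotto--Le~Stum. The slope bound gives the invertibility of $p^s-\phi$ on $H^j_{\hk}(X)$ for $s>j$ directly: on an isocrystal whose slopes are all $<s$, the operator $1-p^{-s}\phi$ is invertible. If you substitute this slope argument for your weight argument, the remainder of your proof goes through exactly as in the paper's.

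Your final paragraph correctly flags the residual point: one must know that the induced map $H^r_{\hk}(X)^{\phi=p^r}\to H^r([\rg_{\hk}(X)]^{\phi=p^{r-1}})$ kills $\ker N$ outright, not merely lands it in the subspace $H^{r-1}_{\hk}(X)/(p^{r-1}-\phi)$. In the paper's language this is the vanishing of the differential $d_2:E_2^{0,r}\to E_2^{2,r-1}$; the paper does not spell this out either, absorbing it into ``we obtain both claims from the spectral sequence''. So on this point you are no worse off than the paper, but your appeal to ``weight/monodromy compatibility'' is not a proof.
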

\begin{proof}
To see that, we note that
 the slopes of Frobenius on $H^{i}_{\hk}(X)$  are $\leq i$:  it is enough to show this for $X$ with a semistable reduction where we can use  the weight spectral sequence  to reduce to showing that, for a smooth  scheme $Y$ over $k$, the slopes of Frobenius
on the (classical) rigid cohomology $H^i_{\rig}(Y/F)$ are $\leq i$; but this is well-known \cite[Th. 3.1.2]{CS}. It follows that 
 the morphism  $\phi-p^j$ is an isomorphism on $H^{i}_{\hk}(X)$ for $i<j$.  Knowing that, we obtain  both claims of the proposition  from  the spectral sequence (\ref{sseq22}).
\end{proof}

  \subsection{Geometric overconvergent Hyodo-Kato cohomology}\label{lebras}We define the Hyodo-Kato  cohomology of smooth dagger  varieties over $C$ by $\eta$-\'etale descent of overconvergent  Hyodo-Kato cohomology of semistable models. 
\subsubsection{Definition and basic properties.} \label{limit11}
  Let  ${f}:\sx\to \Spwf(\so_C)^{\times}$ be a  semistable  weak formal model. Suppose  that ${f}$ is the base change of 
 a semistable weak  formal model ${f}_L:\sx_{\so_L}\to \Spwf(\so_{L})^{\times}$  over $\so_L$ by $\theta: \Spwf(\so_C)^{\times}\to\Spwf(\so_{L})^{\times}$, for a finite extension $L/K$. That is, we have a map $\theta_{L}: \sx\to \sx_{\so_L}$ such that the square $({f},{f}_L,\theta,\theta_{L})$ is Cartesian. Such data $\{(L,\sx,\theta_{L})\}$ reduced modulo $p$ form a filtered set $\Sigma$ (cf. Section \ref{part-i}). \\
 (i) {\em Hyodo-Kato cohomology.} For a morphism of tuples $(L\pri,\sx_{\so_{L\pri,1}}\pri,\theta\pri_{L^\prime})\to (L,\sx_{\so_{L,1}},\theta_{L})$ from $\Sigma$,
 we   have a canonical base change identification compatible with $\phi$-action (unramified base change)
\begin{equation}
\label{limit1}
\R\Gamma_{\hk}(\sx_{\so_L,0})\otimes_{F_L}F_{L\pri}\stackrel{\sim}{\to}
 \R\Gamma_{\hk}(\sx\pri_{\so_{L^\prime},0}).
\end{equation}
We set
$$ \R\Gamma_{\hk}(\sx_1):= \hocolim_{\Sigma}\R\Gamma_{\hk}(\sx_{\so_L,0}).
$$
It is a dg $ F^{\nr}$-algebra\footnote{The field $F^{\nr}$ is equipped here with the inductive limit topology in $C_F$. In particular, a sequence $(x_n)_{n\in\N}$, of elements of $F^{\nr}$ converges if and only if there exists a finite extension $L$ of $F$ such that all $x_n\in L$ and the sequence  $(x_n)_{n\in\N}$ converges inside $L$.}  equipped with a $\phi$-action and a derivation $N$ such that $N\phi=p\phi N$.
It is functorial with respect to  $\sx$: note that the restriction of a morphism  $\sx\to \sy$ to a morphism $\sx_1\to\sy_1$ is defined over a finite extension of $K$. 

 Let $\sa_{\hk}$ be the $\eta$-\'etale sheafification of the presheaf $\sx\mapsto \rg_{\hk}(\sx_1)$ on $\sm_C^{\dagger, \sem,b}$. For $X\in {\rm Sm}^{\dagger}_C$, we set $\rg_{\hk}(X):=\rg_{\eet}(X,\sa_{\hk})$. It is a dg $F^{\nr}$-algebra equipped with a Frobenius, monodromy action, and a continuous action of
 $\sg_K$ if $X$ is defined over $K$ (this action is smooth if $X$ is quasi-compact). We equip it with the topology induced, by $\eta$-\'etale descent,  from the topology on the $\R\Gamma_{\hk}(\sx_{\so_L,0})$'s.
 \begin{proposition}
\label{rain15}
Let $X$ be a smooth dagger variety over $C$. 
\begin{enumerate}
\item If $X$ is quasi-compact then $\wt{H}^*_{\hk}(X)$ is classical. It  is a finite dimensional $F^{nr}$-vector space with its natural topology.
\item The cohomology $\wt{H}^*_{\hk}(X)$ is classical. It is a limit (in $C_F$) of finite dimensional $F^{\nr}$-vector spaces. 
\item The endomorphism $\phi$ on $H^*_{\hk}(X)$ is  a  homeomorphism.
\item  If $k$ is finite and $X$ is quasi-compact then $H^*_{\hk}(X)$ is a mixed $F$-isocrystal, i.e., the eigenvalues\footnote{The cohomology $H^*_{\hk}(X)$ together with its Frobenius, a priori an $F^{\nr}$-vector space of finite rank, is obtained by a base change from a finite rank $F^{\prime}$-vector space $V$, where $[F^{\prime}:F]<\infty$, equipped with a semilinear Frobenius so we can use the definition of eigenvalues of Frobenius  from the footnote to Proposition \ref{rain1}.}
 of $\phi$  are Weil numbers (if $X$ is not quasi-compact then $H^*_{\hk}(X)$ is a product of mixed $F$-isocrystals). 
\end{enumerate}
\end{proposition}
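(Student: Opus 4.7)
The plan is to mirror the proof of Proposition \ref{rain1}, with the extra input being the behaviour of cohomology under the filtered colimit $\hocolim_{\Sigma}$ defining $\rg_{\hk}(\sx_1)$ for $\sx\in\sm^{\dagger,\sem,b}_C$.

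For (1), by $\eta$-\'etale descent for $\sa_{\hk}$ we may assume $X=\sx_C$ for a basic semistable weak formal model $\sx$ descending from $\sx_{\so_L}/\so_L$. By construction, $\rg_{\hk}(\sx_1)=\hocolim_{\Sigma}\rg_{\hk}(\sx_{\so_L,0})$. Applying the arithmetic Proposition \ref{rain1} together with Proposition \ref{hypercov-dagger} to the quasi-compact smooth dagger variety $\sx_{\so_L,L}$, each term $\wt{H}^i_{\hk}(\sx_{\so_L,0})$ is classical and finite dimensional over $F_L$, with $\phi$ a homeomorphism. The unramified base change identification (\ref{limit1}) says that the transition maps in $\Sigma$, after tensoring the source with $F_{L'}$, are (strict) quasi-isomorphisms. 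Hence the filtered colimit computes cohomology degreewise, giving a strict quasi-isomorphism
\[
\wt{H}^i_{\hk}(\sx_{\so_L,0})\otimes_{F_L}F^{\nr}\stackrel{\sim}{\to}\wt{H}^i_{\hk}(\sx_1)
\]
for any sufficiently large $L$; this is a finite dimensional $F^{\nr}$-vector space with its natural (inductive limit) topology in $C_F$.

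For (2), I would proceed exactly as in Proposition \ref{rain1}(2): cover $X$ by an admissible increasing sequence $\{U_n\}_{n\in\N}$ of quasi-compact open dagger subvarieties (Remark \ref{ducros}), so that $\rg_{\hk}(X)\simeq \holim_n\rg_{\hk}(U_n)$. The resulting Milnor short exact sequence
\[
0\to R^1\holim_n\wt{H}^{i-1}_{\hk}(U_n)\to \wt{H}^i_{\hk}(X)\to \holim_n\wt{H}^i_{\hk}(U_n)\to 0
\]
combined with part (1), which gives each $\wt{H}^i_{\hk}(U_n)$ finite dimensional over $F^{\nr}$, shows by Mittag-Leffler that the $R^1\holim$ term vanishes and that $\wt{H}^i_{\hk}(X)$ is classical and identified with $\invlim_n \wt{H}^i_{\hk}(U_n)$, a projective limit of finite dimensional $F^{\nr}$-vector spaces in $C_F$. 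Part (3) then follows from Proposition \ref{rain1}(3) at each finite level $\sx_{\so_L,0}$ (for $X$ quasi-compact, via part (1)) and then from the limit presentation for general $X$. Part (4) is immediate from (1) and Proposition \ref{rain1}(4), since being a Weil number is stable under unramified base change from $F_L$ to $F^{\nr}$ (this is the reason the footnote insists on the base-change-invariant definition of eigenvalue).

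The main technical obstacle will be rigorously justifying that the filtered colimit $\hocolim_{\Sigma}$ preserves classicality of cohomology and yields the algebraic tensor product with $F^{\nr}$ carrying its inductive limit topology. This amounts to verifying that, in the quasi-abelian category $C_{\Q_p}$, filtered colimits along strict quasi-isomorphisms (after base change) commute with passage to cohomology; this is the topological analogue of the elementary fact that filtered colimits are exact, and is available here because each transition map in the colimit becomes a strict quasi-isomorphism after tensoring with a sufficiently large finite unramified extension of $F_L$, and the target spaces at each finite level are finite dimensional (so Banach, hence flat for the completed tensor product).
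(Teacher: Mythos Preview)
Your approach is essentially the paper's: reduce to Proposition \ref{rain1} via the unramified base-change identification (\ref{limit1}), then pass to general $X$ by an admissible exhaustion. One point of imprecision: your opening move ``by $\eta$-\'etale descent we may assume $X=\sx_C$ for a basic semistable model'' presupposes local-global compatibility for the geometric Hyodo-Kato cohomology, which is Proposition \ref{pierre1} and is only proved \emph{after} this proposition in the paper. The paper avoids this by working directly with an $\eta$-\'etale hypercovering $\su_\cdot$ of $X$ from $\sm^{\dagger,\sem,b}_C$ and showing that the total complex built from the presheaf values $\rg_{\hk}(\su_{j,1})$ has classical finite-rank $F^{\nr}$-cohomology; your argument yields exactly this once you replace the single model by the degrees of a hypercovering, so the fix is purely a matter of phrasing. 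The filtered-colimit issue you flag at the end is real but harmless here, since at each finite level the cohomology is already finite-dimensional and the transition maps become isomorphisms after tensoring with $F_{L'}$.
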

\begin{proof} For claim (1), it suffices to show that, for every $\eta$-\'etale hypercovering $\su_{\cdot}$ of $X$ from $\sm^{\dagger,\sem,b}_C$, the cohomology
$\wt{H}^i_{\hk}(\su_{\cdot,C})$, $i\geq 0$,  is classical and of finite rank over $F^{\nr}$. Since we can assume that the weak formal schemes in every degree of the hypercovering are admissible, this follows immediately from 
Proposition \ref{rain1} and the quasi-isomorphism (\ref{limit1}).

Claim (2) follows easily  from claim (1).  Claim (3) and (4) follow by the same argument as claim~(1).
\end{proof}
 
(i) {\em Rigid cohomology.}  Let $\sa_{\rig}$ be the $\eta$-\'etale sheafification of the presheaf $\sx\mapsto \rg_{\rig}(\sx_1/\so_C^{\times})$ on $\sm_{C}^{\dagger,\sem,b}$.
   For $X\in {\rm Sm}^{\dagger}_C$, we set $\rg_{\rig}(X):=\rg_{\eet}(X,\sa_{\rig})$. It is a dg $C$-algebra equipped with  a continuous action of
 $\sg_K$ if $X$ is defined over $K$. We equip it with the topology induced, by $\eta$-\'etale descent,  from the topology on the $\R\Gamma_{\rig}(\sx_1/\so_C^{\times})$'s.
We have natural (strict) quasi-isomorphisms
 $$
 \sa_{\rig}\stackrel{\sim}{\to} \sa_{\dr},\quad \rg_{\rig}(X)\stackrel{\sim}{\to} \rg_{\dr}(X).
 $$

  Let $\sa_{\rig,\ovk}$ be the $\eta$-\'etale sheafification of the presheaf $\sx\mapsto \rg_{\rig,\ovk}(\sx_1)$ on $\sm_{C}^{\dagger,\sem,b}$,
  where we set
  $$
  \rg_{\rig,\ovk}(\sx_1):=\hocolim_{\Sigma}\rg_{\rig}(\sx_0/\so_{L}^{\times}).
  $$
   For $X\in {\rm Sm}^{\dagger}_C$, we set $\rg_{\rig,\ovk}(X):=\rg_{\eet}(X,\sa_{\rig,\ovk})$. It is a dg $\ovk$-algebra equipped with  a continuous action of
 $\sg_K$ if $X$ is defined over $K$ (this action is smooth if $X$ is quasi-compact). We equip it with the topology induced, by $\eta$-\'etale descent,  from the topology on the $\R\Gamma_{\rig}(\sx_{\so_L,0})$'s.
There are natural continuous morphisms
$$
  \sa_{\rig,\ovk}  \to \sa_{\rig},\quad \rg_{\rig,\ovk}(X)\to \rg_{\rig}(X).
$$

\subsubsection{Galois descent.}
    Again we have a Galois descent.
\begin{proposition}
\label{descent1}
Let $X\in {\rm Sm}^{\dagger}_K$. The natural projection $\varepsilon: X_{C,\eet}\to X_{\eet}$ defines pullback quasi-isomorphisms
\begin{equation}
\label{qis111}
 \varepsilon^*: \rg_{\hk}(X)\stackrel{\sim}{\to}\rg_{\hk}(X_C)^{\sg_K},\quad  \varepsilon^*: \rg_{\dr}(X)\stackrel{\sim}{\to}\rg_{\rig,\ovk}(X_C)^{\sg_K}.
 \end{equation}
\end{proposition}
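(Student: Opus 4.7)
The plan is to follow verbatim the structure of the proof of Proposition \ref{descent}, replacing the rigid inputs by their dagger counterparts (Proposition \ref{hypercov-dagger} in place of Proposition \ref{hypercov}, Proposition \ref{rain1} in place of its rigid analogue, and Section \ref{limit11} in place of Section \ref{part-i}). First, by $\eta$-\'etale descent for both sides of the two morphisms, I may reduce to the case $X=\sx_K$ for $\sx\in\sm^{\dagger,\sem}_K$; passing to a finite extension of the splitting field of $\sx$ if necessary, I may assume that $\sx$ is semistable over $\so_L$ for some finite Galois extension $L/K$. On such models the $\sg_K$-action on $\rg_{\hk}(X_C)$ and $\rg_{\rig,\ovk}(X_C)$ is smooth, so the fixed points on the right-hand sides are well-defined in the sense of Remark~\ref{Galois-sense}: the normalized trace argument given there goes through verbatim, since after $\eta$-\'etale hypercovering everything is built from LB-type spaces with smooth $\sg_K$-action.

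Next, I consider the square
$$
\xymatrix@R=.6cm{
\rg_{?}(X/L)\ar[r]^-{\varepsilon^*} & \rg_{?}(X\otimes_LC)^{\sg_L}\\
\rg_{?}(X/K)\ar[u]^{\wr}\ar[r]^-{\varepsilon^*} & \rg_{?}(X\otimes_KC)^{\sg_K}\ar[u]
}
$$
for $?\in\{\hk,\,(\dr,\rig{\text-}\ovk)\}$, where the left vertical map is a strict quasi-isomorphism because the $\eta$-\'etale sheafifications over $\sm^{\dagger,\sem}_K$ and $\sm^{\dagger,\sem}_L$ coincide on models that split over $L$. The key step is the top horizontal map. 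Using Proposition~\ref{hypercov-dagger} and the definition of $\rg_{\hk}(\sx_1)$ in Section~\ref{limit11} as $\hocolim_\Sigma \rg_{\hk}(\sx_{\so_{L'},0})=\rg_{\hk}(\sx_{\so_L,0})\otimes_{F_L}F^{\nr}$ (when $\sx$ is already defined over $L$), the top map is strictly quasi-isomorphic to the natural map
$$
\varepsilon^*:\rg_{\hk}(\sx_{\so_L,0})\to \bl\rg_{\hk}(\sx_{\so_L,0})\otimes_{F_L}F^{\nr}\br^{\sg_L},
$$
which is a strict quasi-isomorphism since $\wt H^i_{\hk}(\sx_{\so_L,0})$ is a finite-dimensional $F_L$-vector space (Proposition~\ref{rain1}) and $(F^{\nr})^{\sg_L}=F_L$. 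The de Rham statement is treated in parallel, using $\rg_{\rig,\ovk}(\sx_1)\simeq\rg_{\dr}(\sx_L)\otimes_L\ovk$ (via the overconvergent comparison $\rg_{\rig}(\sx_{\so_L,0}/\so_L^{\times})\simeq\rg_{\dr}(\sx_L)$) and $\ovk^{\sg_L}=L$, together with the fact that $\wt H^i_{\dr}(\sx_L)$ is finite dimensional over $L$.

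Finally, for the right vertical map I use that, as an $L$-dagger variety, $X\otimes_KC\simeq\coprod_H X\otimes_LC$ where $H=\Gal(L/K)$ acts by permutation; hence $\rg_{?}(X\otimes_KC)\simeq\prod_H\rg_{?}(X\otimes_LC)$ and taking $\sg_K$-fixed points recovers $\rg_{?}(X\otimes_LC)^{\sg_L}$, so the right vertical map is a strict quasi-isomorphism. The three maps being strict quasi-isomorphisms forces the bottom horizontal map to be one as well. The main obstacle I anticipate is topological rather than conceptual: one needs to verify that the completed colimits defining $\rg_{\hk}(\sx_1)$ and $\rg_{\rig,\ovk}(\sx_1)$ commute with taking $\sg_L$-fixed points in the strict sense, which reduces to checking that the transition maps in $F^{\nr}=\dirlim_L F_L$ and $\ovk=\dirlim_L L$ are strict and that the normalized-trace argument applies to the Fr\'echet/LB hybrids arising from non-quasi-compact dagger varieties; both verifications follow by covering $X$ by an admissible increasing quasi-compact sequence as in Remark~\ref{ducros}, reducing to the quasi-compact case where Proposition~\ref{rain1} gives finite-dimensional cohomology with the standard topology.
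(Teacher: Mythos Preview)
Your proposal is correct and follows essentially the same approach as the paper, which simply says to rerun the proof of Proposition~\ref{descent} verbatim with the dagger analogues (Proposition~\ref{hypercov-dagger}, Proposition~\ref{pierre1}, and the definitions from Section~\ref{limit11}) and notes that Remark~\ref{Galois-sense} applies. Your additional care about finite-dimensionality via Proposition~\ref{rain1} and the topological remarks at the end are more explicit than necessary---the paper's argument for the map $\rg_{\hk}(\sx_0)\to(\rg_{\hk}(\sx_0)\otimes_{F_L}F^{\nr})^{\sg_L}$ being a quasi-isomorphism is simply that $(F^{\nr})^{\sg_L}=F_L$ termwise---but they do no harm.
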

 \begin{proof}
 We can  use the proof of Proposition \ref{descent} almost verbatim\footnote{Note that Remark \ref{Galois-sense} applies to this setting.}. 
 \end{proof}

  \subsubsection{Hyodo-Kato quasi-isomorphisms.}  $\quad$
  
   (i) {\em Local definition.} Let $\sx\to \Spwf(\so_C)^{\times}$ be as above. 
  The Hyodo-Kato morphism from (\ref{kolo21}):
  \begin{equation}
  \label{HK-jablko2}
  \iota_{\hk}: \rg_{\hk}(\sx_{\so_L,0})\to \rg_{\rig}(\sx_{\so_L,0}/\so_L^{\times}),\quad \iota_{\hk}: \rg_{\hk}(\sx_{\so_L,0})\otimes_{F_L}L\stackrel{\sim}{\to} \rg_{\rig}(\sx_{\so_L,0}/\so_L^{\times})
  \end{equation}
   is compatible with the morphisms in $\Sigma$ and taking  its homotopy colimit yields the first of the following two natural strict quasi-isomorphisms (called again the {\em Hyodo-Kato quasi-isomorphisms})
\begin{align}
\label{HK-geometric-rig}
 & \iota_{\hk}:\quad \R\Gamma_{\hk}(\sx_{1}){\otimes}_{F^{\nr}}{\ovk}\simeq\hocolim_{\Sigma}(\rg_{\hk}(\sx_{\so_L,0})\otimes_{F_L}L)
 \stackrel{\sim}{\to} \hocolim_{\Sigma}\R\Gamma_{\rig}(\sx_{\so_L,0}/\so_L^{\times})=:\R\Gamma_{\rig,\ovk}(\sx_1),\\
& \iota_{\hk}:\quad \R\Gamma_{\hk}(\sx_1)\wh{\otimes}^R_{F^{\nr}}{C}\stackrel{\sim}{\to} \rg_{\rig}(\sx_1/\so_C^{\times})\simeq \R\Gamma_{\dr}(\sx_{C}).\notag
\end{align}
In the second Hyodo-Kato morphism, we set
$$
\R\Gamma_{\hk}(\sx_1)\wh{\otimes}^R_{F^{\nr}}{C}:=\hocolim_{\Sigma}(\R\Gamma_{\hk}(\sx_{\so_L,0})\wh{\otimes}^R_{F_L}{C}),
$$
where  all the maps in the homotopy limit are strict quasi-isomorphisms. 
This morphism is then defined as the composition
\begin{align*}
& \hocolim_{\Sigma}(\R\Gamma_{\hk}(\sx_{\so_L,0})\wh{\otimes}^R_{F_L}{C})\lomapr{\iota_{\hk}}\hocolim_{\Sigma}(\R\Gamma_{\rig}(\sx_{\so_L,0}/\so_L^{\times})\wh{\otimes}^R_{L}{C})\\
&\quad\quad \stackrel{\sim}{\to} \R\Gamma_{\rig}(\sx_{1}/\so_C^{\times})
\stackrel{\sim}{\to}\R\Gamma_{\dr}(\sx_{C}),
\end{align*}
where we have used the Hyodo-Kato quasi-isomorphism from (\ref{HK-jablko2}), the second map is a strict quasi-isomorphism by base change.
So defined morphism is clearly a strict quasi-isomorphism. 

(ii) {\em Globalization.} Varying $\sx$ in the above constructions we obtain the Hyodo-Kato maps
 $$
\iota_{\hk}:  \sa_{\hk}\to \sa_{\rig},\quad \iota_{\hk}: \sa_{\hk}\to \sa_{\dr}
 $$
 of sheaves on ${\rm Sm}^{\dagger}_{C,\eet}$.
 For $X\in {\rm Sm}^{\dagger}_C$, they induce the natural  Hyodo-Kato strict quasi-isomorphisms 
  \begin{equation}
  \label{HK-rig}
   \iota_{\hk}:\R\Gamma_{\hk}(X)\wh{\otimes}_{F^{\nr}}{\ovk}\stackrel{\sim}{\to} \R\Gamma_{\rig,\ovk}(X),\quad
 \iota_{\hk}: \R\Gamma_{\hk}(X)\wh{\otimes}^R_{F^{\nr}}{C}\stackrel{\sim}{\to} \R\Gamma_{\dr}(X).
  \end{equation}
   Here we set 
 \begin{align}
 \label{HK-referee1}
  & \R\Gamma_{\hk}(X)\wh{\otimes}_{F^{\nr}}{\ovk}:=\hocolim((\R\Gamma_{\hk}{\otimes}_{F^{\nr}}{\ovk})(\su_{\cdot,0})),\\
 & \R\Gamma_{\hk}(X)\wh{\otimes}^R_{F^{\nr}}C:=\hocolim((\R\Gamma_{\hk}\wh{\otimes}^R_{F^{\nr}}{C})(\su_{\cdot,0})),\notag
 \end{align}
 where the homotopy colimit is taken over $\eta$-\'etale hypercoverings from $\sm^{\dagger,\sem, b}_C$.  We note that 
\begin{equation}
\label{pierre2}
 \R\Gamma_{\rig,\ovk}(X)\simeq \hocolim\R\Gamma_{\rig,\ovk}(\su_{\cdot,1}).
 \end{equation}
  This is because $ \hocolim\R\Gamma_{\rig,\ovk}(\su_{\cdot,1})\simeq  \hocolim\R\Gamma_{\rig,\ovk}(\su_{\cdot,C})
$ by Proposition \ref{pierre1} below (there is no circular reasoning here) and we have $\eta$-\'etale descent for $\R\Gamma_{\rig,\ovk}(X)$. 
Having (\ref{pierre2}),
the first strict quasi-isomorphism in (\ref{HK-rig}) follows from the strict Hyodo-Kato quasi-isomorphism in (\ref{HK-geometric-rig}). The latter also imply easily the second strict quasi-isomorphism we wanted.

  (iii) {\em Local-global compatibility and comparison results.} The Hyodo-Kato quasi-isomorphisms allow us now to prove the following comparison result (where the tensor products in (2) and (3) are defined as in (\ref{HK-referee1}).
  \begin{proposition} \label{pierre1}
  \begin{enumerate}
  \item 
Let  $\sx\in\sm^{\dag,\sem, b}$. 
 Then the natural maps
 $$
 \rg_{\hk}(\sx_1)\to\rg_{\hk}(\sx_C),\quad  \rg_{\rig}(\sx_1)\to\rg_{\rig}(\sx_C), \quad  \rg_{\rig,\ovk}(\sx_1)\to\rg_{\rig,\ovk}(\sx_C)
 $$ are strict quasi-isomorphisms. 
 \item For $X\in {\rm Sm}^{\dagger}_C$, we have a natural strict quasi-isomorphism
  $$
  \rg_{\rig,\ovk}(X)\wh{\otimes}^R_{\ovk}C\stackrel{\sim}{\to} \rg_{\rig}(X)\simeq \rg_{\dr}(X).
  $$
\item For $X\in {\rm Sm}^{\dagger}_K$, we have a natural strict quasi-isomorphism
  $$
  \rg_{\dr}(X)\wh{\otimes}_{K}\ovk\simeq \rg_{\rig,\ovk}(X_C).
  $$
 \end{enumerate}
 \end{proposition}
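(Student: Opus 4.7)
The plan is to follow the template of Proposition \ref{etale-descent}, exploiting the dagger Hyodo--Kato quasi-isomorphisms from (\ref{HK-geometric-rig})--(\ref{HK-rig}) to reduce every claim to an \'etale descent statement for de Rham cohomology of dagger varieties, which is available from Section 4.1 (differential forms satisfy \'etale descent strictly). Throughout, the main subtlety will be the homotopy colimit over $\Sigma$ built from models defined over varying finite extensions $L/K$; the key to handling it will be passing to a finite truncation of any given $\eta$-\'etale hypercovering, so that all relevant data descends to a common finite extension.

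For part (1), it suffices to show that for any $\eta$-\'etale hypercovering $\su_{\cdot}$ of $\sx$ in $\sm^{\dagger,\sem,b}_C$, taking a refinement if necessary, the natural map $\rg_{?}(\sx_1)\to\rg_{?}(\su_{\cdot,1})$ is a strict quasi-isomorphism for $?=\hk,\rig,\{\rig,\ovk\}$. For the Hyodo--Kato case, I would apply $(-)\otimes_{F^{\nr}}\ovk$ on both sides and use (\ref{HK-geometric-rig}) to reduce to the $\{\rig,\ovk\}$ case. For that latter case, I would rewrite both sides via the strict quasi-isomorphism $\rg_{\rig}(\sx_{\so_L,0}/\so_L^{\times})\otimes_LL\simeq \rg_{\dr}(\sx_L)$ obtained from log-smoothness, arriving at a map of homotopy colimits of de Rham complexes over varying fields $L_\cdot$. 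Restricting to a degreewise-finite truncated hypercovering, one can descend all the dagger varieties involved to a common finite extension $K^{\prime}$ of $K$, and then invoke \'etale descent for dagger de Rham cohomology. The $\rig$ case is immediate via $\sa_{\rig}\simeq\sa_{\dr}$.

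For part (2), by $\eta$-\'etale descent for both $\rg_{\rig,\ovk}$ and $\rg_{\dr}$, I would take an $\eta$-\'etale hypercovering $\su_{\cdot}$ of $X$ from $\sm^{\dagger,\sem,b}_C$ and argue degree-wise. For a single $\su\in\sm^{\dagger,\sem,b}_C$, arising by base change from $\su_{\so_E}$, $[E:K]<\infty$, the claim reduces to the commutative diagram
$$
\xymatrix@R=.6cm{
\rg_{\rig,\ovk}(\su_{1})\wh{\otimes}^R_{\ovk}C \ar[r] & \rg_{\rig}(\su_{1})\ar[r]^-{\sim} & \rg_{\dr}(\su_{C})\\
\rg_{\rig}(\su_{\so_E,1}/\so_E^{\times})\wh{\otimes}^R_{E}C\ar[u] \ar[ur]^-{\sim} \ar[r]^-{\sim} & \rg_{\dr}(\su_{E})\wh{\otimes}^R_EC\ar[ur]^{\sim} &
}
$$
whose slanted and horizontal arrows are strict quasi-isomorphisms by log-smoothness plus part (1), so the top row is also strict. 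For part (3), having chosen an $\eta$-\'etale hypercovering of $X_C$ from $\sm^{\dagger,\sem,b}_C$ that arises by base change from a hypercovering defined over a finite extension of $K$ (after truncation and refinement, using the universal property of presentations), both $\rg_{\dr}(X)\wh{\otimes}_K\ovk$ and $\rg_{\rig,\ovk}(X_C)$ can be computed by $(\rg_{\dr}\otimes_{L_\cdot}\ovk)(\su_{\cdot,L_\cdot})$, yielding the desired strict quasi-isomorphism.

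The main obstacle I expect is the bookkeeping for the filtered system $\Sigma$ and for the varying splitting fields across a hypercovering: one has to verify that taking $\hocolim_\Sigma$, $(-)\wh{\otimes}^R_{F^{\nr}}C$, $(-)\wh{\otimes}^R_{\ovk}C$, and the \v{C}ech-type totalization over $\su_\cdot$ can be interchanged in the required strict sense. This is where the assumption that the relevant complexes are made of Banach/$LB$-spaces (so that completed tensor products are exact and base change is strict) is essential, exactly as in the proof of Proposition \ref{etale-descent}; with this input the three parts follow uniformly from \'etale descent for dagger de Rham cohomology and the Hyodo--Kato strict quasi-isomorphisms established in Section \ref{lebras}.
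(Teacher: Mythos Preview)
Your proposal is correct and follows essentially the same approach as the paper: the paper's own proof simply says that the argument of Proposition \ref{etale-descent} goes through verbatim with $\rg_{\conv}$ replaced by $\rg_{\rig}$, and you have correctly unpacked exactly that argument (reduction of the Hyodo--Kato case to the $\{\rig,\ovk\}$ case via (\ref{HK-geometric-rig}), passage to de Rham cohomology over a common finite extension after truncation, and \'etale descent for dagger differentials). The only minor slip is the redundant ``$\otimes_L L$'' in your identification $\rg_{\rig}(\sx_{\so_L,0}/\so_L^{\times})\simeq \rg_{\dr}(\sx_L)$, which is harmless.
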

 \begin{proof} The proof is almost verbatim the same as  the proof of Proposition \ref{etale-descent} (which contains analogous claims in the case of rigid analytic varieties) we just need to replace $\rg_{\conv}$ used there with
 $\rg_{\rig}$. 
 \end{proof}
 \begin{remark}
Much of what we have described above in Section \ref{lebras} goes through, with minimal changes, for $X\in {\rm Sm}_C$. Hence, working with formal schemes instead of weak formal schemes, we have the geometric Hyodo-Kato cohomology
$\rg^{\dagger}_{\hk}(X)$. We wrote ${}^\dagger$ to distinguished this cohomology from the geometric Hyodo-Kato cohomology $\rg_{\hk}(X)$ defined in Section \ref{berkeley11}. It is a dg $F^{\nr}$-algebra equipped with a $\phi$-action, derivation $N$ such that $N\phi=p\phi N$, and a continuous action of $\sg_K$ (which is smooth when $X$ is quasi-compact). It has an arithmetic analogue that satisfies  Galois descent of the type described  in Proposition \ref{descent1}.
We also have the Hyodo-Kato quasi-isomorphism
$$
\iota_{\hk}:\rg^{\dagger}_{\hk}(X)\wh{\otimes}_{F^{\nr}}\ovk\stackrel{\sim}{\to} \rg_{\rig,\ovk}(X),
$$
where the rigid cohomology is defined like its analog  for  dagger varieties.

     If $X$ is quasi-compact, the underlying isocrystal of $H^i\rg^{\dagger}_{\hk}(X)$ should be the one defined by Le Bras in \cite{LeBras}.
  
\end{remark}

    \subsection{Arithmetic overconvergent  syntomic cohomology}
    \label{def33}We define now arithmetic overconvergent syntomic  cohomology of smooth dagger  varieties over $K$ by $\eta$-\'etale descent of overconvergent  syntomic cohomology of semistable weak formal models.

Let $\sx$ be an admissible  semistable weak formal scheme over $\so_L$, $[L:K]<\infty$. For $r\geq 0$, we define the overconvergent  syntomic cohomology as 
\begin{equation}
\label{over0}
\rg_{\synt}(\sx,\Q_p(r)):=[[\rg_{\hk}(\sx_0)]^{N=0,\phi=p^r}\lomapr{\iota_{\hk}} \rg_{\dr}(\sx_L)/F^r].
\end{equation}
For a smooth dagger space $X$ over $K$ we define the syntomic cohomology $\sa_{\synt}(r)$ as the $\eta$-\'etale sheafification of the above complexes on $\sm_{K}^{\sem, \dagger}$; and we define the syntomic cohomology of $X$ as 
$$
\rg_{\synt}(X,\Q_p(r)):=\rg_{\eet}(X,\sa_{\synt}(r)).
$$
We have the distinguished triangle
\begin{equation}
\label{over1}
\rg_{\synt}(X,\Q_p(r))\to [\rg_{\hk}(X)]^{N=0,\phi=p^r}\lomapr{\iota_{\hk}}\rg_{\dr}(X)/F^r
\end{equation}
\begin{proposition}(Local-global compatibility)
\label{berkeley-sunny}
Let $r\geq 0$. Let $\sx$ be a semistable weak formal scheme over $\so_K$. Then the natural map
$$
\rg_{\synt}(\sx,\Q_p(r))\to \rg_{\synt}(\sx_K,\Q_p(r))
$$
is a strict quasi-isomorphism.
\end{proposition}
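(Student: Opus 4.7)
The plan is to imitate the proof of Corollary \ref{compt1} in the overconvergent setting, but actually more directly since, by the very definition (\ref{over0})--(\ref{over1}), both sides of the map are already expressed as homotopy fibers of the Hyodo-Kato map into $\rg_{\dr}(\sx_K)/F^r$; unlike the rigid case there is no need to pass through the intermediate strict quasi-isomorphism $\iota_2$ of Proposition \ref{passage-BK}.

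First, I would unpack both sides. By definition (\ref{over0}),
\[
\rg_{\synt}(\sx,\Q_p(r))=[[\rg_{\hk}(\sx_0)]^{N=0,\phi=p^r}\lomapr{\iota_{\hk}} \rg_{\dr}(\sx_K)/F^r].
\]
For the globalized side, I would apply $\eta$-\'etale descent: by the distinguished triangle (\ref{over1}) together with $\eta$-\'etale descent for $\sa_{\hk}$ and $\sa_{\dr}$ (hence for the quotients $\sa_{\dr}/F^r$), one obtains
\[
\rg_{\synt}(\sx_K,\Q_p(r))\simeq [[\rg_{\hk}(\sx_K)]^{N=0,\phi=p^r}\lomapr{\iota_{\hk}} \rg_{\dr}(\sx_K)/F^r].
\]
The natural map $\rg_{\synt}(\sx,\Q_p(r))\to\rg_{\synt}(\sx_K,\Q_p(r))$ is then induced, on the target of the Hyodo-Kato map, by the identity on $\rg_{\dr}(\sx_K)/F^r$, and on the source by the canonical map $\rg_{\hk}(\sx_0)\to \rg_{\hk}(\sx_K)$ coming from the $\eta$-\'etale sheafification defining $\sa_{\hk}$.

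Next, I would invoke Proposition \ref{hypercov-dagger}, which says precisely that the map $\rg_{\hk}(\sx_0)\to \rg_{\hk}(\sx_K)$ is a strict quasi-isomorphism. Since the operators $N$ and $\phi$, as well as the Hyodo-Kato morphism $\iota_{\hk}$, are defined on both sides via the same sheaf-theoretic construction (the Ertl--Yamada functorial version recalled in Remark \ref{referee12}), this strict quasi-isomorphism is compatible with $N$, $\phi$, and with the map $\iota_{\hk}$ into $\rg_{\dr}(\sx_K)/F^r$. Applying the exact functor $[-]^{N=0,\phi=p^r}$ (which is itself built from homotopy limits of the kind $[C\xrightarrow{} C']$, hence preserves strict quasi-isomorphisms in $\sd(C_{\Q_p})$), one obtains a strict quasi-isomorphism
\[
[\rg_{\hk}(\sx_0)]^{N=0,\phi=p^r}\stackrel{\sim}{\to} [\rg_{\hk}(\sx_K)]^{N=0,\phi=p^r}
\]
commuting with the arrows into $\rg_{\dr}(\sx_K)/F^r$.

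Finally, since both $\rg_{\synt}(\sx,\Q_p(r))$ and $\rg_{\synt}(\sx_K,\Q_p(r))$ are homotopy fibers (in $\sd(C_{\Q_p})$) of two arrows which are intertwined by a strict quasi-isomorphism on the source and the identity on the target, the induced map between the fibers is a strict quasi-isomorphism. The only nontrivial point that needs to be checked carefully is the compatibility of $\iota_{\hk}$ with $\eta$-\'etale sheafification; I expect this to be the main obstacle, but it follows formally because the map $\iota_{\hk}$ in (\ref{kolo21}) is induced at the level of presheaves on $\sm^{\dagger,\sem}_K$ and both $\sa_{\hk}$ and $\sa_{\dr}$ are obtained as $\eta$-\'etale sheafifications of these presheaves, so the commuting square
\[
\xymatrix@R=.5cm{
\rg_{\hk}(\sx_0)\ar[r]^{\iota_{\hk}}\ar[d] & \rg_{\dr}(\sx_K)/F^r\ar@{=}[d]\\
\rg_{\hk}(\sx_K)\ar[r]^{\iota_{\hk}} & \rg_{\dr}(\sx_K)/F^r
}
\]
is forced by functoriality of the sheafification.
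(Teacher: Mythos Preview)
Your proposal is correct and follows essentially the same approach as the paper's own proof: both reduce, via the presentations (\ref{over0}) and (\ref{over1}), to showing that the canonical map $\rg_{\hk}(\sx_0)\to\rg_{\hk}(\sx_K)$ is a strict quasi-isomorphism, and then invoke Proposition~\ref{hypercov-dagger}. You have simply made explicit the compatibility with $N$, $\phi$, and $\iota_{\hk}$ that the paper leaves implicit.
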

\begin{proof}
Using the presentations of syntomic cohomology from (\ref{over0}) and (\ref{over1}) we reduce to proving that the natural map
$
\rg_{\hk}(\sx_0)\to \rg_{\hk}(\sx_K)
$
is a strict  quasi-isomorphism. But this we know to be true by Proposition \ref{hypercov-dagger}. 
\end{proof}
\subsubsection{Examples.} \label{synt-examples}We will discuss a couple of examples.

 (i) {\em The closed  ball.} Let $L=K, C$. Let $X_L:={\mathbb B}^d_L(\rho)$ be the overconvergent closed ball over $L$ of dimension $d\geq 0$ and radius $\rho\in\sqrt |L^{\times}|$.  Since 
$H^0_{\dr}(X_L)\simeq L$ and 
$H^i_{\dr}(X_L)=0$,  $i>0$, and we have the Hyodo-Kato isomorphism $H^i_{\hk}(X_C){\otimes}_{F^{\nr}}C\simeq H^i_{\dr}(X_C)$ and the Galois descent $H^i_{\hk}(X_K)\stackrel{\sim}{\to} H^i_{\hk}(X_C)^{\sg_K}$,
 we get
$$
H^i_{\hk}({\mathbb B}^d_L(\rho))\simeq \begin{cases}
F_L & \mbox{ if } i=0,\\
0 & \mbox{ if } i\geq 1,
\end{cases}
$$
where $F_C=F^{\nr}$ and $F_K=F$.

 From the exact sequence (\ref{jesien}), we get 
 \begin{align*} H^0([\rg_{\hk}(X_K)]^{N=0,\phi=1}) & \stackrel{\sim}{\to} 
  H^0_{\hk}(X_K)^{N=0,\phi=1}, \\
  H^0_{\hk}(X_K)^{N=0,\phi=1} & \stackrel{\sim}{\to} H^1([\rg_{\hk}(X_K)]^{N=0,\phi=p}).
 \end{align*}
  Hence, by the above, 
 $$
H^i([\rg_{\hk}({\mathbb B}^d_K(\rho))]^{N=0,\phi=p^i}))\simeq \begin{cases}
\Q_p& \mbox{ if } i=0,1,\\
0 & \mbox{ if } i\geq 2.
\end{cases}
$$

    Let $r\geq 0$. By the triviality, in nonzero degrees,   of the  cohomology of coherent sheaves on ${\mathbb B}^d_K(\rho)$, we have
 $$
 \rg_{\dr}(X_K)/F^r\simeq\so(X_K)\to\Omega(X_K)\to \cdots\to \Omega^{r-1}(X_K).
 $$
 Hence $H^{i}( \rg_{\dr}(X_K)/F^r)=0,$ for $i\geq r$, and $H^{r-1} (\rg_{\dr}(X_K)/F^r)\stackrel{\sim}{\leftarrow} \Omega^{r-1}(X_K)/\im d_{r-2}\simeq \Omega^r(X_K)^{d=0}$. 
 From the definition of syntomic cohomology and the above computations, 
 we get the long exact sequence
 $$
H^{r-1}([\rg_{\hk}(X_K)]^{N=0, \phi=p^r})\to  \Omega^{r-1}(X_K)/\im d_{r-2}  \to H^r_{\synt}(X_K,\Q_p(r))\to H^r([\rg_{\hk}(X_K)]^{N=0, \phi=p^r})\to 0
 $$
 Hence
 $$H^r_{\synt}({\mathbb B}^d_K(\rho),\Q_p(r))
 \simeq \begin{cases}
\Q_p& \mbox{ if } r=0,\\
 \Omega^{r-1}({\mathbb B}^d_K(\rho))/\im d_{r-2} & \mbox{ if } r\geq 2,
\end{cases}
$$
and, for $r=1$, we get an extension
$$
0\to \so({\mathbb B}^d_K(\rho))\to H^1_{\synt}(({\mathbb B}^d_K(\rho),\Q_p(1))\to \Q_p\to 0
$$

(ii) {\em The open  ball.} Let $L=K, C$. Let ${\mathbb B}^{{\rm o},d}_L(\rho)$ be the overconvergent open  ball over $L$ of dimension $d\geq 0$ and radius $\rho$. 
Cover ${\mathbb B}^{{\rm o},d}_L(\rho)$ with an increasing union of overconvergent closed balls $\{U_n\}_{n\in\N}$. By the above example, we have
$H^i_{\hk}({\mathbb B}^{{\rm o},d}_L(\rho))\simeq \invlim_n H^i_{\hk}(U_n)$. Hence
$$
H^i_{\hk}({\mathbb B}^{{\rm o},d}_L(\rho))\simeq \begin{cases}
F_L & \mbox{ if } i=0,\\
0 & \mbox{ if } i>0.
\end{cases}
$$
The rest of the computations is exactly the same as for the closed ball in the first example (note that ${\mathbb B}^{{\rm o},d}_K(\rho)$ is Stein) yielding the same final formulas for $H^r_{\synt}(({\mathbb B}^{{\rm o},d}_K(\rho),\Q_p(r))$ (with ${\mathbb B}^{{\rm o},d}_K(\rho)$ in the place of ${\mathbb B}^{d}_K(\rho)$). 

 \section{Comparison of overconvergent  and  rigid analytic arithmetic syntomic cohomology}
 We  define a map from syntomic cohomology of a smooth dagger variety to syntomic cohomology of its completion. We show that it is a strict quasi-isomorphism when the variety is partially proper. 
\subsection{Construction of the comparison morphism} 
    Let $X$ be a smooth dagger space over $K$. We will now construct a functorial map 
    \begin{equation*}
    \iota: \quad  \rg_{\synt}(X,\Q_p(r))\to \rg_{\synt}(\wh{X},\Q_p(r))
   \end{equation*}
    from the syntomic cohomology of $X$ to the syntomic cohomology of its completion $\wh{X}$.
    This will be done by first constructing a map $\iota_1$ to the Bloch-Kato syntomic cohomology from Section \ref{passage1}:
     \begin{equation*}
    \iota_1: \quad  \rg_{\synt}(X,\Q_p(r))\to \rg^{\rm BK}_{\synt}(\wh{X},\Q_p(r))
   \end{equation*}
   and then setting  $\iota:=\iota_2\iota_1$, for the map $\iota_2:  \rg^{\rm BK}_{\synt}(\wh{X},\Q_p(r))\simeq  \rg_{\synt}(\wh{X},\Q_p(r))$ that was defined in Proposition \ref{passage-BK}.

(i) {\em Local definition.}
   Let $\sx$ be a semistable weak formal scheme of finite type over $\so_K$.     
   First, we  define a functorial morphism
    \begin{align}
    \label{iota1}
  \iota_1:  \rg_{\synt}(\sx,\Q_p(r)) & =  [[\rg_{\rig}(\sx_0/\so_F^0)]^{\phi=p^r}\verylomapr{\iota_{\hk}}\rg_{\dr}(\sx_K)/F^r]\\
   & \to [[\rg_{\crr}(\sx_0/\so_F^0)_{F}]^{N=0,\phi=p^r}\lomapr{\iota^{\prime}_{\hk}} \rg_{\dr}(\wh{\sx}_K)/F^r].\notag
    \end{align}
    We use for that the following diagram (we note that that all the terms in the first two columns carry a monodromy operator and that all the maps between these terms are compatible with the monodromy action)
     \begin{equation}
     \label{sopot1}
\xymatrix@C=.6cm{
 [\rg_{\rig}(\sx_0/\so_F^0)]^{\phi=p^r}\ar@/_110pt/[dddd]\ar[dr]^{\iota_{\hk}}\ar@/^20pt/[rrrd]^{\iota_{\hk}}\\
[ \rg_{\rig}(\overline{\sx}_0/r^{\dagger}_F)]^{\phi=p^r}\ar[u]^{p_0}_{\wr}\ar[d]^{f_1}\ar[r]^-{p_p}_-{\sim}& \rg_{\rig}(\sx_0/\so_F^{\times})\ar[d]\ar[r]  &  \rg_{\rig}(\sx_0/\so_K^{\times})\ar[d] & \rg_{\dr}(\sx_K)\ar[d]\ar[l]^-{\sim}\\
 [\rg_{\rig}(\sx_0/r^{\dagger}_F)]^{\phi=p^r}\ar@/^60pt/[uu]^{p_0}\ar[ru]^{p_p}\ar[d] & \rg_{\conv}(\sx_0/\so_F^{\times})\ar[r]\ar[d]^{\wr}  &  \rg_{\conv}(\sx_0/\so_K^{\times})& \rg_{\dr}(\wh{\sx}_K)\ar[l]^-{\sim}\\
[ \rg_{\crr}(\sx_0/r^{\rm PD}_F)_{\Q_p}]^{\phi=p^r}\ar[d]_-{p_0}^-{\wr}\ar[r]^-{p_p} &  \rg_{\crr}(\sx_0/\so_F^{\times})_{F}\\
[\rg_{\crr}(\sx_0/\so_F^0)_{F}]^{\phi=p^r}\ar@/_20pt/[rrruu]_{\iota_{\hk}^{\prime}}  
}
\end{equation}
 The maps $p_0,p_p$ are defined by sending $T$ to $0,p$, respectively. 
The top triangle defines the overconvergent  Hyodo-Kato morphism $\iota_{\hk}$ as explained in Remark \ref{referee12}, where it is also shown that the maps $p_0,p_p$  from $\overline{\sx}_0$ commute with the ones from $\sx_0$.
The strict quasi-isomorphism between crystalline and convergent cohomology holds because $\sx_0$ is log-smooth over $k^0$. The morphism between de Rham cohomologies is  compatible with Hodge filtrations.

   (ii) {\em Globalization.} 
  We define the functorial map $\iota_1: \rg_{\synt}(X,\Q_p(r))\to\rg^{\rm BK}_{\synt}(\wh{X},\Q_p(r)) $ by   lifting the map (\ref{iota1}) via $\eta$-\'etale descent.

 \subsection{A comparison result}We are now ready to prove our main comparison theorem: 
  \begin{theorem} 
  \label{zamek}
  Let $X$ be a partially proper dagger space over $K$. 
    The map
    $$
    \iota: \rg_{\synt}(X,\Q_p(r))\to \rg_{\synt}(\wh{X},\Q_p(r))
    $$
    is a strict quasi-isomorphism.
    \end{theorem}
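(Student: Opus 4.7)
The plan is to split $\iota$ into its Hyodo-Kato and filtered de Rham constituents, handle each separately, and reduce the Hyodo-Kato comparison by Galois descent to a statement about de Rham cohomology over $C$. By Proposition \ref{passage-BK} applied to $\wh{X}$, the target of $\iota$ is strictly quasi-isomorphic to the Bloch-Kato style homotopy fiber
$$
\bigl[[\rg_{\hk}(\wh{X})]^{N=0,\phi=p^r}\lomapr{\iota^{\prime}_{\hk}}\rg_{\dr}(\wh{X})/F^r\bigr],
$$
where $\rg_{\hk}(\wh{X})$ denotes the crystalline Hyodo-Kato cohomology from Section \ref{kwak-kwak}, while the source of $\iota$ is already of this form by the definition in Section \ref{def33}. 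A chase through diagram (\ref{sopot1}) shows that $\iota=\iota_2\iota_1$ respects these presentations, so the theorem reduces to showing that the two natural maps
$$
\rg_{\hk}(X)\to \rg_{\hk}(\wh{X}),\qquad \rg_{\dr}(X)/F^r\to\rg_{\dr}(\wh{X})/F^r
$$
(the first $(\phi,N)$-equivariant, the second filtered) are strict quasi-isomorphisms.

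For the filtered de Rham comparison, since $X$ is partially proper it is locally Stein, so working locally I may assume $X$ is Stein. Pick an admissible increasing covering $\{U_n\}_{n\in\N}$ of $X$ by dagger affinoids with $U_n\Subset U_{n+1}$. As in the proof of Proposition \ref{kicia-kicia}, the factorization $\wh{U}_{n+1}\to {\rm pres}(U_{n+1})\to \wh{U}_n$ shows that the pro-systems $\{U_n\}$ and $\{\wh{U}_n\}$ are equivalent, and the Stein presentation of de Rham cohomology as $\holim_n$ of the corresponding affinoid pieces then identifies $\rg_{\dr}(X)/F^r$ strictly with $\rg_{\dr}(\wh{X})/F^r$.

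For the Hyodo-Kato comparison, I would use Galois descent (Propositions \ref{descent} and \ref{descent1}) to reduce to showing that the geometric map $\rg_{\hk}(X_C)\to\rg_{\hk}(\wh{X}_C)$ is a strict quasi-isomorphism compatible with $\phi$ and $N$. After completed base change $\wh{\otimes}^R_{F^{\nr}}C$, the Hyodo-Kato strict quasi-isomorphisms of (\ref{HK-two}) and (\ref{HK-rig}) identify this map with $\rg_{\dr}(X_C)\to\rg_{\dr}(\wh{X}_C)$, which is a strict quasi-isomorphism by the previous paragraph applied over $C$. To descend from $C$ back to $F^{\nr}$, one uses the fact that both Hyodo-Kato cohomologies are Fr\'echet limits of finite-dimensional $F^{\nr}$-vector spaces (Propositions \ref{rain1} and \ref{rain15}), so that completed base change to $C$ reflects strict quasi-isomorphisms.

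The main obstacle, as the introduction already hints, is verifying that the overconvergent (Grosse-Kl\"onne) and crystalline Hyodo-Kato constructions are compatible with $\iota_1$, i.e.\ that the square built from $\iota_{\hk}$ (overconvergent), $\iota^{\prime}_{\hk}$ (crystalline), the completed base change of $\iota_1$, and the de Rham map $\rg_{\dr}(X_C)\to\rg_{\dr}(\wh{X}_C)$ commutes strictly. This forces one to interpolate both constructions through the auxiliary $r^{\rm PD}$-cohomology of Section \ref{tea1}, as in the right-hand portion of diagram (\ref{sopot1}), and to track carefully the various normalizations (the factor $p^{-N_{\iota}}$ in (\ref{definition-HK}), the $p_0$ and $p_p$ projections, and the comparison between rigid and convergent cohomology of the special fiber) in order to obtain strict commutativity rather than mere commutativity up to a universal constant.
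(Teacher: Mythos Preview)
Your overall strategy---split into de Rham and Hyodo-Kato pieces, then handle Hyodo-Kato via Galois descent to $C$ and the Hyodo-Kato isomorphisms---matches the paper's. But there is a genuine gap in the Hyodo-Kato part that the paper singles out explicitly.

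You propose to base change to $C$ via $\wh{\otimes}^R_{F^{\nr}}C$, use the Hyodo-Kato isomorphisms (\ref{HK-two}) and (\ref{HK-rig}) to reduce to de Rham over $C$, and then descend back to $F^{\nr}$ by appealing to Propositions \ref{rain1} and \ref{rain15}. The trouble is that those propositions concern only the \emph{overconvergent} Hyodo-Kato cohomology of dagger varieties; neither says anything about the crystalline Hyodo-Kato cohomology $H^i_{\hk}(\wh{X}_C)$ of a rigid analytic variety, which a priori is not known to be a limit of finite-dimensional $F^{\nr}$-spaces (indeed, for an affine semistable model the underlying crystalline complex is built from Banach spaces, not finite-dimensional ones). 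So your claim that completed base change to $C$ reflects strict quasi-isomorphisms is unsupported on the rigid analytic side. The paper flags exactly this obstruction in the Remark inside the proof: after noting that the map $\rg_{\hk}(X_C)\wh{\otimes}^R_{F^{\nr}}C\to\rg_{\hk}(\wh{X}_C)\wh{\otimes}^R_{F^{\nr}}C$ is a strict quasi-isomorphism, it says ``The problem is that we do not know how to show that this implies the same for the map (\ref{mainmap}).''

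The paper's fix is to replace $C$ by $\ovk$ and use the $\ovk$-valued Hyodo-Kato isomorphisms $\rg_{\hk}\wh{\otimes}_{F^{\nr}}\ovk\simeq\rg_{\rig,\ovk}$ (resp.\ $\rg_{\conv,\ovk}$) together with normalized trace maps $\alpha,\hat{\alpha}$ that split the inclusions $\rg_{\hk}\hookrightarrow\rg_{\hk}\wh{\otimes}_{F^{\nr}}\ovk$; see diagram (\ref{reduction}). This converts the problem into showing commutativity of a single square on cohomology, which is then settled by the $r^{\rm PD}$ interpolation you allude to (diagrams (\ref{definition-diagram}) and (\ref{reduction4})) plus an explicit argument with the section $s$ and the ideals $I_n\subset r^{\rm PD}_{\ovk,\Q_p}$. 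Your last paragraph gestures at this interpolation but does not carry it out; in the paper it is the bulk of the proof, and the argument that $\bigcap_n H^i_{\hk}(\wh{X}_C)\wh{\otimes}_{F^{\nr}}I_n=0$ is what finally pins down the commutativity.

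A minor point: for the filtered de Rham comparison the paper simply cites \cite[Th.~2.26]{GK0} (identification of coherent cohomology for partially proper dagger spaces and their completions), rather than your pro-system argument; your route works too.
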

    \begin{proof}By the construction of the maps $\iota_1, \iota_2$,  it suffices to show that the following canonical  maps 
    \begin{equation}
    \label{kwaku2}
     \rg_{\dr}(X)\to  \rg_{\dr}(\widehat{X}),\quad  [\rg_{\hk}(X)]^{\phi=p^r}\to  [\rg_{\hk}(\widehat{X})]^{\phi=p^r}
    \end{equation}
    are (filtered) strict quasi-isomorphisms. 
    The first map is an isomorphism induced by the canonical identification of coherent cohomology of a partially proper dagger variety and its rigid analytic avatar \cite[Th. 2.26]{GK0}.
   For the second map, we will show that already the canonical map
    \begin{equation}
    \label{lyon22}
    \rg_{\hk}(X)\to  \rg_{\hk}(\widehat{X})
    \end{equation}
    is a strict quasi-isomorphism.  Our strategy is to pass  to the geometric situation, where we can use the Hyodo-Kato isomorphisms to reduce to the de Rham cohomology. The main difficulty in this approach lies in showing the compatibility of the overconvergent and rigid analytic Hyodo-Kato isomorphisms.\\
    
    (i) {\em Passage to de Rham cohomology.}
    
    We start with the passage to the geometric cohomologies. Since we have compatible strict quasi-isomorphisms (see Proposition \ref{descent} and Proposition \ref{descent1})
    $$
    \rg_{\hk}(X)\stackrel{\sim}{\to} \rg_{\hk}(X_C)^{\sg_K},\quad 
     \rg_{\hk}(\widehat{X})\stackrel{\sim}{\to} \rg_{\hk}(\widehat{X}_C)^{\sg_K},
    $$
  to show that the map (\ref{lyon22}) is a strict quasi-isomorphism,   it suffices to show that so is the canonical map 
    \begin{equation}
    \label{mainmap}
     \rg_{\hk}(X_C)\to \rg_{\hk}(\widehat{X}_C).
     \end{equation} 
    \begin{remark}
    Now, if we were to argue in analogy with the algebraic situation, we would use the following approach: 
    
    (1) we would prove the commutativity of the diagram:
        $$
    \xymatrix@C=.7cm@R=.6cm{
    \rg_{\hk}(X_C)\wh{\otimes}^R_{{F}^{\nr}}C\ar[r]\ar[d]^{\iota_{\hk}}_{\wr}& \rg_{\hk}(\wh{X}_C)\wh{\otimes}^R_{{F}^{\nr}}C\ar[d]^{\iota_{\hk}}_{\wr}\\
  \rg_{\dr}(X_C)\ar[r]^{\sim}& \rg_{\dr}(\wh{X}_C).
    }
    $$
    This is not an easy task since the constructions of the rigid and the crystalline Hyodo-Kato maps are very different.

    (2) 
    The vertical arrows are the Hyodo-Kato quasi-isomorphisms (\ref{HK-two}) and (\ref{HK-rig}) and the bottom arrow is a strict quasi-isomorphism because $X_C$ is partially proper. Hence the top arrow is a strict quasi-isomorphism. The problem is that we do not know how to show  that  this implies the same for the map (\ref{mainmap}). So, below, we use instead the $\ovk$-Hyodo-Kato quasi-isomorphisms. 
    \end{remark}
    %We may assume that $X$ is quasi-compact. This is because we can write
 %   $$
  %    \rg_{\hk}(X_C)\stackrel{\sim}{\to}\holim_i\rg_{\hk}(\wh{U}_{i,C}),\quad  \rg_{\hk}(X_C)\stackrel{\sim}{\to}\holim_i\rg_{\hk}(\wh{U}_{i,C}),
   % $$
   % for a covering of $X$ by an increasing  union of quasi-compact dagger varieties  $\{U_i\}_{i\in \N}$.
   Consider  the  diagram
  \begin{equation}
  \label{reduction}
  \xymatrix@R=.6cm{
  \rg_{\hk}(X_C)\ar[r]\ar[d]^{\can} & \rg_{\hk}(\wh{X}_C)\ar[d]^{\can}\\
  \rg_{\hk}(X_C)\wh{\otimes}_{F^{\nr}}\ovk\ar[r]^{\beta}\ar[d]^{\iota_{\hk}}_{\wr} \ar@{-->}@/^10pt/[u]^{\alpha}& \rg_{\hk}(\wh{X}_C)\wh{\otimes}_{F^{\nr}}\ovk\ar[d]^{\iota_{\hk}}_{\wr}\ar@{-->}@/^10pt/[u]^{\hat{\alpha}}\\
  \rg_{\rig,\ovk}(X_C)\ar[r]^{\beta^{\prime}}_{\sim} &\rg_{\conv,\ovk}(\wh{X}_C)\\
  \rg_{\dr}(X)\wh{\otimes}_{K}\ovk\ar[r]^{\sim}\ar[u]^{\wr} & \rg_{\dr}(\wh{X})\wh{\otimes}_{K}\ovk\ar[u]^{\wr}.
  }
  \end{equation}
  The maps $\alpha, \hat{\alpha}$ are the normalized trace maps, natural  left inverses of the canonical vertical maps. The top squares, the dotted and the non-dotted one, commute. The bottom square clearly commutes. Its vertical maps are strict quasi-isomorrphisms by Proposition \ref{etale-descent} and Proposition \ref{pierre1}. The bottom map is a strict quasi-isomorphism because $X$ is partially proper. 
It follows that the map $\beta^{\prime}$ is a strict quasi-isomorphism.   We will show below that the middle square  commutes on the level of ($\wt{H}$-)cohomology. This will imply that the map $\beta$ is a cohomological  isomorphism. This in turn 
 will imply immediately that the map (\ref{mainmap}) is injective on cohomology level; we get its cohomological surjectivity by using the maps $\alpha, \hat{\alpha}$. \\
 
 (ii) {\em Comparison of Hyodo-Kato quasi-isomorphisms.}
  
   Hence, it remains to show that the middle  square in the above diagram  commutes on cohomology level, or that the following diagram commutes
\begin{equation}
  \label{reduction1}
  \xymatrix@R=.6cm@C=.8cm{
 \wt{H}^i( \rg_{\hk}(X_C)\wh{\otimes}_{F^{\nr}}\ovk)\ar[r]\ar[d]^{\iota_{\hk}}_{\wr} & \wt{H}^i(\rg_{\hk}(\wh{X}_C)\wh{\otimes}_{F^{\nr}}\ovk)\ar[d]^{\iota_{\hk}}_{\wr}\\
  \wt{H}^i_{\rig,\ovk}(X_C)\ar[r]^{\sim}& \wt{H}^i_{\conv,\ovk}(\wh{X}_C).
  }
  \end{equation}
We claim that we can assume that $X$ is quasi-compact and argue just on the level of classical cohomology. Indeed,
 write $X$ as an increasing union of quasi-compact open sets $\{U_n\}$, $n\geq 0$. Then we have 
 $$
 \rg_{\hk}(X_C)\wh{\otimes}_{F^{\nr}}\ovk \simeq \holim_n(\rg_{\hk}(U_{n,C}){\otimes}_{F^{\nr}}\ovk).
 $$
 This yields the exact sequence
 $$0\to {H}^1\holim_n(\wt{H}^{i-1}_{\hk}(U_{n,C}){\otimes}_{F^{\nr}}\ovk) \to 
 \wt{H}^i(\rg_{\hk}(X_C)\wh{\otimes}_{F^{\nr}}\ovk)\to {H}^0\holim_n(\wt{H}^i_{\hk}(U_{n,C}){\otimes}_{F^{\nr}}\ovk)\to 0
 $$
 By Proposition \ref{rain15}, the cohomology $\wt{H}^i_{\hk}(U_{n,C})$ is classical and finite rank over $F^{\nr}$. This implies that the cohomology $ \wt{H}^i(\rg_{\hk}(X_C)\wh{\otimes}_{F^{\nr}}\ovk)$ is classical as well and 
 $$
  {H}^i(\rg_{\hk}(X_C)\wh{\otimes}_{F^{\nr}}\ovk)\stackrel{\sim}{\to} {H}^0\holim_n({H}^i_{\hk}(U_{n,C}){\otimes}_{F^{\nr}}\ovk).
 $$
Similarly, we can show that the cohomology $\wt{H}^i_{\conv,\ovk}(\wh{X}_C)$ is classical and we have 
$$
{H}^i_{\conv,\ovk}(\wh{X}_C)\stackrel{\sim}{\to}H^0\holim_n{H}^i_{\conv,\ovk}(\wh{U}_{n,C}).
$$Indeed, arguing as above we get the exact sequence
\begin{equation}
\label{norain1}
0\to {H}^1\holim_n\wt{H}^{i-1}_{\conv,\ovk}(\wh{U}_{n,C})\to 
 \wt{H}^i_{\conv,\ovk}(\wh{X}_C)\to {H}^0\holim_n\wt{H}^i_{\conv,\ovk}(\wh{U}_{n,C})\to 0
 \end{equation}
We note that the prosystems $\{\wt{H}^i_{\conv,\ovk}(\wh{U}_{n,C})\}_{n\in\N}$ and $\{\wt{H}^i_{\rig,\ovk}({U}_{n,C})\}_{n\in\N}$ are equivalent. This follows from the commutative diagram of prosystems
$$
\xymatrix@R=.6cm@C=.8cm{\{\wt{H}^i_{\conv,\ovk}(\wh{U}_{n,C})\}_{n\in\N}\ar[r]^{\sim} &\{\wt{H}^i_{\conv,\ovk}({U}^{\rm o}_{n,C})\}_{n\in\N}\\
\{\wt{H}^i_{\rig,\ovk}({U}_{n,C})\}_{n\in\N}\ar[u]\ar[r]^{\sim} & \{\wt{H}^i_{\rig,\ovk}({U}^{{\rm o},\dagger}_{n,C})\}_{n\in\N}\ar[u]^{\wr}
}
$$
Here $U^{{\rm o},\dagger}$ denotes the rigid analytic space $U^{\rm o}$, the interior of $U$,  equipped with its canonical overconvergent structure.  The horizontal equivalences are clear. The right vertical map is an isomorphism degree by degree because $U^{{\rm o},\dagger}$ is partially proper. This implies that the left vertical map is a an equivalence, as wanted. 

 Now,  the cohomology $\wt{H}^i_{\rig,\ovk}({U}_{n,C})$ is classical and finite rank over $\ovk$ (it is strictly quasi-isomorphic to $H^i_{\dr}(U_n){\otimes}_K\ovk$ by Proposition \ref{pierre1}). Hence the term $H^1\holim_n$ in the exact sequence (\ref{norain1}) vanishes and we get our claim.

 So, from now on, $X$ is quasi-compact and we will show that the diagram (\ref{reduction1}) commutes on the level of classical cohomology. 
We have
$$H^i( \rg_{\hk}(X_C)\wh{\otimes}_{F^{\nr}}\ovk)\simeq H^i_{\hk}(X_C){\otimes}_{F^{\nr}}\ovk,\quad 
H^i( \rg_{\hk}(\wh{X}_C)\wh{\otimes}_{F^{\nr}}\ovk)\simeq H^i_{\hk}(\wh{X}_C){\otimes}_{F^{\nr}}\ovk.
$$
Hence, 
  we are reduced to showing that, for a quasi-compact $X\in {\rm Sm}_K$,  the following diagram commutes
 \begin{equation}
 \label{reduction2}
 \xymatrix@R=.6cm@C=.8cm{
 H^i_{\hk}(X_C)\ar[r]\ar[d]^{\iota_{\hk}}& H^i_{\hk}(\wh{X}_C)\ar[d]^{\iota_{\hk}}\\
  H^i_{\rig,\ovk}(X_C)\ar[r]& H^i_{\conv,\ovk}(\wh{X}_C).
  }
\end{equation}
 Assume first that $X$ has an admissible  semistable weak formal model $\sx$ over $\so_L$, $[L:K]< \infty$,  and consider the diagram
 \begin{equation}
 \label{definition-diagram}
 \xymatrix{
 & \rg_{\rig}(\sx_0/\so_{F_L}^0)\ar@/_40pt/[dddl]\ar[dr]^{\iota_{\hk}}\\
& \rg_{\rig}(\overline{\sx}_0/r^{\dagger}_L)\ar[u]^{p_0}_{\wr}\ar[d]^{f_1}\ar[r]^{p_p}& \rg_{\rig}(\sx_0/\so_{F_L}^{\times})\ar[d]\\
& \rg_{\rig}(\sx_0/r^{\dagger}_L)\ar@/^50pt/[uu]^{p_0}\ar[ru]^{p_p}\ar[d] & \rg_{\conv}(\sx_0/\so_{F_L}^{\times})\ar[d]^{\wr}  \\
\rg_{\crr}(\sx_0/\so_{F_L}^0)_{\Q_p}\ar@/_20pt/[rr]^{\iota_{\hk}}\ar@/^24pt/[r]^{s} & \rg_{\crr}(\sx_0/r^{\rm PD}_L)_{\Q_p}\ar[l]^{p_0}\ar[r]^{p_p} &  \rg_{\crr}(\sx_0/\so_{F_L}^{\times})_{\Q_p}
 }
 \end{equation}
If we remove the section $s$ (and hence also the bottom map $\iota_{\hk}$) the above diagram  commutes.
 For a general  quasi-compact and smooth $X$, take first a homotopy colimit of the above diagram (over $L$) and then glue by $\eta$-\'etale descent. We obtain the following diagram
  \begin{equation}
  \label{reduction4}
 \xymatrix{
 & \rg_{\hk}(X_C)\ar@/_40pt/[dddl]\ar[dr]^{\iota_{\hk}}\\
& \rg_{\rig}(\overline{X}_C/r^{\dagger})\ar[u]^{p_0}_{\wr}\ar[d]^{f_1}\ar[r]^{p_p}_{\sim}& \rg_{\rig}(X_C/\so_F^{\times})\ar[d]\\
& \rg_{\rig}(X_C/r^{\dagger})\ar@/^50pt/[uu]^{p_0}\ar[ru]^{p_p}\ar[d] & \rg_{\conv}(\wh{X}_C/\so_F^{\times})\ar[d]^{\wr}  \\
\rg_{\hk}(\wh{X}_C)\ar@/_20pt/[rr]^{\iota_{\hk}}\ar@/^24pt/[r]^{s} & \rg_{\rm PD}(\wh{X}_C)\ar[l]^{p_0}\ar[r]^{p_p} &  \rg_{\crr}(\wh{X}_C/\so_F^{\times})
 }
 \end{equation}
The notation should be mostly  self-explanatory: the cohomology complexes are defined by the homotopy colimit and the \'etale descent from the corresponding complexes in the diagram (\ref{definition-diagram}) following the procedure used in Section \ref{limit11}.
The groups in the right column are $F^{\nr}$-modules. 

If we remove the section $s$  the above diagram commutes. 
To prove that the diagram (\ref{reduction2}) commutes, 
   by the diagram (\ref{sopot1}),  it suffices to show that so does, on the level of classical cohomology,  the large  round triangle\footnote{That is, the round  triangle with vertices $\rg_{\hk}(X_C)$, $\rg_{\hk}(\wh{X}_C)$, and $\rg_{\rm PD}(\wh{X}_C)$.}, in the diagram (\ref{reduction4}). For that we note that 
   we have the isomorphism 
 \begin{equation}
 \label{kolosept}
   s: H^i_{\hk}(\wh{X}_C)\wh{\otimes}_{F^{\nr}}r^{\rm PD}_{\ovk,\Q_p}\stackrel{\sim}{\to} H^i_{\rm PD}(\wh{X}_C).
 \end{equation}
  If $\wh{X}$ has a quasi-compact semistable formal model $\sx$ over $\so_L$, this arises from the $p^N$-quasi-isomorphism, $N=N(d)$,  (see (\ref{section1}))
   $$
   s: \rg_{\crr}(\sx_0/\so_{F_L}^0)\wh{\otimes}_{\so_{F_L}}r^{\rm PD}_{L}{\to} \rg_{\crr}(\sx_0/r^{\rm PD}_L)
   $$
  and  the fact that $\rg_{\crr}(\sx_0/\so_{F_L}^0)\wh{\otimes}_{\so_{F_L}}r^{\rm PD}_{L}$  is $p$-adically derived complete and $r^{\rm PD}_{L,n}$ is free over $\so_{F_L,n}$. 
   For a general quasi-compact and smooth $\wh{X}$ over $K$, the above argument goes through yielding the isomorphism (\ref{kolosept}), as wanted.

   Now, to show that the round triangle in the diagram (\ref{reduction4}) commutes, consider the ideal
   $$
   I_n:=\Big\{\sum_{i\geq p^n}\tfrac{a_i}{\lfloor i/e\rfloor !}T^i, \lim_{i\mapsto \infty}a_i=0\Big\}.
   $$
   We have the exact sequence
   $$
   0\to I_0\to r^{\rm PD}_{\ovk,\Q_p}\to F^{\nr}\to 0.
   $$
The  $F^{\nr}$-linear and Frobenius equivariant section 
   $s: H^i_{\hk}(\wh{X}_C)\to H^i_{\rm PD}(\wh{X}_C)$
   of the projection $p_0$ satisfies 
   $$
   s(a)=\phi^n\widetilde{\phi^{-n}(a)} \mod H^i_{\hk}(\wh{X}_C)\wh{\otimes}_{F^{\nr}} I_n, \quad a\in H^i_{\hk}(\wh{X}), n\geq 0,
   $$
   where $\widetilde{b}$, for $b\in H^i_{\hk}(\wh{X}_C)$,  is a lifting of $b$ via $p_0$. 
  This is because, for any $a\in H^i_{\hk}(\wh{X}_C)$, we have $ s(a)=\phi^ns(\phi^{-n}(a)) $ and $s(a)= \phi^n\widetilde{\phi^{-n}(a)} \mod H^i_{\hk}(\wh{X}_C)\wh{\otimes}_{F^{\nr}} I_0$. And we also have $\phi^n(I_0)\subset I_n$.
   
   Hence, to show that the large round triangle in the diagram (\ref{reduction4}) commutes, it suffices to show that the intersection of the submodules $H^i_{\hk}(\wh{X}_C)\wh{\otimes}_{F^{\nr}} I_n$, $n\geq 0$, is trivial. 
 But this is clear.
\end{proof}
   \subsection{Overconvergent syntomic cohomology via presentations of dagger structures} In this section we introduce a definition of overconvergent syntomic cohomology using presentations of dagger structures (see \cite[Appendix]{Vez}, Section \ref{pierre11}). We show that so defined syntomic cohomology, a priori different from the one defined in Section \ref{def33}, is strictly quasi-isomorphic to it.

      (i) {\em Local definition.} Let $X$ be a dagger  affinoid over $L=K,C$. Let ${\rm pres}(X)=\{X_h\}$. Define $$\rg_{\synt}^{\dagger}(X,\Q_p(r)):=\hocolim_h \rg_{\synt}(X_h,\Q_p(r)),\quad  r\in\N. $$
     Let $L=K$. We have a natural map 
    \begin{equation}
    \label{def1}
     \iota^{\dagger}_{\synt}\colon \rg_{\synt}^{\dagger}(X,\Q_p(r)) \to \rg_{\synt}(X,\Q_p(r))
    \end{equation}
defined as the composition
\begin{align}
\label{composition1}
 \rg_{\synt}^{\dagger}(X,\Q_p(r)) & =\hocolim_h \rg_{\synt}(X_h,\Q_p(r))\stackrel{\sim}{\to}\hocolim_h \rg_{\synt}(X^0_h,\Q_p(r))\\
  & \stackrel{\sim}{\leftarrow}
 \hocolim_h \rg_{\synt}(X^{0,\dagger}_h,\Q_p(r))\to \rg_{\synt}(X,\Q_p(r)).\notag
\end{align}
The third quasi-isomorphism holds by Theorem \ref{zamek} because $X^0_h$ is partially proper. 
       
     (ii) {\em Globalization.}  For a general smooth dagger variety $X$ over $L$, 
using the  natural equivalence of analytic topoi
$$
{\rm Sh}({\rm SmAff}^{\dagger}_{L,\eet})\stackrel{\sim}{\to} {\rm Sh}({\rm Sm}^{\dagger}_{L,\eet}),
$$
we define the sheaf $\sa_{\synt}^{\dagger}(r)$, $r\in\N$,  on $X_{\eet}$ as the sheaf associated to the presheaf defined by:  $U\mapsto \rg_{\synt}^{\dagger}(U,\Q_p(r))$, $U\in {\rm SmAff}^{\dagger}_{L}$, $U\to X$ an \'etale map.  We define\footnote{We will show below (see Remark \ref{consistence}) that this definition of $\rg_{\synt}^{\dagger}(X,\Q_p(r))$, for a smooth dagger affinoid $X$,  gives an object naturally strictly quasi-isomorphic to the one defined above.} 
$$
\rg_{\synt}^{\dagger}(X,\Q_p(r)):=\rg_{\eet}(X, \sa_{\synt}^{\dagger}(r)),\quad r\in\N.
$$
Globalizing  the  map $\iota^{\dagger}_{\synt}$  from (\ref{def1})
     we obtain a natural map 
    $$
    \iota^{\dagger}_{\synt}:  \rg_{\synt}^{\dagger}(X,\Q_p(r)) \to \rg_{\synt}(X,\Q_p(r)).
    $$
    
     (iii) {\em A comparison quasi-isomorphism.}

    \begin{proposition}    \label{zamek3}
    The  above map $\iota^{\dagger}_{\synt}$ is a strict quasi-isomorphism.
    \end{proposition}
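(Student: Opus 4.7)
The plan is first to reduce to the case of a smooth dagger affinoid. Both $\rg_{\synt}^{\dagger}$ and $\rg_{\synt}$ define sheaves on ${\rm Sm}^{\dagger}_{L,\eet}$: the former by construction as the \'etale sheafification of its presheaf on ${\rm SmAff}^{\dagger}_L$, and the latter via its definition through $\eta$-\'etale descent from overconvergent syntomic cohomology of weak formal semistable models combined with Proposition \ref{Bonn2}. Since $\iota^{\dagger}_{\synt}$ is a map of such sheaves, it suffices to prove the strict quasi-isomorphism on smooth dagger affinoids. For a dagger affinoid $X$ with presentation $\{X_h\}_{h\in\N}$ the map $\iota^{\dagger}_{\synt}$ is the composition (\ref{composition1}). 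The first arrow there is a strict quasi-isomorphism because the pro-rigid analytic varieties $\{X_h\}$ and $\{X_h^0\}$ are mutually cofinal via the interleaving $X\Subset X_{h+1}\Subset X_h^0\Subset X_h$; the second arrow (in the reverse direction) is a strict quasi-isomorphism by Theorem \ref{zamek} applied termwise, since each $X_h^0$ is partially proper.

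The remaining task is therefore to show that the third arrow
$$
\hocolim_h \rg_{\synt}(X_h^{0,\dagger},\Q_p(r)) \to \rg_{\synt}(X,\Q_p(r))
$$
is a strict quasi-isomorphism. Using the distinguished triangle (\ref{over1}) defining overconvergent syntomic cohomology, which is compatible with filtered homotopy colimits, this reduces to proving strict quasi-isomorphisms
$$
\hocolim_h [\rg_{\hk}(X_h^{0,\dagger})]^{N=0,\phi=p^r}\stackrel{\sim}{\to} [\rg_{\hk}(X)]^{N=0,\phi=p^r},\qquad \hocolim_h \bigl(\rg_{\dr}(X_h^{0,\dagger})/F^r\bigr)\stackrel{\sim}{\to} \rg_{\dr}(X)/F^r
$$
separately for the Hyodo-Kato and filtered de Rham pieces. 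The de Rham piece is handled by observing that $\so(X)=\dirlim_h \so(X_h)$ by the definition of a dagger presentation, that dagger differentials and Hodge filtrations commute with this colimit, and that the pro-systems $\{\rg_{\dr}(X_h)\}$ and $\{\rg_{\dr}(X_h^{0,\dagger})\}$ are mutually cofinal (via the interleaving above together with $\rg_{\dr}(Y^\dagger)\simeq \rg_{\dr}(Y)$ for partially proper rigid $Y$); the same argument applies verbatim to the quotients by $F^r$.

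The Hyodo-Kato piece is where I expect the main difficulty. The strategy, modelled on the proof of Theorem \ref{zamek}, is to pass to the geometric setting via Galois descent (Proposition \ref{descent1}) and then exploit the Hyodo-Kato quasi-isomorphism $\iota_{\hk}\colon \rg_{\hk}(-)\wh{\otimes}^R_{F^{\nr}} C \stackrel{\sim}{\to} \rg_{\dr}(-)$ of (\ref{HK-rig}) to transport the desired Hyodo-Kato comparison to the de Rham comparison already established on the partially proper $X_{h,C}^{0,\dagger}$. The technical subtlety to overcome is that the filtered colimit over the presentation $h$ must be interchanged both with the filtered colimit defining $F^{\nr}$ and with the derived completed tensor product over $F^{\nr}$. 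One handles this in the spirit of the argument surrounding diagram (\ref{reduction4}): after reducing to quasi-compact pieces of $X$ (an admissible increasing covering) and invoking Proposition \ref{rain15} to ensure the relevant Hyodo-Kato cohomologies are classical and finite-dimensional over $F^{\nr}$, the needed commutations of limits become checkable on individual cohomology groups, and the quasi-isomorphism then follows from the partial-properness of each $X_h^{0,\dagger}$ and the strict de Rham comparison.
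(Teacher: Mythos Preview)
Your proposal follows essentially the paper's route: reduce to a smooth dagger affinoid by \'etale descent, unpack the composition (\ref{composition1}), and reduce the last arrow to separate Hyodo--Kato and de Rham statements via the distinguished triangle (\ref{over1}). The de Rham argument you give is the paper's argument.

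The one substantive difference is in the Hyodo--Kato step. The paper does \emph{not} use the $C$-linear Hyodo--Kato isomorphism $\rg_{\hk}(-)\wh{\otimes}^R_{F^{\nr}}C\simeq\rg_{\dr}(-)$ you propose; it uses the $\ovk$-linear one (first formula in (\ref{HK-rig})) together with the identification $\rg_{\dr}(-)\wh{\otimes}_K\ovk\simeq\rg_{\rig,\ovk}((-)_C)$ of Proposition \ref{pierre1}, and then the normalized trace maps $\alpha,\alpha_h$ as left inverses to the canonical maps $\rg_{\hk}\to\rg_{\hk}\wh{\otimes}_{F^{\nr}}\ovk$. This cleanly avoids the interchange problems you flag: once $f_1:\hocolim_h\rg_{\hk}(X^{{\rm o},\dagger}_{h,C})\wh{\otimes}_{F^{\nr}}\ovk\to\rg_{\hk}(X_C)\wh{\otimes}_{F^{\nr}}\ovk$ is a strict quasi-isomorphism, the trace splittings immediately give the statement before tensoring. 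Your $C$-linear variant can be made to work, but you would have to argue that tensoring with $C$ over $F^{\nr}$ is conservative on the relevant cohomology groups, which forces you through finite-dimensionality over $F^{\nr}$ (Proposition \ref{rain15}) and an identification of $H^i(\rg_{\hk}\wh{\otimes}^R_{F^{\nr}}C)$ with $H^i_{\hk}\otimes_{F^{\nr}}C$; the $\ovk$-plus-trace approach bypasses this entirely.

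Two smaller points. First, your ``reducing to quasi-compact pieces of $X$'' is misplaced: after the \'etale-descent reduction, $X$ is already a dagger affinoid and hence quasi-compact. Second, you need to commute $\hocolim_h$ with $(-)^{\sg_K}$ after applying Galois descent (Proposition \ref{descent1}); the paper does this explicitly by noting that the pro-systems $\{\rg_{\hk}(X^{{\rm o},\dagger}_{h,C})\}$ and $\{\rg_{\hk}(X_{h,C})\}$ are equivalent and that the $\sg_K$-action on the latter is smooth (since $X_h$ is quasi-compact). You should make that step explicit.
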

    \begin{proof}By \'etale descent, we may assume that $X$ is a smooth dagger affinoid. 
Looking at the composition (\ref{composition1}) defining the map $\iota^{\dagger}_{\synt}$  we see that it suffices to show that the natural map
    $$
    \hocolim_h \rg_{\synt}(X^{{\rm o},\dagger}_h,\Q_p(r))\to \rg_{\synt}(X,\Q_p(r))
    $$
    is a strict quasi-isomorphism. Or, from the definitions of both sides, that
  we have strict quasi-isomorphisms
     $$
     \rg_{\hk}(X)\stackrel{\sim}{\leftarrow}\hocolim_h \rg_{\hk}(X^{{\rm o},\dagger}_h),\quad \rg_{\dr}(X)\stackrel{\sim}{\leftarrow}\hocolim_h \rg_{\dr}(X^{{\rm o},\dagger}_h).
     $$
     
    This is clear in the case of the second map since this map  factors as
    \begin{equation}
    \label{def2}
    \hocolim_h \rg_{\dr}(X^{{\rm o},\dagger}_h)\stackrel{\sim}{\to}\hocolim_h \rg_{\dr}(X_{h+1})\stackrel{\sim}{\to} \rg_{\dr}(X).
    \end{equation}
  For the first map consider the commutative diagram
    $$
    \xymatrix@R=.6cm@C=.6cm{
     \rg_{\hk}(X)\ar[d]^{\wr} & \hocolim_h \rg_{\hk}(X^{{\rm o},\dagger}_h)\ar[l]\ar[d]^{\wr}\\
      \rg_{\hk}(X_C)^{\sg_K} & \hocolim_h \rg_{\hk}(X^{{\rm o},\dagger}_{h,C})^{\sg_K}\ar[l] \ar[r]^-{\sim}& (\hocolim_h \rg_{\hk}(X^{{\rm o},\dagger}_{h,C}))^{\sg_K}.
    }
     $$
     Here the vertical maps are strict quasi-isomorphisms by Proposition \ref{descent1}.  The horizontal map is a strict quasi-isomorphism because the prosystems $\{\rg_{\hk}(X^{{\rm o},\dagger}_{h,C})\}$ and 
     $\{\rg_{\hk}(X_{h,C})\}$ are equivalent and the  action of $\sg_K$ on the terms of the last one is smooth. It suffices thus to show that the natural map
     $$
       \rg_{\hk}(X_C)\leftarrow \hocolim_h \rg_{\hk}(X^{{\rm o},\dagger}_{h,C})
     $$
     is a strict quasi-isomorphism. For that consider the following diagram 
     $$ \xymatrix@C=.5cm@R=.6cm{
       \rg_{\hk}(X_C)\ar[r]\ar[d] & \hocolim_h \rg_{\hk}(X^{{\rm o},\dagger}_{h,C})\ar[d] \\
     \rg_{\hk}(X_C)\wh{\otimes}_{F^{\nr}}\ovk\ar[d]^{\wr}_{\iota_{\hk}}\ar@/^10pt/[u]^{\alpha}& \hocolim_h \rg_{\hk}(X^{{\rm o},\dagger}_{h,C})\wh{\otimes}_{F^{\nr}}\ovk\ar[l]_-{f_1}\ar[d]^{\wr}_{\iota_{\hk}}\ar@/^10pt/[u]^{\hocolim_h\alpha_h}\\
      \rg_{\rig,\ovk}(X_C) & \hocolim_h \rg_{\rig,\ovk}(X^{{\rm o},\dagger}_{h,C})\ar[l]_-{f_2}\\
       \rg_{\dr}(X)\wh{\otimes}_{K}\ovk \ar[u]^{\wr}_{\beta}& \hocolim_h \rg_{\dr}(X^{{\rm o},\dagger}_{h})\wh{\otimes}_{K}\ovk\ar[l]_-{f_3}\ar[u]^{\wr}_{\hocolim_h\beta_h}
       & \hocolim_h \rg_{\dr}(X_{h}){\otimes}_{K}\ovk\ar[l]_-{\sim} \\
        & & (\hocolim_h \rg_{\dr}(X_{h})){\otimes}_{K}\ovk\ar[u]^{\wr}_{\gamma}\ar[ull]^{\sim}
    }
$$
The maps $\alpha, \alpha_h$ are left inverses of the canonical vertical maps (used already in the diagram (\ref{reduction})). The Hyodo-Kato morphisms are the ones from (\ref{HK-rig}); they are strict quasi-isomorphisms. The maps $\beta, \beta_h$ are those
 from  Proposition \ref{pierre1}; they are strict quasi-isomorphisms as well.  The diagram clearly commutes. The strict quasi-isomorphism $\gamma$ uses the fact that $X_h$ is quasi-compact. It follows that the map $f_3$ is a quasi-isomorphism and then that so is the map $f_1$ and, finally, that so is the top horizontal map, as wanted. 
\end{proof}
\begin{remark}
\label{consistence}
The above proof shows that, for a smooth dagger affinoid $X$ over $K$ with a dagger presentation $\{X_h\}$,  the natural map 
$$
\hocolim_h\rg_{\synt}(X_h,\Q_p(r))\to \rg_{\eet}(X,\sa_{\synt}^{\dagger}(r))
$$
is a strict quasi-isomorphism. Hence the two definitions of $\rg^{\dagger}_{\synt}(X,\Q_p(r))$ that we gave above coincide.
\end{remark}
\section{Arithmetic $p$-adic pro-\'etale cohomology}We pass now to the computation of arithmetic $p$-adic pro-\'etale cohomology of smooth dagger and rigid analytic varieties. 
  \subsection{Syntomic period isomorphisms}First, we will use the comparison theorem between syntomic complexes and $p$-adic nearby cycles from \cite{CN} to define period maps for smooth rigid analytic and dagger varieties.

  Let $\sx$ be a semistable formal model over $\so_K$. 
    Recall that 
     Fontaine-Messing \cite{FM} and  Kato \cite{K1} have constructed  period morphisms  ($i: \sx_0\hookrightarrow \sx, j: \sx_{K}\hookrightarrow \sx$)
$$\alpha^{\rm FM}_{r,n}:  \quad \sss_n(r)_{\sx}  \rightarrow i^*Rj_*{\mathbf Z}/p^n(r)^{\prime}_{\sx_{K}},\quad r\geq 0,
$$
from syntomic cohomology  to  $p$-adic nearby cycles taken as complexes of sheaves on the \'etale site of $\sx_0$. Here  we set  $\Z_p(r)^{\prime}:=\tfrac{1}{p^{a(r)}}\Z_p(r)$,  for $r=(p-1)a(r)+b(r),$ $0\leq b(r)\leq p-1$. The syntomic sheaf $\sss_n(r)$ is associated to the presheaf $\su\mapsto \rg_{\synt}(\su,\Z/p^n(r))$, for formally \'etale $\su\to\sx$. 

 Recall the following comparison result.
    \begin{theorem}{\rm (Colmez-Nizio\l, \cite[Th. 1.1]{CN})}
 \label{main0}
For   $0\leq i\leq r$,  consider the period map 
\begin{equation}
\label{maineq1}
\alpha^{\rm FM}_{r,n}:\quad  \sh^i(\sss_n(r)_{\sx}) \rightarrow i^*R^ij_*{\mathbf Z}/p^n(r)'_{\sx_{K}}.
\end{equation}

{\rm (i)} If $K$ has enough roots of unity\footnote{See \cite[Sec.  2.2.1]{CN} for what it means for a field to contain enough roots of unity. 
For any $K$, the field $K(\zeta_{p^n})$, for $n\geq c(K)+3$, where $c(K)$ is  the conductor of $K$, contains enough roots of unity.} then the kernel  and cokernel of this map are annihilated by $p^{Nr+c_p}$ for a universal constant $N$ {\rm (not depending on $p$, $\sx$, $K$, $n$ or $r$)} and a constant $c_p$ depending only on $p$ (and $d$ if $p=2$).

{\rm(ii)} In general, the kernel  and cokernel of this map are annihilated by $p^N$ for an integer $N=N(e,p,r)$, which depends on $e$, $r$, but not on $\sx$ or $n$.
\end{theorem}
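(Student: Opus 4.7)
The plan is to follow the Fontaine--Messing/Kato strategy, adapted to the semistable setting, and work locally: since the statement is a claim about sheaves on $\sx_{0,\eet}$, we may assume $\sx=\Spf(R)$ is a small affine admitting an \'etale map to the standard chart $\Spf(\so_K\{X_1,\ldots,X_l\}/(X_1\cdots X_m-\varpi))$. Fix a separable closure $\overline{R}$ of $R[1/p]$ with Galois group $G_R$, and write $A_{\crr}(\overline{R})$ for Fontaine's local crystalline period ring. The key local input is that the sheaf $\sh^i(\sss_n(r)_{\sx})$ on $\sx_{0,\eet}$ is computed, up to $p^{Nr}$-error, by the cohomology of the mapping fiber
\[
[\,F^r A_{\crr,n}(\overline{R})\xrightarrow{1-\varphi/p^r} A_{\crr,n}(\overline{R})\,]^{G_R},
\]
which, in the semistable case, is supplied by a Poincar\'e lemma for $A_{\crr}$ plus the Hyodo--Kato-style description of $\rg_{\crr}(\sx_n)$.

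The heart of the proof is then the local fundamental exact sequence
\[
0\longrightarrow \Z/p^n(r)'\longrightarrow F^r A_{\crr,n}(\overline{R})\xrightarrow{1-\varphi/p^r} A_{\crr,n}(\overline{R})\longrightarrow 0,
\]
which is exact (up to $p^{N}$-bounded error, $N=N(r)$) in the range that matters. Taking continuous $G_R$-cohomology converts its fiber into the continuous Galois cohomology of $\Z/p^n(r)'$, and Faltings' almost purity together with the comparison of $R[1/p]$-\'etale and $R^{\rm sh}$-\'etale cohomology identifies this with the stalk $(i^*R^ij_*\Z/p^n(r)')_{\sx_0}$, again up to $p^N$. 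Chasing the two descriptions through the same fiber produces the period map $\alpha^{\rm FM}_{r,n}$ and bounds its kernel and cokernel by the total $p$-adic defect of the various steps.

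The degree restriction $i\leq r$ is essential at two places: first in controlling the mapping-fiber presentation of $\sss_n(r)$ (where $1-p^s\varphi$ is invertible on differentials of degree $r+s$, $s\geq 1$), and second in controlling the continuous $G_R$-cohomology of the ideal $F^r A_{\crr,n}(\overline{R})$, which vanishes in degrees $\leq r$ after multiplication by a bounded power of $p$. For part (i), the standing hypothesis that $K$ contains enough roots of unity allows one to replace $G_R$-cohomology by Koszul complexes over the cyclotomic-like generators of $G_R$, producing explicit contracting homotopies whose denominators are universal powers of $p$ independent of $p$. For part (ii), one reduces to (i) by passing to $K(\zeta_{p^n})$ with $n$ large (of the order of the conductor of $K$) and then descending with Tate's normalized trace; the trace introduces the extra factor depending on~$e$.

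The main obstacle, as in all results of this type, is the bookkeeping of the $p^N$-denominators: one must assemble the Poincar\'e lemma for $A_{\crr}$, the fundamental exact sequence, almost purity, and the continuous Galois cohomology vanishing into a \emph{single} uniform constant depending only on $(e,p,r)$ (respectively only on $r$, once enough roots of unity are present). Each step is individually classical, but making the constants genuinely independent of $\sx$ and $n$ requires the quantitative forms of these inputs developed in~\cite{CN}; the cleanest way to package them is to prove the statement first on standard semistable charts, then use \'etale descent with effective $p^N$-bounds to globalize to arbitrary $\sx$.
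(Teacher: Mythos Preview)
The present paper does not contain a proof of this theorem: it is quoted verbatim as \cite[Th.~1.1]{CN} and used as a black box (see the sentence ``Recall the following comparison result'' immediately preceding it). There is therefore no ``paper's own proof'' to compare your sketch against.

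That said, your outline is a fair high-level summary of the strategy in \cite{CN}: one does localize to small semistable affines, the hypothesis of enough roots of unity in (i) is exactly what makes the Galois group close enough to $\Z_p(1)\times\Z_p^d$ that Koszul-complex computations with universal denominators go through, and (ii) is obtained from (i) by ascent to $K(\zeta_{p^n})$ and descent via Tate's normalized trace (which introduces the dependence on $e$). One substantive inaccuracy: the actual argument in \cite{CN} does \emph{not} go through a Poincar\'e lemma for $A_{\crr}(\overline R)$ and the fundamental exact sequence in the way you describe. Instead it replaces the crystalline presentation of $\sss_n(r)$ by an explicit $(\varphi,\Gamma)$-module Koszul complex over Lazard-type rings, and the bulk of the work is a two-sided comparison of that Koszul complex with (a) crystalline syntomic cohomology and (b) continuous Galois cohomology of $\Z/p^n(r)'$. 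Your ``fundamental exact sequence + almost purity'' packaging is the older Fontaine--Messing/Tsuji route and does not by itself yield the sharp uniform constants claimed in (i); the $(\varphi,\Gamma)$-module detour is precisely what makes those constants independent of $K$ and $\sx$.
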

\subsubsection{Rigid analytic varieties.}
The above comparison quasi-isomorphism globalizes easily to smooth rigid analytic varieties:  
    
    \begin{corollary}\label{period1}For $X\in {\rm Sm}_L$,  $L=K,C$,
    the period maps
    $$
    \alpha_r: \R\Gamma_{\synt}(X,\Z_p(r))_{\Q_p}\to \R\Gamma_{\eet}(X,\Q_p(r)),\quad  \alpha_r: \R\Gamma_{\synt}(X,\Q_p(r))\to \R\Gamma_{\proeet}(X,\Q_p(r))
    $$
    are strict quasi-isomorphisms after truncation $\tau_{\leq r}$.
    \end{corollary}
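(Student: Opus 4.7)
The plan is to obtain both maps by $\eta$-\'etale sheafification of the Fontaine--Messing period morphisms on semistable formal models, and then to deduce the stable-range statement from Theorem \ref{main0}. First I would define the period map locally as follows: for $\sx\in\sm^{\sem}_L$, the Fontaine--Messing morphism $\alpha^{\rm FM}_{r,n}:\sss_n(r)_{\sx}\to i^*Rj_*\Z/p^n(r)^\prime_{\sx_L}$ (after the harmless $p^{a(r)}$ twist, invisible rationally) induces, upon applying $\rg(\sx_0,-)$, proper base change to identify $\rg(\sx_0,i^*Rj_*\Z/p^n(r)')\simeq \rg_{\eet}(\sx_L,\Z/p^n(r)')$, and then $\holim_n$ followed by $(-)\otimes^L\Q_p$, a morphism
\[
\alpha_r:\rg_{\synt}(\sx,\Z_p(r))_{\Q_p}\to \rg_{\eet}(\sx_L,\Q_p(r)).
\]
Sheafifying this presheaf of maps on $\sm^{\sem}_L$ in the $\eta$-\'etale topology and using Proposition \ref{Bonn1}, we obtain the arrow $\alpha_r:\rg_{\synt}(X,\Z_p(r))_{\Q_p}\to\rg_{\eet}(X,\Q_p(r))$ for every $X\in{\rm Sm}_L$. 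The second period map is obtained from the first by postcomposing with the natural map $\rg_{\eet}(X,\Q_p(r))\to\rg_{\proeet}(X,\Q_p(r))$ (which is a strict quasi-isomorphism for quasi-compact $X$, cf.\ the topological discussion following (\ref{dagmaps})), together with the passage $\rg_{\synt}(X,\Z_p(r))_{\Q_p}\to\rg_{\synt}(X,\Q_p(r))$ from (\ref{passage}).

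The key input is Theorem \ref{main0}(ii): in the range $0\leq i\leq r$ the sheaf-level map $\sh^i(\sss_n(r)_{\sx})\to i^*R^ij_*\Z/p^n(r)'_{\sx_L}$ is a $p^N$-iso with $N=N(e,p,r)$ independent of $n$ and $\sx$. Thus, after inverting $p$, the induced map
\[
\tau_{\leq r}\alpha_r:\tau_{\leq r}\rg_{\synt}(\sx,\Z_p(r))_{\Q_p}\to \tau_{\leq r}\rg_{\eet}(\sx_L,\Q_p(r))
\]
is a quasi-isomorphism for every $\sx\in\sm^{\sem}_L$. The strictness follows because the underlying integral complexes can be chosen to consist of $p$-torsion free, $p$-adically complete pro-discrete $\so_L$-modules, so Proposition \ref{acyclic-integral} applies and the rationalized quasi-isomorphism is automatically strict; the same observation lets one commute $(-)\otimes^L\Q_p$ through the $\holim_n$ on both sides without loss of strictness.

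Next, I would descend from semistable models to arbitrary smooth rigid analytic varieties via $\eta$-\'etale descent. Both the syntomic and the \'etale sides satisfy $\eta$-\'etale descent by construction (Section \ref{def33} and Proposition \ref{Bonn1}), and truncation $\tau_{\leq r}$ commutes with such descent provided we work degree-by-degree on cohomology sheaves. Since the local map is a strict quasi-isomorphism in the range $i\leq r$, so is the globalized map, first for quasi-compact $X$ directly, and then in general by writing $X$ as an increasing quasi-compact admissible covering $\{U_n\}$ as in Remark \ref{ducros} and taking $\holim_n$ on both sides; the $\holim$ preserves strict quasi-isomorphisms in the convex category because the transition maps on the (classical, finite-dimensional in the relevant degrees by the comparison with \'etale cohomology) cohomology groups are continuous between Banach spaces. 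The pro-\'etale version then follows because the two period maps fit into a commutative square whose other two arrows, $\rg_{\synt}(X,\Z_p(r))_{\Q_p}\to\rg_{\synt}(X,\Q_p(r))$ and $\rg_{\eet}(X,\Q_p(r))\to\rg_{\proeet}(X,\Q_p(r))$, are strict quasi-isomorphisms in the quasi-compact case and can be handled globally by the same $\holim_n$ argument.

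The main obstacle I anticipate is topological: ensuring that the composition of the local comparison (which is only a $p^N$-iso integrally) with the operations $(-)_{\Q_p}$, $\holim_n$, $\hocolim$, $\tau_{\leq r}$, and the final $\eta$-\'etale sheafification actually produces a \emph{strict} quasi-isomorphism rather than just a quasi-isomorphism of abstract complexes. This is handled by the systematic use of Proposition \ref{acyclic-integral} to pass integrally through $(-)\otimes\Q_p$, by the fact that for quasi-compact $X$ all complexes in sight are complexes of Banach $\Q_p$-spaces so that the Open Mapping Theorem gives strictness automatically in the stable range, and by reducing the general case to the quasi-compact one via Remark \ref{ducros}.
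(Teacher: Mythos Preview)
Your approach is essentially the same as the paper's: reduce to semistable models via $\eta$-\'etale descent and then invoke Theorem~\ref{main0}. The paper's own proof is extremely terse (it just says ``by descent reduce to a semistable model, then use Theorem~\ref{main0} as in \cite[Prop.~6.1, Cor.~3.46]{CDN3}''), so your write-up is in fact more explicit, particularly on the strictness side via Proposition~\ref{acyclic-integral}.

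Two small corrections. First, your reference to ``Section~\ref{def33}'' points to the \emph{overconvergent} syntomic cohomology; the relevant construction here is the rigid analytic one from the beginning of Section~4 (around~(\ref{passage})). Second, for $L=C$ you cannot use Theorem~\ref{main0}(ii) with $N=N(e,p,r)$, since $e$ is unbounded over the basic semistable models; you need part~(i) (which gives a constant uniform in $K$ once there are enough roots of unity, hence applicable after base change to $\so_C$) or the geometric input from \cite{Ts}, as the paper indicates in the introduction.

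One genuine imprecision: the sentence ``truncation $\tau_{\leq r}$ commutes with such descent provided we work degree-by-degree on cohomology sheaves'' is not correct as stated, and the hypercovering argument does not directly give a $\tau_{\leq r}$-statement because of degree shifts. The clean way around this (implicit in the paper's reference to \cite{CDN3}) is to argue at the level of the cone $C$ of the sheaf map $\sa_{\synt,n}(r)\to\Z/p^n(r)'$: Theorem~\ref{main0} shows each $\sh^i(C)$ for $i\leq r$ is $p^N$-torsion with $N$ independent of the model and of $n$, hence $\tau_{\leq r}C$ is $p^{N'}$-acyclic for some uniform $N'$, and then $\tau_{\leq r}\rg_{\eet}(X,C)=\tau_{\leq r}\rg_{\eet}(X,\tau_{\leq r}C)$ is $p^{N''}$-acyclic, so rationally zero. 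This avoids the issue of commuting truncation with totalization and makes the passage to general $X$ immediate.
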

    \begin{proof}Since both the domain and the target of the period maps satisfy $\eta$-\'etale descent we may assume that $X$ has a semistable model over $\so_K$. But in that case this  follows from 
    Theorem \ref{main0} as in analogous claims in the geometric setting in \cite[Prop. 6.1, Cor. 3.46]{CDN3}. 
    \end{proof}
   \subsubsection{Dagger varieties.} 
     The comparison quasi-isomorphism (\ref{maineq1}) can also be extended to smooth dagger varieties.
   Let $X\in {\rm Sm}^{\dagger}_K$, $r\geq 0$.  Define the period map
   \begin{equation}
   \label{period-dagger}
   \alpha_r: \rg_{\synt}(X,\Q_p(r))\to \rg_{\proeet}(X,\Q_p(r))
   \end{equation}
   as the composition
   $$
    \rg_{\synt}(X,\Q_p(r))\stackrel{\sim}{\leftarrow}  \rg_{\synt}^{\dagger}(X,\Q_p(r))\lomapr{\alpha_r^{\dagger}} \rg_{\proeet}(X,\Q_p(r)),
   $$
   where the first map is the map $\iota^{\dagger}_{\synt}$ from Proposition \ref{zamek3} and the second map is defined by globalizing the following map defined for $X$ a dagger affinoid with  presentation $\{X_h\}$:
   $$
    \rg_{\synt}^{\dagger}(X,\Q_p(r))=\hocolim_h\rg_{\synt}(X_h,\Q_p(r))\lomapr{\alpha_r}\hocolim_h\rg_{\proeet}(X_h,\Q_p(r))\simeq 
     \rg_{\proeet}(X,\Q_p(r)).
   $$
    Corollary \ref{period1} implies immediately the following result: 
   \begin{corollary}
   \label{period15}For $X\in {\rm Sm}^{\dagger}_K$, 
    the period map
    $$
    \alpha_r: \R\Gamma_{\synt}(X,\Q_p(r))\to \R\Gamma_{\proeet}(X,\Q_p(r))
    $$
    is a  strict quasi-isomorphism after truncation $\tau_{\leq r}$.
\end{corollary}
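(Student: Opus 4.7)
The plan is to reduce the statement to Corollary \ref{period1} via the two-step factorization of the period map. By construction, $\alpha_r$ sits in a commutative diagram
$$
\xymatrix@R=.5cm{
\rg_{\synt}^{\dagger}(X,\Q_p(r))\ar[r]^{\alpha_r^{\dagger}}\ar[d]_{\iota^{\dagger}_{\synt}}^{\wr} & \rg_{\proeet}(X,\Q_p(r))\\
\rg_{\synt}(X,\Q_p(r))\ar[ur]_{\alpha_r}
}
$$
in which the vertical map is a strict quasi-isomorphism by Proposition \ref{zamek3}. Hence it suffices to show that $\tau_{\leq r}\alpha_r^{\dagger}$ is a strict quasi-isomorphism.

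First I would reduce to the case of a smooth dagger affinoid. Both sheaves $\sa^{\dagger}_{\synt}(r)$ and $\sa_{\proeet}(r)$ are defined as \'etale sheafifications of presheaves on ${\rm SmAff}^{\dagger}_K$, and $\alpha_r^{\dagger}$ is the induced morphism of the associated cohomologies $\rg_{\eet}(X,\sa^{\dagger}_{\synt}(r))\to \rg_{\eet}(X,\sa_{\proeet}(r))$. Since the property of being a strict quasi-isomorphism in degrees $\leq r$ on an \'etale sheaf can be tested sectionwise on an \'etale cover built from dagger affinoids, and since  both presheaves satisfy $\eta$-\'etale descent in the strict sense (the pro-\'etale side by the lemma following Proposition \ref{equivalence1}, the syntomic side by Remark \ref{consistence}), it suffices to treat the case where $X$ is a smooth dagger affinoid.

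Next, fix a dagger presentation $X\simeq \{X_h\}_{h\in\N}$. By the local definitions we have, in $\sd(C_{\Q_p})$,
\begin{align*}
\rg_{\synt}^{\dagger}(X,\Q_p(r)) & \simeq \hocolim_h\rg_{\synt}(X_h,\Q_p(r)),\\
\rg_{\proeet}(X,\Q_p(r)) & \simeq \hocolim_h\rg_{\proeet}(X_h,\Q_p(r)),
\end{align*}
and under these identifications $\alpha_r^{\dagger}$ is the filtered homotopy colimit of the rigid analytic period maps $\alpha_r\colon \rg_{\synt}(X_h,\Q_p(r))\to \rg_{\proeet}(X_h,\Q_p(r))$. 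By Corollary \ref{period1} applied to each quasi-compact smooth rigid analytic $X_h$, each such map becomes a strict quasi-isomorphism after truncation $\tau_{\leq r}$. Since the truncation functor $\tau_{\leq r}$ on $\sd(C_{\Q_p})$ is a left adjoint (it commutes with filtered homotopy colimits), passing to the colimit in $h$ yields that $\tau_{\leq r}\alpha_r^{\dagger}$ is a strict quasi-isomorphism, as required.

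The only non-routine point is the interaction of the truncation functor with the homotopy colimit taken in the quasi-abelian category $C_{\Q_p}$; this is the potential obstacle, but it is a formal consequence of the $t$-structure on $\sd(C_{\Q_p})$ recalled at the start of the paper (where $\tau_{\leq r}$ is characterized as a left adjoint to the inclusion of $\sd_{\leq r}(C_{\Q_p})$, which is closed under filtered colimits since the heart $LH(C_{\Q_p})$ has exact filtered colimits). No new compatibility beyond what has already been established for the \'etale/pro-\'etale definition of dagger cohomology in Section~\ref{pierre11} and for $\rg_{\synt}^{\dagger}$ in Section~\ref{def33} is needed.
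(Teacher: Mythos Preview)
Your approach is exactly what the paper has in mind: it states only that ``Corollary \ref{period1} implies immediately'' the result, and your unpacking---reduce via $\iota^{\dagger}_{\synt}$ to $\alpha_r^{\dagger}$, pass to a dagger affinoid, write both sides as $\hocolim_h$ over a presentation $\{X_h\}$, and apply Corollary \ref{period1} termwise---is precisely the intended argument.

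One small correction: $\tau_{\leq r}$ is the \emph{right} adjoint to the inclusion $\sd_{\leq r}(C_{\Q_p})\hookrightarrow \sd(C_{\Q_p})$, not the left adjoint, so your stated reason for its commutation with filtered colimits is backwards. The correct justification is that the t-structure is compatible with filtered colimits (equivalently, $\sd_{\geq r+1}(C_{\Q_p})$ is closed under filtered colimits, or $\wt{H}^i$ commutes with filtered colimits in $LH(C_{\Q_p})$); this is what makes $\tau_{\leq r}$ preserve filtered colimits despite being a right adjoint. This slip does not affect the validity of the overall argument.
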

   \begin{remark}
   Let $X$ be a smooth partially proper dagger variety over $K$. 
   We claim that the following diagram commutes:
   $$
   \xymatrix@R=.6cm{
  \rg_{\synt}(X,\Q_p(r))\ar[r]^{\alpha_r} \ar[d]^-{\wr}_-{\iota} & \rg_{\proeet}(X,\Q_p(r))\ar[d]^-{\wr}_{\iota_{\proeet}}\\
    \rg_{\synt}(\wh{X},\Q_p(r))\ar[r]^{\wh{\alpha}_r} & \rg_{\proeet}(\wh{X},\Q_p(r))
    }
   $$
   The map $\iota$ is the strict quasi-isomorphism from Theorem \ref{zamek}; the map $\iota_{\proeet}$ is the strict quasi-isomorphism from Proposition \ref{kicia-kicia}. The period maps $\wh{\alpha}_r$, ${\alpha}_r$ are the ones defined above (we put hat above the rigid analytic period map to distinguish it from the dagger period map). 
   
    It suffices to show that this diagram naturally commutes \'etale locally. So we may assume that $X$ is a smooth dagger affinoid. Then checking commutativity is straightforward from the definitions (if tedious). 
   \end{remark}

    \subsection{Applications and Examples} We are now ready to list some applications of our computations and to discuss some examples of computations of $p$-adic pro-\'etale cohomology.
    \subsubsection{Rigid analytic varieties.}
  We start with the rigid analytic case. 
  Let $X\in {\rm Sm}_K$, $r\geq 0$. The distinguished triangle (\ref{seq11}), Lemma \ref{derham}, and the period map $\alpha_r$ above yield a natural map
 $$\partial_r: (\rg_{\dr}(X)/F^r)[-1]\to \rg_{\proeet}(X,\Q_p(r)).$$
    \begin{theorem}\label{rigid1}
    Let $X\in {\rm Sm}_K$, $r\geq 1$. 
    \begin{enumerate}
    \item 
    For $1\leq i\leq r-1$, the map
    $$
    \partial_r: \wt{H}^{i-1}_{\dr}(X)\to \wt{H}^{i}_{\proeet}(X,\Q_p(r))
    $$
    is an isomorphism. In particular, the cohomology $ \wt{H}^{i}_{\proeet}(X,\Q_p(r))$ is not, in  general, classical. 
    \item We have the short exact sequence
    \begin{equation}
    \label{Aseq}
    0\to \wt{H}^{r-1}(\rg_{\dr}(X)/F^r)\lomapr{\partial_r} \wt{H}^r_{\proeet}(X,\Q_p(r))\to \wt{H}^{r}([\rg_{\hk}(X)]^{N=0,\phi=p^r})\to \wt{H}^{r}(\rg_{\dr}(X)/F^r)
    \end{equation}
    \end{enumerate}
    \end{theorem}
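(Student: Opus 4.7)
The plan is to deduce Theorem \ref{rigid1} as a cohomological consequence of the period isomorphism combined with the Bloch--Kato reformulation of rigid analytic syntomic cohomology.

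First, I would invoke Corollary \ref{period1}, which says that the period map
$$\alpha_r\colon \rg_{\synt}(X,\Q_p(r))\to \rg_{\proeet}(X,\Q_p(r))$$
is a strict quasi-isomorphism after truncation $\tau_{\leq r}$. This identifies $\wt{H}^i_{\proeet}(X,\Q_p(r))$ with $\wt{H}^i_{\synt}(X,\Q_p(r))$ for $i\leq r$. Next, I would apply Proposition \ref{passage-BK} to rewrite
$$\rg_{\synt}(X,\Q_p(r))\simeq \bigl[\,[\rg_{\hk}(X)]^{N=0,\phi=p^r}\stackrel{\iota^{\prime}_{\hk}}{\longrightarrow}\rg_{\dr}(X)/F^r\,\bigr],$$
and extract the associated long exact sequence
$$\cdots \to \wt{H}^{i-1}(\rg_{\dr}(X)/F^r)\stackrel{\partial_r}{\to} \wt{H}^{i}_{\synt}(X,\Q_p(r))\to \wt{H}^{i}([\rg_{\hk}(X)]^{N=0,\phi=p^r})\to \wt{H}^{i}(\rg_{\dr}(X)/F^r)\to\cdots.$$

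Second, I would establish the rigid analytic analog of Proposition \ref{computHK}, namely $\wt{H}^{i}([\rg_{\hk}(X)]^{N=0,\phi=p^r})=0$ for $i\leq r-1$. The argument relies on the spectral sequence (\ref{sseq22}) (whose construction is formal and applies equally to $X\in {\rm Sm}_K$) together with the slope bound: Frobenius on $H^i_{\hk}(X)$ has slopes $\leq i$. This bound reduces by $\eta$-\'etale descent to the case of semistable formal models, where the weight spectral sequence further reduces it to the classical bound for rigid cohomology of smooth schemes over $k$ \cite[Th.~3.1.2]{CS}. Consequently $\phi-p^j$ is invertible on $H^i_{\hk}(X)$ whenever $i<j$, and the relevant $E_2$-terms of (\ref{sseq22}) vanish in total degree $\leq r-1$.

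Third, for $1\leq i\leq r-1$, the vanishing of both $\wt{H}^{i-1}$ and $\wt{H}^{i}$ of $[\rg_{\hk}(X)]^{N=0,\phi=p^r}$ turns $\partial_r$ into an isomorphism $\wt{H}^{i-1}(\rg_{\dr}(X)/F^r)\stackrel{\sim}{\to}\wt{H}^{i}_{\synt}(X,\Q_p(r))$. The distinguished triangle $F^r\rg_{\dr}(X)\to \rg_{\dr}(X)\to \rg_{\dr}(X)/F^r$ together with the vanishing of $\wt{H}^{i-1}(X,\Omega^{\geq r}_X)=0$ in the range $i-1<r$ then identifies the left-hand side with $\wt{H}^{i-1}_{\dr}(X)$, giving part (1); since $\wt{H}^{i-1}_{\dr}(X)$ may fail to be Hausdorff in the rigid analytic setting, this also explains the non-classicality claim. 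For $i=r$ the vanishing of $\wt{H}^{r-1}([\rg_{\hk}(X)]^{N=0,\phi=p^r})$ makes $\partial_r$ injective, so the long exact sequence truncates to the four-term sequence (\ref{Aseq}), yielding part (2).

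The principal technical hurdle is to verify that the slope bound and the spectral sequence (\ref{sseq22}) behave well under $\eta$-\'etale descent for rigid analytic (as opposed to dagger) varieties; one must ensure that the descent is compatible with the Frobenius action and that the algebraic slope inequality is preserved at the abutment. Once this is in place, the rest of the argument is a formal combination of the period quasi-isomorphism, the Bloch--Kato description, and elementary truncation arguments for $\rg_{\dr}(X)/F^r$.
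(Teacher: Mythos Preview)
Your structural outline is the same as the paper's: pass to syntomic cohomology via Corollary~\ref{period1}, use the Bloch--Kato presentation, and exploit the vanishing of $\wt{H}^{i}([\rg_{\hk}(X)]^{N=0,\phi=p^r})$ for $i\leq r-1$. The gap is in how you propose to obtain that vanishing.

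The slope argument you invoke is the one from Proposition~\ref{computHK}, but that proposition is for \emph{dagger} varieties, where the Hyodo--Kato cohomology is the overconvergent (rigid) one and the groups $H^j_{\hk}(X)$ are finite-dimensional $F$-vector spaces (Proposition~\ref{rain1}); this is what makes the notion of Frobenius slopes meaningful and allows the reduction, via the weight spectral sequence, to \cite[Th.~3.1.2]{CS} on $H^i_{\rig}(Y/F)$. In the rigid analytic setting of Theorem~\ref{rigid1}, the Hyodo--Kato cohomology is the \emph{crystalline} one, $\rg_{\hk}(\sx_0)=\rg_{\crr}(\sx_0/\so_F^0)_{\Q_p}$, and for affine $\sx$ these groups are typically infinite-dimensional and not classical. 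The weight spectral sequence then feeds in crystalline (not rigid) cohomology of affine smooth $k$-schemes, which is again infinite-dimensional; the slope bound from \cite{CS} simply does not apply there. So the ``principal technical hurdle'' you flag is not a descent issue but a genuine obstruction already at the level of a single semistable model.

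The paper circumvents this entirely. It first proves the analogous statement with the crystalline eigenspace sheaf $\sa_{\crr,\Q_p}^{\phi=p^r}$ in place of $[\rg_{\hk}(X)]^{N=0,\phi=p^r}$ (Corollary~\ref{first1}), where the required vanishing is supplied by Proposition~\ref{added} --- an integral $p^N$-acyclicity result for $\tau_{\leq r-1}([\rg_{\crr}(\sx)]^{\phi=p^r})$ imported from \cite{CN}, whose proof does not go through slopes on cohomology groups at all. Only afterwards does it identify $\wt{H}^{r}_{\eet}(X,\sa_{\crr,\Q_p}^{\phi=p^r})$ with $\wt{H}^{r}([\rg_{\hk}(X)]^{N=0,\phi=p^r})$ using the zigzag in diagram~(\ref{cr=HK}). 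If you want to keep your route through Proposition~\ref{passage-BK}, you should replace the slope argument by this identification together with Proposition~\ref{added}.
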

    \begin{proof}  Corollary \ref{period1} allows us to pass (by the period map) to syntomic cohomology for which, by 
     Corollary \ref{first1}, we have an analogous claim with $\wt{H}^{r}_{\eet}(X,\sa_{\crr,\Q_p}^{\phi=p^r})$ in place of $\wt{H}^{r}([\rg_{\hk}(X)]^{N=0,\phi=p^r})$. That the latter two are isomorphic follows from diagram (\ref{cr=HK}).  
\end{proof}

%    We will  show below (Theorem \ref{first3})  that, for $X$ partially proper,  the exact sequence (\ref{Aseq}) takes a nicer form. 
    \subsubsection{Dagger varieties.}
 Now we pass to the overconvergent  case.     Let $X\in {\rm Sm}^{\dagger}_K$, $r\geq 0$. 
   The distinguished triangle (\ref{over1}) and the period map $\alpha_r$ from (\ref{period-dagger}) yield a natural map
 $$\partial_r: (\rg_{\dr}(X)/F^r)[-1]\to \rg_{\proeet}(X,\Q_p(r)).$$

  \begin{theorem}\label{first3}
  Let $X\in {\rm Sm}^{\dagger}_K$, $r\geq 1$. 
    \begin{enumerate}
    \item 
    For $1\leq i\leq r-1$, the map
    $$
    \partial_{r}: \wt{H}^{i-1}_{\dr}(X)\to \wt{H}^{i}_{\proeet}(X,\Q_p(r))
    $$
    is an isomorphism. In particular, the cohomology $ \wt{H}^{i}_{\proeet}(X,\Q_p(r))$ is classical. 
    \item We have the long exact sequence
    $$
    0\to \wt{H}^{r-1}(\rg_{\dr}(X)/F^r)\lomapr{\partial_r} \wt{H}^r_{\proeet}(X,\Q_p(r))\to \wt{H}^{r}([\rg_{\hk}(X)]^{N=0,\phi=p^r})\lomapr{\iota_{\hk}}\wt{H}^r(\rg_{\dr}(X)/F^r)
    $$
    \end{enumerate}
    \end{theorem}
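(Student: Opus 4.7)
The plan is to mirror the proof of Theorem \ref{rigid1} in the overconvergent setting, combining three ingredients already available: the period strict quasi-isomorphism of Corollary \ref{period15}, the defining distinguished triangle (\ref{over1}) for overconvergent syntomic cohomology, and the vanishing supplied by Proposition \ref{computHK}.

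First I would invoke Corollary \ref{period15}, which gives strict isomorphisms
\[
\alpha_r\colon \wt{H}^i_{\synt}(X,\Q_p(r)) \stackrel{\sim}{\to} \wt{H}^i_{\proeet}(X,\Q_p(r)), \qquad 0\le i\le r.
\]
This reduces both claims to corresponding statements for overconvergent syntomic cohomology. Applying $\wt{H}^{\ast}$ to the triangle (\ref{over1}) yields a long exact sequence
\[
\cdots \to \wt{H}^{i-1}([\rg_{\hk}(X)]^{N=0,\phi=p^r}) \to \wt{H}^{i-1}(\rg_{\dr}(X)/F^r) \to \wt{H}^i_{\synt}(X,\Q_p(r)) \to \wt{H}^i([\rg_{\hk}(X)]^{N=0,\phi=p^r}) \to \cdots
\]
By Proposition \ref{computHK}(1), the outer $[\rg_{\hk}(X)]^{N=0,\phi=p^r}$-terms vanish whenever the cohomological degree is at most $r-1$.

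For part (1), in the range $1\le i\le r-1$ both outer terms vanish, so the middle arrow is a strict isomorphism $\wt{H}^{i-1}(\rg_{\dr}(X)/F^r)\stackrel{\sim}{\to}\wt{H}^i_{\synt}(X,\Q_p(r))$. On the other hand, because $F^r\rg_{\dr}(X)\simeq \rg(X,\Omega^{\ge r}_X)$ is concentrated in cohomological degrees $\ge r$, the long exact sequence of the Hodge filtration yields strict isomorphisms $\wt{H}^{i-1}_{\dr}(X)\stackrel{\sim}{\to}\wt{H}^{i-1}(\rg_{\dr}(X)/F^r)$ for every $i\le r-1$. Composing all three isomorphisms recovers $\partial_r$ and proves (1). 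Classicalness of $\wt{H}^i_{\proeet}(X,\Q_p(r))$ then follows, since the overconvergent de Rham cohomology of a smooth dagger variety is classical (as recorded at the beginning of Section 5.1).

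For part (2), apply the same long exact sequence in cohomological degree $r$: only the leftmost term $\wt{H}^{r-1}([\rg_{\hk}(X)]^{N=0,\phi=p^r})$ vanishes, which gives exactly the desired four-term strict exact sequence after transporting through $\alpha_r$. I expect no genuine obstacle in executing this plan: the analytic content is concentrated in Corollary \ref{period15} (built on Theorem \ref{zamek} and Fontaine--Messing), in Proposition \ref{computHK} (via the slope bound on Frobenius), and in the description of $\rg_{\dr}(X)/F^r$ in the overconvergent de Rham section. The only point requiring care is to verify that the composite isomorphism assembled above coincides with the map $\partial_r$ defined via the triangle (\ref{over1}) and the period map $\alpha_r$; this is immediate by unwinding the construction of $\partial_r$.
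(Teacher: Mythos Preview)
Your proposal is correct and follows essentially the same approach as the paper: both use Corollary~\ref{period15} to replace pro-\'etale by syntomic cohomology in degrees $\leq r$, then read off the long exact sequence from the distinguished triangle~(\ref{over1}), invoke Proposition~\ref{computHK}(1) for the vanishing of $\wt{H}^{i}([\rg_{\hk}(X)]^{N=0,\phi=p^r})$ in degrees $\leq r-1$, and use that $F^r\rg_{\dr}(X)$ lives in degrees $\geq r$ to identify $\wt{H}^{i-1}_{\dr}(X)\simeq \wt{H}^{i-1}(\rg_{\dr}(X)/F^r)$. The only cosmetic difference is that you spell out the classicality of $\wt{H}^i_{\proeet}$ via the classicality of overconvergent de Rham cohomology, which the paper leaves implicit.
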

    \begin{proof} For $i\leq r$, from the definition of syntomic cohomology and Corollary \ref{period15} 
   we get the long exact sequence
   $$
 \cdots  \to  \wt{H}^{i-1}(\rg_{\dr}(X)/F^r)\to \wt{H}^{i}_{\proeet}(X,\Q_p(r))\to \wt{H}^{i}([\rg_{\hk}(X)]^{N=0,\phi=p^r})\to \wt{H}^{i}(\rg_{\dr}(X)/F^r)\to \cdots
   $$
  For the first claim of the theorem, it suffices to show that, for $i\leq r-1$, $\wt{H}^{i}([\rg_{\hk}(X)]^{N=0,\phi=p^r})=0$ and $\wt{H}^{i-1}_{\dr}(X)\stackrel{\sim}{\to}\wt{H}^{i-1}(\rg_{\dr}(X)/F^r)$. The second isomorphism is clear and the first one follows from Proposition \ref{computHK}.
  
  For the second claim of the theorem, we note that the injectivity on the left is implied by the fact that  $\wt{H}^{r-1}([\rg_{\hk}(X)]^{N=0,\phi=p^r})=0$ (see Proposition \ref{computHK}). 
  
\end{proof}
\subsubsection{Overconvergent balls.} Let $X$ be the overconvergent open  or closed ball over $K$ of dimension $d\geq 0$ and radius $\rho\in\surd |K^{\times}|$.
Using Corollary \ref{period15} and Example \ref{synt-examples} we get
 $$H^r_{\proeet}(X,\Q_p(r))
 \simeq \begin{cases}
\Q_p& \mbox{ if } r=0,\\
 \Omega^{r-1}(X)/\ker d_{r-1}\simeq \Omega^{r}(X)^{d=0} & \mbox{ if } r\geq 2,
\end{cases}
$$
and, for $r=1$, we get a strict exact sequence
$$
0\to \so(X)\to H^1_{\proeet}(X,\Q_p(1))\to \Q_p\to 0.
$$
For comparison, recall that, for the geometric pro-\'etale cohomology, 
we have a topological isomorphism~\cite{CN2}
$$
\Omega^{r-1}(X_C)/\ker d_{r-1}\stackrel{\sim}{\to} H^r_{\proeet}(X_C,\Q_p(r)),\quad r\geq 1.
$$
\subsubsection{Proper smooth rigid analytic varieties.}
Let $X$ be a proper smooth dagger variety over $K$ (recall that every smooth proper rigid analytic variety over $K$ has a canonical dagger structure). For $r\geq 1$, Theorem \ref{first3} and Section \ref{derham1} imply that the cohomology $H^r_{\proeet}(X, \Q_p(r))$ is classical,
we have $$
H^{i-1}_{\dr}(X)\simeq H^i_{\eet}(X,\Q_p(r)),\quad 1\leq i\leq r-1,
$$
 and we have a strict exact sequence (we note that $H^r_{\proeet}(X, \Q_p(r))\simeq H^r_{\eet}(X, \Q_p(r))$)
$$
0\to H^{r-1}_{\dr}(X)\to H^r_{\eet}(X, \Q_p(r))\to E(r)\to 0,
$$
where $E(r)$ is an extension
$$
0\to H^{r-1}_{\hk}(X)^{\phi=p^{r-1}}\to E(r) \to H^r_{\hk}(X)^{N=0,\phi=p^r}\cap \Omega^r(X)\to 0
$$
\subsubsection{The Drinfeld half-space.}
 Let $d\geq 1$ and let ${\mathbb H}^d_K$  be the Drinfeld half-space of dimension $d$, 
i.e., 
$${\mathbb H}^d_K:={\mathbb P}^d_K\setminus \bigcup_{H\in \sh}H,$$ 
where $\sh$ denotes the set of $K$-rational hyperplanes. We set $G:={\rm GL}_{d+1}(K)$. 
For  $1\leq r\leq d$, 
denote by ${\rm Sp}_r(\Q_p)$  
the generalized 
locally constant  Steinberg $\Q_p$-representation of $G$ 
equipped with a trivial action of $\sg_K$ (for a definition see \cite[Sect. 5.2.1]{CDN3}).

\begin{corollary}
\label{drinfeld1}
    \begin{enumerate}
    \item For $0\leq i\leq r$, the cohomology $\wt{H}^i_{\proeet}({\mathbb H}^d_K,\Q_p(r))$ is classical. 
    \item  For $i\leq r-1$, there is a natural $G$-equivariant topological isomorphism
    $$H^i_{\proeet}({\mathbb H}^d_K,\Q_p(r))\simeq {\rm Sp}_{i-1}(K)^*.
    $$
    \item We have a $G$-equivariant diagram of strict exact sequences
    $$
    \xymatrix@R=.4cm@C=.4cm{
    & & & 0\ar[d]\\
     & & & {\rm Sp}_{r-1}(\Q_p)^*\ar[d]\\
0\ar[r] & \Omega^{r-1}({\mathbb H}^d_K)/\im d_{r-2} \ar[r] & H^r_{\proeet}({\mathbb H}^d_K,\Q_p(r))\ar[r] & {\rm E}(\Q_p) \ar[r] \ar[d]&  0\\
 & & &  {\rm Sp}_r(\Q_p)^* \ar[d]\\
 & & & 0
}
$$
 \end{enumerate}
    \end{corollary}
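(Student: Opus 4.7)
The plan is to apply Theorem \ref{first3} to $X=\mathbb{H}^d_K$, which is a smooth Stein partially proper dagger variety (equipped with its canonical dagger structure), and to feed in the known computations of its de Rham and Hyodo--Kato cohomology in terms of generalized Steinberg representations of $G$. First I would recall, via Schneider--Stuhler (for de Rham) and Iovita--Spiess / Grosse-Kl\"onne (for Hyodo--Kato), the existence of natural $G$-equivariant topological isomorphisms $H^i_{\dr}(\mathbb{H}^d_K)\simeq {\rm Sp}_i(K)^*$ and $H^i_{\hk}(\mathbb{H}^d_K)\simeq {\rm Sp}_i(F)^*$, with Frobenius acting as $p^i$ on the $i$-th Hyodo--Kato group and with the monodromy operator described explicitly on the Steinberg side; these groups are all classical (Fr\'echet, as strong duals of locally convex $G$-representations). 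Because $\mathbb{H}^d_K$ is Stein, the vanishing discussion of Section \ref{derham1} gives $\wt{H}^i(\rg_{\dr}(\mathbb{H}^d_K)/F^r)=0$ for $i\geq r$ and $\wt{H}^{r-1}(\rg_{\dr}(\mathbb{H}^d_K)/F^r)\simeq \Omega^{r-1}(\mathbb{H}^d_K)/\im d_{r-2}$.

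For (1) in the range $0\leq i\leq r-1$ and for (2), I would invoke Theorem \ref{first3}(1): the connecting map $\partial_r$ yields an isomorphism $H^{i-1}_{\dr}(\mathbb{H}^d_K)\stackrel{\sim}{\to}\wt{H}^i_{\proeet}(\mathbb{H}^d_K,\Q_p(r))$ for $1\leq i\leq r-1$. Combined with the Schneider--Stuhler identification this gives the claimed topological isomorphism $\wt{H}^i_{\proeet}(\mathbb{H}^d_K,\Q_p(r))\simeq {\rm Sp}_{i-1}(K)^*$, and simultaneously settles classicality in this range since the right-hand side is classical. For $i=0$ the claim is just $\wt{H}^0_{\proeet}(\mathbb{H}^d_K,\Q_p(r))=\Q_p$ (coming from connectedness and the fact that $\wt{H}^0_{\hk}(\mathbb{H}^d_K)^{N=0,\phi=1}=\Q_p$).

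For (3), I would use Theorem \ref{first3}(2) in degree $r$. The Stein vanishing kills the rightmost term of its long exact sequence, producing the strict short exact sequence
\[
0\to \Omega^{r-1}(\mathbb{H}^d_K)/\im d_{r-2}\to H^r_{\proeet}(\mathbb{H}^d_K,\Q_p(r))\to E(\Q_p)\to 0,
\]
with $E(\Q_p):=\wt{H}^{r}([\rg_{\hk}(\mathbb{H}^d_K)]^{N=0,\phi=p^r})$. I would then apply Proposition \ref{computHK} (the second sequence of Theorem \ref{first3.1}(2)) to get
\[
0\to H^{r-1}_{\hk}(\mathbb{H}^d_K)^{\phi=p^{r-1}}\to E(\Q_p)\to H^{r}_{\hk}(\mathbb{H}^d_K)^{N=0,\phi=p^r}\to 0,
\]
and identify the outer terms with ${\rm Sp}_{r-1}(\Q_p)^*$ and ${\rm Sp}_r(\Q_p)^*$ respectively, using that $\phi$ acts by $p^{r-1}$ on all of $H^{r-1}_{\hk}$ and by $p^r$ on all of $H^r_{\hk}$ (so the eigenspace conditions descend the coefficient field from $F$ to $\Q_p$), together with the explicit vanishing of the monodromy on $H^r_{\hk}(\mathbb{H}^d_K)\simeq {\rm Sp}_r(F)^*$ that follows from the description of $N$ in terms of Steinberg representations (the top monodromy lands in a group where it must vanish by the slope range).

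The main obstacle is the Hyodo--Kato input: one needs a $G$-equivariant description of $H^i_{\hk}(\mathbb{H}^d_K)$ with its $(\phi,N)$-structure faithful enough to identify the $N=0$, $\phi=p^r$ subspace and to descend it to $\Q_p$. This can be reached either by appealing directly to \cite{CDN3} (where the geometric analogue is carried out in detail and the arithmetic version follows via the Galois descent of Proposition \ref{descent1}) or by unwinding the weight spectral sequence for the standard Mustafin semistable models of $\mathbb{H}^d_K$; either route is substantially more intricate than the formal application of Theorem \ref{first3} that provides the skeleton of the argument.
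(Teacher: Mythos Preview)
Your proposal is correct and follows essentially the same route as the paper: apply Theorem~\ref{first3}, use the Stein vanishing from Section~\ref{derham1} to simplify $\rg_{\dr}/F^r$, invoke Schneider--Stuhler for de Rham, and use Proposition~\ref{computHK} (the sequence~(\ref{jesien})) together with the known $(\phi,N)$-structure on $H^i_{\hk}(\mathbb{H}^d_K)$ to identify $E(\Q_p)$. The paper sources the Hyodo--Kato input directly from \cite[Lemma~5.11, Sect.~5.5]{CDN3} (for $H^i_{\hk}(\mathbb{H}^d_K)^{\phi=p^i}\simeq{\rm Sp}_i(\Q_p)^*$ and the triviality of $N$) rather than via Iovita--Spiess/Grosse--Kl\"onne, but this is a citation choice, not a mathematical divergence.
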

\begin{proof}Point (2) follows from Theorem \ref{first3} and the computations of Schneider-Stuhler \cite{SS} of the de Rham cohomology of the Drinfeld 
half-space: $\wt{H}^i_{\dr}({\mathbb H}^d_K)\simeq {\rm Sp}_{i}(K)^*$. 

For point (3), since ${\mathbb H}^d_K$ is Stein, by Section \ref{derham1}, we have 
$$
\wt{H}^{r-1}(\rg_{\dr}({\mathbb H}^d_K)/F^r)\simeq \Omega^{r-1}({\mathbb H}^d_K)/\im d_{r-2},\quad \wt{H}^r(\rg_{\dr}({\mathbb H}^d_K)/F^r)\simeq 0.
$$
On the other hand, from (\ref{jesien}) we get an exact sequence
\begin{equation}
\label{long-day}
0\to \wt{H}^{r-1}_{\hk}({\mathbb H}^d_K)^{\phi=p^{r-1}}\to \wt{H}^r([\rg_{\hk}({\mathbb H}^d_K)]^{N=0,\phi=p^r})\to \wt{H}^{r}_{\hk}({\mathbb H}^d_K)^{N=0,\phi=p^{r}}\to 0,
\end{equation}
where all the cohomologies are classical. But, by \cite[Lemma 5.11]{CDN3}, we have a $G$-equivariant isomorphism $ \wt{H}^{i}_{\hk}({\mathbb H}^d_K)^{\phi=p^{i}}\simeq 
{\rm Sp}_i(\Q_p)^*$. Since the monodromy is trivial (see \cite[Sect. 5.5]{CDN3}),  (\ref{long-day}) then yields  an exact sequence
$$
0\to {\rm Sp}_{r-1}(\Q_p)^*\to \wt{H}^r([\rg_{\hk}({\mathbb H}^d_K)]^{N=0,\phi=p^r}))\to {\rm Sp}_r(\Q_p)^*\to 0
$$
Plugging the above computations into Theorem \ref{first3} and setting $E(\Q_p):={H}^r([\rg_{\hk}({\mathbb H}^d_K)]^{N=0,\phi=p^r}))$ we get point (2).

 Point (1) follows now trivially from points (2) and (3). 
\end{proof}
\begin{remark}\begin{enumerate}
\item 
We note that we have the strict exact sequence
$$
0\to H^{r-1}_{\dr}({\mathbb H}^d_K)\to \Omega^{r-1}({\mathbb H}^d_K)/\im d_{r-2}\lomapr{d_{r-1}}\Omega^r({\mathbb H}^d_K)^{d=0}\to H^{r}_{\dr}({\mathbb H}^d_K)\to 0
$$
and that the two de Rham cohomology terms are topologically isomorphic to ${\rm Sp}_{r-1}(K)^*$ and ${\rm Sp}_r(K)^*$, respectively.
\item
It would be interesting to understand the computations in this example better. In particular, to describe the extensions of Steinberg representations that appear. 
\end{enumerate}
\end{remark}
\begin{remark}
It is interesting to link the computation of the arithmetic cohomology $H^i_{\proeet}({\mathbb H}^d_K,\Q_p(r))$ presented here to the computation of the geometric cohomology $H^i_{\proeet}({\mathbb H}^d_C,\Q_p(r))$ done in \cite[Th. 5.15]{CDN3}.  The following argument would need to be made more precise but it shows that the two computations, the arithmetic and the geometric one, are compatible.

  We have the Hochschild-Serre spectral sequence 
  \begin{equation}
\label{jussieu1}
H^n(\sg_K,H^{i-n}_{\proeet}({\mathbb H}^d_C,\Q_p(r)))\Longrightarrow H^i_{\proeet}({\mathbb H}^d_K,\Q_p(r)).
\end{equation}
(Only $n=0,1,2$ can possibly give a nonzero contribution.)
Now, the exact sequence from \cite[Th. 5.15]{CDN3} twisted by $(j-k)$, yields an exact sequence of $\sg_K\times G$-modules
$$0\to C(j-k)\widehat{\otimes}_K(\Omega^{k-1}({\mathbb H}^d_K)/\ker d_{k-1})\to H^{k}_{\proeet}({\mathbb H}^d_C,\Q_p(j))\to
{\rm Sp}_k(\Q_p)^*(j-k)\to 0.$$
   Hence the computation of $H^n(\sg_K,H^{i-n}_{\proeet}({\mathbb H}_C,\Q_p(r)))$ will
involve the groups $H^n(\sg_K,\Q_p(r-i+n))$ and $H^n(\sg_K,C(r-i+n))$.

Recall the following results of Tate and Bloch-Kato:
\begin{align}
\label{jussieu2}
&H^0(\sg_K,\Q_p(j))\simeq 
\begin{cases} \Q_p&{\text{if $j=0$,}}\\ 0 &{\text{if $j\geq 1$}},\end{cases}
\quad H^1(\sg_K,\Q_p(j))\simeq
\begin{cases} K\oplus\Q_p&{\text{if $j=1$,}}\\ K &{\text{if $j\geq 2$}},\end{cases}\\\notag
&H^2(\sg_K,\Q_p(j))=0,\ {\text{if $j\geq 2$}},\\\notag
&H^0(\sg_K,C(j))\simeq
\begin{cases} K&{\text{if $j=0$}},\\ 0 &{\text{if $j\geq 1$}},\end{cases}
\quad H^1(\sg_K,C(j))\simeq
\begin{cases} K&{\text{if $j=0$}},\\ 0 &{\text{if $j\geq 1$}},\end{cases}\\\notag
&H^2(\sg_K,C(j))=0\ {\text{if $j\geq 0$}}.
\end{align}
Using them, we see that the nonzero terms of the spectral sequence (\ref{jussieu1}) contributing to $ H^i_{\proeet}({\mathbb H}^d_K,\Q_p(r))$, $i\leq r$, are the following:
\begin{align*}
0\to \Omega^{i-1}({\mathbb H}^d_K)/\ker d_{i-1}\to &H^0(\sg_K,H^{i}_{\proeet}({\mathbb H}^d_C,\Q_p(r)))\to
{\rm Sp}_i(\Q_p)^*\to 0,\quad{\text{if $i=r$}};\\
&H^1(\sg_K,H^{i-1}_{\proeet}({\mathbb H}_C,\Q_p(r)))\simeq (K\oplus\Q_p)\otimes_{\Q_p}{\rm Sp}_{i-1}(\Q_p)^*,
\quad{\text{if $i=r$}};\\
&H^1(\sg_K,H^{i-1}_{\proeet}({\mathbb H}_C,\Q_p(r)))\simeq K\otimes_{\Q_p}{\rm Sp}_{i-1}(\Q_p)^*\simeq {\rm Sp}_{i-1}(K)^*,
\quad{\text{if $i\leq r-1$}}.
\end{align*}
Here the top sequence is exact though  (\ref{jussieu2}) is not enough to ensure the surjectivity of the map $H^0(\sg_K,H^{i}_{\proeet}({\mathbb H}^d_C,\Q_p(r)))\to
{\rm Sp}_i(\Q_p)^*$. It yields however the exact sequence
$$
H^0(\sg_K,H^{i}_{\proeet}({\mathbb H}^d_C,\Q_p(r)))\to
{\rm Sp}_i(\Q_p)^*\lomapr{\partial} \,\Omega^{i-1}({\mathbb H}^d_K)/\ker d_{i-1}
$$
Now the boundary map $\partial$ is trivial by a representation theory argument: the map $\partial$ is continuous  and $G$-equivariant,
the $G$-smooth vectors are dense in ${\rm Sp}_i(\Q_p)^*$, but
$\Omega^{i-1}({\mathbb H}^d_K)/\ker d_{i-1}$ does not have any nonzero $G$-smooth elements since it injects into $\Omega^{i}({\mathbb H}^d_K)$.

   Hence, for $0\leq i\leq r-1$, we get $ H^i_{\proeet}({\mathbb H}^d_K,\Q_p(r))\simeq {\rm Sp}_{i-1}(K)^*$ as in Corollary \ref{drinfeld1}. For $i=r$, we get the diagram of exact sequences
   $$
   \xymatrix@R=.4cm@C=.4cm{
    & & & 0\ar[d]\\
  & & & \Omega^{r-1}({\mathbb H}^d_K)/\ker d_{r-1}\ar[d]  \\
 0\ar[r] &  {\rm Sp}_{r-1}(K)^*\oplus{\rm Sp}_{r-1}(\Q_p)^* \ar[r] &  H^r_{\proeet}({\mathbb H}^d_K,\Q_p(r))\ar[r] &  H^0(\sg_K,H^{r}_{\proeet}({\mathbb H}^d_C,\Q_p(r)))\ar[r]\ar[d]  &  0\\
  & & & {\rm Sp}_r(\Q_p)^*\ar[d] \\
    & & & 0
    }
   $$
    To compare this with Corollary \ref{drinfeld1}, note that we have an exact sequence
$$0\to H^{i-1}_{\dr}({\mathbb H}^d_K)\to\Omega^{i-1}({\mathbb H}^d_K)/{\rm im}\,d_{i-2}\to
\Omega^{i-1}({\mathbb H}^d_K)/\ker d_{i-1}\to 0$$
and the Schneider-Stuhler  isomorphism
$$H^{i-1}_{\dr}({\mathbb H}^d_K)\cong {\rm Sp}_{i-1}(K)^*.$$ Hence Corollary \ref{drinfeld1} and the above computation via Galois descent give us the same Jordan-H\"older components of $ H^r_{\proeet}({\mathbb H}^d_K,\Q_p(r))$ but they are put together in two different ways.
\end{remark}


\begin{thebibliography}{leBras}
   \bibitem{Be1} {A.~Beilinson}, {\em  $p$-adic periods and derived de Rham cohomology}. J. AMS~{ 25} (2012), 715--738. 
   \bibitem{BE2} {A.~Beilinson}, {\em On the crystalline period map}. Cambridge J. Math.~{1} (2013), 1--51. References to the post-publication arXiv:1111.3316v4. 
   \bibitem{BhS} B.~Bhatt, P.~Scholze, {\em The pro-\'etale topology for schemes}, Ast\'erisque~{369} (2015), 99-201.
   \bibitem{BS} B.~Bhatt, A.~Snowden, {\em Refined alterations}. Preprint, 2017.
   \bibitem{Ray} S.~Bosch,  W.~L\"utkebohmert, {\em  Formal and rigid geometry. I. Rigid spaces}. Math. Ann. 295 (1993), 291--317.
   \bibitem{FRG} S.~Bosch,  W.~L\"utkebohmert,  {\em Formal and rigid geometry. II. Flattening techniques}. 
Math. Ann. 296 (1993), 403--429.
    \bibitem{CS} B.~Chiarellotto, B.~ Le Stum, {\em  Pentes en cohomologie rigide et F-isocristaux unipotents}. Manuscripta Math. 100 (1999), 455--468. 
    \bibitem{CDN1} P.~Colmez, G.~Dospinescu, W.~Nizio\l, 
{\em Cohomologie $p$-adique de la tour de Drinfeld: le cas de la dimension 1.} arXiv:1704.08928 [math.NT], to appear in J. Amer. Math. Soc.
   \bibitem{CDN3}  P.~Colmez, G.~Dospinescu, W.~Nizio\l, {\em Cohomology of $p$-adic Stein spaces}, arXiv:1801.06686 [math.NT],  to appear in Invent. Math.
   \bibitem{CN} P.~Colmez, W.~Nizio\l, {\em Syntomic complexes and $p$-adic nearby cycles}. Invent. Math. 208 (2017), 1-108.
   \bibitem{CN2} P.~Colmez, W.~ Nizio\l,  
{\em  On the cohomology of the affine space.} arXiv:1707.01133 [math.AG], to appear in $p$-adic Hodge theory (2017), Simons Symposia,
Springer-Verlag.
    \bibitem{CN4} P.~Colmez, W.~Nizio\l, {\em On $p$-adic comparison theorems for rigid analytic varieties, II}. In preparation. 
\bibitem{DN} F.~D\'eglise, W.~Nizio{\l}, {\em On $p$-adic absolute Hodge cohomology and syntomic coefficients, I}, Comment. Math. Helv. 93 (2018), 71--131.
\bibitem{Elk} R.~Elkik, {\em Solutions d'\'equations \`a coefficients dans un anneau hens\'elien}. Ann. Sci. \'Ecole Norm. Sup. 6 (1973), 553--603. 
   \bibitem{EY} V.~Ertl, K.~Yamada, {\em Comparison between rigid syntomic and crystalline syntomic cohomology for strictly semistable log schemes with boundary}. 	
   arXiv:1805.04974 [math.NT].
   \bibitem{ET1}  J.-Y.~Etesse, {\em Rel\`evements de sch\'emas et alg\`ebres de Monsky-Washnitzer: th\'eor\`emes d'\'equivalences et de pleine fid\'elit\'e. II.} Rend. Semin. Mat. Univ. Padova 122 (2009), 205--234. 
    \bibitem{Fal} G.~Faltings, {\em $p$-adic Hodge theory.} J. Amer. Math. Soc. 1 (1988), 255--299. 
    \bibitem{FvP}  J.~Fresnel, M.~van der Put, {\em  Rigid analytic geometry and its applications}. Progr. Math. 218, Birk\"auser, 2004. 
     \bibitem{FM} {J.-M.~Fontaine},  W.~Messing,  {\em  $p$-adic periods and $p$-adic \'{e}tale
cohomology}. Current Trends in Arithmetical Algebraic Geometry
(K.~Ribet, ed.), Contemporary
Math., vol.~{67}, AMS, Providence, 1987, 179--207.
\bibitem{GK0} E.~Grosse-Kl\"onne, {\em Rigid analytic spaces with overconvergent structure sheaf.} J. Reine Angew. Math. 519 (2000), 73--95. 
\bibitem{GKFr}  E.~Grosse-Kl\"onne, {\em Frobenius and monodromy operators in rigid analysis, and Drinfeld's symmetric space}. J. Algebraic Geom. 14 (2005), 391--437. 
    \bibitem{Urs} U.~Hartl, {\em  Semi-stable models for rigid-analytic spaces}. Manuscripta Math. 110 (2003), 365--380.
    \bibitem{Hub} R.~Huber, {\em \'Etale cohomology of rigid analytic varieties and adic spaces.} Aspects of Mathematics, E30. Friedr. Vieweg \& Sohn, Braunschweig, 1996. 
    \bibitem{HK} O.~Hyodo, K.~Kato, {\em Semi-stable reduction and crystalline cohomology with logarithmic poles}. Ast\'erisque 223 (1994), 221--268. 
    \bibitem{IL} L.~Illusie, {\em On the category of sheaves of objects of $\sd(R)$ (after Beilinson and Lurie)}, notes, 2013.
    \bibitem{K1} {K.~Kato}, {\em Semistable reduction and $p$-adic \'etale
cohomology}. Ast\'erisque~{223} (1994), 269--293.
 \bibitem{LA} A.~Langer, A.~Muralidharan, {\em An analogue of Raynaud's theorem: weak formal schemes and dagger spaces.} M\"unster J. Math. 6 (2013), 271--294.
 \bibitem{LeBras} A.-C.~Le Bras, {\em Overconvergent de Rham cohomology over the Fargues-Fontaine curve}. arXiv:1801.00429v2 [math.NT].
 \bibitem{Lu1} J.~Lurie, {\em Higher topos theory}, vol. 170, Annals of Mathematics Studies, Princeton University Press, 2009.
\bibitem{Lu2} J.~Lurie, {\em Higher Algebra}, preprint. 
\bibitem{NN} {J.~Nekov\'a\v{r}}, {W.~Nizio\l}, {\em Syntomic cohomology and $p$-adic regulators for varieties over $p$-adic fields}.  Algebra Number Theory 10 (2016), 1695--1790.
\bibitem{Ni} W.~Nizio\l, {\em On the image of $p$-adic regulators}. Invent. Math. 127 (1997), 375--400.
\bibitem{Og} A.~Ogus, {\em $F$-crystals on schemes with constant log structure.}  Compositio Math. 97 (1995), no. 1-2, 187--225.  
  \bibitem{OM} M.~Olsson, {\em Crystalline cohomology of algebraic stacks and Hyodo-Kato cohomology.}  Ast\'erisque  316 (2007).
\bibitem{Sai} T.~Saito, {\em Weight spectral sequences and independence of $\ell$}. 
J. Inst. Math. Jussieu 2 (2003), 583--634. 
 \bibitem{SS} P.~Schneider, U.~Stuhler, {\em The cohomology of $p$-adic symmetric spaces}. Invent. Math. 105 (1991), 47--122. 
 \bibitem{Sch} P.~Scholze, {\em $p$-adic Hodge theory for rigid-analytic varieties}. Forum of Mathematics, Pi, 1, e1 2013.
 \bibitem{ScE}  P.~Scholze, {\em $p$-adic Hodge theory for rigid-analytic varieties: corrigendum}. Forum Math. Pi 4 (2016), e6, 4 pp.
\bibitem{Tem} M.~Temkin, {\em Altered local uniformization of Berkovich spaces}. Israel J. Math. 221 (2017), 585--603.
\bibitem{Ts} T.~Tsuji, {$p$-adic \'etale cohomology and crystalline cohomology in the semi-stable reduction case},
 Invent. Math. 137 (1999), 233-411.
   \bibitem{V2} J.-L.~Verdier, {\em Fonctorialit\'e de cat\'egories de faisceaux, Th\'eorie des topos et cohomologie \'etale de sch\'emas (SGA~4)}, Tome 1, Lect. Notes in Math. 269, Springer-Verlag, 1972, pp. 265--298.
   \bibitem{Vez} A.~Vezzani, {\em The Monsky-Washnitzer and the overconvergent realizations}.  Int. Math. Res. Not. IMRN 2018, 3443--3489.
   \bibitem{Yam} K.~Yamada, {\em Log rigid syntomic cohomology for semistable schemes.},  arXiv:1505.03878v4 [math.AG]. 
     \bibitem{Zhe} W.~Zheng, {\em Note on derived $\infty$-categories and monoidal structures}, notes, 2013.
\end{thebibliography}
 \end{document}